\newtheorem{thm}{Theorem}[section]
\newtheorem{lemma}[thm]{Lemma}
\newtheorem{prop}[thm]{Proposition}
\newtheorem{cor}[thm]{Corollary}
\newtheorem{THM}{Theorem}
\theoremstyle{definition}
\newtheorem{dfn}[thm]{Definition}
\newtheorem{question}{Question}
\theoremstyle{remark}
\newtheorem{remark}[thm]{Remark}
\newcommand{\mb}{\mathbb}
\newcommand{\mc}{\mathcal}
\newcommand{\R}{\mb R}
\newcommand{\C}{\mb C}
\newcommand{\Q}{\mb Q}
\newcommand{\sS}{\mc S}
\newcommand{\Hj}{\mb H}
\newcommand{\scrH}{\mathscr{H}}
\newcommand{\F}{\mc F}
\newcommand{\G}{\mc G}
\newcommand{\cC}{\mc C}
\newcommand{\cD}{\mathscr{D}}
\newcommand{\eqnum}{\leavevmode\hfill\refstepcounter{equation}\textup{\tagform@{\theequation}}}
\DeclareMathOperator{\Orth}{O}
\DeclareMathOperator{\Isom}{Isom}
\DeclareMathOperator{\Lieisom}{\mathfrak{isom}}
\newcommand{\To}{\longrightarrow}
\DeclareMathOperator{\Aff}{Aff}
\DeclareMathOperator{\Lie}{Lie}
\DeclareMathOperator{\Ker}{Ker}
\newcommand{\barF}{\overline{\mathcal{F}}}
\newcommand{\wtX}{\widetilde{X}}
\newcommand{\wtsX}{\widetilde{X^\sharp}}
\newcommand{\wtF}{\widetilde{\mathcal{F}}}
\newcommand{\wtG}{\widetilde{\mathcal{G}}}
\newcommand{\wtbarF}{\widetilde{\barF}}
\renewcommand{\Im}{\operatorname{Im}}
\newcommand{\Aut}{\mathrm{Aut}}
\newcommand{\Autfix}{\mathrm{Aut}^\mathrm{Fix}}
\newcommand{\knr}{\ensuremath{\mathscr{K}_{\mathrm{nR}}}\xspace}
\DeclareMathOperator{\codim}{codim}
\DeclareMathOperator{\nd}{nd}
\newcommand{\frg}[1]{\mathfrak{g}\left(#1\right)}
\newcommand\restr[2]{{
  \left.\kern-\nulldelimiterspace 
  #1 
  \vphantom{\big|} 
  \right|_{#2} 
  }}
\newcommand{\xdashrightarrow}[2][]{\ext@arrow 0359\rightarrowfill@@{#1}{#2}}
\def\rightarrowfill@@{\arrowfill@@\relax\relbar\rightarrow}
\def\arrowfill@@#1#2#3#4{%
  $\m@th\thickmuskip0mu\medmuskip\thickmuskip\thinmuskip\thickmuskip
   \relax#4#1
   \xleaders\hbox{$#4#2$}\hfill
   #3$%
}
\DeclareMathOperator{\rg}{rk}
\DeclareMathOperator{\SO}{SO}
\DeclareMathOperator{\Chi}{\mathfrak{X}}
\DeclareMathOperator{\PSL}{PSL}
\DeclareFixedFootnote{\repnote}{ These notations are precisely explained in Subsection \ref{sec:sheavesautomorphisms} but should be meaningful when looking at Picture \ref{pic:SecNorm}.}
\numberwithin{equation}{section}       
\keywords{Riemannian foliations, transverse K\"ahler metrics, foliated harmonic maps, Molino's theory}
\subjclass[2020]{53C43, 37F75, 32M25}
\title[K\"ahler foliations]{Structure of  K\"ahler foliations with negative transverse Ricci curvature}
\author[B. Claudon]{Beno\^it Claudon}
\address{Univ Rennes, CNRS, IRMAR - UMR 6625, F-35000 Rennes, France}
\email{benoit.claudon@univ-rennes.fr}
\author[F. Touzet]{Fr\'ed\'eric Touzet}
\address{Univ Rennes, CNRS, IRMAR - UMR 6625, F-35000 Rennes, France}
\email{frederic.touzet@univ-rennes.fr}
\thanks{The first named author would like to thank the Institut Universitaire de France for providing excellent working conditions. The authors benefits from the support of the French government “Investissements d’Avenir” program integrated to France 2030, bearing the following reference ANR-11-LABX-0020-01.}
\begin{document}
\begin{abstract} 
We investigate the structure of transversely K\" ahler foliations with quasi-negative tranverse Ricci curvature. In particular, we prove a de Rham type decomposition theorem on the leaf space where we characterize each factor. This can be seen as a foliated analog of Nadel and Frankel's uniformization theorem for canonically polarized manifold. This is also related to the works of the second named author on codimension one singular foliations with transverse hyperbolic structure. Further properties are established when the ambient manifold is compact K\"ahler and the foliation is holomorphic. 
\end{abstract}
\maketitle

\sloppy
\tableofcontents

\section{Introduction}\label{S:results}

\subsection*{Preliminary warning} Unless otherwise stated, the objects considered here such as manifolds, foliations, functions, tensors$\dots$ are supposed to be smooth. For the sake of notational simplicity, we will denote by the same symbol (typically $\F$) a foliation/distribution and its tangent bundle. 
\medskip
\subsection{Statement of the main results}\label{SS:main-results} Let $\F$ be a transversely K\"ahler foliation of complex codimension $n$ on $X$ a compact (differentiable) manifold; it is worth mentioning here that $X$ is not necessarily endowed with a complex structure. We denote by $J_\F$ the holomorphic transverse structure (see \S\ref{SS:transverse Kähler}) and by $\overline{g}$ the transverse K\"ahler metric. In some holomorphic coordinates $(z_1,\ldots,z_{ n})$ parameterizing the local space of leaves, it reads  $$ \overline{g}= \sum_{i,{ j}}g_{i\bar{ j}}dz_i  d\bar{z_j}$$
where $g_{i\bar{ j}}$ depends only of the transverse variables $(z_1,\ldots,z_{n})$. The foliation is thus equipped with two basic closed $(1,1)$ forms. Namely the fundamental form $\omega$ of $\overline{g}$ and the (transverse) Ricci form $\gamma=\mathrm{ Ric}(\overline{g})$ respectively defined in the previous local transverse coordinates as 
\[\omega=\sqrt{-1}\sum_{i,j}g_{i\bar{ j}}dz_i\wedge  d\bar{z_j} \]
and
\[\gamma=\mathrm{ Ric}(\overline{g})= -\sqrt{ -1}\partial\bar{\partial}\log \left(\frac{\omega^{ n}}{{|dz_1\wedge\cdots \wedge dz_{ n}|}^2}\right)=-\sqrt{ -1}\partial\bar{\partial}\log \left(\textrm{det}(g_{i\bar{j}})\right). \]

In this paper, we will make use of the following assumptions.
\begin{enumerate}[label=(A\arabic*)]
	\item \label{A:Riccineg} The tranverse Ricci form $\mathrm{ Ric}(\overline{g})$ is \textit{quasi-negative}: $\mathrm{ Ric}(\overline{g})\leq 0$ and  $\mathrm{ Ric}(\overline{g})<0$ in the transverse direction (\emph{i.e.} has maximal rank $n$) somewhere.
	\item \label{A:homor} $\F$ is \textit{homologically orientable}, \emph{i.e.} the top basic cohomology group $H^{2n}(X/\F)$ is non-trivial, and then generated over $\R$ by the class of the transverse volume $\omega^{n}$ (see Theorem \ref{th:poincareduality}).
\end{enumerate}
Motivated by Nadel/Frankel's uniformization results (see the paragraph below Theorem~\ref{T:CdeRhamdec}) and by the codimension one situation (see \S~\ref{SS:codim 1}), we can then prove the following statement.

\begin{THM}\label{T:A}
Let us assume that the pair $(X,\F)$ satisfies \ref{A:Riccineg} and \ref{A:homor} above.
\begin{enumerate}
	\item There exist on $X$ two regular foliations $\G$ and $\overline{\F}$ containing $\F$ and such that  $\F=\G\cap \overline\F$.
	\item The foliations $\G$ and $\overline\F$ are holomorphic with respect to the complex structure $J_\F$ on the normal bundle $N\F$ . Moreover, $\G/\F$ and $\overline{\F}/\F$ are orthogonal and parallel with respect to $\overline{g}$ (as subbundles of $N\F$).
	\item The leaves of $\overline\F$ are the topological closure  of the leaves of $\F$. In particular, they are closed. The leaf space $X/\overline\F$ is a compact K\"ahler orbifold with quasi-negative Ricci curvature, in particular of general type.
	\end{enumerate}
\end{THM}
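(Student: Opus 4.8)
The plan is to read off $\G$ and $\overline\F$ from the transverse Riemannian geometry of $(\F,\overline{g})$, the main tool being a transverse Bochner argument that exploits \ref{A:Riccineg} and \ref{A:homor}. First I would set up on $N\F$ the transverse Levi--Civita connection $\overline{\nabla}$ of $\overline{g}$, i.e.\ the Bott connection of the transverse holomorphic structure $J_\F$: it is basic, metric, compatible with $J_\F$, torsion free on basic $1$-forms, and has holonomy in $\mathrm{U}(n)$. Two formal facts get used repeatedly: an $\overline{\nabla}$-parallel subbundle of $N\F$ is automatically $J_\F$-stable (unitary holonomy) and, since $\overline{\nabla}$ is torsion free on basic forms, integrates to a \emph{regular} foliation of $X$ containing $\F$ and holomorphic for $J_\F$; and a subbundle of $N\F$ spanned by local $\overline{\nabla}$-parallel sections has locally constant rank. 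The engine of the whole argument is the following: by the transverse Bochner identity together with $\mathrm{Ric}(\overline{g})\le 0$ (hypothesis \ref{A:Riccineg}), the squared norm of a transverse Killing field of $\overline{g}$ is a basic subharmonic function, and since \ref{A:homor} provides basic Hodge theory and the basic maximum principle (cf.\ Theorem \ref{th:poincareduality}) such a function is constant, so every transverse Killing field of $\overline{g}$ is $\overline{\nabla}$-parallel and lies pointwise in the kernel of $\mathrm{Ric}(\overline{g})$.

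Next I would \emph{define} $\overline\F$ by declaring its leaves to be the topological closures of the leaves of $\F$. Since $\F$ is a Riemannian foliation of the compact manifold $X$, Molino's structure theory makes this an a priori \emph{singular} Riemannian foliation: a leaf closure $\overline L$ is an orbit of the closure $\overline{\mathcal H}$ of the holonomy pseudogroup, which acts transversally transitively on $\overline L$ by $J_\F$-holomorphic isometries of $\overline{g}$, so $\overline L/\F$ is a compact homogeneous K\"ahler space, the distribution $\overline\F/\F\subset N\F$ it directs is $J_\F$-stable, and it is generated by the commuting sheaf $\mathcal C$ of transverse Killing fields of $\overline{g}$. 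Feeding $\mathcal C$ into the engine above --- with the care imposed by the fact that $\mathcal C$ is only locally generated, for instance by running Bochner on the globally defined basic function obtained by tracing $\overline{g}$ over a local frame of $\mathcal C$ --- one concludes that $\overline\F/\F$ is $\overline{\nabla}$-parallel of locally constant rank, so that $\overline\F$ is a \emph{regular} holomorphic foliation with closed leaves, and that $\mathrm{Ric}(\overline{g})$ vanishes identically along $\overline\F/\F$. One then identifies $\overline\F/\F$ with the maximal $\overline{\nabla}$-parallel subbundle of $N\F$ annihilated by $\mathrm{Ric}(\overline{g})$; the point where $\mathrm{Ric}(\overline{g})$ is negative definite, provided by \ref{A:Riccineg}, is exactly what forbids any residual flat summand and guarantees that the orthogonal complement carries strictly negative Ricci curvature somewhere.

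It remains to produce $\G$ and to treat the leaf space. I would set $\G/\F$ to be the $\overline{\nabla}$-parallel, $\overline{g}$-orthogonal complement of $\overline\F/\F$ in $N\F$: by the facts recalled above it integrates to a regular foliation $\G$ of $X$ containing $\F$ and holomorphic for $J_\F$; the two subbundles $\G/\F$ and $\overline\F/\F$ are complementary in $N\F$, so they meet only in the zero section and $\F=\G\cap\overline\F$; and orthogonality and parallelism hold by construction. This is assertions (1) and (2). For (3), since $\overline\F$ is a Riemannian foliation with closed leaves on the compact manifold $X$, its leaf space $Q:=X/\overline\F$ is a compact Hausdorff orbifold --- Hausdorff because the leaves are closed, an orbifold by the standard description of the space of closed leaves of a Riemannian foliation. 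The $\overline\F$-basic data $(\omega,J_\F)$ descend along $N\overline\F\cong\G/\overline\F$ to a K\"ahler orbifold metric $g_Q$ on $Q$, and since the collapsed directions $\overline\F/\F$ are $\overline{\nabla}$-parallel and killed by $\mathrm{Ric}(\overline{g})$, the submersion has no O'Neill correction terms and the Ricci form of $g_Q$ is simply the descent of $\mathrm{Ric}(\overline{g})$; hence $g_Q$ has quasi-negative Ricci curvature by \ref{A:Riccineg}, and, exactly as in the K\"ahler manifold case, this forces $K_Q$ to be big, so $Q$ is of general type.

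The step I expect to be the real obstacle is the middle one: converting the qualitative hypotheses \ref{A:Riccineg} and \ref{A:homor} into both the \emph{regularity} of the closure foliation $\overline\F$ and the \emph{precise} identification of $\overline\F/\F$ as the maximal parallel Ricci-flat subbundle of $N\F$, so that the quotient $X/\overline\F$ falls exactly in the general-type regime. Everything else should reduce to transverse linear algebra together with the orbifold incarnations of well-known facts about Riemannian foliations and K\"ahler geometry.
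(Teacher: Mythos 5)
Your argument hinges on the ``engine'': that every (local) transverse Killing field of $\overline{g}$ has basic subharmonic squared norm, hence is $\overline{\nabla}$-parallel and lies in $\ker\mathrm{Ric}(\overline{g})$, and you then feed the commuting sheaf $\mathcal C$ into it. This step is both unjustified and, as stated, false. First, the sections of $\mathcal C$ are only \emph{local}; the monodromy of this local system is dense in the non-compact group $\Aut^0(\mathfrak g)$ (Theorem \ref{T:Touzet-semisimple} and \S\ref{SS:addobserv}), so there is no monodromy-invariant positive pairing on its fibers, hence no globally defined ``norm-type'' basic function to which a maximum principle could apply. Your proposed fix, tracing $\overline{g}$ over a frame of $\mathcal C$, does not repair this: a frame orthonormal for the Killing form gives a \emph{signed} trace (the Killing form is indefinite on a semi-simple algebra without compact factors), which is neither nonnegative nor subharmonic, while a frame orthonormal for any positive form is not monodromy-invariant. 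Second, the conclusion you draw contradicts the paper's own basic example (\S\ref{SS:example}): for $X=M\times N$ with $\F$ restricting to a minimal transversely Hermitian-symmetric foliation on the fibers $N$, the closure direction $\overline\F/\F$ carries the symmetric-space metric, whose Ricci curvature is \emph{negative definite}, not zero; the local sections of $\mathcal C$ there are the non-parallel $\mathfrak g$-Killing fields. If your engine were valid it would force $\mathcal C=0$, i.e.\ closed leaves, contradicting minimality of $\F_N$. In particular your identification of $\overline\F/\F$ as the maximal parallel Ricci-null subbundle is wrong, and with it the regularity of $\overline\F$, the construction of $\G$ as its orthogonal complement, and the parallel splitting all lose their support (the final computation of the quotient's Ricci curvature happens to be consistent only because the restriction of a quasi-negative form to $\G/\F$ is quasi-negative once the splitting is known).

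What the paper proves instead is much weaker and much harder: only the \emph{isotropy} subalgebra $\mathfrak{iso}_s(\sS)$, $s=\Psi(x)$, vanishes along the fiber $F_s=\Psi^{-1}(s)$ (Theorem \ref{TH:isotvanishingloci}), and this is exactly what yields constancy of the closure dimension, $\overline\F\cap\G=\F$, and the splitting. Even this statement presupposes the foliation $\G$: one must first build the $\rho$-equivariant, $\wtF$-invariant harmonic map $\Psi:\wtX\to\sS$ via Molino's frame-bundle theory, the tangential energy estimate of Section \ref{S:labeltangenergy} and Corlette/Labourie (Theorem \ref{T:harmonic-riemannian}), then upgrade it to a transversely holomorphic submersion by Siu--Carlson--Toledo and Jost--Yau rigidity (Theorems \ref{T:harmonic-kahler}, \ref{T:harmonic-holomorphic}). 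Moreover the Bochner/Weitzenb\"ock argument cannot be run with $\mathrm{Ric}(\overline{g})$ on all of $X$: it is run fiberwise along $F_s$, and the relevant curvature is the $\G$-leafwise Ricci curvature, which is not the restriction of the ambient transverse Ricci; producing a leafwise K\"ahler metric whose leafwise Ricci is quasi-negative is precisely the role of the doubly foliated Monge--Amp\`ere equation (Lemma \ref{L:relativemongeampere} and Corollary \ref{C:ricciqnegative}). Finally, in the paper the parallel orthogonal decomposition is obtained \emph{after} both foliations exist, via Frankel's lemma and the standard K\"ahler fact on complementary holomorphic distributions, rather than by integrating a parallel complement. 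So the missing idea is the entire harmonic-map-plus-Monge--Amp\`ere mechanism; a purely transverse Bochner argument on $X$ cannot see the difference between the isotropy and non-isotropy parts of $\mathcal C$, which is where the theorem actually lives.
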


We denote by  $\widetilde{\bullet}$ the lift on the universal cover $\widetilde{X}$ of any object previously defined. To get a more in-depth description of the foliations $\G$ and $\overline{\F}$ we first formulate the following result.
\begin{THM}\label{L:BSpaceleaves}
The leaves of $\wtF$ are closed and the leaf space $\wtX/\wtF$ is a complete K\"ahler orbifold (with respect to the metric $\tilde{\overline{g}}$).
\end{THM}

Let $\mathcal{G}_1$ and $\mathcal{G}_2$ be the foliations induced on $\wtX/\wtF$ by $\wtG$ and $\wtbarF$. Note that they induce an infinitesimal splitting
(in the orbifold category):
$$T( \wtX/\wtF)=\mathcal{G}_1\oplus \mathcal{G}_2.$$
We have then the following geometric description of these foliations.
\begin{THM}\label{T:CdeRhamdec}
\leavevmode
\begin{enumerate}
	\item The leaves of $\G_2$ are all isometric to a Hermitian  symmetric space $\mathscr{H}$ of  non-compact type.
	
	\item The leaves of $\G_1$ are all isometric to a  K\"ahler complete  orbifold $\mathscr{K}$ with quasi-negative Ricci curvature.
	
	\item The aforementioned infinitesimal splitting gives rise to a global decomposition (de Rham decomposition)
\[\wtX/\wtF=\mathscr{K}\times \mathscr{H}.\]

	\item \label{I:denserepresentation}  Let us consider the natural diagonal and isometric action of $\pi_1(X)$ (with respect to the decomposition above). Then this action is minimal (\emph{i.e.} dense) on the second factor and discrete cocompact on the first factor $\mathscr{K}$. 
\end{enumerate}	
\end{THM}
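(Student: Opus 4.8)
The plan is to deduce everything from Theorem~\ref{T:A} and Lemma~\ref{L:BSpaceleaves}, combined with the de Rham decomposition theorem and Molino's structure theory of Riemannian foliations.

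\emph{Global splitting (part (3)).} By Lemma~\ref{L:BSpaceleaves} the orbifold $\wtX/\wtF$ is complete K\"ahler. Since the leaves of $\wtF$ are closed, the projection $\wtX\to\wtX/\wtF$ is a submersion with connected fibres onto $\wtX/\wtF$, so the homotopy exact sequence together with $\pi_1(\wtX)=1$ forces $\pi_1^{\mathrm{orb}}(\wtX/\wtF)=1$: the orbifold is simply connected. By Theorem~\ref{T:A}(2) the subbundles $\G/\F$ and $\overline{\F}/\F$ of $N\F$ are holomorphic, mutually orthogonal and parallel for $\overline{g}$; lifting to $\wtX$ and pushing forward, $T(\wtX/\wtF)=\G_1\oplus\G_2$ is a parallel, orthogonal, holomorphic splitting. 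The (orbifold) de Rham theorem then produces a global product of complete K\"ahler orbifolds $\wtX/\wtF=\mathscr{K}\times\mathscr{H}$ with $T\mathscr{K}=\G_1$, $T\mathscr{H}=\G_2$, along which the Ricci form decomposes; by assumption~\ref{A:Riccineg} (lifted to $\wtX/\wtF$) both $\mathrm{Ric}(\mathscr{K})$ and $\mathrm{Ric}(\mathscr{H})$ are $\le 0$ everywhere and negative definite at the image of a point where $\mathrm{Ric}(\overline{g})$ has maximal rank.

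\emph{The two factors (parts (2) and (1)).} The natural map $\wtX/\wtF\to\wtX/\wtbarF$ has connected fibres equal to the leaves of $\G_2$, i.e.\ to the slices $\{\ast\}\times\mathscr{H}$, whence $\mathscr{K}\cong\wtX/\wtbarF$ is the lift to $\wtX$ of $X/\overline{\F}$; by Theorem~\ref{T:A}(3) the latter is a compact K\"ahler orbifold with quasi-negative Ricci curvature, properties inherited by the cover $\mathscr{K}$, which is complete as a de Rham factor of a complete space. This is (2). For (1), a leaf of $\G_2$ is the image in $\wtX/\wtF$ of a leaf of $\wtbarF$, that is, (a cover of) the set of $\F$-leaves inside a single leaf-closure $\overline{L}\subset X$. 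As $\overline{L}$ is compact and the leaves of $\restr{\F}{\overline{L}}$ are dense, Molino's structure theorem gives that $\restr{\F}{\overline{L}}$ is transversely homogeneous, its transverse model being a homogeneous K\"ahler manifold $G/H$ with $G$ acting by holomorphic isometries; using that the leaves of $\wtF$ are closed and that $\wtX/\wtF$ is simply connected one checks that the $\G_2$-leaf is isometric to the full model, so $\mathscr{H}=G/H$ is a simply connected, complete, \emph{homogeneous} K\"ahler manifold, with $\mathrm{Ric}(\mathscr{H})<0$ everywhere by homogeneity and Step~1 (in particular $\mathscr{H}$ has no flat and no compact de Rham factor). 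To upgrade this to ``Hermitian symmetric of the non-compact type'' I would use that on a homogeneous manifold all invariant K\"ahler metrics share the same Ricci form (the ratio of two invariant volume forms being an invariant, hence constant, function); consequently $-\mathrm{Ric}(\mathscr{H})$ is an invariant K\"ahler metric equal to minus its own Ricci form, so $\mathscr{H}$ carries a homogeneous K\"ahler--Einstein metric with negative Einstein constant. Rigidity of homogeneous K\"ahler--Einstein manifolds of negative scalar curvature then identifies $\mathscr{H}$, as a complex manifold, with a bounded symmetric domain, and a bookkeeping of the invariant metrics on its de Rham-irreducible factors (whose isotropy acts irreducibly) shows that $\restr{\tilde{\overline{g}}}{\mathscr{H}}$ is, up to scales on the factors, the symmetric metric.

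\emph{The holonomy representation (part (4)).} The diagonal isometric action of $\pi_1(X)$ on $\mathscr{K}\times\mathscr{H}$ induces on $\mathscr{K}\cong\wtX/\wtbarF$ the deck action over $X/\overline{\F}$, which is a \emph{compact} orbifold by Theorem~\ref{T:A}(3); hence this action is proper and cocompact and its image in $\Isom(\mathscr{K})$ is discrete and cocompact. On $\mathscr{H}$ the action is by the transverse holonomy of $\restr{\F}{\overline{L}}$, and since the leaves of $\F$ are dense in the leaf-closure this holonomy has dense image in $G\subseteq\Isom(\mathscr{H})$, so the action on the second factor is minimal. The genuinely delicate point of the whole scheme is the last assertion of part~(1): promoting the Molino/homogeneous description of $\mathscr{H}$ to the statement that it is, \emph{with its induced metric}, a Hermitian symmetric space of the non-compact type---the K\"ahler--Einstein rigidity pins down the complex manifold, but recovering the metric requires controlling the invariant K\"ahler metrics on each irreducible factor. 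Everything else is a direct consequence of Theorem~\ref{T:A}, Lemma~\ref{L:BSpaceleaves}, the de Rham theorem and standard foliation theory.
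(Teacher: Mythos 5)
Your treatment of the splitting and of items (2)--(4) follows essentially the paper's own route (simple connectedness of the orbifold $\wtX/\wtF$, then the orbifold de Rham theorem applied to the orthogonal parallel splitting coming from Theorem~\ref{T:A}), and is acceptable at roughly the paper's level of detail. The genuine gap is in item (1). Even granting your ``one checks'' step --- namely that the closure of the holonomy of $\restr{\F}{\overline{L}}$ really integrates to a \emph{transitive Lie group of holomorphic isometries} of the complete, simply connected leaf of $\G_2$; this is exactly where the paper needs Molino's frame-bundle constructions (or Haefliger's theorem on leaf closures), not just ``Molino's structure theorem'' --- your identification of $\mathscr H$ rests on the rigidity statement ``a homogeneous K\"ahler--Einstein manifold with negative Einstein constant is a bounded symmetric domain''. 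That statement is false. Every homogeneous bounded domain carries a complete K\"ahler--Einstein metric of negative Ricci curvature (Cheng--Yau), which by uniqueness is invariant under the full automorphism group and hence homogeneous; since non-symmetric homogeneous bounded domains exist (Piatetski-Shapiro, from complex dimension $4$ on), ``homogeneous $+$ K\"ahler--Einstein $+$ negative'' does not force symmetry. Likewise ``homogeneous K\"ahler with $\mathrm{Ric}<0$'' only yields a homogeneous bounded domain, again not necessarily symmetric. So your scheme does not pin down even the complex structure of $\mathscr H$, and the metric identification you yourself flag as delicate never gets off the ground.

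What is missing is precisely the ingredient your proposal never invokes: the structural Lie algebra $\mathfrak g$ of $\F$ is semi-simple without compact factors (Theorem~\ref{T:Touzet-semisimple}). With that input one can take the transitive group to be semi-simple and apply Borel's theorem on K\"ahlerian coset spaces: $\mathscr H$ fibers holomorphically over a bounded symmetric domain with flag-manifold fiber, and negativity of the Ricci curvature kills the fiber --- this is exactly the shortcut the paper records in the minimal case, and on each irreducible Hermitian symmetric factor the invariant metric is unique up to scale (isotropy acts irreducibly), which settles the metric bookkeeping. In the general case the paper argues differently: $\mathscr H$ is identified with the symmetric space $\sS$ attached to $\mathfrak g$ via the foliated $\rho$-equivariant harmonic map $\Psi$, which is holomorphic and submersive by the Siu--Carlson--Toledo rigidity results (Theorems~\ref{T:harmonic-kahler} and~\ref{T:harmonic-holomorphic}); $\Psi$ descends to $\overline\Psi$ on $\wtX/\wtF$, and its restriction to the factor $\mathscr H$ is a local isometry onto the complete simply connected $\sS$, hence an isometric biholomorphism. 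Either way, item (1) requires the semi-simplicity of $\mathfrak g$ (or the harmonic-map rigidity that encodes it), not a general rigidity of homogeneous K\"ahler--Einstein metrics. A minor further point: in (2) you use that $\mathscr K\to X/\overline\F$ is an orbifold covering (i.e.\ that the deck action on $\mathscr K$ is properly discontinuous) before establishing the discreteness asserted in (4); the quasi-negativity of the Ricci curvature of $\mathscr K$ is better extracted, as you in fact do in (3), from the splitting of the lifted Ricci form.
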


The most interesting situation occurs when the Hermitian symmetric factor, equivalently the bounded symmetric domain $\mathscr{H}$ is not reduced to a point in the decomposition of $\wtX/\wtF$. Let $\Isom(\mathscr{H})$ be the Lie group of isometries of $\mathscr{H}$ with respect to its Bergman metric. The elements of the identity component $G=\Isom^0(\mathscr{H})$, a semi-simple real algebraic group without compact factors,  are holomorphic transformation maps of $\mathscr{H}$ and $\mathscr{H}$ is identified with the coset space $G/K$ where $K$ is a maximal compact subgroup of $G$.  The leaves of $\G_1$ coincide with the fibers of a submersion $f: \wtX\to \mathscr{H}$. Possibly after replacing $X$ by a finite étale cover, $f$ is equivariant with respect to some representation $\rho: \pi_1(X)\to G$ with dense image and which is in addition transversely holomorphic (see \S\ref{SS:extension}, as well as Theorems~\ref{T:harmonic-kahler} and~\ref{T:harmonic-holomorphic}). Actually, as it will be highlighted in the sequel,  this map $f$ is the unique $\rho$-equivariant map with respect to (the lift of) some suitable bundle-like metric on $X$.

These statements shall be seen as foliated analogues of results obtained by Nadel \cite{Nadel} and Frankel \cite{Frankel-annals}. Both authors have studied the geometry of the universal covering $\wtX$ of $X$ a complex projective manifold with ample canonical bundle.\footnote{Let us recall that it is equivalent to saying that $X$ admits a K\"ahler--Einstein metric with negative Ricci curvature.} Nadel proved that $\Aut^0(\wtX)$ is a semi-simple Lie group having no compact factors and he conjectured the existence of a splitting $\wtX\simeq \mathscr{K}\times \mathscr{H}$ with $\Aut^0(\mathscr{K})=\{\mathrm{Id}\}$ and $\mathscr{H}$ being a bounded symmetric domain (equivalently a Hermitian symmetric space of non-compact type).  This was first confirmed in the case of surfaces \cite[Theorem~0.2]{Nadel} and then in full generality \cite[Theorem~0.1]{Frankel-annals}. Theorem~\ref{T:CdeRhamdec} can thus be seen as a foliated version of the above-mentioned splitting where, in our setting, the Hermitian factor encapsulates the ``non-trivial part'' of the dynamic.

\subsection{Examples}\label{SS:examples}

To illustrate the above-mentioned statements, we give examples with increasing complexity.
\begin{enumerate}[label=(\alph*)]
\item The most basic construction consists in considering the foliation defined by a submersion $X\to M$ with connected fibers of a compact manifold onto a compact K\" ahler manifold  $(M,g_M)$ with quasi-negative Ricci curvature. Note that $M$ is of the general type (\emph{i.e.}, the canonical bundle $K_M$ is big) by virtue of Riemenschneider's Theorem \cite{Riemen73}.
\item Another family of examples is given by the so-called suspension process. Let $N$ be a compact manifold, $\widetilde{N}$ its universal cover and let us consider a representation $\rho: \pi_1(N)\to \textrm{Aut}( M)$ of the fundamental group of $N$ into the holomorphic transformation group of $M$ with quasi-negative Ricci curvature as above. Recall that the latter group is finite. Up to averaging $g_M$, one can suppose that $M$ is equipped with a $\textrm{Aut}(M)$-invariant K\"ahler metric. Let $X$ be the quotient manifold $(\tilde{ N}\times M)/\pi_1(N)$ defined by the natural diagonal action of  $\pi_1(N)$. This latter  has a structure of fiber bundle $M\rightarrow X\rightarrow N$ over $N$ and carries a natural foliation $\F_H$ transverse to the fibers, namely the projection of the horizontal foliation on $N\times M$.  This foliation has the sough properties but this construction does not gives rise to an interesting example from the dynamical viewpoint: the leaf space $X/\F_H$ identifies with the K\"ahler orbifold $M/ \textrm{Im}\ \rho$. 
\item Now, suppose given on a compact manifold $N$ a  \textit{minimal  foliation} (\emph{i.e.} with dense leaves) $\F_N$ whose transverse geometry is  locally modeled on a Hermitian symmetric space of non-compact type $\mathscr H= G/K$. The foliation carries a natural transverse invariant metric $\overline{g_N}$ induced by the $G$-invariant K\" ahler--Einstein metric on $\mathscr H$. Let $(M,g_M)$ be a compact K\"ahler manifold with quasi-negative Ricci curvature. The product $X=M\times N$ is equipped with an induced  foliation $\F$ of the same rank which restricts to each vertical fiber $\{m\}\times N\simeq N$ to $\F_N$.  This foliation is transversely K\"ahler with respect to the transverse metric $g_M\oplus \overline{g_N}$ and satisfies moreover hypothesis \ref{A:Riccineg} and \ref{A:homor}. Here, the leaves closure are precisely the vertical fibers. Moreover, if we fix a Hermitian symmetric space of non-compact type $\mathscr H$, it is possible to exhibit a compact foliated manifold $(N,\F_N)$ as above. Indeed, let $\mathscr H={ \mathscr H}_1\times\cdots\times { \mathscr H}_p$ be the decomposition of $\mathscr H$ into irreducible symmetric factors. It is well known since Borel's work \cite{Borelcompact63} (see also  \cite[Section~IX.4.7, Theorem~C]{Margulis91} and \cite[Corollary~18.7.4]{WitteMorris2015}) that for every $i$, there exists a discrete torsion free subgroup of holomorphic isometries $\Gamma_i$ of  
$\mathscr H_i \times \mathscr H_i$  acting cocompactly and diagonally and such that $\Gamma_i$ acts densely on each factor. It is then sufficient to consider the projective manifold $N= ( \mathscr H\times\mathscr H)/\prod_i \Gamma_i$ and to take as 
$\F_N$ the holomorphic foliation which lift to the horizontal (or vertical) one on the universal cover $\mathscr H \times \mathscr H$.  One can also mix with the suspension construction by considering  the foliation $\F$ lying on ${N\times M}$ obtained as the intersection $\F= \F_H\cap {\pi^*} \F_N$. With the notations of Theorem~\ref{T:A},  $\F_H$ and  ${\pi^*} \F_N$ correspond respectively to  $\overline{\F}$ and $\G$.  
\end{enumerate}

Apart this use of irreducible uniform lattices, we are not aware of other examples of somewhat different nature.  In this setting, it is worth mentionning  that, under special circumstances, Zimmer has  shown that the ``holonomy group'' of a minimal  Riemannian foliation on a compact manifold with non-compact semi-simple structural Lie algebra is of ``arithmetic nature'' \cite{Zimmer88}. 
As pointed out in the following subsection, when the ambient manifold is algebraic/K\"ahler, we have at our disposal a powerful machinery that enable to connect the representation $\rho$ (see te paragraph after Theorem~\ref{T:CdeRhamdec}) to arithmetic lattices in semi-simple Lie group.

\subsection{The case where $X$ is K\"ahler}\label{S:kahlercase}
When $X$ is a compact complex manifold and $\F$ is a holomorphic foliation satisfying the conditions \ref{A:Riccineg}(with respect to the natural holomorphic transverse structure) and \ref{A:homor} above.  The second item of Theorem \ref{T:A} implies that $\F_1$ and $\F_2$ are also holomorphic (the tranverse complex structure being the one induced by the complex structure of the ambient manifold $X$). Moreover, if $X$ is K\" ahler, the homological orientability is automatically fullfilled. In this case, the $\rho$-equivariant map $f$ is holomorphic. Examples are provided by foliations defined by a subline bundle $L\subset \Omega_X^p$ such that $c_1(L)$ is semi-positive with the maximal possible rank $=p$ everywhere, see Proposition~\ref{P:HNRclass}. 

Assume now for simplicity that $X$ is projective and that $G=H_\mathbb R$ consists of the real points of a simple algebraic group $H$.
As $f$ has maximal rank, the representation $\rho$ tends to be rigid in as an element of $\mathrm{Hom}(\pi_1 (X), H_{\mathbb C})$. The foliation $\mathcal{G}_1$ is defined as the kernel of the  differential $d^{1,0}f$, which can be regarded as the Higgs field attached to the representation $\pi_1(X)\to G_\C= \mathrm{Aut}^0 ({ \mathfrak g}_\C)$.
The minimality of $\mathcal{G}_1$ prevents from the existence of a  morphism $\varphi:X\to Y$, $\dim_{ \mathbb C }(Y)\leq \mathrm{rank}_{ \mathbb C } {H}$ through which $\rho$ factors. With this at hands, together with similar results valid in the context of Zariski dense representations of the fundamental group $\pi_1(X)$ to $p$-adic simple group, we can infer that the local system defined by $\rho$ is a direct factor of some $\mathbb Z$-variation of Hodge structures. In particular, thanks to the metric properties  of period domains, this enables to show that $X$ cannot support a Zariski dense entire curve, see \S~\ref{SS:hyperbolicity prop}.

\begin{THM}\label{TH:hyperbolicity}
	Let $(X,\F)$ be a foliated compact K\"ahler manifold ($\F$ being holomorphic).  Assume that $\F$ is transversely K\" ahler with quasi-negative Ricci curvature and that $\F\subsetneq\overline{\F}$. Then every entire curve on $X$ is contained in a proper analytic subset of $X$. 
\end{THM}
\subsection{The case where $\F$ has complex codimension one}\label{SS:codim 1}
In this situation Theorems~\ref{T:A} and~\ref{T:CdeRhamdec} read as follows.\footnote{This can be proved with simpler arguments than those used in the rest of this paper.}
\begin{enumerate}
	\item Either the leaves of $\F$ are closed and the leaf space $X/\F$ is a compact Riemann surface hyperbolic in the orbifold sense.
	\item Either $\F$ is minimal and transversely hyperbolic: the Hermitian symmetric space $\mathscr{H}$  involved in the statement of Theorem~\ref{T:CdeRhamdec} is the upper half-plane $\Hj^2$ ($\mathscr{K}$ is reduced to a point).
\end{enumerate}
Maybe the simplest instance of such minimal foliation is provided by transversely hyperbolic holomorphic foliations on canonically polarized projective surfaces $S$ which appear in Brunella's classification \cite{Brunella97}. In  \textit{loc.cit.} Brunella raised the following question: is  $S$ necessarily a quotient $\Hj^2/\Gamma$ of a bidisk by an irreducible cocompact lattice? Up to our knowledge, this is still an open problem.\footnote{Even if it is known that the monodromy representation of this transverse hyperbolic structure takes values in an arithmetic group (see Theorem~\ref{TH:codim1}).}
\medskip

It's important to note here that the latter dichotomy holds in a more general framework. Namely, $\F$ is allowed to be singular in \cite{Touzetconormal} (see also Theorem \ref{TH:codim1}) and its conormal bundle is merely assumed to be pseudoeffective.

\subsection{The case where $\F$ has real dimension one}\label{SS:dimR=1}

Let $X$ a $n$-dimensional compact manifold equipped with a 1-dimensional Riemannian foliation $\F$ transversely modelled on $(M, \Isom(M))$ where $M$ is a simply connected complete Riemannian manifold together with its group $\Isom(M)$ of isometries. Let $\rho:\pi_1(X)\to \Isom(M)$ the holonomy representation and denote by $H_0$ the identity component of the closure of the image of $\rho$. It is easy to see that the following assertions are  equivalent:
\begin{itemize}
	\item Every leaf of $\F$ is closed.
	\item The group $H_0$ is trivial.
\end{itemize}
According to a result of Thurston when $M={\mathbb H}^{n-1}$ and  generalized afterwards by Carrière \cite[Appendix~A, Theorem~1.1]{Molino-livre}, the group $H_0$ is \textit{abelian}. This implies that, if $\F$ satisfies properties~\ref{A:Riccineg} and~\ref{A:homor}, it cannot fit into this category unless $H_0$ is trivial. Indeed, suppose by contradiction that $\F$ possesses a non compact leaf $\mathcal L$. Then, according to Theorems~\ref{T:A}, \ref{L:BSpaceleaves}, and~\ref{T:CdeRhamdec}, the topological closure $\bar{\mathcal L}$ is a submanifold of $M$ and the restriction ${\F}_{\bar{\mathcal L}}$ is transversely modelled on a symmetric space $\mathscr{H}$. Moreover, the group $H_0$ attached in this case  to the representation $\pi_1(\bar{\mathcal L})\to \Isom(\mathscr{H})$ is nothing but the identity component of the target, a semi-simple Lie group.

Important examples of 1-dimensional transversely K\"ahler foliations are provided by the orbits of the so-called Reeb vector field on Sasakian manifold. These foliations are known to satisfy property~\ref{A:homor} (this is a general feature for foliations defined by a killing vector field) so that the leaves are automatically closed whenever they satisfy \ref{A:Riccineg}. On the opposite side, Epstein has given in \cite{Epstein82} examples of $1$-dimensional transversely hyperbolic foliation on closed 3-dimensional manifolds $M$ such that the leaf closure is 2-dimensional torus and the holonomy representation $\pi_1(M)\to \Isom(\mathbb H^2)$ takes values in the affine subgroup. In fact, these foliations are not transversely homologically orientable and their dynamical and arithmetic behavior present some strong similarities with that of holomorphic foliations described in item \eqref{R:homorientnecessary} of  the following subsection.

\subsection{Remarks and counterexamples}\label{SS:counterexamples}
We give now some basic examples of transversely K\"ahler foliations which exhibit different behaviors when the assumptions \ref{A:Riccineg} or \ref{A:homor} are dropped.
\begin{enumerate} 
	\item Let $\F$ be a Riemannian foliation on a compact manifold~$X$. In general the leaves of the lifted foliation on the universal cover $\wtX$ are not necessarily closed.
	\item The conclusions of Theorem \ref{T:A} do not necessarily hold if we only require the Ricci form to be only semi-negative. Actually, it may happen that the topological closure of the leaves of a linear foliation on a complex torus are real hypersurfaces. In this setting, the natural transverse metric $\overline g$ is flat and the Ricci form $\gamma$ vanishes identically. 
	\item For general transversely K\" ahler holomorphic foliations on compact K\" ahler manifolds, the dimension of the topological closure of the leaves is likely to vary. A simple instance of this phenomenon is the Riccati foliation constructed on a ruled surface $S$ over a curve $C$ of genus $g\geq 1$ by the datum of a dense representation
\[\pi_1(C)\To { \mathbb S }^ 1\subset \textrm{Aut}(\mathbb P^1) .\]
In this situation, there exists exactly two closed leaves and the closure of the other leaves are Levi-Flat hypersurfaces.The natural transverse metric is induced by the Fubiny-study metric on $\mathbb P^1$ and thus coincides with its Ricci form.  We don't know if this equidimensionality defect can occur when the Ricci form is semi-negative (but not quasi-negative).
\item\label{R:homorientnecessary}
We cannot drop the homological orientability assumption, even if $X$ is complex and $\F$ is holomorphic.  

In order to justify this assertion, let us consider the  examples of non-{K}ähler compact complex manifolds  associated to number fields as constructed in \cite{OT} generalizing some examples of Inoue surfaces \cite{Ino}. We retain the presentation given in \cite{OT} and we refer to \textit{loc.cit.} for details. This example has been previously considered in \cite[\S5.5]{lobiancoetal2022} in relation with the study of the transverse action of the automorphisms group of a foliation (see the next subsection).
 
Let $K$ be a number field, let $\sigma_1,\dots,\sigma_s$ be its real embeddings and $\sigma_{s+1},\dots,\sigma_{s+2t}$ its complex embeddings ($\sigma_{s+t+i}=\overline{\sigma_{s+i}}$).  Let us assume that $s,t>0$. Let $\Hj$ be the Poincar\'e upper half-plane. Let $a \in \mathcal{O}_K$ acting on $\Hj^s\times \C^t$ as a translation by the vector $(\sigma_1(a),\dots,\sigma_{s+t}(a))$. Let $u \in \mathcal{O}_K^{*,+}$ be a totally positive unit (\emph{i.e.} $\sigma_i(u)>0$ for all real places). Then $u$ acts on $\Hj^s\times \C^t$ by $u\cdot(z_1,\dots,z_{s+t})=(\sigma_1(u)z_1,\dots, \sigma_{s+t}(u)z_{s+t}).$ Moreover, as $t>0$, the set $\{ (\sigma_1(a),\dots,\sigma_{s}(a))\mid\ a\in \mathcal{O}_K \}$ is dense in ${ \mathbb R}^s$. For any subgroup $U$ of totally positive units, the semi-direct product $U \ltimes \mathcal{O}_K$ acts freely on $\Hj^s\times \C^t$. This subgroup $U$ is called admissible if the quotient space $X(K,U)$ is a compact complex manifold. In particular, admissible groups must have rank $s$. We can always find such admissible subgroups. 

The admissible group $U$ being given, the corresponding compact complex manifold support a transversely Hermitian symmetric foliation of complex codimension $s$  transversely locally modeled on $\Hj^s$ and which lifts to the vertical foliation on the universal cover $\Hj^s\times \C^t$. The  leaves closure are thus codimension $s$ real submanifolds of $X$, namely  $(2t+s)$-dimensional real tori fibering over the real $s$-dimensional torus ${ \mathbb T}^ s$. In any cases, these submanifolds fail to be holomorphic.

Here, the representation $\rho_\F:\pi_1 (X(K,U))\to { \PSL(2,\R)}^s$  associated to the transverse hyperbolic structure takes values in the product of affine subgroups ${ \Aff(2,\R)}^ s$ and its linear part $\rho_\F^1:\pi_1 (X(K,U))\to ({\R}_{>0}^s,\times)$ has non-trivial image.

The product of Poincar\'e metrics (defining the transverse hyperbolic metric) on  ${ \Hj}^ s$ is given (up to a multiplicative factor) as $\omega= dd^c \left(\sum_{i=1}^s\log (\Im (z_i))\right)$ and thus descends to $X$ as an exact two form, namely the differential of the $\pi_1(X)$-invariant one form $d^c\left(\sum_{i=1}^s \log (\Im (z_i))\right)$. Hence, this foliation is not homologically orientable (albeit satisfying \ref{A:Riccineg}).

\end{enumerate}

\subsection{Transverse action of the group of automorphisms}
Consider now a foliated compact complex manifold $(X,\F)$ and
let $\Aut(X,\F)$ be the group of biholomorphisms  of $X$ preserving the foliation $\F$.  This group contains the normal subgroup $\Autfix(X,\F)$ of biholomorphisms $f$ preserving the foliation leafwise. That is, $f\in \Autfix(X,\F)$ iff for every $x\in X$, ${ \mathcal L}_x= { \mathcal L}_{f(x)}$ where ${ \mathcal L}_x$ denotes the leaf through $x$.
\begin{dfn}
	We will say that the transverse action of $\Aut(X,\F)$ is finite whenever  the quotient $\Aut(X,\F)/\Autfix(X,\F)$ is finite.
\end{dfn}
In other words, the action is transersely finite if the set theoretic action $\Aut(X,\F)\times X/\F\to X/\F$ on the leaf space $X/\F$ is \textit{finite} (\emph{i.e.} has finite image in the symmetric group of $X/\F$).

Assume now that $\F$ is transversely K\" ahler and satisfies the assumption~\ref{A:Riccineg}. In the classical (unfoliated) situation, the quasi-negativity of the Ricci curvature implies that $X$ is of general type~\cite{Riemen73}. As a byproduct, the group $\Aut(X)$ of biholomorphisms of $X$ is nessessarily finite. In our foliated setting, it is therefore natural to inquire whether the transverse action is finite. Actually, we cannot expect it to hold in full generality. This is illustrated by the example given in \S\ref{SS:counterexamples}-(\ref{R:homorientnecessary}) above. Indeed, retaining the same notations, we know from Dirichlet's units theorem that $\mathcal{O}_K^*$ is a group of rank $s+t-1$. Elements of $\mathcal{O}_K^{*,+}/U$ induce automorphisms of $X(K,U)$. Therefore as soon as $t>1$, we obtain automorphisms with infinite transverse order. 

We now turn our attention to the particular case where $X$ is itself K\" ahler.  As we can see, the situation is much better.
\begin{THM}\label{T:transversefinite}
	Let $(X,\F)$ be a foliated compact K\"ahler manifold ($\F$ being holomorphic).  Assume that $\F$ is transversely K\" ahler with quasi-negative Ricci curvature.  Then the transverse action of $\Aut(X,\F)$ is finite.
\end{THM}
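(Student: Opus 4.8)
The plan is to exploit the de Rham type decomposition of Theorem~\ref{T:CdeRhamdec} after lifting to the universal cover, together with the rigidity of Hermitian symmetric spaces and the general-type phenomenon on the $\mathscr{K}$ factor. First I would observe that since $X$ is K\"ahler and $\F$ holomorphic, by \S\ref{S:kahlercase} the homological orientability assumption \ref{A:homor} is automatic, so Theorems~\ref{T:A} and \ref{T:CdeRhamdec} apply. The foliation $\overline\F$ (whose leaves are the closures of leaves of $\F$) is then a holomorphic foliation on $X$ with closed leaves and compact K\"ahler orbifold quotient $Y := X/\overline\F$ of quasi-negative Ricci curvature, hence of general type. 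Any $f \in \Aut(X,\F)$ permutes the leaves of $\F$ and hence preserves $\overline\F$ (being a homeomorphism, it sends leaf closures to leaf closures), thus descends to a biholomorphism $\bar f$ of the orbifold $Y$. By Riemenschneider's theorem $Y$ is of general type, so $\Aut(Y)$ is finite; therefore a finite-index subgroup $\Aut_0(X,\F) \subset \Aut(X,\F)$ acts trivially on $Y$, i.e.\ preserves every leaf of $\overline\F$ individually.

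It then suffices to show that $\Aut_0(X,\F)$ acts with finite transverse order, and here the point is that each $f\in\Aut_0(X,\F)$ restricts to a transverse automorphism \emph{along the fibers of $\overline\F$}, which after passing to the universal cover is an automorphism of the leaf space $\wtX/\wtF \cong \mathscr{K}\times\mathscr{H}$ compatible with the de Rham splitting. More precisely, lift $f$ to $\tilde f$ on $\wtX$; it normalizes $\pi_1(X)$ and induces a biholomorphic isometry $F$ of $\mathscr{K}\times\mathscr{H}$ preserving the product structure (by uniqueness of the de Rham decomposition, the factor $\mathscr{K}$ having no Euclidean de Rham factor and being distinguished from $\mathscr{H}$ by, e.g., the ample canonical bundle versus the symmetric structure). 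So $F = (F_1,F_2)$ with $F_1 \in \Isom(\mathscr{K})$ and $F_2 \in \Isom(\mathscr{H})$. The claim that $f$ has finite transverse order amounts to showing that, modulo $\Autfix(X,\F)$, only finitely many such $F$ occur.

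The main obstacle — and the place where K\"ahlerness of $X$ is genuinely used — is to control the symmetric factor $\mathscr{H}$: the group $\Isom^0(\mathscr{H})$ is a semisimple Lie group with no compact factors and is far from finite, so one must show that the transverse automorphisms induced by $\Aut_0(X,\F)$ land in a bounded (equivalently finite, by discreteness) part of it. The strategy is to use the minimality statement \ref{I:denserepresentation} of Theorem~\ref{T:CdeRhamdec}: the image of $\pi_1(X)$ is dense in $\Isom(\mathscr{H})$ and acts as a \emph{cocompact lattice} on the $\mathscr{K}$ side. An element $f\in\Aut_0(X,\F)$ normalizes $\pi_1(X)$ inside $\Isom(\mathscr{K}\times\mathscr{H})$; its $\mathscr{K}$-component $F_1$ therefore normalizes the cocompact lattice $\Gamma_1 = \mathrm{pr}_1(\pi_1 X)$ in $\Isom(\mathscr{K})$. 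Since $\mathscr{K}$ has quasi-negative Ricci curvature (so $\Gamma_1\backslash\mathscr{K}$ is a compact orbifold of general type), the normalizer of $\Gamma_1$ in $\Isom(\mathscr{K})$ is itself discrete (indeed $\Isom^0(\mathscr{K})=\{\mathrm{Id}\}$ by Nadel/Frankel-type vanishing, or directly because $\mathrm{Aut}$ of a general-type compact orbifold is finite), hence $F_1$ ranges over a group commensurable to $\Gamma_1$, in particular a discrete group. Now use that $f$ preserves \emph{each leaf of $\overline\F$}: this forces $F$ to commute with the closure of $\mathrm{pr}_2(\pi_1 X)$, which is all of $\Isom^0(\mathscr{H})$ by density — so $F_2$ lies in the centralizer of $\Isom^0(\mathscr{H})$, i.e.\ $F_2$ is (up to the finite center) trivial. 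Combined with the discreteness of the possible $F_1$ and the fact that $F_1$ acts trivially on $Y$ (hence is constrained to a finite set, $\Gamma_1$ acting freely-ish and $F_1$ descending trivially to $\Gamma_1\backslash\mathscr{K}$ which is a finite cover of $Y$), one concludes that only finitely many pairs $(F_1,F_2)$ arise modulo $\Autfix(X,\F)$. Assembling these pieces gives $\#\big(\Aut(X,\F)/\Autfix(X,\F)\big) < \infty$.
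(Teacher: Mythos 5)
Your opening step (every $f\in\Aut(X,\F)$ preserves $\overline\F$ and descends to the general-type orbifold $X/\overline\F$, so a finite-index subgroup acts trivially there) agrees with the paper. The decisive gap comes next: you claim that because $f$ preserves each leaf of $\overline\F$, the induced isometry $F=(F_1,F_2)$ of $\mathscr{K}\times\mathscr{H}$ must \emph{commute} with the closure of $\rho_{\mathscr H}(\pi_1(X))$, hence $F_2$ centralizes $\Isom^0(\mathscr H)$ and is trivial. This is a non sequitur. Preserving each leaf of $\overline\F$ only constrains the $\mathscr K$-component (it says $F_1(k)\in\rho_{\mathscr K}(\pi_1(X))\cdot k$ for every $k$) and says nothing about $F_2$; moreover a lift $\tilde f$ merely \emph{normalizes} the deck group, it does not centralize it, so all one gets for free is that $F_2$ normalizes the closure of $\rho_{\mathscr H}(\pi_1(X))$, which is vacuous. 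The conclusion $F_2=\mathrm{Id}$ is in fact false in general: in the example of \S\ref{SS:example}, with $N=(\mathscr H\times\mathscr H)/\Gamma$ for an irreducible cocompact lattice, automorphisms coming from the normalizer of $\Gamma$ induce nontrivial $F_2$. Controlling the $\mathscr H$-component is exactly where the paper must use K\"ahlerness in a serious way: the Shafarevich maps of Zuo and Campana--Claudon--Eyssidieux applied to $\rho_{\mathscr H}$, together with the fact that a compact fiber admits no nonconstant holomorphic map to the bounded domain $\mathscr H$, show that a finite-index subgroup of $\Aut(X,\F)$ fixes each leaf of $\G$; then the Baire-category Lemma~\ref{L:orbitinclusion} places $F_2$ in the countable dense group $\rho_{\mathscr H}(\pi_1(X))$ --- not at the identity.

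Even granting triviality of the action on both $X/\overline\F$ and $X/\G$, your final assembly is too quick: as the paper emphasizes, the intersection of a leaf of $\overline\F$ with a leaf of $\G$ may consist of infinitely many leaves of $\F$, so these two facts do not yet bound $\Aut(X,\F)/\Autfix(X,\F)$. The paper's essential last step restricts to a leaf closure $\mathcal L_q$ with trivial holonomy, where the restriction of $\F$ is defined by a $\rho^q_{\mathscr H}$-equivariant fibration onto $\mathscr H$ with $\rho^q_{\mathscr H}(\pi_1(\mathcal L_q))$ dense, and reruns the Shafarevich-map argument for this restricted representation, combined again with Lemma~\ref{L:orbitinclusion}. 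Neither this step nor any correct mechanism for controlling $F_2$ appears in your proposal, so the finiteness of the transverse action does not follow from your argument as written.
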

\begin{remark}
We are not aware of any example satisfying \ref{A:Riccineg} and \ref{A:homor} with $X$ complex non-K\"ahler such that the transverse action of $\Aut(X,\F)$ is infinite.
\end{remark}

\section{Outline of the proof}
Let us now describe the strategy that we employ to establish the main Theorems~\ref{T:A} and~\ref{T:CdeRhamdec}. As already mentioned in the introduction, the techniques are widely inspired by previous results of Nadel \cite{Nadel} and Frankel \cite{Frankel-annals} about the structure of the universal cover of a canonically polarized manifold. In particular, we make use of ``foliated twisted harmonic maps'', available in our context. 
 
From a previous result by Touzet \cite{Touzet-toulouse} (\emph{cf.} Theorem~\ref{T:Touzet-semisimple}), we know that the commuting sheaf $\mathcal C$ of the foliation (in Molino's theory terminology, see \S\ref{SS:commutingsheaf}) is semi-simple without compact factors. Roughly speaking, this commuting sheaf is a locally constant sheaf of Lie algebra (with typical fiber denoted by $\mathfrak g$) of basic Killing vector fields which encodes the dynamic of the foliation and which  somehow represent the infinitesimal part of the holonomy pseudo-group (see \S  \ref{SS:asstrfrbundle}, this is a common feature of Riemannian foliations). In our setting, we can derive from semi-simplicity of $\mathfrak g$ the existence of a representation (the \textit{monodromy representation of $\F$} attached to $\mathcal{C}$):
\[\rho:\pi_1(X)\To \mathrm{Aut}^0(\mathfrak{g})\]
with dense image (up to replacing $X$ with a finite \'etale cover). We can also derive from the semi-simplicity of $\mathfrak{g}$ that the leaves of $\wtF$ (the lifted foliation $\wtF$ on the universal cover $\tilde X$) are closed. The latter being a complete Riemannian foliation, this implies that its space of  leaves $\wtX/\wtF$ is a complete K\"ahler orbifold as described in Theorem~\ref{L:BSpaceleaves}.  The constant sheaf $\widetilde{\mathcal{C}}$ defined on $\wtX$ as the lift of $\mathcal C$ can be then identified to a Lie subalgebra of the Lie algebra of Killing fields on $\wtX/\wtF$.
  
On the other hand, it is well known that $\Aut^0(\mathfrak{g})$ is identified, via the adjoint action, to the unique center-free Lie (and algebraic) group $G$ having $\mathfrak{g}$ as Lie algebra.
  
Let $K$ be maximal compact subgroup of $G$. According to a theorem of Corlette \cite{Corlette-jdg}, there exists a unique $\rho$-equivariant harmonic map $f:\wtX\to G/K$ (depending of a course on a given Riemannian structure $g$ on $X$). It is then natural to investigate the existence of such $f$ by requiring $f$ to be constant on the leaves  of $\wtF$. Such a property will be called $\F$-\textit{invariance}. It turns out that this can be realized provided the metric $g$ is suitably chosen. Namely, $g$ is bundle-like and $\F$ is \textit{taut}: the leaves of $\F$ are minimal submanifolds with respect to $g$. Actually, the assumption ``homologically orientable'' is equivalent to the existence of this kind of metric as stated in Theorem \ref{TH:Masatautcrit}.  Basically, one way to prove the existence of an $\F$-invariant harmonic map is first to construct a smooth  $\rho$-equivariant and $\F$-invariant map $f_0$ and to deform it to a harmonic one via the usual evolution equation. By the result of Section~\ref{S:evolution harmonic map} (which we hope has an interest in its own right), the solutions $f_t$ will remain $\F$-invariant and so will be the sough harmonic map which is obtained by taking the limit when $t\to\infty$. Actually, we do not proceed exactly in this way because it seems rather delicate to construct directly $f_0$ from the original manifold $X$. To circumvent this problem, we work on the transverse orthonormal frame bundle $X^\sharp$ equipped with its natural structure of $\Orth(n)$-principal bundle over $X$.\footnote{There is a reduction of the structural group to $\mathrm{U}(n)$ when $\F$ is transversely Kähler.} According to Molino's theory, the foliation $\F$ lifts to $X^\sharp$ as a transversely parallelizable foliation $\F^\sharp$ of the same rank and the commuting sheaf of $\F^\sharp$ descends on $X$ as the commuting sheaf $\mathcal C$. In particular, the monodromy representation of $\F^\sharp$ is exactly given by $\rho$. Now, we can exploit the structure of transversely parallelizable foliations to construct on the universal covering $\widetilde{X^\sharp}$ (see~\cite[p.~162]{Molino-livre})  a $\rho$-equivariant map $F_0$ with values in $G/K$. We can then deform $F_0$  following the evolution equation 
\[\frac{\partial F_t}{\partial t}=-d_\nabla^* d(F_t)\]
preserving the $\rho$-equivariance of the map $F_t$.

Using that the representation $\rho$ has dense image and is in particular reductive, we get, taking the limit, a $\rho$-equivariant and $\widetilde{\F^\sharp}$-invariant harmonic map $F_\infty$ which is invariant by the isometric action of the structural group $\mathrm{U}(n)$ of $\widetilde{X^\sharp}$ (as a consequence of the uniqueness of the harmonic map in Corlette's Theorem). This actually implies that $F_\infty$ descends on $\wtX$ as a $\wtF$-invariant harmonic map $f_\infty$ (with respect to the original bundle-like metric $\widetilde{g}$).
  
Alternatively, this enables us to consider $f_\infty$ as a $\rho$-equivariant harmonic map
\[f_\infty:\wtX/\wtF\To G/K\]
which can be easily seen to be a surjective submersion with connected fibers. In addition we can prove the following points.
 \begin{enumerate}
 	\item The Lie algebra $\widetilde {\mathcal{C}}$ projects via $f_\infty$ to $\mathfrak{g}=\Lieisom(G/K)$ (the Lie algebra of the isometry group $\Isom(G/K)$ of the symmetric space $G/K$). 
 	\item For every fiber $F$ of $f_\infty$, for every $x\in F$, there does not exist any $V\in\widetilde{\mathcal{C}}$ such that $V(x)\not=0$ and $V(x)\in T_x F/ {\wtF}_x$.\label{MA}
 	\item The map $f_\infty$ is $H$-equivariant with respect to a subgroup  of the isometry group ${\Isom}(\wtX/\wtF)$ which integrates $\widetilde{\mathcal{C}}$.
 \end{enumerate}

If we combine this with the rigidity properties of harmonic maps as proved by Carlson--Toledo and Jost--Yau, we obtain that $\mathscr{H}:= G/K$ is a Hermitian symmetric space and that $f_\infty$ is indeed holomorphic (up to switching the complex structure of $\mathscr{H}$ to its conjugate). The proof is widely inspired from Frankel's article \cite{Frankel-annals}  and is somehow simpler, at least as far the $H$-equivariance property is concerned. On the other hand,  the item~\eqref{MA} is a byproduct of the existence of solutions of a foliated Monge--Amp\`ere equation in the spirit of what is done in \cite{Elkacimi90} and which requires a fairly more technical analysis.

Again relying on Frankel's argumentation \cite{Frankel-acta,Frankel-annals}, we can deduce the existence of a holomorphic splitting 
$$T( \wtX/\wtF)=\mathcal{G}_1\oplus \mathcal{G}_2$$
where $\G_1$ is the foliation induced by the orbits of $\tilde{\mathcal C}$ and $\G_2$ is the fibration defined by $f_\infty$. Keeping in mind the dynamical meaning of $\mathcal C$, we obtain Theorems~\ref{T:A} and~\ref{T:CdeRhamdec}.

Theorem \ref{T:transversefinite} turns to be a consequence of the latter statements together with results concerning linear representations of K\"ahler groups that we will recall subsequently (see Section~\ref{S:transversefinite}).

\subsection*{Organization of the article}

Let us describe briefly the content of this article. Section~\ref{S:recoll} gathers standard results on Riemannian foliations and the basic objects attached to them. Their structure (Molino's theory) is then presented in Section~\ref{S:structure riemannian foliation}. The (foliated) harmonic flow is studied in Section~\ref{S:evolution harmonic map} and more precisely how the tangential energy behaves with respect to the heat operator. This fundamental estimate is used to produce harmonic maps that are constant along the leaves of the foliation in Section~\ref{S:equivariant-foliated-harmoni-maps}.  The transverse Kähler case is studied in the remaining sections. In Section~\ref{S:DRdecomposition}, we show the existence of a supplementary foliation and prove Theorems~\ref{T:A}, \ref{L:BSpaceleaves} and~\ref{T:CdeRhamdec}. The proof of Theorem~\ref{T:transversefinite} is given in Section~\ref{S:transversefinite} and Theorem~\ref{TH:hyperbolicity} is given in Section~\ref{S:questions} together with additional questions and remarks. Finally Appendix~\ref{S:proofofmaintechnlemma} is devoted to the proof of a doubly foliated version of Yau's theorem

\section{Riemannian foliations: a recollection}\label{S:recoll}

Let $(X,\F)$ be a foliated manifold. One will denote repectively by $m$ and $n$ the \textit{rank} and the \textit{codimension} of $\F$.\footnote{For the time being, we deal with real foliations. However, from Section~\ref{S:DRdecomposition} for the sake of notational simplicity, we will also denote by $n$  the complex codimension of transversely K\"ahler foliations (hence of real codimension $2n$). Actually this is the notation used in the introductory part.} We follow notation from \cite[Chapter~2]{Molino-livre}: $\Chi(X)$ (resp. $\Chi(\F)$) stands for the Lie algebra of vector fields on $X$ (resp. tangent to $\F$). Let us denote by $L(X,\F)\subset \Chi(\F) $ the Lie algebra of \emph{foliated} vector fields:
$$L(X,\F)= \left\{v\in\Chi (X) \mid [\Chi (\F),v]\subset \Chi (\F)\right\}.$$
Remark that $L(X,\F)$ is a module over the ring $\Omega^0 (X/\F)$ of basic functions (\emph{i.e.} functions constant on the leaves). Let us finally consider $\ell(X,\F)$ the Lie algebra of \textit{basic} vector fields defined by the quotient
$$\ell(X,\F):=L(X,\F)/\Chi (\F).$$
It is also a module over $\Omega^0 (X/\F)$.
\begin{dfn}[\emph{cf.} \protect{\cite[Chapter 1, \S1.1 and~1.4]{Molino-livre}}]\label{D:simple}
\leavevmode
\begin{enumerate}
	\item An open subset $U\subset X$ is said to be \emph{ distinguished} (with respect to the foliation $\F$) if there exists a diffeomorphism $\phi=(x_1,\ldots,x_m,y_1,\ldots,y_n):U\to \Omega$ onto a domain $\Omega$ of $\R^{m+n}=\R^{m}\times \R^n$ such that the restriction of $\F$ to $U$ is given by $\{dy_1=\cdots=dy_n=0\}$. 	
	\item The foliation $\F$ is said to be \emph{simple} if its leaves can be defined as the fibers of some surjective submersion $f:X\to Y$. In particular the space of leaves $X/\F$ (with the quotient topology) is homomeorphic to $Y$ and thus inherits a natural structure of manifold such that the projection map $X\to X/\F$ is submersive.
\end{enumerate}
\end{dfn}
Of course, we can cover $X$ by open subsets $U$ which are both distinguished and simple (with respect to the projection $(x,y)\mapsto y$) for the restricted foliation $\restr{ \F}{U}$. In the sequel, when we will consider \textit{local} foliated/basic objects or \textit{local} space of leaves, it should be understood that this means ``in restriction to these peculiar neighborhoods''. The pair $(U,\phi)$ as above will be refered as a \textit{foliated chart}.

\subsection{Basic tensor fields/basic cohomology}
Let $N\F=TX/\F$ denote the \textit{normal} bundle to $\F$. 
The respective duals $\F^ *$, $N^*\F$ are called the \textit{ cotangent} and \textit{conormal} bundle of $\F$. The (local) sections of the latter are precisely (local) forms of degree one whose restriction to the leaves vanish identically. More generally, a transverse $(p,q)$ tensor field is a section of $\Gamma_\F^{p,q}:=N\F^{\otimes p}\otimes N^*\F^{ \otimes q}$.

We can alternatively define a basic vector field as a section of $N\F$ which is flat with respect to the partial \textit{Bott connection} 
$$\nabla^\F: N\F\to N\F\otimes \F^ *$$
defined for every local sections $X$ and $Y$ (respectively sections of $\F$ and $N\F$) by $ \nabla_X^\F Y= [X,Y]$ (well defined as a section of $N\F$ by integrability).

We can more generally define a \textit{basic} $(p,q)$ transverse tensor field as a section $s$ of $\Gamma_\F^{p,q}$ such that $\nabla^\F s=0$ (here, we denote by the same symbol the extension of the Bott connection to $\Gamma_\F^{p,q})$. In local foliated transverse coordinates $x=( x_1,\ldots,x_n)$, this amounts to saying that $s$ can be written as a sum of simple tensors of the form $$
f(x)\frac{\partial}{\partial x_{i_1}}\otimes\cdots\otimes \frac{\partial}{\partial x_{i_p}}\otimes {d x_{j_1}}\otimes\cdots\otimes {d x_{j_q}}.$$

\begin{dfn}[Riemannian foliation]\label{def:riemannian foliation}
A foliation $\F$ is said to be (transversely) \emph{Riemannian} if $M$ can be quipped with a basic transverse metric $\overline{g}$, that is a basic section $\overline{g}$ of the symmetric power $\mathrm{Sym}^2 N^*\F\subset N^*\F^{ \otimes 2} $ which is positive definite on $N\F$.
\end{dfn}


\medskip
We can easily check that a $q$-form $\theta\in \Gamma(\bigwedge^q T^*X)$ is basic (then in particular is a section of $N^*\F^{ \otimes q}$) if and only if we have $i_v \theta=i_v(d\theta)=0$ for every vector field $v$ tangent to $\F$. The algebra $\Omega^\bullet (X/\F)$ of basic differential forms is then a subcomplex of $\Omega(X)$ whose cohomology $H^*(X/\F)$ is the so-called \textit{basic (de Rham) cohomology}. 

In the case where $\F$ is (a codimension $n$) \emph{Riemannian} foliation on a \emph{compact} manifold $X$, the basic cohomology turns out to be finite dimensional, according to \cite{Kacimihector86}. Moreover, the same authors prove in the same setting the following result.
\begin{thm}[\emph{cf.} \protect{\cite[Th\'eor\`eme de dualit\'e 4.10]{Kacimihector86}}] \label{th:poincareduality}
	For a Riemannian foliation $\F$ on a compact manifold $X$ we have the following alternative:
	\begin{enumerate}
		\item either $H^n(X/\F)=0,$
		\item or $H^n(X/\F)\simeq \R$, in which case $\F$ is transversely orientable,   $H^n(X/\F)$ is generated by the class of the basic volume form and $H^*(X/\F)$ satisfies the Poincar\'e--Hodge duality.
	\end{enumerate}
\end{thm}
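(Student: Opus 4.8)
\textbf{Proof proposal for Theorem~\ref{th:poincareduality}.}

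The plan is to derive the alternative from Poincar\'e--Hodge theory on the space of basic forms, following the approach of El Kacimi--Hector. First I would fix an auxiliary bundle-like metric $g$ on $X$ adapted to $\F$, with associated transverse metric $\overline g$ on $N\F$; this exists on any Riemannian foliation. Let $\kappa$ denote the mean curvature one-form of the leaves. The key technical input is that the basic cohomology $H^*(X/\F)$ is finite-dimensional (already cited from \cite{Kacimihector86}) and, more importantly, that it can be computed by a Hodge-theoretic Laplacian. The subtlety is that the naive basic codifferential $\delta_B$ need not map basic forms to basic forms unless one twists by the mean curvature: one must work with $\delta_\kappa := \delta_B - i_{\kappa^\sharp}$ (or equivalently decompose $\kappa = \kappa_b + \kappa_{\mathrm{orth}}$ into its basic and non-basic parts and use $\kappa_b$). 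With $\Delta_B := d\,\delta_\kappa + \delta_\kappa\, d$ acting on $\Omega^\bullet(X/\F)$, a Hodge decomposition $\Omega^k(X/\F) = \mathcal H^k_B \oplus \operatorname{im} d \oplus \operatorname{im}\delta_\kappa$ holds with $\mathcal H^k_B = \ker \Delta_B \cap \Omega^k(X/\F)$ finite-dimensional and isomorphic to $H^k(X/\F)$.

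Next I would set up the basic Poincar\'e pairing. Assuming $\F$ is transversely orientable (the non-orientable case forces $H^n(X/\F)=0$, handled separately at the end via a double cover argument), there is a transverse Hodge star $\bar\ast\colon \Omega^k(X/\F)\to\Omega^{n-k}(X/\F)$ built from $\overline g$ and the transverse orientation. One then shows that $\bar\ast$ commutes (up to sign) with $\Delta_B$ — this is where the twist by $\kappa_b$ is exactly what is needed, since $\delta_\kappa = \pm\,\bar\ast\, d\, \bar\ast$ on basic forms — so $\bar\ast$ induces an isomorphism $\mathcal H^k_B \cong \mathcal H^{n-k}_B$, i.e. $H^k(X/\F)\cong H^{n-k}(X/\F)$. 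In particular $H^n(X/\F)\cong H^0(X/\F)\cong\R$ whenever it is nonzero, since connectedness of $X$ (which I will assume, passing to a component otherwise) gives $H^0(X/\F)=\R$. The pairing $H^k(X/\F)\times H^{n-k}(X/\F)\to H^n(X/\F)\cong\R$, $([\alpha],[\beta])\mapsto \int_X \alpha\wedge\beta\wedge\chi_\F$ for a suitable characteristic form $\chi_\F$ of the leaves, is then nondegenerate, which is the asserted Poincar\'e--Hodge duality. The transverse volume form $\omega^n$ (notation from the introduction; here $\bar\ast 1$) represents a nonzero class because $\int_X \omega^n\wedge\chi_\F > 0$, so it generates $H^n(X/\F)$.

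For the dichotomy itself: if $\F$ is not transversely orientable, pass to the transverse orientation double cover $\hat X\to X$ with its lifted foliation $\hat\F$; the deck involution $\tau$ acts on $H^n(\hat X/\hat\F)\cong\R$ by $-1$ (it reverses the transverse orientation), so the invariant part $H^n(X/\F) = H^n(\hat X/\hat\F)^\tau$ vanishes, giving case~(1). If $\F$ is transversely orientable, either the top basic class of $\omega^n$ vanishes — in which case, by the duality isomorphism $H^n\cong H^0=\R$ one sees this forces $H^n(X/\F)=0$, again case~(1) (this happens precisely in the non-taut situation) — or $[\omega^n]\neq 0$ and we are in case~(2) with all the stated structure. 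I expect the main obstacle to be the careful verification that the mean-curvature-twisted codifferential $\delta_\kappa$ preserves basic forms and that the corresponding Laplacian $\Delta_B$ is genuinely self-adjoint with finite-dimensional kernel computing $H^*(X/\F)$; this is the analytic heart of \cite{Kacimihector86} and requires controlling the non-basic component $\kappa_{\mathrm{orth}}$ of the mean curvature, together with a Rellich-type compactness argument for the basic Sobolev spaces. Once that machinery is in place, the commutation of $\bar\ast$ with $\Delta_B$ and the resulting duality are essentially formal.
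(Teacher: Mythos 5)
This theorem is not proved in the paper: it is taken verbatim from El~Kacimi--Hector \cite{Kacimihector86}, so there is no in-text argument to compare against. Reviewing your proposal on its own merits, the overall framework is the right one (basic Hodge theory, mean-curvature correction, basic star operator), but there is a genuine error at the central step, and it is precisely the error that would erase the dichotomy you are trying to prove.

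You assert $\delta_\kappa=\pm\,\bar\ast\, d\,\bar\ast$ on basic forms and deduce that $\bar\ast$ commutes with $\Delta_B$, hence $H^k(X/\F)\cong H^{n-k}(X/\F)$ unconditionally. But if that isomorphism held for every Riemannian foliation, then $H^n(X/\F)\cong H^0(X/\F)=\R$ would be automatic for connected $X$, and case~(1) of the theorem could never occur (for transversely orientable $\F$). Your own phrasing betrays the problem: you write that when $[\nu_\F]=0$ ``by the duality isomorphism $H^n\cong H^0=\R$ one sees this forces $H^n(X/\F)=0$,'' which is self-contradictory. The correct identity (after choosing, by Dom\'{\i}nguez's theorem, a bundle-like metric with basic mean curvature $\kappa_b$, which is then closed) is
\[
\delta_\kappa \;=\; \pm\,\bar\ast\,\bigl(d-\kappa_b\wedge\cdot\bigr)\,\bar\ast \;=\;\pm\,\bar\ast\, d_{\kappa_b}\,\bar\ast,
\]
so $\bar\ast$ does \emph{not} intertwine $\Delta_B$ with itself; it intertwines the basic complex $(\Omega^\bullet(X/\F),d)$ with the \emph{twisted} complex $(\Omega^\bullet(X/\F),d_{\kappa_b})$, yielding $H^k(X/\F)\cong H^{n-k}_{\kappa_b}(X/\F)$. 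In particular $H^n(X/\F)\cong H^0_{\kappa_b}(X/\F)=\{f\ \text{basic}: df=f\kappa_b\}$, which is $\R$ exactly when $\kappa_b$ is exact (the taut/homologically orientable case) and $0$ otherwise. That is where the alternative really comes from; the unconditional commutation you posit is too strong, and it is incompatible with the statement itself. The double-cover argument for the non-transversely-orientable case and the computation $\int_X\nu_\F\wedge\chi_\F>0$ in the taut case are fine as stated, once the twisted duality is put in place.
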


\subsection{Bundle-like metrics}\label{SS:bundlelike}
Let $(X,\F)$ be a foliated manifold where in addition $\F$ is assumed to be Riemannian. Let us denote by $\overline{g}$ the transverse invariant metric defined on the normal bundle $N\F$ and by $\nu_\F$ the associated transverse volume form, assuming that $N\F$ is oriented.  A metric $g$ on $X$ is said to be \textit{bundle-like} (with respect to the given Riemannian transverse structure) if the metric induced on $\F^\perp\simeq N_\F$ is precisely  $\overline{g}$. This amounts to saying that $\F$ can be locally described by the fibers of a Riemannian submersion where the basis of the fibration is equipped with the metric $\overline{g}$.  It is well known (see \cite{Reinhart59}) that bundle-like metrics always exist. The triple $(X,\F,g)$ will be referred as a \textit{Riemannian foliated manifold}. The pair $(\F,g)$ stands for a Riemannian foliation together with a bundle-like metric $g$ on the ambient manifold.

\subsection{The transversely Kähler case}\label{SS:transverse Kähler}

Consider now the situation where $(X,\F)$ is a foliated manifold such that $\F$ is transversely holomorphic of complex codimension $n$. Recall that this means that we can cover $X$ by foliated charts $U_i$ of the form $f_i: U_i \to V_i\subset \R^m\times \C^{n}$ such that for every $i,j$ such that $U_i\cap U_j\not=\emptyset$, the local transformation map $f_{ij}=f_j\circ f_i^{-1}$ of  $\R^m\times \C^{n}$  takes the form
\[f_{ij}(x,z)= \left(g_{ij} (x,z), h_{ij}(z)\right)\]
where  $h_{ij}$ is \textit{holomorphic}.

As usual this is equivalent to  the datum of a basic endomorphism  $J_\F:N\F\to N\F$ satisfying $J_\F^2=-\mathrm{Id}$ (a  transverse basic complex structure) fulfilling the Newlander--Nirenberg integrability property:
\begin{center}
{\it for all local \emph{basic} sections $v,w$ of ${ N\F}^{0,1}$, $[v,w]$ is still a section of  ${ N\F}^{0,1}$.}
\end{center}
Here,  ${ N\F}^{0,1}$ is the second summand in the splitting 
\[{ N\F}\otimes \C={ N\F}^{1,0}\oplus{ N\F}^{0,1}\]
determined by the $\pm \sqrt{-1}$ eigenspaces of $J_\F$ (more generally, if $E$ is any $J_\F$-stable subbundle of $N\F$, we can analogously consider $E^{1,0}$ and $E^{0,1}$).

\begin{dfn}[Transversely Kähler foliation]\label{def:transverse Kähler}
The foliation $\F$ is said to \emph{(transversely) Kähler} if $M$ admits a basic transverse complex structure $J_\F$ together with a  basic transverse Hermitian metric $\overline{g}$ whose imaginary part is $d$-closed.
\end{dfn}

Let $f:X\to Y$ be a smooth \textit{basic} function taking values in a complex manifold $Y$. Let $J$ be the complex structure on $Y$. Then $f$ is said to be \textit{transversely}-holomorphic or merely \textit{$J_\F$-holomorphic} if $f_*\circ J_\F=J\circ f_*$. This exactly means that the local factorization $\bar f: U/\F\to Y$ of $f$ through the local leaves spaces is \textit{holomorphic}.

Similarly, we will say that an extension (see Definition~\ref{def:extension} below) $\G$ of $\F$ is $J_\F$-\textit{holomorphic} if the foliation induced by $\G$ on $U/\F$ is \textit{holomorphic}.  Alternatively, holomorphicity of $\G$ can be characterized by the following properties.

\begin{itemize}
\item The vector bundle $\G/\F\subset N\F$ is $J_\F$-stable.
\item  $ { ( \G/\F)}^{1,0}\subset {N\F}^{1,0} $ is locally spanned by basic \textit{holomorphic} vector fields (\emph{i.e. } local sections of ${N\F}^{1,0}$ that project to holomorphic vector fields on $U/\F$). 
\end{itemize}

\subsection{Riemannian extensions}\label{SS:extension}

\begin{dfn}[Extension of a foliation]\label{def:extension}
Let $(X,\F)$ be a foliated manifold. A foliation $\mathcal G$ on $X$ is said to be an \emph{extension} of $\F$ if $\F\subset \G$. 
\end{dfn}
Suppose in addition that there exists on $X$ a basic \footnote{Here and hereafter, we mean basic with respect to $\F$.} symmetric form  $\sigma\in\mathrm{Sym}^2 N^*\F$  such that the restriction of $\sigma$ to $\G/\F$ is positive. 
This implies that $\F$ is Riemannian in restriction to the leaves of $\G$ with respect to $\sigma':=\restr{\sigma}{\G/\F}$.We will say that $\sigma'$ is a \textit{$\F$-basic $\G$-leafwise metric}. For any (local) basic section $v$ of $\G/\F $, it is then meaningful to consider the $\F$-basic $\mathcal G$-leafwise divergence $\mathrm{div}_\G (v)$ well defined as a (local) basic function of $\F$. Similarly, we can consider the $\F$-basic $\mathcal G$-leafwise gradient $\nabla_\G (f)$ (with respect to $\restr{ \overline g}{\G/\F}$) of a (local) basic function as a (local) basic function of $\F$. 
We can thus consider the associated \textit{$\F$-basic $\G$-leafwise Laplacian} $\Delta_\G$ acting on local basic functions by 
$$\Delta_\G (f)=-\textrm{div}_\G(\nabla_\G (f)).$$
It is well defined on $X$ as a \textit{basic} differential operator of order $2$ (see \cite{Elkacimi90}). It will play a prominent role in our work (see \S\ref{SS:prelproofoflemmaE}).

\subsection{Characteristic and mean curvature forms}\label{SS:characteristicform}
Let $(X,\F)$ be a foliated manifold. We also assume that $\F$ is oriented. Let $g$ be a Riemannian metric on $X$. The characteristic form $\chi_\F$ is the $m$-form (where $m=\rg(\F)$) defined by the following properties:
\begin{enumerate}
	\item the restriction of $\chi_\F$ to the leaves is the volume form associated to $g_\F$ (the leafwise metric induced by $g$);
	\item for all $v\in\Gamma(\F^\perp)$, $i_v (\chi_\F)=0$.
\end{enumerate}

Let $\tau_g\in\Gamma ({ \F}^\perp)$ be the mean curvature vector of the leaves with respect to $g$. We can define the \textit{mean curvature form} $\kappa_g$ by setting
\[
\kappa_g (s)=g(\tau_g,s) \quad \mathrm{for}\ s\in\Gamma(TX) .
\]
The following fundamental result was proven by Dominguez \cite{Dominguez98}.

 \begin{thm}\label{TH:Dominguezth}
 	Let $(X,\F)$ be a foliated compact manifold. Assume that $\F$ is Riemannian. Then there exists a bundle-like metric $g$ such that $\kappa_g$ is basic (or equivalently, $\tau_g$ is foliated).
 \end{thm}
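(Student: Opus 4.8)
The plan is to follow Molino's structure theory in three reductions: first to a transversely parallelizable foliation, by passing to the transverse frame bundle; then to a Lie foliation with dense leaves, by passing to a leaf closure; and finally to an analysis of the mean curvature of such a Lie foliation through its structural Lie algebra. The real content will be concentrated in the last, Lie-theoretic, step.

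First I would lift all the data to the transverse orthonormal frame bundle $p\colon X^\sharp\to X$, a principal $\Orth(n)$-bundle over the compact manifold $X$. By Molino's theory $\F$ lifts to a transversely parallelizable foliation $\F^\sharp$ of the same rank $m=\rg(\F)$, and $p$ sends leaves of $\F^\sharp$ onto leaves of $\F$. If $g^\sharp$ is an $\Orth(n)$-invariant bundle-like metric on $X^\sharp$ built in Kaluza--Klein fashion over a bundle-like metric $g$ on $X$ — a bi-invariant metric on the $\Orth(n)$-orbits, a principal connection compatible with Bott parallel transport along the leaves as horizontal complement, and $p^{*}g$ in the horizontal directions — then one checks that the mean curvature vector of an $\F^\sharp$-leaf is $p$-related to that of the corresponding $\F$-leaf, so that $\kappa_{g^\sharp}=p^{*}\kappa_{g}$. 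Since $p^{*}$ identifies $\F$-basic forms with the $\Orth(n)$-invariant, $p$-horizontal, $\F^\sharp$-basic forms, it suffices to produce on $X^\sharp$ an $\Orth(n)$-invariant bundle-like metric of this type whose mean curvature form is $\F^\sharp$-basic. I may therefore assume from the start that $\F$ is transversely parallelizable.

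Molino's structure theorem for transversely parallelizable foliations on a compact manifold then provides a locally trivial fibration $\pi\colon X\to W$ over a compact manifold $W$ whose fibres are the closures $\overline L$ of the leaves; on each fibre $\overline L$ the induced foliation is a Lie foliation with dense leaves whose structural Lie algebra $\mathfrak g$ (the typical fibre of the commuting sheaf) is independent of $\overline L$. I would choose the bundle-like metric adapted to $\pi$ — splitting $TX=\F\oplus\mathcal V\oplus\mathcal H$ with $\ker d\pi=\F\oplus\mathcal V$ and $\mathcal H=(\ker d\pi)^{\perp}$, keeping the prescribed transverse metric on $\mathcal V\oplus\mathcal H\cong N\F$ — so that the mean curvature form of $\F$ becomes $\pi$-vertical and restricts on each fibre to the mean curvature form of the Lie foliation living there. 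The problem reduces to: on a Lie $\mathfrak g$-foliation with dense leaves on a compact manifold, with prescribed transverse metric, find a bundle-like metric whose mean curvature form is basic, the choice depending smoothly on $\overline L$ as it varies over $W$. On such a foliation the basic functions are the constants, so the basic $1$-forms form a finite-dimensional space; through the transverse Maurer--Cartan parallelism they are the Chevalley--Eilenberg-closed elements of $\mathfrak g^{*}$, among which sits the intrinsic modular character $\delta_{\mathfrak g}(\xi)=\operatorname{tr}(\operatorname{ad}_\xi)$, defining a distinguished basic $1$-form $\overline{\delta}$. Starting from the natural metric attached to the parallelism, I would compute the mean curvature form, check by an invariant computation that its $L^{2}$-basic component necessarily represents the class of $\overline{\delta}$, and kill the non-basic remainder by a conformal leafwise rescaling $g_{\F}\mapsto e^{2u}g_{\F}$, which shifts the mean curvature form by $-m\,du$ in the directions transverse to $\F$; solving for $u$ fibrewise and fixing the additive ambiguity finishes this step. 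When $\mathfrak g$ is unimodular one recovers the classical tautness of unimodular Lie foliations ($\overline{\delta}=0$); the non-unimodular case is where the difficulty lies.

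To globalise, I would use that $\delta_{\mathfrak g}$, being intrinsic to $\mathfrak g$, is invariant under the $\pi_{1}(W)$-monodromy, so the fibrewise $1$-forms $\overline{\delta}$ glue into a global $\pi$-vertical $\F$-basic $1$-form on $X$, while the fibrewise conformal factors, governed by a smoothly varying elliptic equation with a normalised constant, can be chosen smooth over $W$; unwinding the frame-bundle reduction then yields the required bundle-like metric on the original $X$. The main obstacle I foresee is precisely the Lie-foliation step: controlling the mean curvature of a possibly non-unimodular (hence non-taut) Lie foliation with dense leaves on a compact manifold and pinning down the cohomology class of its basic part as the modular character of $\mathfrak g$. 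By comparison, the descent from the frame bundle and the smooth dependence on $W$ are routine, if technical, verifications.
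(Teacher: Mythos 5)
This statement is not proved in the paper at all: it is quoted as Domínguez's tenseness theorem \cite{Dominguez98}, whose published proof is long and delicate, so your attempt can only be measured against that. Your proposal reproduces the standard reductions (transverse frame bundle, Molino's basic fibration, dense Lie foliations on the leaf closures) but leaves the actual content unproved. The decisive step is the assertion that, on a leaf closure carrying a dense Lie $\mathfrak g$-foliation, the mean curvature form of the natural metric differs from a basic form by $m\,(du)_{\perp}$ for a single function $u$ ``solved for fibrewise''. Since basic functions there are constants, a leafwise conformal rescaling changes $\kappa$ exactly by the transverse part of an exact form; writing $\kappa=\sum_i f_i\,\theta^i$ in the coframe dual to the transverse parallelism, you need one function $u$ with $du(Y_i)=(f_i-c_i)/m$ for all $i$ and suitable constants $c_i$ --- an overdetermined system whose integrability conditions are precisely what must be established (and it is not even clear that tenseness can always be reached by a purely leafwise conformal change of a given metric, rather than by also moving $\F^{\perp}$ or the leafwise volume more drastically). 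Nothing in the proposal identifies this system or explains why the parallelism metric satisfies its compatibility conditions; the observation that the basic component should represent the modular class $\delta_{\mathfrak g}$ (indeed the expected Álvarez class) does not produce $u$. You yourself flag this as the ``main obstacle'', and it is exactly where the whole difficulty of the theorem sits, so what you have is a programme, not a proof.

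The reductions also have gaps. The frame-bundle step is circular as written: if you restrict to metrics of Kaluza--Klein type $p^{*}g\oplus\vartheta$ (the ones that descend to $X$), then $\kappa_{g^{\sharp}}=p^{*}\kappa_{g}$ and tenseness upstairs is literally equivalent to tenseness downstairs, so nothing is gained; if instead you allow general $\Orth(n)$-invariant bundle-like metrics on $X^{\sharp}$ --- which your ``metric adapted to the basic fibration'' requires --- the resulting tense metric has no reason to be of the descending type, and no averaging or descent mechanism preserving basicness of $\kappa$ is provided. Similarly, for a general adapted metric the mean curvature form of $\F$ is not automatically $\pi$-vertical, nor does it restrict on a fibre to the intrinsic mean curvature of the induced Lie foliation: the horizontal component of $\tau_g$ is the $\F$-trace of the second fundamental form of the fibres of the basic fibration, so you would have to arrange (not assume) that these fibres are totally geodesic, and make that choice consistently over $W$. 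Each of these points needs a genuine argument before the scheme could be called a proof of the theorem.
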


\begin{remark}
As noticed in \cite[Chapter 7]{Tondeur}, when $(X,\F,g)$ is a compact Riemannian foliated manifold,  this basic mean curvature form $\kappa_g$ is indeed closed and its cohomology class $[\kappa] :=[\kappa_g]\in H^1(X,\R)$ does not depend on the choice of the bundle-like metric $g$. Moreover, $[\kappa] $ vanishes if and only if there exists a bundle-like metric $g$ such that $\kappa_g$ vanishes identically, \emph{i.e.} the leaves are immersed \textit{minimal} submanifolds. 
\end{remark}

\begin{dfn}
We will say that a Riemannian foliation $(\F,g)$ is \emph{tense} if $\kappa_g$ is basic and $(\F,g)$ is \emph{taut} if $\kappa_g$ vanishes identically.\footnote{This slightly differs from the usual terminology, according to which $\F$ is tense/taut if there exists  bundle-like metrics $g$ fullfilling these properties, but which are not necessarily the one we are working with. For this peculiar metrics, tense/taut (in our sense) corresponds to isoparametric/minimal.} 
\end{dfn}

\begin{thm}[\emph{cf.} \cite{Masa}]\label{TH:Masatautcrit}
Let $(X,\F)$ be a compact foliated manifold such that $\F$ is transversely Riemannian. Suppose that $\F$ is transversely orientable. Then the following two properties are equivalent:
\begin{itemize}
\item[---] There exists a bundle-like metric $g$ such that $(\F,g)$ is taut.
\item[---] The foliation $\F$ is homologically orientable.	
\end{itemize}
\end{thm}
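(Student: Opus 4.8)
The plan is to reduce Theorem~\ref{TH:Masatautcrit} (taken as the statement to prove) to the cohomological analysis of the mean curvature form $\kappa_g$ via Dominguez's theorem and the Poincar\'e--Hodge duality for basic cohomology. I assume throughout that $\F$ is transversely oriented (so the transverse volume form $\nu_\F$ is a globally defined basic $n$-form) and, by Theorem~\ref{TH:Dominguezth}, I fix from the start a bundle-like metric $g$ for which $\kappa_g$ is basic, hence closed, with well-defined class $[\kappa]\in H^1(X/\F)$ that is independent of the choice of tense bundle-like metric. The point is that tautness for \emph{some} bundle-like metric is equivalent to $[\kappa]=0$ in $H^1(X/\F)$; this is the standard Rummler--Sullivan / Tondeur circle of ideas, and I would either cite it or reprove it: if $[\kappa]=0$ one modifies $g$ inside the leaves (a conformal change along the leaves by $e^{\phi}$ with $d\phi$ killing the exact part) to arrange $\kappa_g\equiv 0$; conversely if some bundle-like metric is taut then $[\kappa]=0$ by the cohomology invariance. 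So the whole statement becomes: $[\kappa]=0\in H^1(X/\F)$ if and only if $H^n(X/\F)\ne 0$ (homological orientability).

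For the direction ``taut $\Rightarrow$ homologically orientable'': if $\kappa_g\equiv0$, then by Rummler's formula the transverse volume form is, up to leafwise-exact correction, $d$-closed as a basic form; more precisely one uses that $d\chi_\F = -\kappa_g\wedge\chi_\F + (\text{terms annihilated against }\wedge\nu_\F)$, so when $\kappa_g=0$ the integral $\int_X \nu_\F\wedge\chi_\F>0$ witnesses that the basic class $[\nu_\F]\in H^n(X/\F)$ pairs nontrivially and in particular is nonzero, giving $H^n(X/\F)\ne0$. Concretely: $\nu_\F$ is basic and closed; if it were basic-exact, $\nu_\F=d\beta$ with $\beta$ basic, then $\int_X d\beta\wedge\chi_\F = \pm\int_X \beta\wedge d\chi_\F$, and the taut condition forces this to vanish, contradicting $\int_X \nu_\F\wedge\chi_\F = \mathrm{vol}(X)>0$. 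Hence $[\nu_\F]\ne 0$ in $H^n(X/\F)$, which by Theorem~\ref{th:poincareduality} is exactly homological orientability.

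For the converse ``homologically orientable $\Rightarrow$ taut'': assume $H^n(X/\F)\simeq\R$, generated by $[\nu_\F]$, and that Poincar\'e--Hodge duality holds on $H^\bullet(X/\F)$ (Theorem~\ref{th:poincareduality}(2)). Here one invokes the twisted/deformed basic cohomology: the class $[\kappa]\in H^1(X/\F)$ governs the failure of the usual basic Poincar\'e pairing, and the nondegeneracy of the genuine (untwisted) duality forces $[\kappa]=0$. The cleanest route is: the leafwise integration map $\int_{\F}\colon \Omega^{m+n}(X)\to\Omega^n(X/\F)$ descends to cohomology and, paired with the orientation class, shows that the linear functional $[\alpha]\mapsto \int_X \alpha\wedge\chi_\F$ on $H^n(X/\F)$ is nonzero; then one checks that $[\kappa]\wedge(-)$ acts as a derivation-type obstruction on the duality pairing, and nondegeneracy (from Theorem~\ref{th:poincareduality}) kills it. Equivalently, following El Kacimi--Hector, homological orientability is precisely the statement that the basic cohomology with the standard differential satisfies Poincar\'e duality, which is known to be equivalent to $[\kappa]=0$. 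Either way one concludes $[\kappa]=0$, and then by the first paragraph there is a bundle-like metric making $(\F,g)$ taut.

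The main obstacle is the converse direction: proving that $[\kappa]=0$ genuinely follows from homological orientability, rather than merely from tautness. This is the heart of Masa's theorem and rests on the interplay between the mean curvature class and the duality defect of basic cohomology (the Kamber--Tondeur twisted complex vs.\ the untwisted one). I would handle it by quoting El Kacimi--Hector's duality theorem (Theorem~\ref{th:poincareduality}) together with the observation that $d+\kappa\wedge$ and $d$ compute Poincar\'e-dual complexes, so that genuine duality for $d$ forces the twisting class to vanish; the remaining verifications (Rummler's formula, the leafwise conformal deformation) are routine. The transverse-orientability hypothesis is used exactly to guarantee $\nu_\F$ is globally basic, and the compactness of $X$ to ensure finite-dimensionality of basic cohomology and convergence of the integrals above.
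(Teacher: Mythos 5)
The paper does not actually prove this statement: it is quoted from Masa's article, so there is no internal proof to compare with, and your attempt has to stand on its own. Your reduction (tautness for some bundle-like metric $\iff [\kappa]=0$, via Dominguez and the leafwise conformal rescaling) and your first implication are fine: if $\kappa_g\equiv 0$ and $\nu_\F=d\beta$ with $\beta$ basic, then $\int_X d\beta\wedge\chi_\F=0$ by Rummler/Stokes while $\int_X\nu_\F\wedge\chi_\F>0$, so $[\nu_\F]\neq 0$. (Minor point: $\chi_\F$ requires $\F$ to be tangentially oriented, which is not among the hypotheses; you should pass to the orientation double cover and note that basic exactness of $\nu_\F$ downstairs would pull back to exactness upstairs.)

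The converse direction, however, contains a genuine gap. After reducing to ``$H^n(X/\F)\neq 0\Rightarrow[\kappa]=0$ in $H^1(X/\F)$'', you justify this by asserting that untwisted basic Poincar\'e duality ``is known to be equivalent to $[\kappa]=0$'' and that nondegeneracy of the pairing ``kills'' a ``derivation-type obstruction''. The first assertion is, up to the reduction you already made, exactly the theorem you are supposed to prove, and the second is not an argument: Theorem~\ref{th:poincareduality} gives nondegeneracy of the pairing for the untwisted differential $d$, and nothing in your text extracts $[\kappa]=0$ from it. The missing mechanism is the twisted (Kamber--Tondeur) complex $d_\kappa=d-\kappa\wedge\cdot$ on basic forms: Rummler's formula shows that $(\alpha,\beta)\mapsto\int_X\alpha\wedge\beta\wedge\chi_\F$ descends to a nondegenerate pairing between $H^p$ of the $d$-complex and $H^{n-p}$ of the $d_\kappa$-complex; taking $p=n$, the hypothesis $H^n(X/\F)\neq 0$ forces $H^0_\kappa(X/\F)\neq 0$, i.e.\ a basic function $f\not\equiv 0$ with $df=f\kappa$; the ODE uniqueness argument along paths shows $f$ is nowhere zero, hence $\kappa=d\log|f|$ is exact with basic primitive, and only then does your leafwise conformal change produce a taut bundle-like metric. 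Without establishing this twisted duality and the nonvanishing-solution step (or some equivalent argument), the hard implication of Masa's theorem is being quoted rather than proved.
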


\subsection{Relationship between the characteristic and mean curvature forms}

It is given by the so-called Rummler's formula \cite[Corollary~4.26]{Tondeur}.  We assume here that $(X,\F)$ is a foliated manifold, that $\F$ is oriented and we fix a Riemannian metric $g$ on $X$. Set as before $m=\textrm{rk}(\F)$, then:
\begin{equation}\label{Rummlerformula}
d\chi_\F+\kappa_g\wedge\chi_\F\in F^2 A^{m+1}
\end{equation}
where $F^2 A^{m+1}$ denotes the space of $(m+1)$-forms $\varphi$ on $X$ such that $i_V\varphi=0$ for any $m$-multivector field tangent to $\F$ (\emph{i.e.} $V\in \Gamma \left(\bigwedge^m \F\right)$). In particular, if $\eta \in \Omega^{n-1}(X/\F)$ is a basic $(n-1)$-form where $n$ is the codimension of $\F$, then $\eta\wedge d\chi_\F=-\eta\wedge \kappa_g\wedge\chi_\F$.
  
As a straightforward but fundamental consequence of Stokes' Theorem, we have the following result that we will used repeatedly.
  
\begin{lemma}\label{L:Rummler/Stokes}
Let $(X,\F)$ be an oriented compact foliated manifold and $g$ be a Riemannian metric on $X$. Assume that $\F$ is oriented and  that $\kappa_g$ vanishes identically. If $n= \codim(\F)$ and $\eta \in \Omega^{n-1}(X/\F)$, we then have:
$$ \int_X d\eta \wedge \chi_\F =0.$$
\end{lemma}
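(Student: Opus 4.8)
The plan is to deduce this directly from Rummler's formula \eqref{Rummlerformula} together with Stokes' Theorem, exploiting that $\kappa_g \equiv 0$. First I would observe that for a basic $(n-1)$-form $\eta$, the product $d\eta \wedge \chi_\F$ is a top-degree form on $X$ (degree $(n-1)+1+m = m+n = \dim X$), so the integral makes sense. The idea is to write $d\eta \wedge \chi_\F$ as an exact form plus a term that vanishes for dimensional/degree reasons, and then apply Stokes.

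\begin{proof}
Since $\eta$ is a basic $(n-1)$-form on $X$ and $n=\codim(\F)$, the form $d\eta\wedge\chi_\F$ has degree $(n-1)+1+m=m+n=\dim X$, so $\int_X d\eta\wedge\chi_\F$ is well defined. Because $\eta$ is basic, so is $d\eta$; in particular $i_V(d\eta)=0$ for every $V\in\Gamma\!\left(\bigwedge^{m}\F\right)$ as soon as the number of contracted vectors is positive (indeed $i_v(d\eta)=0$ for every $v\in\Gamma(\F)$ by the definition of basic forms). Consider the $(m+n)$-form $d\eta\wedge\chi_\F$ and rewrite it using \eqref{Rummlerformula}:
\[
d\bigl(\eta\wedge\chi_\F\bigr)=d\eta\wedge\chi_\F+(-1)^{n-1}\eta\wedge d\chi_\F
 =d\eta\wedge\chi_\F+(-1)^{n-1}\eta\wedge\bigl(-\kappa_g\wedge\chi_\F+\varphi\bigr),
\]
where $\varphi\in F^2 A^{m+1}$. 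Now $\kappa_g\equiv 0$ by hypothesis, so the middle term drops. For the last term, $\eta\wedge\varphi$ has degree $(n-1)+(m+1)=m+n$ and $\varphi\in F^2A^{m+1}$ means $i_V\varphi=0$ for every $V\in\Gamma\!\left(\bigwedge^{m}\F\right)$; since $\eta$ is a section of $N^*\F^{\otimes(n-1)}$, any contraction of $\eta\wedge\varphi$ by $m$ leaf-tangent vectors must contract at least two of them into $\varphi$, hence vanishes. A top-degree form on the $(m+n)$-manifold $X$ that is annihilated by contraction with every $m$-vector tangent to $\F$ is identically zero (locally, in a foliated chart, such a form is a multiple of a wedge of $n$ conormal generators with an $m$-form, and the condition forces the $m$-form factor to vanish). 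Therefore $\eta\wedge\varphi=0$, and we are left with
\[
d\bigl(\eta\wedge\chi_\F\bigr)=d\eta\wedge\chi_\F .
\]
Integrating over the compact manifold $X$ and applying Stokes' Theorem gives
\[
\int_X d\eta\wedge\chi_\F=\int_X d\bigl(\eta\wedge\chi_\F\bigr)=0,
\]
which is the desired identity.
\end{proof}

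The main obstacle I expect is the verification that the error term $\eta\wedge\varphi$ vanishes identically: one needs to argue carefully from the degree count and the defining property of $F^2A^{m+1}$ that a top form on $X$ killed by all $m$-vectors tangent to $\F$ must be zero, which is a pointwise linear-algebra statement in a foliated chart but is the only place where something beyond formal manipulation enters. Everything else is bookkeeping with the basic-form condition $i_v\eta=i_v d\eta=0$ for $v$ tangent to $\F$, the Leibniz rule for $d$, and Stokes' Theorem on the compact manifold $X$.
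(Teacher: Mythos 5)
Your proof is correct and follows essentially the same route as the paper, which records the identity $\eta\wedge d\chi_\F=-\eta\wedge\kappa_g\wedge\chi_\F$ as an immediate consequence of Rummler's formula and then applies Stokes' Theorem; your contribution is simply to spell out the pointwise verification that the error term $\eta\wedge\varphi$ with $\varphi\in F^2A^{m+1}$ vanishes. Only a wording quibble there: in the contraction $i_{v_m}\cdots i_{v_1}(\eta\wedge\varphi)$ the surviving term is the one where \emph{all} $m$ leaf-tangent vectors land on $\varphi$ (every term hitting $\eta$ dies since $i_v\eta=0$), and that term vanishes precisely by the defining property of $F^2A^{m+1}$, rather than because ``at least two'' vectors are contracted into $\varphi$.
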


\subsection{Some adapted orthonormal frame bundle}
Let $(X,\F,g)$ be a Riemannian foliated manifold (see \S\ref{SS:bundlelike}). Set $m+n=\dim(X)$ where $m=\rg(\F)$. Denote by $\nabla^g$ the Levi-Civita connection associated to $g$. Recall the two basic fundamental properties of $\nabla _g$, namely the torsion freeness and metric compatibility:
\begin{align}
\forall  u,v\in \Gamma(TX),\quad \nabla_u^g v- \nabla_v^g u&=[u,v] ,\label{E:Torsionfreeg}\\
\forall  u,v,w\in \Gamma(TX),\quad u\cdot\langle v,w\rangle_g&= \langle\nabla_u^g v,w\rangle_g +\langle v,\nabla_u^g w\rangle_g.\label{E:metriccompg}
\end{align}

\begin{dfn}
Let ${(e_i)}_{i=1,\ldots,m}$ be a local orthonormal frame of $\F$ near $x\in X$. The family $(e_i)$ is said to be tangentially geodesic (with respect to $\F$) at the point $x\in X$ if for all $v\in T_xX$, we have: 
\[\nabla_{v}^g e_i(x)\in { ( \F_x)}^\perp.\]	
\end{dfn}

If $(e_i)$ is tangentially geodesic, note that the Lie bracket $[e_i,e_j]$ \textit{vanishes} at the point $x$ as an immediate consequence of the torsion freeness of $\nabla^g$ and the involutivity of $\F$:
\[
\forall\,u\in \F_x,\,\,\langle[e_i,e_j],u\rangle_g=\langle\nabla^g_{e_i}e_j,u\rangle_g-\langle\nabla^g_{e_j}e_i,u\rangle_g=0.
\]

\begin{lemma}\label{L:tangentiallygeodexistence}
For every $x\in X$, there exists  a  tangentially geodesic frame of $\F$ at the point~$x$. 
\end{lemma}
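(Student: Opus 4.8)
The plan is to construct a tangentially geodesic frame at a fixed point $x\in X$ by starting from an arbitrary local orthonormal frame of $\F$ and correcting it by a suitable $\Orth(m)$-valued function that vanishes to first order at $x$ in an appropriate sense. First I would pick any local orthonormal frame $(f_i)_{i=1,\ldots,m}$ of $\F$ near $x$. The issue is that $\nabla^g_v f_i(x)$ need not lie in $(\F_x)^\perp$: decompose $\nabla^g_v f_i(x) = (\nabla^g_v f_i(x))^\perp + (\nabla^g_v f_i(x))^\top$ along $\F^\perp \oplus \F$, and note that metric compatibility \eqref{E:metriccompg} forces the tangential part to be \emph{antisymmetric} in the frame indices: for $u,v\in T_xX$,
\[
\langle \nabla^g_v f_i, f_j\rangle_g(x) + \langle f_i, \nabla^g_v f_j\rangle_g(x) = v\cdot\langle f_i,f_j\rangle_g = 0,
\]
since $\langle f_i,f_j\rangle_g \equiv \delta_{ij}$. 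So the obstruction is encoded by a $\Lie(\Orth(m))$-valued $1$-form $A$ on a neighbourhood of $x$, defined by $A(v)_{ij} = \langle \nabla^g_v f_i, f_j\rangle_g$.

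Next I would modify the frame: set $e_i = \sum_j a_{ij} f_j$ where $(a_{ij})$ is an $\Orth(m)$-valued smooth function near $x$ with $a_{ij}(x) = \delta_{ij}$. A direct computation using the product rule for $\nabla^g$ gives, at the point $x$,
\[
\langle \nabla^g_v e_i, e_k\rangle_g(x) = \sum_j \big( (v\cdot a_{ij})(x)\,\delta_{jk} + \delta_{ij}\,A(v)_{jk}\big) = (v\cdot a_{ik})(x) + A(v)_{ik}.
\]
Hence $(e_i)$ is tangentially geodesic at $x$ precisely when $(v\cdot a_{ik})(x) = -A(v)_{ik}$ for all $v\in T_xX$, i.e.\ when the differential of the matrix-valued function $(a_{ij})$ at $x$ equals $-A|_x$. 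Since $-A|_x$ takes values in the Lie algebra $\Lie(\Orth(m))$ (by the antisymmetry observed above), such a function exists: in a coordinate chart centred at $x$ one may take $a_{ij}(y) = \big(\exp(-A_x(y-x))\big)_{ij}$, or even more simply the linear truncation composed with any smooth retraction onto $\Orth(m)$ near the identity; either way $a(x) = \mathrm{Id}$ and $da_x = -A_x$. This produces the desired local orthonormal frame of $\F$ that is tangentially geodesic at $x$.

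The only point requiring care — and the closest thing to an obstacle — is checking that $-A|_x$ genuinely lands in the tangent space to $\Orth(m)$ at the identity, so that one can realise it as the derivative of an $\Orth(m)$-valued map; this is exactly the antisymmetry identity derived from \eqref{E:metriccompg} and causes no real trouble. Everything else is a pointwise linear-algebra computation together with the existence of local frames. I would remark in passing that involutivity of $\F$ is \emph{not} needed for this lemma; it is only afterwards, in the displayed consequence that $[e_i,e_j](x)=0$, that torsion-freeness \eqref{E:Torsionfreeg} and involutivity of $\F$ enter.
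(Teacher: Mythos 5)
Your proof is correct. It rests on the same key observation as the paper's proof --- metric compatibility \eqref{E:metriccompg} forces the obstruction $\langle\nabla^g_v e_i,e_j\rangle_g$ to be skew-symmetric in $(i,j)$, so it can be cancelled by an infinitesimal rotation of the frame --- but the implementation is genuinely different. The paper first restricts to the leaf $\mathcal L$ through $x$ and takes a frame geodesic at $x$ for the induced metric $g_{\mathcal L}$, which kills the obstruction in the directions $v\in\F_x$; it then corrects only in the transverse directions, using skew-symmetric functions $A_{ij}$ vanishing along the leaf with prescribed transverse derivatives, and since the corrected frame $(\epsilon_i)$ is only orthonormal to first order at $x$, it finishes with a Gram--Schmidt step, checking that this does not change the $1$-jets at $x$ because $\langle\epsilon_i,\epsilon_j\rangle_g-\delta_{ij}$ vanishes there to order $2$. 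You instead perform a single gauge correction $e_i=\sum_j a_{ij}f_j$ by an $\Orth(m)$-valued map with $a(x)=\mathrm{Id}$ and $da_x=-A_x$ (for instance the exponential of a linear skew-valued function, legitimate precisely because $A_x$ lands in $\mathfrak{so}(m)$), which treats all directions $v\in T_xX$ at once, keeps the frame exactly orthonormal, and dispenses with both the leafwise geodesic frame and the Gram--Schmidt/jet-comparison argument; the price is essentially nil, since the condition to be achieved is purely a condition on the $1$-jet of the frame at the single point $x$. Your closing remark is also accurate: involutivity of $\F$ plays no role in the lemma itself and enters only in the subsequent observation that $[e_i,e_j](x)=0$.
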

\begin{proof}
	Let $\mathcal L$ be the leaf of $\F$ through $x$  and $g_\mathcal L$ be the metric induced by $g$ on $\mathcal L$. Take $( f_i)$ an  orthonormal frame with respect to $g_\mathcal L$ defined in some neighborhood $V\subset\mathcal L $ of $x$ and geodesic in the usual sense at $x$ and extend it as a orthonormal frame $(\xi_i)$ of $\F$ in some neighborhood $U\subset X$ of the ambient manifold. For all $v\in { \F_x}$, we have thus $\nabla_{v}^g \xi_i(x)\in {  \F_x}^\perp$. We want to extend this property for all $v\in T_x X$ by modifying suitably the $\xi_i$'s. To this aim, pick a basis $(v_k)$, $k=m+1,\ldots,m+n$ of ${  \F_x}^\perp$. For every $(i,j,k)\in {\llbracket 1,m\rrbracket}^2 \times \llbracket m+1,m+n\rrbracket$, there exist $a_{ijk}\in\R$, $v_{ik}\in { (T_x \F)}^\perp$ such that 
\[\nabla_{v_k}^g \xi_i(x)=\sum_{i,j}a_{ijk}\xi_j (x) +v_{ik}.\]

From \eqref{E:metriccompg}, we can infer the skew-symmetry property $a_{ijk}=-a_{jik}$ so that there certainly  exists a family $(A_{ij})$ of smooth function $A_{ij}:U\to\R$ (shrinking $U$ if necessary) fulfilling the following properties:
	\begin{itemize}
		\item $A_{ij}$ vanishes along the leaf $\mathcal L$,
		\item $v_k( A_{ij})(x)=a_{ijk}$,
	\item $A_ {ij}=-A_{ji}$.
	\end{itemize}
For $i=1,\ldots,m$, set $\epsilon_i=\xi_i-\sum_j A_{ij}\xi_j$. By construction, the family $(\epsilon_i)$ forms a local frame of $\F$ around $x$ and satisfies 
$$\nabla_{v}^g \epsilon_i(x)\in {  \F_x}^\perp,\ \textrm{for every}\ v\in T_x X.$$

Let $(e_i)$ be the local orthonormal frame of $\F$ produced by Gram--Schmidt orthonormalization process applied to the family $(\epsilon_i)$.
 The properties of the $A_{ij}$'s listed above guarantee that for $i\not=j$, $\langle \epsilon_i,\epsilon_j\rangle_g$ vanishes at $x$ at order at least $2$. This easily implies that the jets of $e_i$ and $\epsilon_i$ at the point $x$ coincide up to order $1$, thus proving that $(e_i)$ is tangentially geodesic at $x$.
\end{proof}
The following lemma will be useful in the proof of the forthcoming Proposition~\ref{P:heatoperatorexpress} to be found in \S\ref{SS:proofoftangenergysubsol}.
\begin{lemma}\label{L:bracketidgeod}
	Let ${(e_i)}_{i=1,\ldots,m+n}$
 be a local orthonormal frame of $TX$ near $x\in X$ such that ${(e_i)}_{i=1,\ldots,m}$ is tangentially geodesic at $x$ and $e_i$ is foliated for $i>m$. Then for any indices $i,l\in \llbracket 1,m\rrbracket$ and $k\in \llbracket m+1,m+n\rrbracket$, we have
\[
\langle[e_i,e_k],e_l\rangle_g (x)= \langle[e_l,e_k],e_i\rangle_g (x)
\]
\end{lemma}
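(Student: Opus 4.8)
The plan is to expand $[e_i,e_k]$ in the orthonormal frame and use torsion freeness to convert Lie brackets into covariant derivatives, then exploit the two structural hypotheses (tangential geodesy of the $e_i$'s at $x$, and the fact that the $e_k$'s are foliated) together with metric compatibility of $\nabla^g$. First I would write, using \eqref{E:Torsionfreeg},
\[
\langle[e_i,e_k],e_l\rangle_g = \langle\nabla^g_{e_i}e_k,e_l\rangle_g - \langle\nabla^g_{e_k}e_i,e_l\rangle_g.
\]
Since $(e_i)_{i\le m}$ is tangentially geodesic at $x$, the term $\langle\nabla^g_{e_k}e_i,e_l\rangle_g(x)$ vanishes (it is $\langle (\F_x)^\perp\text{-valued}, e_l\rangle_g$ with $e_l\in\F_x$). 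So at $x$ we are left with
\[
\langle[e_i,e_k],e_l\rangle_g(x) = \langle\nabla^g_{e_i}e_k,e_l\rangle_g(x),
\]
and symmetrically $\langle[e_l,e_k],e_i\rangle_g(x) = \langle\nabla^g_{e_l}e_k,e_i\rangle_g(x)$. Thus the claim reduces to
\[
\langle\nabla^g_{e_i}e_k,e_l\rangle_g(x) = \langle\nabla^g_{e_l}e_k,e_i\rangle_g(x).
\]

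Next I would bring in the hypothesis that $e_k$ is foliated, i.e. $[e_j,e_k]\in\Chi(\F)$ for every $e_j$ tangent to $\F$; in particular $\langle[e_i,e_k],e_l\rangle_g$ and $\langle[e_l,e_k],e_i\rangle_g$ are already the ``tangential'' components we want, but more usefully $e_k$ foliated lets us control $\langle\nabla^g_{e_k}e_i,e_l\rangle_g$-type terms. The clean route: use metric compatibility \eqref{E:metriccompg} on the pair $(e_i,e_l)$ differentiated along $e_k$, together with $\langle e_i,e_l\rangle_g\equiv\delta_{il}$ (orthonormal frame), to get $\langle\nabla^g_{e_k}e_i,e_l\rangle_g + \langle e_i,\nabla^g_{e_k}e_l\rangle_g = 0$; combine this with the analogous identity differentiated along $e_i$ and along $e_l$, and with the torsion-free expansions of $[e_i,e_k]$, $[e_l,e_k]$, $[e_i,e_l]$. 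The Koszul-type manipulation gives
\[
\langle\nabla^g_{e_i}e_k,e_l\rangle_g - \langle\nabla^g_{e_l}e_k,e_i\rangle_g
= -\langle[e_i,e_l],e_k\rangle_g + (\text{terms that vanish at }x),
\]
where the error terms vanish at $x$ because $(e_i)_{i\le m}$ is tangentially geodesic and because, by the remark following the definition of tangentially geodesic frame, $[e_i,e_l](x)=0$ for $i,l\le m$; hence $\langle[e_i,e_l],e_k\rangle_g(x)=0$ as well. That yields exactly the desired equality at $x$.

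The main obstacle is the bookkeeping in the Koszul-style identity: one has to expand all three brackets $[e_i,e_k],\,[e_l,e_k],\,[e_i,e_l]$ via \eqref{E:Torsionfreeg}, feed in the three metric-compatibility relations from \eqref{E:metriccompg}, and carefully track which of the resulting $\langle\nabla^g_\bullet\bullet,\bullet\rangle_g$ terms survive at $x$. The two facts that make everything collapse are (i) tangential geodesy: $\nabla^g_v e_i(x)\perp\F_x$ for $i\le m$ and all $v$, which kills every term of the form $\langle\nabla^g_\bullet e_i,e_l\rangle_g(x)$ with $i,l\le m$; and (ii) $e_k$ foliated plus $[e_i,e_l](x)=0$, which kills the residual bracket term. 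I would organize the computation so these cancellations are applied only at the very end, at the point $x$, keeping the identities exact elsewhere. No further input beyond \eqref{E:Torsionfreeg}, \eqref{E:metriccompg}, orthonormality, and the remark after the tangentially-geodesic definition is needed.
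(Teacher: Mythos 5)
Your proposal is correct and uses exactly the ingredients the paper's one-line proof invokes — torsion-freeness \eqref{E:Torsionfreeg}, metric compatibility \eqref{E:metriccompg}, involutivity of $\F$, and the tangential geodesy hypothesis — so it is essentially the same argument, just written out: the difference of the two sides reduces to $-\langle[e_i,e_l],e_k\rangle_g$ plus terms killed at $x$ by tangential geodesy. The only cosmetic remark is that the residual bracket term vanishes identically (not just at $x$), since $[e_i,e_l]$ is tangent to $\F$ while $e_k\perp\F$, so you do not even need the pointwise vanishing $[e_i,e_l](x)=0$.
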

\begin{proof}
This a straightforward consequence of \eqref{E:Torsionfreeg}, \eqref{E:metriccompg} and the fact that $\F$ is stable under Lie bracket.
\end{proof}

Concerning the transverse behavior of the Levi-Civita connection, we have the following statement (see \cite[Lemma~1]{Oneill66}).
\begin{lemma}\label{L:covariantfoliated}
Let $u$, $v$ be local foliated vector fields orthogonal to $\F$. Then, the orthogonal projection of $\nabla_u^g v$ on ${ \F}^\perp$ is still foliated. 
\end{lemma}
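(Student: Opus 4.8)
The statement is local, so the plan is to fix a foliated chart $(U,\phi)$ as in Definition~\ref{D:simple} and, after shrinking $U$, to exploit the fact that the bundle-like metric $g$ realizes $\restr{\F}{U}$ as the fibres of a Riemannian submersion $p\colon U\to B$ onto a Riemannian manifold $(B,g_B)$, with $\restr{g}{\F^\perp}=p^*g_B$. The first thing to record is the dictionary: a vector field on $U$ which is foliated and orthogonal to $\F$ is exactly the horizontal lift of a (uniquely determined) vector field on $B$; conversely every horizontal lift is foliated, orthogonal to $\F$, and such lifts span $\F^\perp$ at every point of $U$. Granting this, if $u,v$ are the horizontal lifts of $\bar u,\bar v\in\Chi(B)$, then O'Neill's formula for Riemannian submersions (\cite[Lemma~1]{Oneill66}) identifies the orthogonal projection of $\nabla^g_u v$ onto $\F^\perp$ with the horizontal lift of $\nabla^{g_B}_{\bar u}\bar v$; being a horizontal lift, it is foliated, which is the claim.

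Alternatively, I would give a direct argument staying on $X$, in two steps. \emph{Step 1: $\langle\nabla^g_u v,s\rangle_g$ is a basic function for every foliated $s\in\Gamma(\F^\perp)$.} This I would read off the Koszul formula
\[
2\langle\nabla^g_u v,s\rangle_g = u\langle v,s\rangle_g + v\langle u,s\rangle_g - s\langle u,v\rangle_g + \langle[u,v],s\rangle_g - \langle[u,s],v\rangle_g + \langle[v,s],u\rangle_g,
\]
checking that each of the six terms lies in $\Omega^0(X/\F)$: the inner product of two foliated sections of $\F^\perp$ is basic because $g$ is bundle-like and $\nabla^\F\overline{g}=0$; the derivative of a basic function along a foliated section of $\F^\perp$ is again basic; and, since $L(X,\F)$ is closed under Lie bracket, for foliated $a,b\in\Gamma(\F^\perp)$ the field $[a,b]$ is foliated, hence congruent modulo $\Chi(\F)$ to a basic vector field, so that $\langle[a,b],c\rangle_g$ is basic for any foliated $c\in\Gamma(\F^\perp)$. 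As $s\in\Gamma(\F^\perp)$ annihilates the $\F$-component of $\nabla^g_u v$, this shows $\langle\xi,s\rangle_g\in\Omega^0(X/\F)$, where $\xi$ denotes the orthogonal projection of $\nabla^g_u v$ onto $\F^\perp$.

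\emph{Step 2: conclude that $\xi$ is foliated.} For $w\in\Chi(\F)$ I would use that the Bott connection is metric with respect to $\overline{g}$ (a reformulation of bundle-likeness, already built into $\nabla^\F\overline{g}=0$): for foliated $s\in\Gamma(\F^\perp)$,
\[
w\cdot\langle\xi,s\rangle_g = \langle[w,\xi],s\rangle_g + \langle\xi,[w,s]\rangle_g = \langle[w,\xi],s\rangle_g,
\]
the second term vanishing since $[w,s]\in\Chi(\F)$ while $\xi\perp\F$. The left-hand side is zero by Step~1, so $\langle[w,\xi],s\rangle_g=0$ for every foliated $s\in\Gamma(\F^\perp)$; as these span $\F^\perp$ pointwise, $[w,\xi]\in\Chi(\F)$, i.e. $\xi$ is foliated.

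The computations are routine once the framework is in place; the only points requiring a little care — and where the actual content sits — are the bracket term in Step~1 (identifying $[a,b]$ modulo $\Chi(\F)$ with a basic vector field, so that its pairings with basic fields are basic functions) and the two standard structural facts about Riemannian foliations used throughout: that a bundle-like metric makes $\F$ locally a Riemannian submersion, and that foliated sections of $\F^\perp$ generate $N\F$ at every point. Both are available from \S\ref{SS:bundlelike} and from Molino's theory (\cite{Molino-livre}).
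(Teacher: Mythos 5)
Your proposal is correct, and your first route is exactly what the paper does: Lemma~\ref{L:covariantfoliated} is stated there with a bare reference to O'Neill's Lemma~1, relying on the fact recalled in \S\ref{SS:bundlelike} that a bundle-like metric realizes $\F$ locally as the fibres of a Riemannian submersion onto the local leaf space equipped with $\overline{g}$, under which foliated sections of $\F^\perp$ are precisely O'Neill's basic (horizontal, projectable) fields. Your second, Koszul-formula argument is a genuinely self-contained alternative: it never leaves $X$, uses only $\nabla^\F\overline{g}=0$ together with the closedness of $L(X,\F)$ under brackets, and in effect reproves the relevant part of O'Neill's lemma instead of quoting it. What it costs is two small verifications you rightly flag: that foliated sections of $\F^\perp$ span $\F^\perp$ pointwise (clear in a distinguished chart, by projecting the transverse coordinate fields orthogonally onto $\F^\perp$, the projections staying foliated since the tangential parts lie in $\Chi(\F)\subset L(X,\F)$), and that the identity $w\cdot\langle\xi,s\rangle_g=\langle[w,\xi],s\rangle_g+\langle\xi,[w,s]\rangle_g$ is valid for a section $\xi$ of $\F^\perp$ not yet known to be foliated --- which it is, since $\nabla^\F\overline{g}=0$ gives the Leibniz identity for arbitrary sections of $N\F$. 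One harmless slip: in your Koszul formula the last term should be $-\langle[v,s],u\rangle_g$, not $+\langle[v,s],u\rangle_g$; this does not affect the argument, since you only use that each of the six terms is a basic function.
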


\section{Structure of Riemannian foliations}\label{S:structure riemannian foliation}

The references for this section are the books \cite{Molino-livre} and~\cite{Moerdijkbook2003}.

\subsection{The structure of transversely parallelizable}\label{SS:TPrecol}

Let $(X,\F)$ be a foliated manifold with $\dim(X)=m+n$ and $\rg( \F)=m$. Recall (\emph{cf.} Section~\ref{S:recoll}) that $\Chi(X)$, $\Chi(\F)$ and $\ell(X,\F)$ denote respectively the Lie algebras of vector fields, tangent and basic vector fields.

\begin{dfn}[\emph{cf.} \protect{\cite[Chapter 4]{Molino-livre}}]
The foliation $\F$ is said to be \emph{transversely parallellizable} or to admit \emph{a transverse parallelism} if there exists $n=\mathrm{codim}(\F)$ basic vector fields $Y_1,\ldots,Y_n$ that form a global  frame of the normal bundle of $\F$.
 \end{dfn}
 
Remark that such foliations can be equipped with a holonomy invariant metric $\overline{g}$. Indeed, it suffices to consider the metric on $N\F$ with respect to which $Y_1,\ldots,Y_n$ is an orthonormal frame. Thus, transversely parallellizable foliations turn out to be particular cases of Riemannian foliations. We shall see in the sequel that in some sense the structure of Riemannian foliations can be studied through the transversely parallelizable case (see \S\ref{SS:asstrfrbundle}).
 
\begin{dfn}
A transversely parallelizable foliation is said to be \emph{complete} (TC for short) if it possesses a transverse frame $Y_1,\ldots,Y_n$ such that each $Y_i$ can be represented by a complete vector field $X_i\in\Chi (X)$.\footnote{The definition is a little bit more restrictive than the one given in \cite[\S4.5]{Molino-livre} but sufficient for our purposes.} Note that this automatically holds whenever $X$ is compact.
\end{dfn}

\begin{remark}
The lift of a TC foliation to any cover is still TC.
\end{remark} 

The structure of TC foliations is given by the following theorem by Molino.
\begin{thm}[Molino \protect{\cite[Theorem~4.2]{Molino-livre}}]\label{th:structure TC}
Let $(X,\F)$ be a foliated manifold such that $\F$ is TC. The topological closure of the leaves of $\F$ are the fibers of a  locally trivial fibration $\pi_\F: X\to W$,  the so-called \emph{basic fibration}. The restriction of $\F$ to any fiber $X_w:=\pi_\F^{-1}(w)$ is such that $\ell(X_w,\restr{\F}{X_w})$ is finite dimensional with
\[\dim\left(\ell(X_w,\restr{\F}{X_w})\right)=n-\dim(W).
\]
The basis $W$ of the fibration $\pi_\F$ is called the \textit{basic manifold}. 
\end{thm}

\begin{remark}
In particular, the closedness of a single leaf implies the closedness of the others and in that case the foliation $\F$ is \textit{simple} (\emph{cf.} Definition~\ref{D:simple}). 
\end{remark}

\begin{dfn}\label{def:structural Lie algebra}
With the notation of Theorem~\ref{th:structure TC}, the Lie algebra $\frg{\F}:=\ell(X_w,\F_{X_w})$ is independent of $w\in W$ (up to isomorphism) and is called the \emph{structural Lie algebra} of the TC foliation~$\F$.
\end{dfn}

\subsection{The commuting sheaf}\label{SS:commutingsheaf} 

We refer to \cite[\S4.4 and~4.5]{Molino-livre} for details about this paragraph. Let $\F$ be a TC foliation on $X$. Let $U$ be an open set of $M$ and denote by $\mathcal C (U,\F)$ the Lie algebra formed by the basic vector fields of $\F_{|U}$ that commute with each element of $\ell(X,\F)$.  The collection of $\mathcal C (U,\F)$ defines a presheaf. We will denote by $\mathcal C_\F$ the corresponding sheaf. The main properties of $\mathcal C_\F$ are summarized in the following proposition.
 
\begin{prop}[\emph{cf.} \protect{\cite[Proposition~4.4]{Molino-livre}}]
The sheaf $\mathcal C_\F$ is a locally constant sheaf of Lie algebra with typical fiber $\frg{X,\F}$. Moreover, any local section $v$ of $\mathcal C_\F$ is tangent to the basic fibration.
\end{prop}

The sheaf $\mathcal{C}_\F$ being locally constant, it gives rise to a monodromy representation:
\[\rho_\F\colon\pi_1(X)\To\Aut(\frg{X,\F}).\]
Let $G_\mathfrak{g}$ be the connected and simply connected Lie group integrating $\mathfrak{g}:=\frg{X,\F}$. Theorem~\ref{th:structure TC} can be made even more precise by saying that the foliation $\restr{\F}{X_w}$ is of \emph{Lie type} (see \cite{Fedida-cras} or \cite[\S4.2]{Molino-livre}), where $X_w$ is a fiber of the basic fibration. In particular we have a commutative diagram
\begin{equation}\label{diag:monodromy}
\begin{tikzcd}
\pi_1(X_w)\ar[r,"\rho_{\F|_{X_w}}"] \ar[d, "i_*"'] & G_\mathfrak{g}\ar[d,"\mathrm{Ad}_{G_\mathfrak{g}}"] \\
\pi_1(X)\ar[r,"\rho_\F"] & \Aut(\mathfrak{g}).
\end{tikzcd}
\end{equation}

\begin{prop}\label{prop:dense image}
The top horizontal arrow of the diagram~\eqref{diag:monodromy} has dense image. In particular, if $\mathfrak{g}$ is semi-simple, the group $\Im(\rho_\F)\cap\Aut^0(\mathfrak{g})$ is dense in $\Aut^0(\mathfrak{g})$.
\end{prop}

\begin{proof}
The proof of the first part can be found in~\cite{Fedida-cras} and the last part is a consequence of classical results about semi-simple Lie algebras (\emph{cf.}~\cite[Chapter~II]{Helgasonbook}).
\end{proof}

\begin{dfn}\label{D:central-cover}
Let $\rho_\F:\pi_1(X)\to \Aut(\frg{X,\F})$ be the monodromy representation attached to the locally constant sheaf  $\mathcal C_\F$. The covering space $\wtX_{\rho_\F}$ corresponding to $\mathrm{Ker}(\rho_\F)$ is called the \emph{central cover} of $X$. 
\end{dfn}

\subsection{A developability criterion}\label{SS:developability}

\begin{dfn}
A foliation $\F$ on $X$ is said to be \emph{developable} if its lift $\wtF$ to the universal cover $\wtX$ is simple.
\end{dfn}

It is established in \cite[Proposition~4.6]{Molino-livre} that if $p\colon\wtX_{\rho_\F}\to X$ is the central cover of $X$ (\emph{cf. ~supra}), we then have
\[\frg{\wtX_{\rho_\F},p^*\F}\subset Z\left(\frg{X,\F}\right)\]
(with the obvious identification) and in particular we get the following criterion.

\begin{lemma}\label{L:devcentercriterion}
If $\F$ is a TC foliation with centerless structural Lie algebra $\frg{X,\F}$, its lift $\F$ on the central cover is a simple foliation. In particular, if $\F$ is a TC foliation whose structural Lie algebra $\frg{X,\F}$ is semi-simple, then $\F$ is developable.
\end{lemma}
\begin{proof}
This is \cite[Proposition~4.6]{Molino-livre}.
\end{proof}

\subsection{The transverse frame bundle of a Riemannian foliation}\label{SS:asstrfrbundle}
 
As it was alluded to above, the structure of Riemannian foliations can be understood from the viewpoint of parallelizable ones.
 
Let $(X,\F,g)$ be a compact Riemannian foliated manifold (see \S\ref{SS:bundlelike} for the definition). As usual, we set $m=\rg(\F)$ and $m+n=\dim(X)$. Denote by $\F^\sharp$ the foliation constructed as the lift of $\F$ on the direct orthonormal transverse frame bundle $X^\sharp$ (see \cite[\S2.5]{Molino-livre}).  Both foliations have the same rank, $\F^\sharp$ projects onto $\F$ via the natural projection map 
\[p^\sharp:X^\sharp\longrightarrow X\] so that the differential $p^\sharp_*$ induces a surjective morphism $\overline{p^\sharp_*}$ between the normal bundles $N{\F^\sharp}$ and $(p^\sharp)^*N\F$.

The space $X^\sharp$ is naturally endowed with a structure of $\SO(n)$-principal bundle. As in the classical setting, the transverse Levi-Civita connection associated to $\F$ is defined on $X^\sharp$ by a horizontal distribution $\mathcal H$ on $X^\sharp$. By construction, the foliation $\F^\sharp$ is tangent to $\mathcal H$, so that $\overline { \mathcal H}:=\mathcal H/\F^\sharp$ is a subbundle of $N\F^\sharp$,  and both $\mathcal H$ and $\F^\sharp$ are invariant under the right action of  $\SO(n)$. Moreover, the foliated manifold $( X^\sharp,\F^\sharp)$  is equipped with a canonical transverse  paralellism. More precisely let us first fix a basis $(\lambda_1,\ldots,\lambda_k)$ of $\mathfrak{so}(n)$ (with $k=\frac{n(n-1)}{2}$) identified with the Lie algebra of fundamental vector fields (with respect to the action of $\SO(n)$). On the other hand, pick a basis $(e_1,\ldots,e_n)$ of $\R^n$ and  denote by $\overline{ u_i}\in\Gamma( \overline { \mathcal H} )$ the horizontal transverse vector field on $X^\sharp$ such that for every $z\in X^\sharp$, the projection $\overline{ p_*} (\overline{ u_i}(z))$ has coordinate vector $e_i$ in the transverse frame $z$. Let $\overline{\lambda_i}$ be the projection of $\lambda_i$ on $N\F^\sharp$. It turns out that the transverse vector fields $\overline{ \lambda_1},\ldots,\overline{ \lambda_k}, \overline{ u_1},\ldots,\overline{ u_n}$ are \textit{basic} and thus define a transverse parallelism for the lifted foliation $\F^\sharp$ (\emph{cf.} \cite[\S3.3]{Molino-livre}). We can thus apply Theorem~\ref{th:structure TC} to get a quite precise description of the foliation $\F^\sharp$.

\begin{dfn}
The structural Lie algebra $\frg{X,\F}$ of $\F$ is by definition the structural Lie algebra $\frg{X^\sharp,\F^\sharp}$ of the transversely parallelizable foliation $\F^\sharp$.
\end{dfn}

The transverse invariant metric $\overline{g}$ on $N\F$ induces a canonical transverse invariant metric ${ \overline{g}}^\sharp=(p^\sharp)^* \overline{g}\oplus_{ \overline { \mathcal H} } \vartheta$ on $N\F^{\sharp}$. The latter is obtained as the orthogonal sum (with respect to the splitting $N\F^\sharp= \overline { \mathcal H}  \oplus \textrm{Ker}(dp^\sharp)$) of the lifting of $\overline{g}$ on $\mathcal H/\F^\sharp $ and the metric $\vartheta$ on vertical fibers induced by the unique bi-invariant metric of volume $1$ on $\SO(n)$. Over the local Riemannian leaf space, $( U/\F,\overline{g})$, $((p^\sharp)^{-1}(U)/\F^\sharp,{ \overline{g}}^\sharp)$ is nothing but the orthonormal frame bundle equipped with its canonical metric. In particular the vertical fibers are totally geodesic (see for instance \cite[\S5, p.~466]{Oneill66}).

The bundle-like metric $g$ induces a canonical $\SO(n)$-invariant bundle-like metric $g^\sharp=(p^\sharp)^* {g}\oplus_{  { \mathcal H} } \vartheta $ for $\F^\sharp$ on $X^\sharp$. Indeed, $g^\sharp$ is constructed as the orthogonal sum of the lifting of $g$ on the horizontal distribution $T_\mathcal H$ and $\vartheta$. In particular $p^\sharp:(X^\sharp, g^\sharp)\to (X, g)$ is a Riemannian submersion with totally geodesic fibers.  According to \cite[Proof of Lemma~7]{Nozawarigid2010}, the mean curvature forms (see \S\ref{SS:characteristicform}) of $(X,\F,g)$ and $(X^\sharp,\F^\sharp,g^\sharp)$ are simply related by
\begin{equation}\label{E:basicMCrelated}
\kappa_{g^\sharp}= (p^\sharp)^*(\kappa_g).
\end{equation}
In particular, if $(\F,g)$ is \textit{ tense} (resp. \textit{ taut}), then $(\F^\sharp,g^\sharp)$ is \textit{ tense} (resp. \textit{ taut}).

Let $\mathfrak{g}:=\frg{X^\sharp,\F^\sharp}$ and $\cC_{\F^\sharp}$ be respectively  the structural Lie algebra and the commuting sheaf attached to $\F^\sharp$. Note that $\cC_{\F^\sharp}$ is invariant under the right action of $\SO(n)$, so that we can define $\cC_\F:=(p^\sharp)_* \cC_{\F^\sharp}$, the \textit{ commuting sheaf} associated to $\F$. According to \cite[\S5.3]{Molino-livre}, $\cC_\F$ is a locally constant sheaf 
of Lie algebras with typical fiber $\frg{X^\sharp,\F^\sharp}$ formed by local basic Killing vector fields whose local flows describe the leaves closure.\footnote{In particular, if $\F$ was already transversely parallelizable, we recover the former definitions of structural Lie algebra and commuting sheaf.} This commuting sheaf  can be also alternatively defined as the Lie algebra of the closure of the holonomy pseudo-group which turns out to be a Lie pseudo-group \cite[Appendix~D by \'E.~Salem]{Molino-livre}. 

The locally constant sheaf $\cC_\F$ gives rise to a representation of the fundamental group
\begin{equation}\label{E:repautg}
\rho_\F: \pi_1(X)\longrightarrow \Aut(\mathfrak g)
\end{equation}
such that $\rho_{\F^\sharp}=\rho_\F\circ (p^\sharp)_*$ where the representation 
\[\rho_{\F^\sharp}\colon \pi_1(X^\sharp)\longrightarrow \mathrm{Aut}(\mathfrak g)\]
is the one associated to $\cC_{ \F^ \sharp}$.

\subsection{The semi-simple without compact factors case}\label{SS:SScase}

We explain in this paragraph how the various structure results for TC foliations (Theorem~\ref{th:structure TC} and Lemma~\ref{L:devcentercriterion}) can be used when the structural Lie algebra is semi-simple without compact factors. In this setting, we can produce a basic map on the universal cover of the ambient manifold that is equivariant with respect to the monodromy representation and that takes values in a symmetric space.

Let $\F$ be a TC foliation on a manifold $X$ and let us assume that the structural Lie algebra $\mathfrak{g}:=\frg{X,\F}$ is semi-simple without compact factors.  From Lemma~\ref{L:devcentercriterion}, we know that $\F$ is developable and the leaves of the foliation $\wtF$ on $\wtX$ are thus given by the fibers of a submersion with connected fibers
\[\pi_{\wtF}\colon \wtX\To P:=\wtX/\wtF.\]
We have a commutative diagram of basic fibrations:
\[\begin{tikzcd}
\wtX \ar[r,"\pi_{\wtF}"]\ar[d,"r"'] & P\ar[d,"\alpha"]\\
X\ar[r,"\pi_\F"] & W.
\end{tikzcd}
\]

The map $\alpha$ is a submersion and it has the structure of a principal bundle over $W$. Let us now explain where this structure comes from. The Lie algebra $\ell(X,\F)$ is naturally a Lie subalgebra of $\Chi(W)$ and $\ell(\wtX,\wtF)$ can be identified (through $\pi_{\wtF}$) with $\Chi(P)$. Using $r^*$ we can thus consider $\ell(X,\F)$ as a subalgebra of $\Chi(P)$.

Let $G$ denote the group of vertical diffeomorphisms of $\alpha\colon P\to W$ (\emph{i.e.} acting in the fibers of $\alpha$) that fix each element of $\ell(X,\F)$ (seen as vector fields on $P$).

\begin{remark}
The group $G$ is a Lie group that can have infinitely many connected components.
\end{remark}

The manifold $P$ is then endowed with an action of $G$ and it is not hard to see that it makes $\alpha\colon G\to W$ a principal $G$-bundle.

By its very definition, it is straightforward to check that $\Lie(G)\simeq \mathfrak{g}$. In particular, its adjoint representation gives rise to a morphism
\[\mathrm{Ad}_G\colon G\To G_\mathfrak{g}:=\Aut(\mathfrak{g}).\]
The latter group is a real algebraic group and has thus finitely many connected components. We can consider $K\subset G_\mathfrak{g}$ a maximal compact subgroup: the homogeneous space $\mathcal{S}_\mathfrak{g}=G_\mathfrak{g}/K$ is thus a Riemannian symmetric space of the non compact type (\emph{cf.} for instance \cite[Chapter~VI]{Helgasonbook} and \cite[Chapter IV]{BorelBook98} ). This is a contractible space (diffeomorphic to a Euclidean space) and if $H$ denotes the subgroup $H:=\mathrm{Ad}_G^{-1}(K)$, the space $G/H$ is diffeomorphic to $\mathcal{S}_\mathfrak{g}$ \cite [Chapter IV, Proposition 4.10, Chapter VII, Theorem 3.7]{BorelBook98}.

It is now a classical fact in the theory of fiber bundles: the homogenous space $G/H$ being contractible, we can reduce the structure group of $P$ to $H$. Equivalently, it means that we can find
\[\Phi\colon P\To G/H\]
such that
\begin{equation} \label{E:equivariance}
\forall\, x\in P,\ \Phi(x\cdot g)= g^{-1} \Phi(x).
\end{equation}

We gather here some useful observations.

\begin{enumerate}
\item The group $\pi_1(X)$ of deck transformations of $\wtX$ acts vertically on $P$ with respect to $\alpha$ and preserves $\pi^*\ell(X,\F)$ pointwise, thus defining a morphism
\[\rho_\F: \pi_1(X)\To G .\]
This morphism maps $\pi_1(X)$ onto a \textit{dense} subgroup of the Lie group $G$. To see this, pick a point $w\in W$ and let $F_w^0$ be  a connected component of the fiber $\alpha^{-1} (w)$. Let $G^0$ be the component of the identity in $G$. As $\pi_1(X)$ acts transitively on the set of connected component of $\alpha^{-1} (w)$, it is sufficient to prove that $\pi_1(X)\cap G^0$ acts densely on $F_w^0$. But this last point immediately results from the fact that $\F$ is minimal (any leaf is dense) in restriction to the fibers of the basic fibration $\pi_\F:X\to W$, according to Proposition~\ref{prop:dense image}.
\item Let $\Phi:P\to G/H$ be the map constructed above (it depends on the choice of a section of the map $P\times_G G/H\to W$). Let us consider
\[\Psi=\Phi\circ \pi_{\wtF}\colon \wtX\To G/H.\]
By construction, $\Psi$ is an equivariant locally trivial fibration where equivariant means
\begin{equation} \label{E: Equiv2} 
\forall\,\gamma\in \pi_1(X),\ \forall\,x\in\wtX,\ \Psi (\gamma(x))= \rho_\F(\gamma)\cdot\Psi(x).
\end{equation}
\item Conversely any $\Psi\in\mathcal C^\infty (\wtX, G/H)$ constant 
 on the leaves of $\wtF$ and satisfying the equivariance property \eqref{E: Equiv2} can be obtained in this way. Indeed, let $\Phi\in \mathcal C^\infty (P,G/H)$ be the map factorizing $\Psi$. Then $\Phi$ satisfies the equivariance property \eqref{E:equivariance} for a dense subgroup of $G$ (namely $\pi_1(X)$), hence for all $g\in G$. It is then straightforward to see that $\Phi$ is given by a reduction from $G$ to $H$.
\end{enumerate}

Let us sum up the discussion above with the following statement.

\begin{thm}\label{TH:equivariantsym}
Let $(X,\F)$ be a foliated manifold such that $\F$ is TC and such that the structural Lie algebra $\mathfrak g:=\frg{X,\F}$ is semi-simple without compact factors. Let $\mathcal S_\mathfrak{g}$ be the associate Riemannian symmetric space. The following holds true.
\begin{enumerate*}
\item The foliation $\F$ is developable and the monodromy representation of $\mathcal{C}_\F$
\[\rho_\F\colon \pi_1(X)\To G\]
take values into a Lie group such that $\Lie(G)=\mathfrak{g}$.

\item \label{I:density} The closure of the image of $\rho:=\mathrm{Ad}_G\circ\rho_\F$  contains $\Isom^0(\mathcal S_\mathfrak{g})$.

\item There exists on $\wtX$ a basic (with respect to $\wtF$ ) smooth map $\Psi:\wtX\to \mathcal S_\mathfrak{g}$ such that
\begin{equation}\label{E:equivariancetrsymmetric}
\forall\,x\in\wtX,\, \forall\,\gamma\in\pi_1(x),\ \Psi (\gamma(x))= \rho(\gamma) (\Psi (x)).
\end{equation}
\item  \label{I:eqimplieslocaltrivial} Any smooth basic map $\Psi:\wtX\to \mathcal S$ satisfying the equivariance condition \eqref{E:equivariancetrsymmetric} is a locally trivial fibration  with connected fibers. Moreover such a map $\Psi$ decomposes as $\Phi\circ \pi_{\wtF}$ with $\Phi\colon P\to\mathcal S_\mathfrak{g}$ and we have
\[
\Phi\left(g\cdot(\pi_{\wtF}(x))\right)=g\cdot\Psi (x)
\]
for every $g\in G$ and for all $x\in\wtX$.
\end{enumerate*}
\end{thm}
\begin{remark} \label{R:defofG}
Under the assumptions/notation of the previous theorem, the foliation defined by the submersion $\Psi$ descends to $X$ as a minimal foliation (all leaves are dense) $\G$ whose codimension is the dimension of $\mathcal S$; this foliation $\G$ is an extension of $\F$ (in the sense of \S\ref{SS:extension}) and is transversely Riemannian homogeneous.\footnote{We refer to \cite[Chapter~III, \S3]{Godbillonbook91} for the general definition of a transversely homogeneous foliation.}
\end{remark}

We will also make use of the following observation relating the equivariant maps and the transverse frame bundle (see Subsection~\ref{SS:asstrfrbundle}).
This is a  consequence of the definition of $\cC_\F$ in term of $\cC_{\F^\sharp}$, the identification between $\Aut(\mathfrak g)$ and $\Isom(\mathcal S_\mathfrak{g})$, and the fact that $\rho_{\F^\sharp}$ factors through $\rho_\F$.
\begin{lemma}\label{L:equivariantlift}
Suppose that the structural Lie algebra $\mathfrak g$ is semi-simple without compact factors and let $\mathcal S_\mathfrak{g}$ be the associated symmetric space. Assume that there exists a $\wtF$-invariant smooth $\rho_\F$-equivariant  map $\Psi: \wtX \to \mathcal S$.  Then the composed map $\Psi^\sharp:=\Psi\circ q: \widetilde{X^\sharp}\to \mathcal S_\mathfrak{g} $ satisfies the equivariance property \eqref{E:equivariancetrsymmetric} of Theorem~\ref{TH:equivariantsym}:
\[\forall\,x\in \widetilde{X^\sharp},\forall\, \gamma \in \pi_1(X^\sharp),\
\Psi^\sharp  (\gamma(x))= \rho_{\F^\sharp}(\gamma) (\Psi (x)).\]
\end{lemma} 

\section{Evolution equation for harmonic maps}\label{S:evolution harmonic map}

The main goal of this section is to prove that the hamonic flow used to produce harmonic maps preserves the property of being basic (with respect to a given Riemannian foliation). To do so, we introduce the notion of tangential energy (\S\ref{SS:tangenergy} below) and study its behavior along the harmonic flow.

\subsection{Tangential energy}\label{SS:tangenergy}

Let $(X,\F,g)$ be a foliated Riemannian manifold and $f:X\to Y$ be a smooth function with values in a Riemannian manifold $(Y,h)$. Let us denote by $f_*^T$ the restriction of the differential $f_*$ to $\F$ and let us define the \textit{tangential energy density} (with respect to $\F$) as the function
$$e_T(f):\left\{\begin{array}{ccl}
X  &\To   & \R^+ \\
x	&\longmapsto  &\frac{1}{2} { \|{f}_*^T\|}_2^2=\frac{1}{2}\mathrm{Trace}\left(  { ( f_*^T)}^\star{ f_*^T}\right)=  \frac{1}{2} \sum_i {{ \vert f_* (e_i)\vert}_h^2}
\end{array}\right.$$
where $(e_i)$ is any orthonormal basis of $\F_x$. Here, the star exponent stands for the adjoint.

\subsection{Tension field and basic maps}\label{SS:tensionfbasicmaps}

We maintain notation/assumptions form \S\ref{SS:tangenergy}. In the sequel, we will denote indifferently the differential of $f$ by $df$ or$f_*$.  Recall that the tension field is the section of $f^{*} ( TY)$ defined by 
$$\tau(f)= \mathrm{Trace}\left(\nabla(df)\right)=\mathrm{div}(df)=-d_\nabla^* df.$$
Here, $\nabla$ is the pull-back by $f$ of the Levi-Civita connection $\nabla^h$ on $Y$ and $d_\nabla^*$ is the adjoint operator of the differentiation
$$d_\nabla: A^k(X, T^*X\otimes f^{*}(TY))\To A^{ k+1}(X, T^*X\otimes f^{*}(TY))$$
of forms  on $ X$ valued in ${f}^{*}(T {  Y})$.

In more layman terms, once we have fixed a local orthonormal frame $(e_i)$ of $TX$,
\begin{equation}\label{E:tensionfieldlocalexp} 
\tau(f)= \sum_i \nabla_{ {e_i}}f_*{e_i} - f_*(\sum_i { \nabla}_{ {e_i}}^{ { g}}  { e_i}).
\end{equation}

Let us now focus on the case  where $f$ is \textbf{basic} (\emph{i.e.} leafwise constant). Consider the curvature form of the connection $\nabla$:
$$\mathscr R (v,w)= \nabla_v \nabla_w-\nabla_w\nabla_v-\nabla_{[v,w]}$$ where $v,w\in \Gamma(TX)$. If $v\in\Gamma (\F)$, $f_*v=0$ and from the definition of $\nabla$, we can infer that 
\begin{equation}\label{E:flatFcurvature}
i_v  \mathscr R =0
\end{equation}
for every $v\in\Gamma(\F)$. Let us fully justify this vanishing property. Pick $x\in X$ and any local section $s$ of $TY$ defined near $f(x)$. Set $s_f=s\circ f$. This is well defined as a local section of $f^{*}(TY)$ near $x$. By multilinearity of the curvature tensor, it is then sufficient to check that $\mathscr R (v,w)(s_f)=0$ for any local foliated vector field $w$. This last point is just a consequence of the fact that both $s_f$ and $\nabla_w (s_f)$ are constant along the leaves and that $[v,w]$ is tangent to $\F$, the vector field $w$ being foliated. This yields
\[\nabla_v (s_f)= \nabla_v\nabla_w (s_f)=\nabla_{[v,w]}(s_f)=0.\] 

In the terminology of \cite[Definition~2.33]{Kambertondeur75} (see also \cite[\S2.6]{Molino-livre}) $f^{*}(TY)$ is a \textit{foliated vector bundle} and  $\nabla$ is a \textit{basic connection} on it. Equivalently, the pair $f^{*} (TY)$ belongs to the category of $\F$-vector bundles as defined in \cite[Section~2.2]{Elkacimi90}. In our context,  a section $s$ of $f^{*}(TY)$ is said to be \textit{basic} if $\nabla_v s=0$ for every $v\in \Gamma(\F)$. Local basic sections form a free module over the ring of local basic functions whose rank coincides with that of 
 $T_Y$.

More generally, denote by $A_b^k\subset A^k:=A^k(X, T^*X\otimes f^{*}(TY))$ the subspace of twisted basic forms of degree $k$, that is
$\alpha\in A_b^k$ iff for every $v\in\Gamma (\F)$, we have $i_v(\alpha)=0$ and $i_v ( d_\nabla \alpha)=0$. More explicitly, $\alpha\in A^k$ is basic if and only if it can be written locally as a finite sum of simple tensors of the form $s\otimes \xi$ where $s$ and $\xi$ are respectively (local) basic sections of $f^{*}TY$ and $\wedge^k N^*\F\subset\wedge^k T^*X$. Thanks to~\eqref{E:flatFcurvature}, the differential $d_\nabla$ induces a differential on the graded algebra $A_b^\bullet$.

Remark that for every local foliated vector field $v$, $f_* v$ is constant along the leaves, so that $\nabla_w f_*v=0$ for every $w$ tangent to $\F$. In particular the tension field of $f$ takes the simplified expression:
\begin{equation}\label{E:tensionfieldbasic}
\tau(f)= \sum_i \nabla_{ {e_i}}f_*{e_i} - f_*\left(\sum_i { \nabla}_{ {e_i}}^{ { g}}  { e_i}\right)-f_* (\tau_g).
\end{equation}
Here, $(e_i)$ is any local orthonormal frame of ${ \F^\perp}$. 
Suppose in addition that $\F$ is \textit{tense} with respect to $g$ (\emph{i.e.} $\tau_g$ is foliated). By choosing the $e_i$'s to be foliated and thanks to Lemma~\ref{L:covariantfoliated} and \eqref{E:flatFcurvature},  we immediately check that $\tau(f)$ is \textit{basic} in the previous sense,  that is
\[\nabla_w \tau(f)=0\]
for every $w\in\Gamma(\F)$.

As noticed before we can restrict the operator $d_\nabla$ to the graded algebra $A_b$ of basic forms. Moreover, when $\F$ is transversely oriented, the basic star operator ${\overline \star}$ defined at the level of local basic forms (\emph{cf.} \cite[Chapter~7]{Tondeur}) extends\emph{via} the metric structure to $f^{*} ( TY)$ as an operator that we denote by the same symbol 
${\overline\star}:A_b^\bullet\to A_b^{n- \bullet}$
 so that we can consider the adjoint $d_\nabla^{\overline\star}:A_b^\bullet \to A_b^{ \bullet -1}.$ As $f\in A_b^0$, we can thus define its \textit{basic tension field}
\[ \tau_b (f)=-d_\nabla^{\overline\star} d f.\]
 More explicitly, let us consider $U$ a sufficiently small distinguished simple open set of $X$ and $\overline f:U/\F\to Y$ the map that factorizes $f$ through the projection $\pi:U\to U/\F$. Note that $\pi$ induces a bundle map $\pi_*:f^{*} ( TY)\to { \overline f}^{*} ( TY)$. The (local) space of leaves $U/\F$ is equipped with the transverse metric $\overline g$ and we have 
 \[\pi_*\tau_b (f)=\tau(\overline{f})\circ\pi .\]
If $(e_i)$ is a local orthonormal frame of ${ \F^\perp}$ and $(\overline{e_i})$ the corresponding orthononormal frame with respect to $\overline{g}$ (regarded as vector fields on $U/\F$), we have
 
\[ \tau(\overline{f})= \sum_i \overline{\nabla}_{\overline{e_i}}\overline{f}_*{\overline{e_i}} - \overline{f}_*\left(\sum_i { \overline{\nabla}}_{\overline{e_i}}^{ {\overline{g}}}  { \overline{e_i}}\right)\]
where $ \overline{\nabla}=\overline{f}^* (\nabla^h)$.
In view of \eqref{E:tensionfieldbasic} and in the particular case where $\tau_g$ vanishes identically,  this somehow means that the notion of harmonicity (with respect to $g$) and transverse harmonicity (with respect to $\overline{g})$ coincide.

We gather the previous observations in the following.

\begin{prop}\label{P:relationtensionfieldbasictf}
Let $(X,\F,g)$ be a Riemannian foliated manifold and $f:X\to Y$ be a leafwise constant smooth map to a Riemannian manifold $(Y,h)$. Then the ordinary tension field $\tau (f)$ and the basic tension field $\tau_b(f)$ are related by 
	\[\tau(f)= \tau_b(f)-f_* (\tau_g).\]
In particular $\tau(f)$ is basic if $(\F,g)$ is tense and $\tau (f)=\tau_b (f)$ whenever $(\F,g)$ is taut.
\end{prop}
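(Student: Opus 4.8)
The plan is to compute both tension fields in a common local orthonormal frame adapted to $\F$ and its orthogonal complement, and to read off the difference directly from the two local expressions \eqref{E:tensionfieldbasic} and the formula for $\tau_b(f)$ given just above the statement. First I would fix a point $x\in X$ and choose a local orthonormal frame $(e_i)_{i=1,\dots,m}$ of $\F$ together with a local orthonormal frame $(e_\alpha)_{\alpha=m+1,\dots,m+n}$ of $\F^\perp$, and I would complete it to a full orthonormal frame $(e_i)\cup(e_\alpha)$ of $TX$. Since $f$ is leafwise constant, $f_*e_i=0$ for $i\le m$, so in the general local formula \eqref{E:tensionfieldlocalexp} all terms indexed by the tangential directions collapse: $\nabla_{e_i}f_*e_i=0$ for $i\le m$ because $f_*e_i$ vanishes identically near $x$. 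Hence $\tau(f)=\sum_\alpha \nabla_{e_\alpha}f_*e_\alpha - f_*\big(\sum_{i}\nabla^g_{e_i}e_i+\sum_\alpha\nabla^g_{e_\alpha}e_\alpha\big)$, and grouping the Levi-Civita terms by their tangential/normal parts, $\sum_i\nabla^g_{e_i}e_i$ contributes exactly the mean curvature vector $\tau_g$ of the leaves (its $\F^\perp$-component) plus a term tangent to $\F$ which is killed by $f_*$; likewise $\sum_\alpha\nabla^g_{e_\alpha}e_\alpha$ decomposes into an $\F^\perp$-part and an $\F$-part, the latter again annihilated by $f_*$. This is precisely the content of \eqref{E:tensionfieldbasic}, so I would simply invoke it: $\tau(f)=\sum_\alpha\nabla_{e_\alpha}f_*e_\alpha - f_*\big(\sum_\alpha\nabla^g_{e_\alpha}e_\alpha\big)-f_*(\tau_g)$, valid for any local orthonormal frame $(e_\alpha)$ of $\F^\perp$.

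Next I would identify the first two terms with $\tau_b(f)$. The basic tension field is defined by $\tau_b(f)=-d_\nabla^{\overline\star}df$, and the discussion above the statement shows that under the local projection $\pi:U\to U/\F$ it satisfies $\pi_*\tau_b(f)=\tau(\overline f)\circ\pi$, where $\overline f:U/\F\to Y$ is the induced map and $\tau(\overline f)=\sum_\alpha\overline\nabla_{\overline{e_\alpha}}\overline f_*\overline{e_\alpha}-\overline f_*\big(\sum_\alpha\overline\nabla^{\overline g}_{\overline{e_\alpha}}\overline{e_\alpha}\big)$ for $(\overline{e_\alpha})$ the orthonormal frame of $(U/\F,\overline g)$ corresponding to a \emph{foliated} orthonormal frame $(e_\alpha)$ of $\F^\perp$. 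Because $\pi$ is a Riemannian submersion with totally geodesic-in-the-transverse-sense horizontal distribution and $\nabla=f^*\nabla^h$, $\overline\nabla=\overline f^*\nabla^h$, the bundle map $\pi_*$ intertwines $\nabla_{e_\alpha}(f_*e_\alpha)$ with $\overline\nabla_{\overline{e_\alpha}}(\overline f_*\overline{e_\alpha})$, and by Lemma~\ref{L:covariantfoliated} the $\F^\perp$-component of $\nabla^g_{e_\alpha}e_\alpha$ is foliated and projects to $\overline\nabla^{\overline g}_{\overline{e_\alpha}}\overline{e_\alpha}$. Thus $\pi_*\big(\sum_\alpha\nabla_{e_\alpha}f_*e_\alpha - f_*(\sum_\alpha\nabla^g_{e_\alpha}e_\alpha)\big)=\tau(\overline f)\circ\pi=\pi_*\tau_b(f)$; since $\pi_*$ is a fibrewise isomorphism on $f^*(TY)$ this gives $\sum_\alpha\nabla_{e_\alpha}f_*e_\alpha - f_*(\sum_\alpha\nabla^g_{e_\alpha}e_\alpha)=\tau_b(f)$ on $U$, hence globally by a partition-of-unity-free patching argument (the identity is local and frame-independent). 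Combining with the displayed formula for $\tau(f)$ yields $\tau(f)=\tau_b(f)-f_*(\tau_g)$, which is the claimed equality.

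Finally the two special cases: if $(\F,g)$ is tense, then $\tau_g$ is foliated, so $f_*(\tau_g)$ is a basic section of $f^*(TY)$ — indeed along the leaves $\nabla_w(f_*\tau_g)=0$ for $w\in\Gamma(\F)$ since $f_*\tau_g$ is leafwise constant — and as $\tau_b(f)$ is manifestly basic (it lives in $A_b^0$ by construction), $\tau(f)=\tau_b(f)-f_*(\tau_g)$ is basic; if $(\F,g)$ is taut, then $\tau_g\equiv 0$ and the correction term disappears, giving $\tau(f)=\tau_b(f)$.

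I expect the only genuinely delicate point to be the second paragraph: justifying carefully that the horizontal Levi-Civita terms project correctly under $\pi_*$ and that the foliated-frame computation of $\tau(\overline f)$ matches $\sum_\alpha\nabla_{e_\alpha}f_*e_\alpha-f_*(\sum_\alpha\nabla^g_{e_\alpha}e_\alpha)$ on the nose, rather than up to a leafwise-tangent error. This hinges on choosing $(e_\alpha)$ foliated (so that $f_*e_\alpha$ is leafwise constant and descends) and on Lemma~\ref{L:covariantfoliated}; everything else is bookkeeping already packaged in \eqref{E:tensionfieldbasic} and the preparatory discussion of $\tau_b$.
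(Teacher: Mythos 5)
Your proposal is correct and follows essentially the same route as the paper: the paper obtains the identity by deriving the simplified expression \eqref{E:tensionfieldbasic} for a leafwise constant map (using $f_*|_{\F}=0$ and the splitting of the Levi-Civita trace into its $\F$- and $\F^\perp$-parts, the former killed by $f_*$ and the latter producing $f_*(\tau_g)$) and then identifying the remaining transverse sum with $\tau_b(f)$ through the local quotient $\pi:U\to U/\F$ and the displayed formula for $\tau(\overline f)$ in a foliated orthonormal frame. Your extra care about $\pi_*$ intertwining the pulled-back connections and about Lemma~\ref{L:covariantfoliated} is exactly the bookkeeping the paper leaves implicit, so there is no gap.
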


\begin{remark}\label{R:basiclaplacian}
A special occurence of Proposition~\ref{P:relationtensionfieldbasictf} is given in \cite[Chapter~7]{Tondeur}, when $Y=\R$ equipped with the standard euclidean metric. In this context, $-\tau$ and $-\tau_b$ are nothing but the usual and basic Laplacian.
\end{remark}

\subsection{The equivariant setting}

Let $(X,\F)$ be a $(m+n)$-dimensional compact foliated manifold  where $\F$ is Riemannian and has rank~$m$.  Let us equip $\F$ with a bundle-like metric $g$ such that the mean curvature vector field of the  the leaves is \textit{foliated}. This can be always achieved thanks to Theorem~\ref{TH:Dominguezth}.

Let $(Y,h) $ be a Riemannian manifold together with a representation
\[\rho:\pi_1(X)\To \Isom(Y).\] 
of the fundamental group of $X$, seen as the group of deck transformations of the universal cover $\wtX$, into the isometry group of $(Y,h)$.

Let $f_0:\wtX\to Y$ be a smooth $\rho$-equivariant mapping and let us also consider a smooth variation $(f_t)_{t\in I}$ of $f_0$ (with $I=[0,\,t_0]$), that is a mapping $F\in{ \mathcal C }^\infty (I\times \wtX),\ f_t:=F(t,\cdot)$ such that 
\begin{enumerate}
	\item $f_t$ is $\rho$-equivariant,
	\item The family $(f_t)$ satisfies the evolution equation
		\begin{equation}\label{E:evolution}
		\frac{\partial f_t }{\partial t}=\tau(f_t)
	 \end{equation}
\end{enumerate}
where $\tau(f_t)\in\Gamma (f^{*} TY)$ is the \textit{tension field} of $f_t$ (see \S\ref{SS:tensionfbasicmaps}).

Finally, let us denote by $e_T: \wtX\times I\to \R_{\geq 0}$ the function $(x,t)\mapsto e_T(f_t)(x)$ where the tangential energy density (introduced in \S\ref{SS:tangenergy}) is taken with respect to the foliation and the complete metric on $\wtX$ obtained by respective pull-backs of $\F$ and $g$. Obviously, $e_T$ descends on $X\times I$ as a function denoted in the same way. 

\begin{thm}\label{Th:tangentialenergy}\label{TH:Tangentialbehavior}
The tangential energy density $e_T$ is a subsolution of the heat operator, \emph{i.e.} there exists a positive constant $C$ such that 
\[\left( \dfrac{\partial}{\partial t}+ { \Delta}^g\right) (e_T)\leq Ce_T.\]
\end{thm}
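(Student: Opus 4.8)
The strategy is the classical Bochner--Eells--Sampson computation for the heat flow of harmonic maps, but carried out only for the \emph{tangential} part of the energy and keeping careful track of the extra terms produced by the second fundamental form of the leaves. First I would fix a point $(x,t)\in X\times I$ and choose the local frame adapted to the geometry: a local orthonormal frame $(e_i)_{i=1,\dots,m+n}$ of $TX$ near $x$ with $(e_i)_{i\le m}$ a tangentially geodesic frame of $\F$ at $x$ (Lemma~\ref{L:tangentiallygeodexistence}) and $(e_i)_{i>m}$ foliated, so that Lemma~\ref{L:bracketidgeod} and Lemma~\ref{L:covariantfoliated} are available. Writing $e_T(f_t)=\tfrac12\sum_{i\le m}\langle f_*e_i,f_*e_i\rangle_h$, I would compute $\bigl(\partial_t+\Delta^g\bigr)e_T$ by differentiating under the sum. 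The time derivative contributes $\sum_{i\le m}\langle \nabla_{\partial_t}f_*e_i,f_*e_i\rangle$; commuting $\nabla_{\partial_t}$ past $\nabla_{e_i}$ using the evolution equation $\partial_t f_t=\tau(f_t)$ turns this into $\sum_{i\le m}\langle \nabla_{e_i}\tau(f_t),f_*e_i\rangle$ plus a curvature term $\sum_{i\le m}\langle \mathscr R(\partial_t,e_i)f_*e_i,f_*e_i\rangle$ coming from the pulled-back curvature $\mathscr R$ of $\nabla=f^*\nabla^h$.

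\textbf{Key steps.} Next I would expand $\Delta^g e_T=-\sum_k\nabla_{e_k}\nabla_{e_k}e_T+\nabla_{\sum_k\nabla^g_{e_k}e_k}e_T$. The Hessian term produces the ``good'' non-negative quantity $-\sum_{k}\sum_{i\le m}|\nabla_{e_k}f_*e_i|^2$ together with a second-order piece $-\sum_k\sum_{i\le m}\langle\nabla_{e_k}\nabla_{e_k}f_*e_i,f_*e_i\rangle$. The heart of the calculation is the Weitzenböck-type interchange: rewriting $\sum_k\nabla_{e_k}\nabla_{e_k}f_*e_i$ in terms of $\nabla_{e_i}\bigl(\sum_k\nabla_{e_k}f_*e_k\bigr)=\nabla_{e_i}\tau(f_t)+(\text{first fundamental form terms})$ at the cost of curvature terms $\mathscr R(e_k,e_i)f_*e_k$ and of terms involving brackets $[e_k,e_i]$ and covariant derivatives of the frame. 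Here is where the foliated structure enters decisively: by \eqref{E:flatFcurvature} the curvature $\mathscr R$ annihilates vectors tangent to $\F$ only when one argument is in $\Gamma(\F)$ — but $f_*$ itself is \emph{not} assumed basic here, so instead I must use that $\sum_{i\le m}\langle\nabla_{e_i}\tau(f_t),f_*e_i\rangle$ cancels against the matching term from $\partial_t e_T$. After this cancellation, what remains on the right-hand side is: (i) the curvature terms $\sum_{i\le m,k}\langle\mathscr R(e_k,e_i)f_*e_k - \mathscr R(e_k,e_i)f_*e_i,\dots\rangle$, which are pointwise bounded by $C\,e(f_t)$; (ii) terms quadratic in $\nabla f_*$ contracted with $\nabla^g$ of the frame and with brackets $[e_i,e_k]$, which by tangential geodesy and $\F$-stability under brackets are either absorbed by $-\sum_k\sum_{i\le m}|\nabla_{e_k}f_*e_i|^2$ via Cauchy--Schwarz or bounded by $C\,e_T(f_t)$; and (iii) terms involving the mean curvature vector $\tau_g$ and its covariant derivatives, which are \emph{bounded in $x$ uniformly} precisely because $\tau_g$ is foliated (hence basic, hence descends to the compact $X$).

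\textbf{Main obstacle.} The delicate point — and the reason the hypothesis that $\tau_g$ is foliated is essential — is controlling the cross terms between the ambient geometry and the tangential derivatives of $f_t$. Concretely, when I split the full energy estimate into tangential and transverse parts, the ``mixed'' second fundamental form contributions $\langle\nabla^g_{e_i}e_k,e_l\rangle$ for $i,l\le m<k$ do not vanish; one must show, using Lemma~\ref{L:bracketidgeod} (the symmetry $\langle[e_i,e_k],e_l\rangle=\langle[e_l,e_k],e_i\rangle$ at $x$) and the $O'$Neill-type Lemma~\ref{L:covariantfoliated}, that the genuinely dangerous terms either reorganize into a perfect square that is subtracted, or are controlled by $C\,e_T$ rather than by the full energy $e(f_t)$, which would be useless since we have no a priori bound on the transverse energy. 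I expect the bookkeeping of these terms — keeping the estimate honestly \emph{tangential} and with coefficient $C\,e_T$ on the right, not $C\,e(f_t)$ — to be the crux; once the frame is chosen well and Lemmas~\ref{L:bracketidgeod}, \ref{L:covariantfoliated} are invoked, the curvature terms are routine (compactness of $X$ and boundedness of the sectional curvature of $Y$ along the bounded-image flow) and the conclusion $\bigl(\partial_t+\Delta^g\bigr)e_T\le C\,e_T$ follows.
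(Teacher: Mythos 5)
Your overall strategy is exactly the paper's: the paper proves the theorem via an Eells--Sampson type identity (Proposition~\ref{P:heatoperatorexpress}) computed in a frame that is tangentially geodesic along $\F$ at the point and foliated in the transverse directions, using Lemmas~\ref{L:tangentiallygeodexistence}, \ref{L:bracketidgeod} and \ref{L:covariantfoliated}, and then bounds each surviving term by $C\,e_T$. However, your proposal misidentifies the mechanism at the one place the paper calls ``the key point''. You claim that the terms involving the mean curvature vector are handled because $\tau_g$ foliated makes them \emph{uniformly bounded}. Boundedness is automatic on the compact quotient and has nothing to do with foliatedness, and, more importantly, it is not enough: when the Laplacian hits $e_T$ one meets terms of the form $\bigl(e_i\cdot\langle v,e_k\rangle\bigr)\,\langle f_*e_k,f_*e_i\rangle_h$ with $i\le m<k$, where $v=\sum_l\nabla^g_{e_l}e_l$. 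Such a term is linear, not quadratic, in the tangential differential, so with merely bounded coefficients it is only $O(\sqrt{e\,e_T})$, which is not $\le C\,e_T$ near the zero set of $e_T$ and would ruin both the stated inequality and Corollary~\ref{C:remainsleafwiseconstant}. What tenseness actually buys (and what the paper uses) is the \emph{vanishing} $e_i\cdot\langle\tau_g,e_k\rangle=0$ for $i\le m<k$, because $\kappa_g$ is basic and $e_k$ is foliated; the analogous vanishing for the transverse part $v_2=\sum_{l>m}\nabla^g_{e_l}e_l$ comes from Lemma~\ref{L:covariantfoliated}. Without this vanishing your bookkeeping cannot close, no matter how the remaining cross terms are reorganized into squares.

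A second, smaller inaccuracy: you bound the curvature contribution by $C\,e(f_t)$. The correct pointwise bound is by $\mathrm{const}\cdot e(f_t)\,e_T(f_t)$ (the term $\mathrm{Riem}_Y(f_*^T,f_*,f_*,f_*^T)$ has two tangential and two full slots), and one then uses that $e(f_t)$ is bounded on $X\times[0,t_0]$ (smoothness of the flow, compactness of $X$ and of $\bigcup_t f_t(K)$) to convert this into $C\,e_T$; a bound by $C\,e(f_t)$ alone would be useless for the same reason as above. Finally, the commutation giving $\partial_t e_T=\sum_{i\le m}\langle\nabla_{e_i}\tau(f_t),f_*e_i\rangle_h$ is pure torsion-freeness ($\nabla_{\partial_t}f_*e_i=\nabla_{e_i}f_*\partial_t$); the extra curvature term you write down does not occur (and would vanish anyway by antisymmetry), so this is harmless but should be cleaned up.
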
 

Let us introduce the following terminology.
\begin{dfn}
A smooth $\rho$-equivariant mapping	$f:\wtX\to Y$ is said to be $\F$-invariant whenever $f$ is basic, \emph{i.e.} leafwise constant (with respect to the lifted foliation $\wtF$).
\end{dfn}

\begin{cor}\label{C:remainsleafwiseconstant}
If the initial datum $f_0$ is $\F$-invariant, then the maps $f_t$ are $\F$-invariant as well for all time $t\in I$.
\end{cor}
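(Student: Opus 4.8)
The plan is to derive Corollary~\ref{C:remainsleafwiseconstant} from Theorem~\ref{TH:Tangentialbehavior} via the parabolic maximum principle. First I would observe that if $f_0$ is $\F$-invariant, then $f_0$ is leafwise constant, which precisely means that its tangential differential $(f_0)_*^T$ vanishes identically; equivalently, $e_T(f_0)\equiv 0$ on $\wtX$, and hence $e_T(\cdot,0)\equiv 0$ on $X$ after descending to the quotient. The goal is then to show that $e_T(\cdot,t)\equiv 0$ for all $t\in I$, which is exactly the assertion that each $f_t$ is leafwise constant, i.e. $\F$-invariant.

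The key step is to apply the maximum principle to the differential inequality
\[
\left(\frac{\partial}{\partial t}+\Delta^g\right)(e_T)\leq C\,e_T
\]
from Theorem~\ref{TH:Tangentialbehavior}, on the \emph{compact} manifold $X\times I$ (recall $e_T$ descends to $X\times I$). Since $e_T\geq 0$ everywhere, the standard trick is to consider the rescaled function $u:=e^{-Ct}e_T\geq 0$, which satisfies $\left(\frac{\partial}{\partial t}+\Delta^g\right)u=e^{-Ct}\left[\left(\frac{\partial}{\partial t}+\Delta^g\right)e_T-C\,e_T\right]\leq 0$; here I am using the sign convention of the paper in which $\Delta^g$ is the (positive) geometer's Laplacian, so that $\frac{\partial}{\partial t}+\Delta^g$ is a backward-heat-type operator and nonnegative subsolutions with zero initial data stay zero. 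One then invokes the parabolic maximum principle (or, equivalently, integrates the inequality against $u$ over $X$ and uses $\int_X u\,\Delta^g u\,\mathrm{vol}_g=\int_X|\nabla u|^2\,\mathrm{vol}_g\geq 0$ together with Gr\"onwall) to conclude that if $u(\cdot,0)\equiv 0$ and $u\geq 0$, then $u\equiv 0$ on $X\times I$, hence $e_T\equiv 0$ on $X\times I$.

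Finally I would translate $e_T(\cdot,t)\equiv 0$ back into the statement that each $f_t$ is $\F$-invariant: vanishing of the tangential energy density means $f_t{}_*(e_i)=0$ for every $i$ and every local orthonormal frame $(e_i)$ of $\F$, i.e. $(f_t)_*|_{\wtF}=0$, which by connectedness of the leaves of $\wtF$ forces $f_t$ to be constant along each leaf; combined with the fact that the $f_t$ are already assumed $\rho$-equivariant, this is exactly $\F$-invariance in the sense of the preceding definition. I expect the only genuinely delicate point to be bookkeeping the sign/normalization conventions so that the maximum principle is applied in the correct (parabolic, well-posed) direction; once the function $u=e^{-Ct}e_T$ is set up, the argument is the textbook uniqueness-type application of the maximum principle and there is no real obstacle. (One should also note at the outset that $e_T$ genuinely descends to the compact quotient $X\times I$, which is already remarked in the Setting, so that compactness is available and no completeness-at-infinity subtlety on $\wtX$ intervenes.)
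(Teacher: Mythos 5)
Your proof is correct and is essentially the paper's own argument: set $a=e^{-Ct}e_T$, note that $\left(\frac{\partial}{\partial t}+\Delta^g\right)(a)\le 0$ on the compact quotient, and apply the parabolic maximum principle to deduce $e_T(\cdot,t)\equiv 0$ from $e_T(\cdot,0)\equiv 0$, which is exactly the $\F$-invariance of each $f_t$. One small caveat: with the paper's convention $\Delta^g=-\mathrm{div}\,\mathrm{grad}$, the operator $\frac{\partial}{\partial t}+\Delta^g$ is the standard \emph{forward} heat operator (not a backward one as you label it), so the maximum principle you invoke --- nonnegative subsolutions with vanishing initial data remain zero --- is indeed applied in the correct, well-posed direction despite the misnomer.
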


Here and hereafter $\Delta^g$ (resp. $\nabla^g$) denotes indifferently the Laplace--Beltrami operator $\Delta^g (u)=-\textrm{div}\left(\textrm{grad}(u)\right)$ (resp. the Levi-Civita connection) with respect to the metric $g$ or the lifted metric $\widetilde g$, depending on whether we work on $X$ or $\wtX$.
\begin{proof}
This is a completely standard application of the maximum principle in the presence of subsolutions of the heat operator $\dfrac{\partial}{\partial t}+ { \Delta}^g$ (Moser--Harnack's inequality \cite{Moser64}): we write $a(x,t)=  e_T(x,t) e^{-Ct}$ and observe that it satisfies 
\[ ( \dfrac{\partial}{\partial t}+ { \Delta}^g) (a)\leq 0\]
 so that the maximum principle yields $a(x,t)\leq \mathrm{sup}_{x\in X}a(x,0)$ or equivalently 
\[ e_T(x,t)\leq e^{Ct}\mathrm{sup}_{x\in X}e_T(x,0).\]
	 Now, it is obvious that the map $f_t$ is $\F$-invariant if and only if the function $e_T(\cdot,t)$ vanishes identically and the corollary follows.
\end{proof}

\subsection{Proof of Theorem \ref{TH:Tangentialbehavior}}\label{SS:proofoftangenergysubsol} Here and henceforth, we will identify $\F$, $\F^\perp$ and their dual to subbundle of $TX$ and $T^*X$ thanks to the ortogonal decomposition.

Let $f:\wtX\to Y$ be a smooth function and $f_*^ T$ its tangential differential in the direction of the foliation. We can regard $f_*^ T$ as a section of $f^{*} ( TY) \otimes {{\wtF}^*}\subset f^{*} ( TY) \otimes {T^*\wtX}$. Let
\[
\widetilde{\nabla}:\Gamma( f^{*} ( TY) \otimes {T^*\wtX})\longrightarrow \Gamma ( f^{*} ( TY) \otimes {T^*\wtX}\otimes  {T^*\wtX})
\]
be the connection induced on the tensor product by $\nabla^g$ and $\nabla:=f^*(\nabla^h)$. It is defined by the following rule
$${ \widetilde{\nabla}}_w (s\otimes t)=\nabla_w s \otimes t + s\otimes \nabla_w^g t.$$ 
The connection $\widetilde{\nabla}$ splits as $\widetilde{\nabla}= \nabla^{\F,\F}+ \nabla^{\F^\perp,\F^\perp}+\nabla ^{\F,\F^\perp}+\nabla^{\F^\perp,\F}$ according to the canonical decomposition 

\[
{ T^*\wtX}\otimes   {T^*\wtX}= \left(  {\wtF}^*\otimes {\wtF}^{*}  \right)\oplus \left( \wtF^{ \perp *} \otimes \wtF^{ \perp *}  \right)\oplus  
\left(  \wtF^{  *}\otimes  {\wtF^{ \perp *}}\right) \oplus \left( {\wtF^{ *}}\otimes  {\wtF}^*\right)
\]
We will also consider $\nabla ^{\F,\F^\perp}$ as an operator
$$f^{*} ( TY) \otimes {T^*\wtX}\To f^{*} ( TY) \otimes \wtF^{ \perp *}\otimes  {\wtF}^*$$
via the canonical isomorphism $\wtF^{ \perp *}\otimes  {\wtF}^*\simeq \wtF^*\otimes \wtF^{ \perp *}$.
With this at hands, Theorem \ref{TH:Tangentialbehavior} is an easy consequence of the following result.

\begin{prop}\label{P:heatoperatorexpress}
When applied to the function $e_T$ the heat operator has the following expression:
\begin{equation}\label{E:heateqexpression}
\begin{split}
\left( \dfrac{\partial}{\partial t}+ { \Delta}^g\right) (e_T)=& -\mathrm{Ric}_{ \wtX} ({f_t}_*^T,{f_t}_*^T)+ \mathrm{Riem}_Y ({f_t}_*^T,{f_t}_*,{f_t}_*,{f_t}_*^T) \\
&-{ \left\arrowvert\nabla ^{\F,\F^\perp}{ f_t}_*^T+\nabla^{\F^\perp,\F}{ f_t}_*^T\right\arrowvert}^2 -{\left\arrowvert\nabla^{\F,\F}{ f_t}_*^T\right\arrowvert}^2 \\
&+{ \left\arrowvert\nabla^{\F^\perp,\F^{\perp}}{ f_t}_*^T\right\arrowvert}^2 +2{ \left\arrowvert\nabla^{\F^\perp,\F}{ f_t}_*^T\right\arrowvert}^2.
\end{split}
\end{equation}

\end{prop}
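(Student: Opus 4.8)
The plan is to establish the Bochner-type identity \eqref{E:heateqexpression} by a direct computation, working at a point $x\in\wtX$ using a carefully chosen adapted orthonormal frame, and then read off Theorem \ref{TH:Tangentialbehavior} from it. First I would fix $x$, a time $t$, and pick a local orthonormal frame $(e_i)_{i=1,\dots,m+n}$ of $T\wtX$ such that $(e_i)_{i\le m}$ is tangentially geodesic at $x$ (Lemma~\ref{L:tangentiallygeodexistence}) and $(e_i)_{i>m}$ is foliated; by the remark following the definition, all leafwise Lie brackets $[e_i,e_j]$ ($i,j\le m$) vanish at $x$, and Lemma~\ref{L:bracketidgeod} gives the symmetry $\langle[e_i,e_k],e_l\rangle=\langle[e_l,e_k],e_i\rangle$ at $x$ that will be needed to symmetrise the mixed second-derivative terms. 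Writing $e_T=\tfrac12\sum_{i\le m}|f_*e_i|_h^2$, I would compute $\partial_t e_T$ using the evolution equation \eqref{E:evolution}, which replaces $\partial_t(f_*e_i)$ essentially by $\nabla_{e_i}\tau(f_t)$ up to bracket terms, and separately compute $\Delta^g e_T=-\sum_j\nabla^g_{e_j}\nabla^g_{e_j}e_T$ (the frame being geodesic at $x$ for the full metric after a harmless further adjustment, or else carrying the connection terms which vanish at $x$). The sum $\partial_t+\Delta^g$ applied to $e_T$ produces, after expanding, a ``good'' negative term $-\sum|\widetilde\nabla^{\F,\cdot}f_*^T|^2$-type contribution, a curvature term coming from commuting covariant derivatives on $\wtX$ and on $Y$ (this is where $\mathrm{Ric}_{\wtX}$ and $\mathrm{Riem}_Y$ enter, via the standard harmonic-map Bochner manipulation), and cross terms that must be organised into the precise combination of $\nabla^{\F,\F^\perp}$, $\nabla^{\F^\perp,\F}$, $\nabla^{\F,\F}$, $\nabla^{\F^\perp,\F^\perp}$ norms displayed in \eqref{E:heateqexpression}.

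The key algebraic step is to track which blocks of $\widetilde\nabla f_*^T$ appear with which sign. The differentiation $\nabla_{e_i}(f_*e_i)$ for $i\le m$ (tangential direction) contributes the $\nabla^{\F,\F}$ block, and summing over $i$ and using the evolution equation together with Weitzenböck on $Y$ produces the curvature terms and $-|\nabla^{\F,\F}f_*^T|^2$. The Laplacian $-\sum_j\nabla^g_{e_j}\nabla^g_{e_j}$ hitting $|f_*e_i|^2$ contributes, for $j\le m$, another copy of the $\nabla^{\F,\F}$-block squared and, for $j>m$, the $\nabla^{\F^\perp,\F}$-block squared (here one must be careful: $\nabla^g_{e_j}e_i$ for $j>m$, $i\le m$ need not be orthogonal to $\F$, so the commutation $\nabla^g_{e_j}(f_*e_i)=\nabla^g_{e_j}f_*^Te_i$ plus correction terms involving $[e_j,e_i]$ must be handled, and this is exactly where Lemma~\ref{L:bracketidgeod} and the mean-curvature foliated hypothesis intervene). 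Collecting the bracket corrections coming from $\partial_t$ (which involve the mean curvature, foliated by hypothesis, hence contributing only lower-order or controllable terms) and from the Laplacian, and applying the symmetry of Lemma~\ref{L:bracketidgeod} to combine the two mixed blocks into $-|\nabla^{\F,\F^\perp}f_*^T+\nabla^{\F^\perp,\F}f_*^T|^2$, while a leftover of opposite sign gives $+|\nabla^{\F^\perp,\F^\perp}f_*^T|^2+2|\nabla^{\F^\perp,\F}f_*^T|^2$, yields the stated identity.

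Granting Proposition~\ref{P:heatoperatorexpress}, Theorem~\ref{TH:Tangentialbehavior} follows at once: on a compact manifold the sectional curvature of $(\wtX,\widetilde g)$, hence $\mathrm{Ric}_{\wtX}$ and $\mathrm{Riem}_Y$ along the image, are bounded, so the first line of \eqref{E:heateqexpression} is $\le C\,|f_t{}_*^T|^2 = 2C\,e_T$ for some constant $C$ independent of $t$ over the compact time interval $I$ (the family $(f_t)$ being smooth on $I\times\wtX$ and descending to $X$, its $C^1$ norm is uniformly bounded); the second line is manifestly $\le 0$; the third line is a sum of two nonnegative terms but each is dominated by the ``good'' negative term $-|\nabla^{\F,\F^\perp}f_*^T+\nabla^{\F^\perp,\F}f_*^T|^2$ only after a further Cauchy--Schwarz absorption — actually one checks directly from the block structure that $|\nabla^{\F^\perp,\F^\perp}f_*^T|^2+2|\nabla^{\F^\perp,\F}f_*^T|^2 \le |\nabla^{\F,\F^\perp}f_*^T+\nabla^{\F^\perp,\F}f_*^T|^2 + |\nabla^{\F,\F}f_*^T|^2$ is \emph{not} automatic, so in fact one simply discards the manifestly nonpositive terms of line two and estimates the remaining third line crudely by boundedness of $\widetilde\nabla f_t^T$ on the compact set $I\times X$, again absorbing it into $C\,e_T$ after noting it vanishes wherever $e_T$ does; more cleanly, one observes that the third-line terms are all pointwise bounded by a constant times $e_T$ plus $|\widetilde\nabla f_t^T|^2$, and the latter is controlled a priori along the flow. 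The main obstacle, and the only genuinely delicate point, is the bookkeeping in Proposition~\ref{P:heatoperatorexpress}: getting the mixed blocks to combine with the exact coefficients shown requires using Lemma~\ref{L:bracketidgeod} precisely and keeping meticulous track of the mean-curvature bracket terms generated by both $\partial_t$ and $\Delta^g$, which is why the foliated (tense) hypothesis on $g$ is indispensable.
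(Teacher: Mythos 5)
Your overall strategy coincides with the paper's: work at a point with a frame whose leafwise part is tangentially geodesic (Lemma~\ref{L:tangentiallygeodexistence}) and whose normal part is foliated, expand $\partial_t e_T$ via the evolution equation and the first variation formula, expand $\Delta^g e_T$, commute derivatives to produce $\mathrm{Ric}_{\wtX}$ and $\mathrm{Riem}_Y$, and reorganise the remaining terms into the four blocks of $\widetilde\nabla f_t{}_*^T$ using Lemma~\ref{L:bracketidgeod} and the tense hypothesis. But as a proof of Proposition~\ref{P:heatoperatorexpress} the proposal has a genuine gap: the Proposition \emph{is} the bookkeeping. The precise coefficients $-1,-1,+1,+2$ on the block norms (and the identification of the leftover sum $S=\sum\langle\nabla^g_{[e_i,e_k]}e_k,e_l\rangle\langle f_*e_i,f_*e_l\rangle_h$ with $\lvert\nabla^{\F^\perp,\F}f_t{}_*^T\rvert^2$, which is the last nontrivial step in the paper) are never derived; you only assert that they ``must be organised into the precise combination displayed''. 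Moreover, the parenthetical claim that one may take the frame ``geodesic at $x$ for the full metric after a harmless further adjustment, or else carrying the connection terms which vanish at $x$'' is false and, taken literally, would derail the computation: an adapted frame with foliated normal fields cannot be made geodesic at $x$ in general, and the connection coefficients $\langle\nabla^g_{e_k}e_i,e_j\rangle$ (second fundamental form and integrability data) do \emph{not} vanish at $x$ — they are exactly what builds the blocks $\nabla^{\F^\perp,\F}f_t{}_*^T$, $\nabla^{\F^\perp,\F^\perp}f_t{}_*^T$ and the mean-curvature term whose cancellation is the one place the foliated-$\tau_g$ hypothesis is used.

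A secondary confusion, relevant to deducing Theorem~\ref{TH:Tangentialbehavior} from the identity: your treatment of the positive third-line terms wavers between an inequality you correctly recognise as unavailable and two fallback arguments that do not work (vanishing of a term where $e_T$ vanishes does not yield a linear bound $\le C e_T$ with a uniform constant, and a bound of the form $Ce_T+\lvert\widetilde\nabla f_t{}_*^T\rvert^2$ is useless without an a priori control you have not established). The correct observation, which drops out of the explicit block formulas in the paper (equations \eqref{E:normperperp} and \eqref{E:normperp}), is structural: both $\nabla^{\F^\perp,\F^\perp}f_t{}_*^T$ and $\nabla^{\F^\perp,\F}f_t{}_*^T$ are built from connection coefficients of the pair $(\F,g)$ alone — data that descends to the compact manifold $X$ and is independent of $f_t$ — contracted against $\langle f_t{}_*e_l,f_t{}_*e_i\rangle_h$ with $i,l\le m$; hence each is pointwise bounded by $C\,e_T$ with $C$ depending only on $(X,\F,g)$, and no control of the full covariant derivative of $f_t$ along the flow is needed. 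Only the $\mathrm{Riem}_Y$ term requires the uniform bound on $\lvert f_t{}_*\rvert$ over a fundamental domain times $[0,t_0]$, which you do invoke correctly.
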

Before entering into the details of the proof, let us firstly explain the meaning of the two first terms in the right-hand side  which involve the Ricci and Riemann curvature tensors and why it implies Theorem \ref{Th:tangentialenergy}. Both terms $\mathrm{Ric}_{\wtX}$ and $\mathrm{Riem}_Y$ in \eqref{E:heateqexpression} are defined respectively from the Ricci tensor of $g$ on $X$ and the full curvature tensor of $h$ on $Y$ and extended in a natural way to tensors in $f^{*} ( TY) \otimes {T^*\wtX}$. We have chosen to keep notation $\mathrm{Ric}_{\wtX}$ and $\mathrm{Riem}_Y$ to avoid cumbersome expressions. In the sequel we use the notation $\langle-,-\rangle$ for the scalar product induced by $g$ (or $\widetilde g$) and $\langle-,-\rangle_h$ for the scalar product induced by $h$.   We will denote $(e_1,\ldots,e_{m+n})$ any local orthonormal frame of $T \wtX$ such that $ (e_1,\ldots,e_m )$ is a local othonormal frame of $\wtF$ and $(e_1^*,\ldots,e_{m+n}^*)$ the dual coframe.

If $u_i=s_i\otimes t_i$ (for $i=1,\ldots,4$) are decomposable tensors in $f^{*} ( TY) \otimes {T^*\wtX}$, we can set $\mathrm{Ric}_{ \wtX} (u_1,u_2):=\langle s_1,s_2\rangle_h \mathrm{Ric}_{ \wtX}(t_1,t_2)$  and extend to general tensors by bilinearity. In particular, in terms of local frame as above, we can easily check that 
\begin{equation}\label{E:Ricciexp}
\begin{split}
&\mathrm{Ric}_{\wtX} ({f_t}_*^T,{f_t}_*^T)(x)=\sum_{i,k\leq m}\langle  \mathrm{Ric}_{ \wtX}(e_k,e_i){f_t}_*e_k ,{f_t}_* e_i\rangle_h \\
&=\sum_{\substack{i,\,k\leq m \\ l\leq m+n}} \left(\langle\nabla_{e_l}^g \nabla_{e_k}^g e_i,e_l\rangle -\langle\nabla_{e_k}^g \nabla_{e_l}^g e_i,e_l\rangle -\langle\nabla_{[ e_l,e_k]}^g  e_i,e_l\rangle\right) \langle { f_t}_* e_k,{ f_t}_* e_i\rangle_h\\
&=\sum_{\substack{i,\,k\leq m \\ l\leq m+n}} \left(\langle\nabla_{e_i}^g \nabla_{e_l}^g e_l,e_k\rangle -\langle\nabla_{e_l}^g \nabla_{e_i}^g e_l,e_k\rangle -\langle\nabla_{[ e_i,e_l]}^g  e_l,e_k\rangle\right) \langle { f_t}_* e_k,{ f_t}_* e_i\rangle_h
\end{split}
\end{equation}
where the last equality comes directly from the symmetry property 
\[\langle R_{\wtX} (u,v)w,z\rangle=\langle R_{\wtX} (w,z)u,v\rangle\]
of the Riemannian curvature tensor on $(\wtX, \widetilde{g})$.
Let us now focus on the second term in the right hand side of \eqref{E:heateqexpression} and denote by $R_Y$ the Riemannian curvature tensor on the target manifold $Y$ .

For decomposable tensors of $f^{*} ( TY) \otimes {T^*\wtX}$ as above, we can set
\[R_Y(u_1,u_2)u_3:= \langle t_2,t_3\rangle_g R_Y(s_1,s_2)s_3\otimes t_1\in  f^{*} ( TY) \otimes {T^*\wtX}.\] This formula allows to extend by multilinearity this map to general tensors $u_i\in f^{*} ( TY) \otimes {T^*\wtX}$ and also define 
\[\mathrm{Riem}_Y (u_1,u_2,u_3,u_4):= \langle R_Y(u_1,u_2)u_3,u_4\rangle_{ h\otimes g}\] where ${ h\otimes g}$ stands for the metric induced by $h$ and $g$ on $f^{*} ( TY) \otimes {T^*\wtX}$. In particular, in the local frame $(e_1,\ldots,e_{m+n})$, we obtain the explicit formula:
\begin{equation}\label{E:Seccurvexp}
\begin{split}
&\mathrm{Riem}_Y ({f_t}_*^T,{f_t}_*,{f_t}_*,{f_t}_*^T)= \sum_{\substack{ i\leq m\\ k\leq m+n}}\langle R_Y({f_t}_* e_i,{f_t}_* e_k){f_t}_*e_k,{f_t}_* e_i\rangle_h\\
 &=\sum_{\substack{ i\leq m\\ k\leq m+n}} \langle \nabla_{e_i}\nabla_{e_k}{ f_t}_* e_k,{ f_t}_* e_i\rangle_h - \langle \nabla_{e_k}\nabla_{e_i}{ f_t}_* e_k,{ f_t}_* e_i\rangle_h- \langle \nabla_{[ e_i,e_k]}{ f_t}_* e_k,{ f_t}_* e_i\rangle_h.
\end{split}
\end{equation}

Moreover, we have
$$\nabla^{\F^\perp,\F^{\perp}}{ f_t}_*^T=\sum_{ \substack{i\leq m\\ j,k>m}}\langle\nabla_{e_k} e_i,e_j\rangle_g {  f_t}_* e_i\otimes e_j^*\otimes e_k^*$$
so that 
 \begin{equation}\label{E:normperperp}
 { \left\arrowvert\nabla^{\F^\perp,\F^{\perp}}{ f_t}_*^T\right\arrowvert}^2  =\sum_{ \substack{i,l\leq m\\ k>m}}   \langle \nabla_{e_k}^g {e_i},\nabla_{e_k}^g {e_l} \rangle\langle{ f_t}_* e_l,{ f_t}_*e_i\rangle_h.
\end{equation}
In the same vein, we can write
 \begin{equation}\label{E:nablafperpf}
 \nabla^{\F^\perp,\F}{ f_t}_*^T=\sum_{\substack{i,l\leq m\\ k> m}}  \langle\nabla_{e_i}^g e_l,e_k\rangle { f_t}_* e_l \otimes e_k^*\otimes e_i^*
 \end{equation}
and infer that
 \begin{equation}\label{E:normperp}
 { \left\arrowvert\nabla^{\F^\perp,\F}{ f_t}_*^T\right\arrowvert}^2= \sum_{i,l,k\leq m}   \langle \nabla_{e_k}^g {e_i},\nabla_{e_k}^g {e_l} \rangle\langle{ f_t}_* e_l,{ f_t}_*e_i\rangle_h.
 \end{equation}
 
 It is now easy (modulo the proof of Proposition~\ref{P:heatoperatorexpress}) to get the upper bound given in Theorem~\ref{Th:tangentialenergy}. Indeed, fix a compact fundamental domain $K\subset \wtX$ with respect to the action of $\pi_1(X)$.\footnote{Recall that the energy density is a well defined function on $X$. More generally, it is not difficult to see that each of the six terms appearing in the RHS of \eqref{E:heateqexpression} remains unchanged when replacing  $f_t$ by $\varphi\circ f_t$, where $\varphi$ is an isometry of $Y$. In particular, it descends to $X$.} It then follows from the compactness of $\bigcup_{t\in[0,t_0]}f_t(K)$ together with the multilinearity of $\mathrm{ Ric}_{\wtX}$,  $\mathrm{Riem}_Y$ and expressions \eqref{E:Ricciexp}--\eqref{E:normperp}.
 
\begin{remark}
 When $\F$ has codimension zero, the terms in the right-hand side involving $\F^\perp$ of the equation~\eqref{E:heateqexpression} do not appear and we recover  the classical Eells--Sampson's formula \cite[Chapter~II, \S8-A]{Eelssampson64} (see also \cite[Formula~11]{Donaldson87}).
\end{remark}

\begin {proof}[Proof of Proposition \ref{P:heatoperatorexpress}]
Let $(e_i)_{i\le m+n}$ be a  local orthonormal frame centered at $x\in \wtX$ such that $e_i$ is tangent to $\wtF$ for $i\leq m$ and $e_i$ is  foliated otherwise. In particular, $[e_i,e_k]$ remains tangent to $\F$ for every $k$ and every $i\leq m$. We first proceed like the classical calculus of the first  variation formula of the  energy (see for instance \cite[p.130]{Urakawa93}). To this end, let us consider the mapping $F:I\times\wtX \rightarrow Y$ defined by $F(t,x)  =f_t(x)$ for $|t|<\varepsilon$. Set $f:=f_0$. In \emph{loc. cit.}, it is proven that $$\frac{\partial}{\partial t}(\frac{1}{2}  { \arrowvert { f_t}_* (e_i)\arrowvert}_h^2)=\langle \nabla_{e_i}\tau(f_t),{ f_t}_* e_i\rangle_h $$ so that, in view of the local writing of the tension field $\tau(f_t)$ given in \eqref{E:tensionfieldlocalexp}: 
	\begin{equation}\label{E:derivativeenergy}
	 \dfrac{\partial}{\partial t} (e_T)= \sum_{\substack{ i\leq m\\ k\leq m+n}} \langle \nabla_{e_i}\nabla_{e_k}{ f_t}_* e_k,{ f_t}_* e_i\rangle_h - \langle \nabla_{e_i}(\langle v,e_k\rangle { f_t}_* e_k),{ f_t}_* e_i\rangle_h
    \end{equation}
where $v=\sum_{ l\leq m+n}\nabla_{e_l}^g e_l$ and $\nabla$ still denotes (without specifying the parameter $t$) the pull-back of the Levi-Civita connection on $Y$ by $f_t$. 

Set $v_1= \sum_{l\leq m}	 \nabla_{e_l}^g e_l $ and $v_2=\sum_{l> m}	 \nabla_{e_l}^g e_l $.
Since the mean curvature vector field  $\tau_g$ is assumed to be foliated, we derive that $e_i \cdot\langle  v_1,e_k\rangle$
\textbf{vanishes identically} for $i\leq m$ and $k>m$. This is indeed the \textbf{ key point} of the calculation. According to Lemma \ref{L:covariantfoliated}, the same  holds for $e_i \cdot\langle  v_2,e_k\rangle$, $i\leq m$, $k>m$.

We also recall that for any vector fiels $u,w$ on $X$, we obtain the identity 
 \begin{equation}\label{E:torsionfree}
  \nabla_{u} f_*(w)- \nabla_{w} f_*(u)=f_*\left([u,w]\right)
 \end{equation}
as a consequence of the torsion-freeness of the Levi-Civita connection on $Y$ (see for instance \cite[Lemma 1.16, p.129]{Urakawa93}). Combining this with the previous vanishing properties, we get
\begin{multline}\label{E:derivativeplusbasic}
\sum_{\substack{i\leq m \\ k\leq m+n}}\langle \nabla_{e_i}(\langle v,e_k\rangle { f_t}_* e_k),{ f_t}_* e_i\rangle_h=\sum_{\substack{i,k\leq m \\ l\leq m+n}} \langle \langle\nabla_{e_i}^g \nabla_{e_l}^g e_l,e_k\rangle { f_t}_* e_k,{ f_t}_* e_i\rangle_h\\+ \sum_{\substack{i,k\leq m }}\langle v,\nabla_{e_i}^g {e_k} \rangle \langle  { f_t}_* e_k,{ f_t}_* e_i\rangle_h  \\ + \sum_{\substack{i\leq m\\ l\leq m +n}} \langle v,e_l\rangle \big(\langle \nabla_{e_l}{ f_t}_* e_i,{ f_t}_* e_i\rangle_h+  \langle { f_t}_* [e_i,e_l],{ f_t}_* e_i\rangle_h\big).
\end{multline} 

Write $\nabla_{e_i}^g {e_k}= \sum_{ l\leq m+n} \langle \nabla_{e_i}^g {e_k},e_l\rangle e_l$. From torsion-freeness \eqref{E:Torsionfreeg}, metric compatibility~\eqref{E:metriccompg} and involutivity of $\F$, we get
\begin{align*}
 \sum_{\substack{i,k\leq m }}\langle v,\nabla_{e_i}^g {e_k} \rangle \langle  { f_t}_* e_k,{ f_t}_* e_i\rangle_h= &\sum_{l\leq m+n}\langle v,e_l\rangle\underbrace{ \sum_{\substack{i, k\leq m }} \langle\nabla_{e_l}^g e_k,e_i\rangle\langle  { f_t}_* e_k),{ f_t}_* e_i\rangle_h}_{=0\ \textrm{thanks to}\ \langle\nabla_{e_l}^g e_k,e_i\rangle=-\langle\nabla_{e_l}^g e_i,e_k\rangle}\\ 
 &-  \sum_{\substack{i\leq m\\ l\leq m +n}} \langle v,e_l\rangle   \langle { f_t}_* [e_i,e_l],{ f_t}_* e_i\rangle_h 
\end{align*}

According to \eqref{E:derivativeenergy} and \eqref{E:derivativeplusbasic}, this leads to the simplified writing:
\begin{align*}
\dfrac{\partial}{\partial t} (e_T)= &\sum_{\substack{i\leq m \\ k\leq m+n}} \langle \nabla_{e_i}\nabla_{e_k}{ f_t}_* e_k,{ f_t}_* e_i\rangle_h -\sum_{\substack{i,\,k\leq m \\ l\leq m+n}} \langle\nabla_{e_i}^g \nabla_{e_l}^g e_l,e_k\rangle \langle { f_t}_* e_k,{ f_t}_* e_i\rangle_h\\-
&\sum_{\substack{i\leq m \\ k\leq m +n}} \langle v,e_k\rangle \langle \nabla_{e_k}{ f_t}_* e_i,{ f_t}_* e_i\rangle_h.
\end{align*}

On the other hand, the Laplace operator reads  
\begin{align*}
 \Delta^g (e_T)= & \sum_{\substack{i\leq m \\ k\leq m +n}} \langle v,e_k\rangle \langle \nabla_{e_k}{ f_t}_* e_i,{ f_t}_* e_i\rangle_h\\
 &- \sum_{\substack{i\leq m\\ k\leq m+n}} \langle \nabla_{e_k}^2{ f_t}_* e_i\,{ f_t}_* e_i\rangle_h +  \arrowvert\nabla_{e_k}{ f_t}_* e_i\arrowvert_h^2.
\end{align*}
Still exploiting repeatedly identity \eqref{E:torsionfree}, we get
\[\nabla_{e_k}^2{ f_t}_* e_i=\nabla_{e_k}\nabla_{e_i}{ f_t}_*{e_k}-\nabla_{ [e_i,e_k]}{ f_t}_* e_k+{ f_t}_* ( [e_k,[e_k,e_i]]).\]
If we combine this with the last lines of~\eqref{E:Ricciexp} and~\eqref{E:Seccurvexp}, we can deduce that
\begin{equation}\label{E:expheateqriemric}
\begin{split}
&\left( \dfrac{\partial}{\partial t}+ { \Delta}^g\right) (e_T)= \mathrm{Riem}_Y ({f_t}_*^T,{f_t}_*,{f_t}_*,{f_t}_*^T)-\mathrm{Ric}_{\wtX} ({ f_t}_*^T,{ f_t}_*^T)\\
&-\sum_{\substack{i\leq m \\ k\leq m +n}} \arrowvert\nabla_{e_k}{ f_t}_* e_i\arrowvert_h^2 -2\langle\nabla_{ [e_i,e_k]}{ f_t}_* e_k,{ f_t}_* e_i\rangle_h +\langle{ f_t}_* ( [e_k,[e_k,e_i]]),{ f_t}_* e_i\rangle_h\\
&-\sum_{\substack{i,l\leq m \\ k\leq m +n}} \langle \nabla_{[e_i,e_k]}^g e_k, e_l\rangle\langle { f_t}_* e_i,{ f_t}_* e_l\rangle_h + \langle \nabla_{e_k}^g\nabla_{e_i}^g e_k,e_l\rangle\langle{ f_t}_* e_l,{ f_t}_*e_i\rangle_h.
\end{split}
\end{equation}
 
On the other hand, $ [e_k,[e_k,e_i]]$ is tangent to $\F$ for $i\leq m$, so that we can write $$[e_k,[e_k,e_i]]	=\sum_{l\leq m}\langle [e_k,[e_k,e_i]],e_l\rangle e_l.$$
Using \eqref{E:Torsionfreeg}, this yields
\begin{align*}
&\langle{ f_t}_* ( [e_k,[e_k,e_i]]),{ f_t}_* e_i\rangle_h=-\sum_{l\leq m}\langle \nabla_{e_k}^g\nabla_{e_i}^g e_k,e_l\rangle\langle{ f_t}_* e_l,{ f_t}_*e_i\rangle_h\\
&+\sum_{l\leq m}\langle \nabla_{[e_i,e_k]}^g e_k, e_l\rangle\langle f_* e_i,f_* e_l\rangle_h +\sum_{l\leq m}\langle { \nabla_{e_k}^{ g}}{ \nabla_{e_k}^{ g}} e_i, e_l\rangle\langle f_* e_i,f_* e_l\rangle_h
\end{align*}
Applying again \eqref{E:Torsionfreeg} together with \eqref{E:metriccompg}, we can express the last term as
\begin{align*}
\sum_{l\leq m}\langle { \nabla_{e_k}^{ g}}{ \nabla_{e_k}^{ g}} e_i, e_l\rangle\langle f_* e_i,f_* e_l\rangle_h=&-\sum_{l\leq m}\langle \nabla_{e_k}^g {e_i},\nabla_{e_k}^g {e_l} \rangle\langle{ f_t}_* e_l,{ f_t}_*e_i\rangle_h\\
&-e_k\cdot\sum_{l\leq m}\langle e_i, \nabla_{e_k}^ g e_l\rangle\langle f_* e_i,f_* e_l\rangle_h
\end{align*}
whence 
\begin{equation}\label{E:expliebracket}
\begin{split}
&\sum_{\substack{i\leq m \\ k\leq m +n}} \langle{ f_t}_* ( [e_k,[e_k,e_i]]),{ f_t}_* e_i\rangle_h=-\sum_{\substack{i,l\leq m \\ k\leq m +n}}\langle \nabla_{e_k}^g\nabla_{e_i}^g e_k,e_l\rangle\langle{ f_t}_* e_l,{ f_t}_*e_i\rangle_h\\
&+\sum_{\substack{i,l\leq m \\ k\leq m +n}}\langle \nabla_{[e_i,e_k]}^g e_k, e_l\rangle\langle f_* (e_i),f_* (e_l)\rangle_h
-\langle \nabla_{e_k}^g {e_i},\nabla_{e_k}^g {e_l} \rangle\langle{ f_t}_* e_l,{ f_t}_*e_i\rangle_h\\
&-\sum_{k\leq m+n}e_k\cdot\underbrace{ \sum_{i,l\leq m}\langle e_i, \nabla_{e_k}^ g e_l\rangle\langle f_* e_i,f_* e_l\rangle_h}_{=0\ \textrm{thanks to}\ \langle e_i, \nabla_{e_k}^ g e_l\rangle=-\langle e_l, \nabla_{e_k}^ g e_i\rangle.}.
\end{split}
\end{equation}
When re-injecting \eqref{E:expliebracket} into  \eqref{E:expheateqriemric}, we thus find
\begin{equation}\label{E:heattangenergyexp}
\begin{split}
&\left( \dfrac{\partial}{\partial t}+ { \Delta}^g\right) (e_T)=  \mathrm{Riem}_Y ({f_t}_*^T,{f_t}_*,{f_t}_*^T{f_t}_*)-\mathrm{Ric}_{ \tilde{ X}} ({ f_t}_*^T,{ f_t}_*^T) \\
&+\sum_{\substack{i\leq m \\ k\leq m +n}} 2\langle\nabla_{ [e_i,e_k]}{ f_t}_* e_k,{ f_t}_* e_i\rangle_h - \arrowvert\nabla_{e_k}{ f_t}_* e_i\arrowvert_h^2\\
&+\sum_{\substack{i,l\leq m \\ k\leq m +n}}  \left(\langle \nabla_{e_k}^g {e_i},\nabla_{e_k}^g {e_l} \rangle -2 \langle \nabla_{[e_i,e_k]}^g e_k, e_l\rangle\right)\langle f_* e_i,f_* e_l\rangle_h.
\end{split}
\end{equation}
 
In order to simplify our computations, we will suppose hereafter, without loss of generality, that the orthonormal frame ${(e_i)}_{i\leq m}$ of  $\F$ is \textit{ tangentially geodesic} at $x$ (see Lemma~\ref{L:tangentiallygeodexistence}). Because $[e_i,e_l](x)=0$ for $i,l\leq m$,  the formula \eqref{E:heattangenergyexp} becomes 
 \begin{equation}\label{E:heatoperatorexpression}
 \begin{split}
 &\left( \dfrac{\partial}{\partial t}+ { \Delta}^g\right) (e_T)= \mathrm{Riem}_Y ({f_t}_*^T,{f_t}_*,{f_t}_*^T{f_t}_*)-\mathrm{Ric}_X ({ f_t}_*^T,{ f_t}_*^T)\\
 &+\sum_{\substack{i\leq m\\ k> m }}2\langle\nabla_{ [e_i,e_k]}{ f_t}_* e_k,{ f_t}_* e_i\rangle_h -\sum_{\substack{i\leq m\\ k\leq m+n }} \arrowvert\nabla_{e_k}{ f_t}_* e_i\arrowvert_h^2\\
& +\sum_{\substack{i,l\leq m\\ k\leq m +n }}   \langle \nabla_{e_k}^g {e_i},\nabla_{e_k}^g {e_l} \rangle\langle{ f_t}_* e_l,{ f_t}_*e_i\rangle_h \\
&-2\sum_{\substack{i,l\leq m\\ k>m }} \langle \nabla_{[e_i,e_k]}^g e_k, e_l\rangle \langle f_* e_i,f_* e_l\rangle_h.
\end{split}
\end{equation}
Now, in view of proving the equality of Proposition~\ref{P:heatoperatorexpress}, let us compute
\begin{equation}\label{E:nablaffperpgeod}
\nabla ^{\F,\F^\perp}{ f_t}_*^T = \sum_{\substack{i\leq m\\ k> m}} \nabla_{e_k}{ f_t}_* e_i\otimes e_k^*\otimes e_i^* +  \sum_{\substack{l\leq m}} \underbrace{ \langle\nabla_{e_k}^g e_i,e_l\rangle}_{=0} { f_t}_* e_i\otimes e_k^*\otimes e_l^* .
\end{equation}
On the other hand, by noticing that 
\begin{align*}
 \sum _{l\leq m} \langle\nabla_{e_i}^g e_l,e_k\rangle{f_t}_* e_l  &=-\sum _{l\leq m}\langle\ e_l,\nabla_{e_i}^g e_k\rangle {f_t}_* e_l  \\
&=-\sum _{l\leq m} \langle\ e_l,[e_i,e_k]\rangle{f_t}_* e_l = -{ f_t}_* [ e_i,e_k] 
\end{align*}
and accordingly to \eqref{E:nablafperpf}, we get
\begin{equation}\label{E:nablafperpfgeod}
\nabla^{\F^\perp,\F}{ f_t}_*^T= -\sum_{\substack{i\leq m\\ k> m }}{ f_t}_* [ e_i,e_k]\otimes e_k^*\otimes e_i^*. 
\end{equation}
As a consequence of \eqref{E:nablaffperpgeod} and \eqref{E:nablafperpfgeod}, we have:
\begin{multline*}\left\arrowvert\nabla ^{\F,\F^\perp}{ f_t}_*^T+\nabla^{\F^\perp,\F}{ f_t}_*^T\right\arrowvert^2=\\ \sum_{\substack{i\leq m\\ k>m }}  { \arrowvert\nabla_{e_k}{ f_t}_* e_i\arrowvert}_h^2  +{ \arrowvert{ f_t}_* [ e_i,e_k] \arrowvert}_h^2  -2\langle\nabla_{e_k}{ f_t}_* e_i, { f_t}_* [ e_i,e_k]\rangle_h .
\end{multline*}
By virtue of \eqref{E:torsionfree}, we can rewrite 
\[\sum_{\substack{i\leq m\\ k>m }} \langle\nabla_{e_k}{ f_t}_* e_i, { f_t}_* [ e_i,e_k]\rangle_h = \sum_{\substack{i\leq m\\ k>m}} \langle\nabla_{e_i}{ f_t}_* e_k, { f_t}_* [ e_i,e_k]\rangle_h -\left\arrowvert{ f_t}_* [ e_i,e_k] \right\arrowvert_h^2 .
\]
Now, thanks to the expansion
\[[ e_i,e_k]=\sum_{l\leq m}\langle [ e_i,e_k],e_l\rangle e_l\]
and Lemma~\ref{L:bracketidgeod}, we conclude that
\[\sum_{\substack{i\leq m\\ k>m}} \langle\nabla_{e_i}{ f_t}_* e_k, { f_t}_* [ e_i,e_k]\rangle_h  = \sum_{\substack{i\leq m\\ k>m}}\langle\nabla_{[e_i,e_k]}{ f_t}_* e_k,{ f_t}_* e_i \rangle_h \]
so that
\begin{equation}\label{E:squarenormsum}
\begin{split}
&	\left\arrowvert\nabla ^{\F,\F^\perp}{ f_t}_*^T+\nabla^{\F^\perp,\F}{ f_t}_*^T\right\arrowvert^2 =\\ &\sum_{\substack{i\leq m\\ k>m }}  { \arrowvert\nabla_{e_k}{ f_t}_* e_i\arrowvert}_h^2  +3{ \arrowvert{ f_t}_* [ e_i,e_k] \arrowvert}_h^2  -2\langle\nabla_{[e_i,e_k]}{ f_t}_* e_k,{ f_t}_* e_i \rangle_h .
\end{split}
\end{equation}
On the other hand, comparison of \eqref{E:normperp} and \eqref{E:nablafperpfgeod} yields
\begin{equation}\label{E:squarenormfperpf}
\begin{split}
\left\arrowvert\nabla^{\F^\perp,\F}{ f_t}_*^T\right\arrowvert^2
&=\sum_{i,l,k\leq m}   \langle \nabla_{e_k}^g {e_i},\nabla_{e_k}^g {e_l} \rangle(x)\langle{ f_t}_* e_l,{ f_t}_*e_i\rangle_h \\
&=\sum_{ \substack{i\leq m\\k>m}} \left\arrowvert{ f_t}_* [ e_i,e_k] \right\arrowvert_h^2 .
\end{split}
\end{equation}
Recall also (\emph{cf.} \eqref{E:normperperp}) that
\begin{equation}\label{E:squarenormfperpfperp}
\begin{split}
 \left\arrowvert\nabla^{\F^\perp,\F^{\perp}}{ f_t}_*^T\right\arrowvert^2 =\sum_{ \substack{i,l\leq m\\ k>m}}   \langle \nabla_{e_k}^g {e_i},\nabla_{e_k}^g {e_l} \rangle(x)\langle{ f_t}_* e_l,{ f_t}_*e_i\rangle_h .
 \end{split}
\end{equation}
According to the equations~\eqref{E:squarenormsum}, \eqref{E:squarenormfperpf}, \eqref{E:squarenormfperpfperp}  and to the equality
$${\left\arrowvert\nabla^{\F,\F}{ f_t}_*^T\right\arrowvert}^2 =\sum_{i,k\leq m} \arrowvert\nabla_{e_k}{ f_t}_* e_i\arrowvert_h^2, $$
the equation~\eqref{E:heatoperatorexpression} can be rewritten as
\begin{equation}\label{E:heateqexpression-final}
\begin{split}
&\left( \dfrac{\partial}{\partial t}+ { \Delta}^g\right) (e_T)= -\mathrm{Ric}_{ \wtX} ({f_t}_*^T,{f_t}_*^T)+ \mathrm{Riem}_Y ({f_t}_*^T,{f_t}_*,{f_t}_*,{f_t}_*^T) \\
&-{ \left\arrowvert\nabla ^{\F,\F^\perp}{ f_t}_*^T+\nabla^{\F^\perp,\F}{ f_t}_*^T\right\arrowvert}^2  -{\left\arrowvert\nabla^{\F,\F}{ f_t}_*^T\right\arrowvert}^2  +{ \left\arrowvert\nabla^{\F^\perp,\F^{\perp}}{ f_t}_*^T\right\arrowvert}^2 \\ 
&-2\sum_{\substack{i,l\leq m\\k>m}}\langle \nabla_{[e_i,e_k]}^g e_k, e_l\rangle\langle f_* e_i,f_* e_l\rangle_h +4\left\arrowvert\nabla^{\F^\perp,\F}{ f_t}_*^T\right\arrowvert^2 .
\end{split}
\end{equation}
It remains to identify the quantity (evaluated at the point $x$)
$$S:=\sum_{\substack{i,l\leq m\\k>m}}\langle \nabla_{[e_i,e_k]}^g e_k, e_l\rangle\langle f_* e_i,f_* e_l\rangle_h.$$
To do so, let us first expand $[e_i,e_k]$ with respect to the basis $(e_j)_{j\leq m}$. This yields:
$$S=\sum_{ \substack{i,l,j\leq m\\k>m}}\langle [e_i,e_k], e_j\rangle\langle \nabla_{e_j}^g e_k, e_l\rangle\langle f_* e_i,f_* e_l\rangle_h.$$
Finally, let us remark that
$$\langle [e_i,e_k], e_j\rangle = -\langle e_k,\nabla_{e_j}^g e_i\rangle (x)\quad\mathrm{and}\quad \langle \nabla_{e_j}^g e_k, e_l\rangle (x)=-\langle  e_k, \nabla_{e_j}^ge_l\rangle (x).$$
According to \eqref{E:nablafperpf}, we get after summation
$$S=\left\arrowvert\nabla^{\F^\perp,\F}{ f_t}_*^T\right\arrowvert^2.$$
We thus obtain the sough formula of Proposition~\ref{P:heatoperatorexpress} by reporting this identification into the heat operator expression given in the equation~\eqref{E:heateqexpression-final}.
\end{proof}

\section{Construction of equivariant foliated harmonic and holomorphic maps}\label{S:equivariant-foliated-harmoni-maps}

In this section, we put together results from Sections~\ref{S:structure riemannian foliation} and~\ref{S:evolution harmonic map} to prove existence of harmonic or holomorphic maps invariant under some foliations.

\subsection{The Riemaniann case}

We first state a general existence result for some TC foliations over a compact base.

\begin{thm}\label{TH:existence+uniqharmonic}
Let $(X,\F)$ be a foliated manifold where $X$ is compact and $\F$ is TC with structural Lie algebra $\mathfrak{g}:=\frg{X,\F}$ semi-simple without compact factors. Let us consider $g$ a metric on $X$ which is bundle-like with respect to $\F$ and such that the mean curvature vector of the leaves is foliated, \emph{i.e.} $(\F,g)$ is tense (see Theorem~\ref{TH:Dominguezth}). Then there exists a unique $\Psi$ that is harmonic with respect to the lift $\widetilde{g}$ of the metric on $\wtX$ and to the Killing metric on $\mathcal S_\mathfrak{g}$ (the symmetric space associated with $\mathfrak{g}$). 
\end{thm}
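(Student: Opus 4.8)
The plan is to produce $\Psi$ by deforming, along the harmonic map heat flow, the equivariant foliated map already at our disposal from Theorem~\ref{TH:equivariantsym}, and to use the density of $\rho(\pi_1(X))$ both to make the flow converge and to obtain uniqueness. Fix the bundle-like tense metric $g$ supplied by Theorem~\ref{TH:Dominguezth}, write $\mathcal{S}=G/H$ for the Riemannian symmetric space of the non-compact type attached to $\mathfrak g$, and set $\rho=\pi\circ\rho_\F:\pi_1(X)\to\Isom(\mathcal{S})$. By Theorem~\ref{TH:equivariantsym}(3) there is a smooth, $\F$-invariant, $\rho$-equivariant map $\Psi_0:\wtX\to\mathcal{S}$. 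We run the evolution equation $\partial_t f_t=\tau(f_t)$ of Section~\ref{S:labeltangenergy} with $f_0=\Psi_0$. Since the tension field is natural under isometries of the target, uniqueness of solutions of this parabolic equation keeps every $f_t$ $\rho$-equivariant, so the flow is really a flow for sections of the associated bundle $\wtX\times_\rho\mathcal{S}$ over the \emph{compact} base $X$.

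Short-time existence is then the standard parabolic statement; since $\mathcal{S}$ is a Hadamard manifold, the Eells--Sampson Bochner estimate for the energy density together with the non-positivity of the curvature of $\mathcal{S}$ forbids finite-time blow-up, so $(f_t)$ exists for all $t\geq 0$. Crucially, $(\F,g)$ is tense by hypothesis, so Corollary~\ref{C:remainsleafwiseconstant} applies and every $f_t$ is $\F$-invariant; equivalently, the tangential energy density $e_T(f_t)$ vanishes identically for all $t$.

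The main point is the limit $t\to\infty$. By Theorem~\ref{TH:equivariantsym}(\ref{I:density}) the closure of $\rho(\pi_1(X))$ contains $\Isom^0(\mathcal{S})$; in particular $\rho(\pi_1(X))$ is not contained in any proper parabolic subgroup of $\Isom(\mathcal{S})$ (these being proper and closed), so $\rho$ fixes no point at infinity and is a reductive representation. By Corlette's theorem \cite{Corlette-jdg} (together with the attendant long-time convergence of the equivariant heat flow, in the line of \cite{Eelssampson64,Donaldson87}), the flow $(f_t)$ converges smoothly, as $t\to\infty$, to a $\rho$-equivariant harmonic map $\Psi_\infty:\wtX\to\mathcal{S}$. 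Passing to the limit in the identity $e_T(f_t)\equiv 0$ shows $e_T(\Psi_\infty)\equiv 0$, i.e. $\Psi_\infty$ is $\F$-invariant; this proves existence, and we set $\Psi:=\Psi_\infty$. I expect this convergence step — ruling out escape to infinity in $\mathcal{S}$, which is exactly where reductivity/density is indispensable — to be the only substantial point; everything else reduces, via equivariance, to standard statements over the compact quotient $X$.

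For uniqueness, let $\Psi$ and $\Psi'$ be two $\rho$-equivariant harmonic maps to $\mathcal{S}$, and let $\Psi_s$, $s\in[0,1]$, be the geodesic homotopy between them (constant speed along the unique geodesic joining $\Psi(x)$ to $\Psi'(x)$), again a family of $\rho$-equivariant maps. Non-positive curvature of $\mathcal{S}$ makes $s\mapsto E(\Psi_s)$ convex (the energy being well defined after quotienting by $\pi_1(X)$), and as its endpoints are critical it is constant; hence every $\Psi_s$ is harmonic and $x\mapsto d_{\mathcal{S}}(\Psi(x),\Psi'(x))$ is constant. If this constant were positive, the geodesics $s\mapsto\Psi_s(x)$ would sweep out a $\rho$-invariant totally geodesic flat strip, forcing $\rho$ to preserve either a point at infinity or a proper totally geodesic flat of $\mathcal{S}$ — impossible since the closure of $\rho(\pi_1(X))$ contains $\Isom^0(\mathcal{S})$. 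Hence $\Psi=\Psi'$, and in particular the unique $\rho$-equivariant harmonic map coincides with the $\F$-invariant one produced above, which is the assertion of Theorem~\ref{TH:existence+uniqharmonic}.
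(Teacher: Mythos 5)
Your existence argument is essentially the paper's: start the heat flow at the basic $\rho$-equivariant map furnished by Theorem~\ref{TH:equivariantsym}, use tenseness of $(\F,g)$ so that Corollary~\ref{C:remainsleafwiseconstant} keeps every $f_t$ leafwise constant, and use the density of $\rho(\pi_1(X))$ (hence no fixed point on $\partial\mathcal S$, hence reductivity) to make the flow converge to an equivariant harmonic map. The only deviation is that you claim smooth convergence of the whole flow, whereas the paper (via Labourie's theorem) only extracts a convergent subsequence $f_{t_n}$; this is harmless, since a subsequential limit is all that is needed, but full convergence is not the standard statement and you should not rely on it.

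The uniqueness half is where you genuinely diverge from the paper, and where there is a gap. The convexity part is correct: $E(\Psi_s)$ is convex with critical endpoints, hence constant, so every $\Psi_s$ is harmonic, $V=\partial_s\Psi_s$ is a parallel section of $\Psi_s^{*}T\mathcal S$ of constant norm $c=d_{\mathcal S}(\Psi,\Psi')$, and every plane spanned by $V$ and a vector in the image of $d\Psi_s$ is flat. But the concluding sentence --- that the segments $s\mapsto\Psi_s(x)$ ``sweep out a $\rho$-invariant totally geodesic flat strip, forcing $\rho$ to preserve either a point at infinity or a proper totally geodesic flat'' --- is not established. The union of these segments is not a two-dimensional strip (it is the image of $\wtX\times[0,1]$), it is only permuted geodesic-by-geodesic by $\rho(\pi_1(X))$ rather than preserved as a single flat, and no $\rho$-invariant flat or boundary point is actually exhibited; the asserted dichotomy is precisely what would need proof. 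The paper closes this step differently: it splits $\mathcal S$ into irreducible factors and invokes Corlette's uniqueness theorem \cite[Theorem~3.4]{Corlette-jdg}, density guaranteeing that each $\rho_i$ preserves no non-trivial subspace of $\mathfrak g_i$. Alternatively, your argument can be completed without that citation: the map $\Psi$ you constructed is basic and equivariant, hence a submersion by item~\eqref{I:eqimplieslocaltrivial} of Theorem~\ref{TH:equivariantsym}, so the flatness condition at $s=0$ gives $R(W,V)V=0$ for \emph{every} $W\in T_{\Psi(x)}\mathcal S$; in a symmetric space of the non-compact type ($\mathfrak g$ semisimple, no Euclidean factor) this forces $V=0$, i.e. $c=0$. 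As written, however, the contradiction-with-density step is a genuine gap.
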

\begin{proof}
Thanks to the item~\ref{I:density} of Theorem~\ref{TH:equivariantsym}, there exists a \textit{basic} (in other words $\F$-invariant) $\rho_\F$-equivariant map
\[f_0=\Psi:\wtX\To \mathcal S_\mathfrak{g}\]
where $\rho_\F\colon\pi_1(X)\to \Isom(\mathcal{S}_\mathfrak{g})$ is the monodromy of the commuting sheaf whose image is dense in the identity component according to Proposition~\ref{prop:dense image}.

We can now apply the existence criterion of twisted harmonic map given in \cite{Labourie91} or \cite{Corlette-jdg}. As the image of $\rho_\F$ does not fix any point on the boundary of $\mathcal S_\mathfrak{g}$, we can apply \cite[Th\' eor\`emes~0.1 et~0.2]{Labourie91} to infer the existence of a $\rho_\F$-equivariant \textit{harmonic} map $f_\infty: \wtX\to \mathcal S_\mathfrak{g}$ which is obtained as the limit of a subsequence of $(f_{t_n}),\ t_n\longmapsto +\infty$ where  $f_t$ is a solution of the evolution equation \eqref{E:evolution} at time $t$ with initial datum $f_{t_0}=f_0$. Moreover, $f_\infty$ remains $\F$-invariant according to Corollary~\ref{C:remainsleafwiseconstant}.

Concerning the uniqueness, we can first assume, up to passing to a finite \'etale cover of $X$ that the representation takes values in $\Isom^0 (\mathcal S_\mathfrak{g})$, the neutral component of $\Isom (\mathcal S_\mathfrak{g})$. Let $\mathcal S_\mathfrak{g}=\mathcal S_1\times\cdots\times \mathcal S_p$ the decomposition of $\mathcal S$ as a Riemannian product of irreducible symmetric spaces. The image of $\rho_\F$ acts diagonally and isometrically with respect to this decomposition.  By projection  to each factor, we inherit a $\rho_i$-equivariant harmonic map $\Psi_i: \wtX\to \mathcal S_i$  where $\rho_i:\pi_1(X)\to \Isom^0 (\mathcal S_i)$ is the corresponding representation. By density, the image of $\rho_i$ does not preserve any non-trivial subspace of ${ \mathfrak g }_i:= \Lie(\Isom^0 (\mathcal S_i))$. By virtue of Corlette's uniqueness Theorem \cite[Theorem~3.4]{Corlette-jdg}, the $\Psi_i$'s are the only ones $\rho_i$-equivariant harmonic maps. In particular $\Psi$ is the unique $\rho_\F$-equivariant harmonic map, as desired.
\end{proof}
 
Using the results from Subsection~\ref{SS:asstrfrbundle}, we can apply the above statement to the transverse frame bundle of a Riemannian foliation $\F$. We first need the following result by Nozawa.

\begin{thm}[\emph{cf.} \protect{\cite[Theorem~2]{Nozawarigid2010}}]\label{T:minimalizableSS}
Let $(X,\F)$ a compact Riemannian foliated manifold and let us assume in addition that the structural Lie algebra of $\F$ is semi-simple. Then $\F$ is taut.
\end{thm}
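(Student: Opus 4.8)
The plan is to show that $\F$ is minimizable by proving the vanishing of its mean curvature class $[\kappa_\F]\in H^1(X;\R)$ (see the Remark after Theorem~\ref{TH:Dominguezth}), or equivalently — via Masa's criterion (Theorem~\ref{TH:Masatautcrit}) — that $H^n(X/\F)\neq0$. The first reduction is to the transversely parallelizable case: replacing $(X,\F,g)$ by $(X^\sharp,\F^\sharp,g^\sharp)$, the lifted foliation $\F^\sharp$ is transversely parallelizable with the same (still semi-simple) structural Lie algebra $\mathfrak g$, and \eqref{E:basicMCrelated} gives $[\kappa_{\F^\sharp}]=p^*[\kappa_\F]$. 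Since $\SO(n)$ is connected one has $\pi_1(X^\sharp)\twoheadrightarrow\pi_1(X)$, hence $p^*$ is injective on $H^1(\cdot;\R)$ and $[\kappa_\F]=0$ whenever $[\kappa_{\F^\sharp}]=0$. The same injectivity holds (by a transfer argument) for finite étale covers, so we may freely pass to such covers; from now on I assume $\F$ itself is transversely parallelizable.

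Next I would invoke Molino's basic fibration $\pi_\F\colon X\to W$, whose fibres are the leaf closures (so that $\overline\F$ is the simple foliation defined by $\pi_\F$), together with Theorem~\ref{T:Molino-TC}: the restriction of $\F$ to each fibre $F_w$ is a minimal Lie $\mathfrak g$-foliation. For such a foliation on a compact manifold the basic cohomology coincides with the Lie algebra cohomology $H^\bullet(\mathfrak g;\R)$; as $\mathfrak g$ is semi-simple it is unimodular — so the developing map pulls back a bi-invariant volume of the integrating Lie group to a closed nowhere-vanishing basic transverse volume form — and $H^{\dim\mathfrak g}(\mathfrak g)\cong\R$ by Poincaré duality for Lie algebra cohomology. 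In particular each $\F|_{F_w}$ is homologically orientable, hence taut.

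To globalise, run the Leray spectral sequence of $\pi_\F$ on basic forms: $E_2^{p,q}=H^p(W;\mathcal H^q)\Rightarrow H^{p+q}(X/\F)$, where $\mathcal H^q$ is the local system on $W$ of fibrewise basic cohomology, of fibre $H^q(\mathfrak g)$. Writing $n=\dim W+\dim\mathfrak g$, the only term on the line $p+q=n$ that can be nonzero is $E_2^{\dim W,\dim\mathfrak g}=H^{\dim W}(W;\mathcal H^{\dim\mathfrak g})$, and it supports no differential in or out, so $H^n(X/\F)\cong H^{\dim W}(W;\mathcal H^{\dim\mathfrak g})$. Because $\F$ is transversely parallelizable the line bundle $\det N\F$ is trivial, and the exact sequence $0\to\overline\F/\F\to N\F\to\pi_\F^*TW\to0$ then identifies $\mathcal H^{\dim\mathfrak g}$ with the orientation local system $\mathrm{or}_W$ of $W$; hence $H^n(X/\F)\cong H^{\dim W}(W;\mathrm{or}_W)\cong\R\neq0$ and, by Masa's criterion, $\F$ is minimizable. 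One may also sidestep this identification by passing to a finite cover trivializing the $\pm1$-local systems involved — legitimate by the first paragraph — where the corner term is simply $H^{\dim W}(W;\R)$.

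The main obstacle I expect is precisely this globalisation step: making the Molino--Leray spectral sequence rigorous and, above all, pinning down the coefficient local system $\mathcal H^{\dim\mathfrak g}$, whose monodromy a priori lies in $\Aut(\mathfrak g)$ acting by $\pm1$ on $\wedge^{\dim\mathfrak g}\mathfrak g^*$ (automorphisms preserve the Killing form), and which one must recognise as exactly $\mathrm{or}_W$. A more hands-on alternative would be to build directly a bundle-like metric on $X$ making $\pi_\F$ a Riemannian submersion with \emph{totally geodesic} fibres, each fibre carrying a taut metric for $\F|_{F_w}$; total geodesy of the fibres forces the leafwise mean curvature computed in $X$ to agree with the one computed in $F_w$, hence to vanish. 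There the difficulty moves to producing such a fibre metric compatible with the structure group of the basic fibration, which once more rests on the rigidity of the transverse homogeneous structure underlying Molino's theory.
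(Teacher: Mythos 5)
The paper does not actually prove Theorem~\ref{T:minimalizableSS}: it is imported wholesale from Nozawa \cite{Nozawarigid2010}, whose argument is phrased in terms of the \'Alvarez class in basic cohomology and Molino's structure theory, not in terms of Masa's criterion. So your proposal is necessarily a different route, and in outline it is a sound one. The reduction to the transversely parallelizable case via $X^\sharp$, \eqref{E:basicMCrelated} and injectivity of $p^*$ on $H^1(\cdot;\R)$ is the standard one; the identification of the basic cohomology of the dense Lie foliations on the leaf closures with $H^\bullet(\mathfrak g)$ is classical; and you place the hypothesis of semi-simplicity exactly where it must act, namely in forcing the monodromy of the local system $\mathcal H^{\dim\mathfrak g}$ to be $\pm1$ because $\Aut(\mathfrak g)$ preserves the (nondegenerate) Killing form. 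This is also the pivot of Nozawa's proof, there expressed as the fact that the monodromy of the commuting sheaf acts on transverse volumes with determinant of modulus one. That this is the decisive point, and not fibrewise tautness, is shown by Carri\`ere's flow on a hyperbolic $T^2$-bundle over $S^1$: the structural algebra is $\R$ (unimodular), every leaf closure carries a taut dense linear flow, yet the foliation is not taut; in your spectral sequence the monodromy on $\mathcal H^1$ is multiplication by the eigenvalue $\lambda\neq\pm1$ and the corner term $H^1(S^1;\mathcal H^1)$ vanishes, matching the known computation $H^2(X/\F)=0$ --- a useful consistency check for your machinery, and a warning that the globalisation step carries all the weight.

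Concerning the two points you flag yourself: the Molino--Leray spectral sequence for basic cohomology of the basic fibration, with coefficients in the local system of fibrewise basic cohomology, does exist --- it rests on the foliated local triviality of the basic fibration provided by Molino's theory and is essentially the device behind the finite-dimensionality results for basic cohomology (\emph{cf.} \cite{Kacimihector86} and the work of El~Kacimi--Sergiescu--Hector) --- but in your write-up it is asserted rather than established, and the identification of $\mathcal H^{\dim\mathfrak g}$ with $\mathrm{or}_W$ is only sketched. Your finite-cover sidestep is the cleaner way out and is legitimate: once the monodromy is known to be $\pm1$, a finite cover pulled back from $W$ trivializes it (leaf closures, the basic fibration and the transverse parallelism all behave as expected under finite covers, and tautness descends by the transfer/injectivity argument on $H^1$). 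So: correct strategy, genuinely different from the paper's (which is a citation), with gaps that are fillable from the literature rather than fatal; what each approach buys is that Nozawa's argument isolates the obstruction as a single degree-one class, while yours computes the top basic cohomology directly and recovers homological orientability, which is the form in which the paper actually uses tautness.
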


With this at hands, we can state the following general existence result for foliated harmonic maps.

\begin{thm}\label{T:harmonic-riemannian}
Let $\F$ be a transversely orientable Riemannian foliation on a compact manifold $X$ and let us assume that its structural Lie algebra $\mathfrak{g}:=\frg{X,\F}$ is semi-simple without compact factors (as above $\mathcal S_\mathfrak{g}$ will be the associated symmetric space). Let $g$ be a bundle-like metric with vanishing mean curvature vector field (it exists by virtue of Theorem~\ref{T:minimalizableSS}).
 	
Let
\[\rho_\F\colon \pi_1 (X)\To \Aut(\mathfrak g)\simeq \Isom(\mathcal S_\mathfrak{g})\]
be the monodromy representation attached to the commuting sheaf $\cC_\F$. Then there exists a surjective and submersive $\rho_\F$-equivariant \emph{harmonic} map
\[\Psi\colon \wtX\To \mathcal S_\mathfrak{g}\]
with connected fibers which is constant on the leaves of $\wtF$ (the lift of $\F$ to $\wtX$).
\end{thm}
\begin{proof}
Keep the notations of \ref{SS:asstrfrbundle}. Consider the lift $\F^\sharp$ of $\F$ on the orthonormal frame bundle $X^\sharp$ together with its bundle like metric $g^\sharp$. According o the equality~\eqref{E:basicMCrelated}, $(\F^\sharp,g^\sharp)$ is also \textit{taut}, so that the conclusion of Theorem~\ref{TH:existence+uniqharmonic} is valid: there exists a unique $\widetilde{\F^\sharp}$-invariant and $\rho_{\F^\sharp}$-equivariant harmonic map $\Psi_0^\sharp:\wtsX\to \mathcal S_\mathfrak{g}$. A priori, it is not completely obvious that ${ \Psi_0}^\sharp$ factors through a a $\rho_\F$-equivariant map defined on $\wtX$. Actually, this holds true according to the following trick which is inspired from \cite{Elkacimiharmonique96}.

  One  can lift the $SO(n)$-action on $X^\sharp$ to an isometric action  of the universal covering group $\widetilde{\SO(n)}$ on $\wtsX$ (with respect to the induced bundle-like metric) with the corresponding quotient map
\[q\colon\wtsX\To \wtX.\]
Consequently, if $\tau$ is the isometric transformation associated to an element of $\widetilde{SO(n)}$  the mapping $\Psi_0^\sharp\circ\tau$ is also $\rho^\sharp$-equivariant and harmonic. By the uniqueness part of Theorem~\ref{TH:existence+uniqharmonic}, $\Psi_0^\sharp$ is $\widetilde{\SO(n)}$ - invariant, hence factors through a $\rho_\F$-equivariant and $\wtF$-invariant  map  $\Psi_0: \wtX\to \mathcal S$. We can now apply the same deformation process as in the proof of Theorem~\ref{TH:existence+uniqharmonic} and thus obtain the expected (and necessarily unique) $\rho_\F$-equivariant and $\wtF$-invariant  harmonic map $\Psi$.\footnote{In fact, it is not difficult to observe, using that the fibers of $X^\sharp\to X$ are totally geodesic combined with Proposition \ref{P:relationtensionfieldbasictf}, that we can directly take $\Psi=\Psi_0$.} By  Lemma~\ref{L:equivariantlift}, $\Psi^\sharp=\Psi\circ q$ satisfies property \eqref{E:equivariancetrsymmetric} of Theorem~\ref{TH:equivariantsym}, so that $\Psi^\sharp$ is a surjective submersion with connected fibers by the item~\eqref{I:eqimplieslocaltrivial} of the same theorem. This immediately implies that $\Psi$ is so.
\end{proof}

\subsection{The case of a transversely K\" ahler foliation}
In the Kähler setting, rigidity of harmonic maps can be used to strengthen the conclusion of Theorem~\ref{T:harmonic-riemannian} and to produce transversely holomorphic maps.
Let us start with a Riemannian compact foliated manifold $(X,\F,g)$ where $( \F,g)$ is transversely K\" ahler with semi-simple structural Lie algebra without compact factors. As before (with a small shift), $m$ and $2n$ denote respectively the real dimension and codimension of $\F$.\footnote{For transversely K\"ahler foliation, the $\SO(2n)$-principal bundle $X^\sharp$ admits a reduction to a $U(n)$-principal bundle, but we will not use this property in the sequel.} The following two theorems, as well as their proofs, are strongly related to \cite[Proposition~4.3 and Corollary~4.4]{Frankel-annals}

\begin{thm}\label{T:harmonic-kahler}
Let $(X,\F,g)$ be a Riemannian compact foliated manifold where $(\F,g)$ is transversely K\" ahler, its structural Lie algebra $\mathfrak g$ being semi-simple without compact factors. Assume moreover that $(\F,g)$ is taut.
Let $\Psi: \wtX\to \mathcal S_\mathfrak{g}$ be the (unique) $\rho_\F$-equivariant harmonic map provided by Theorem~\ref{T:harmonic-riemannian}. Then $\mathcal S_\mathfrak{g}$ is Hermitian symmetric (of non-compact type).
\end{thm}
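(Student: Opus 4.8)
The plan is to follow Frankel's argument \cite{Frankel-annals} in the transverse category. Since $(\F,g)$ is taut, Proposition~\ref{P:relationtensionfieldbasictf} makes harmonicity of $\Psi$ coincide with transverse harmonicity, and since $\Psi$ is $\wtF$-invariant it factors through a harmonic map $\bar\Psi\colon\wtX/\wtF\to\mathcal S$ from the complete K\"ahler orbifold of Lemma~\ref{L:BSpaceleaves}, which by Theorem~\ref{T:harmonic-riemannian} is a surjective submersion with connected fibres, equivariant under the isometric $\pi_1(X)$-action defined by $\rho$. The first and essential step is to prove that $\bar\Psi$ is \emph{transversely pluriharmonic}, i.e.\ that the transverse $(1,0)$-differential $\partial_T\Psi$ satisfies $\nabla^{0,1}\bigl(\partial_T\Psi\bigr)=0$. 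For this I would establish the transverse analogue of the Siu--Sampson Bochner identity: the domain being transversely K\"ahler, the pointwise curvature computation is identical to the classical one, and on a symmetric target of the non-compact type the resulting curvature term has the favourable sign, so that (exactly as in the pluriharmonicity theorems of Sampson and Carlson--Toledo) one obtains $\nabla^{0,1}\partial_T\Psi=0$ once the integration by parts is justified. That justification is the only foliated subtlety: all quantities entering the Bochner formula are built from $\Psi$ and its transverse covariant derivatives, hence are $\wtF$-basic and $\pi_1(X)$-invariant and therefore descend to \emph{basic} forms on the \emph{compact} manifold $X$; the tangential energy of $\Psi$ vanishes (it is $\wtF$-invariant) while its transverse energy density is basic, hence bounded; and the relevant Stokes' identity for basic forms wedged with $\chi_\F$ holds precisely because $(\F,g)$ is taut (Lemma~\ref{L:Rummler/Stokes}).

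Granting transverse pluriharmonicity, I would finish by a pointwise, purely Lie-theoretic argument, following Carlson--Toledo and Jost--Yau. Write $\mathcal S=G/H$ with Cartan decomposition $\mathfrak g=\mathfrak k\oplus\mathfrak p$ of the (semisimple, non-compact-type) structural Lie algebra. Pluriharmonicity says that at every point $x$ of the leaf space the image $W_x:=\mathrm{Im}\bigl(\partial_T\Psi_x\bigr)\subset\mathfrak p\otimes\C$ is a subspace on which the Cartan bracket $(\mathfrak p\otimes\C)\times(\mathfrak p\otimes\C)\to\mathfrak k\otimes\C$ vanishes identically. Because $\bar\Psi$ is a submersion, $d\bar\Psi_x$ is surjective onto $T_{\bar\Psi(x)}\mathcal S\cong\mathfrak p$, whence $W_x+\overline{W_x}=\mathfrak p\otimes\C$. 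By the algebraic characterisation of Hermitian symmetric Lie algebras, the existence of such a $W$ forces $\mathfrak g$ to be of Hermitian type; no such subspace exists, for instance, when $\mathcal S$ has a real-hyperbolic factor $\Hj^k$ with $k\ge 3$ or a factor $\mathrm{SL}(2,\C)/\mathrm{SU}(2)$. When $\mathcal S$ is reducible one applies this to each de Rham factor, since the corresponding component of $\bar\Psi$ is again a submersive transversely pluriharmonic map. Hence every irreducible de Rham factor of $\mathcal S$ is Hermitian symmetric, i.e.\ $\mathcal S$ is Hermitian symmetric (of the non-compact type, by the hypothesis on $\mathfrak g$).

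The single real obstacle is therefore the transverse Siu--Sampson estimate of the first paragraph; once transverse pluriharmonicity is in hand, the remainder is algebraic and identical to the unfoliated case. I would stress that tautness of $(\F,g)$ — available here by Theorem~\ref{T:minimalizableSS} — is used twice and essentially: to turn transverse harmonicity into genuine harmonicity via Proposition~\ref{P:relationtensionfieldbasictf}, and to validate the Stokes' identity on the compact base $X$ that underlies the Bochner integration by parts.
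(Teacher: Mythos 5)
Your proposal is correct and follows essentially the same route as the paper: first a Sampson--Siu type vanishing (pluriharmonicity plus the curvature identity) for the basic map, with the integration by parts on the compact manifold $X$ justified precisely through tautness and the Rummler/Stokes identity for basic forms wedged with $\chi_\F$, and then the pointwise Carlson--Toledo/Jost--Yau argument that the image of the transverse $(1,0)$-differential is an abelian subspace of $\mathfrak p\otimes\C$ which, by submersivity of $\Psi$, forces $\mathcal S$ to be Hermitian symmetric. The paper's proof (following Toledo's survey) differs only in presentation, working with the basic form $\eta\wedge\omega^{n-2}\wedge\chi$ on $X$ rather than on the leaf-space orbifold.
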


\begin{proof}
We follow closely Toledo's survey \cite{Toledo}. Here, we have to think that the relevant substitute for the complexification of the tangent bundle of the source manifold is the \textit{complexified normal bundle} together with its splitting into $(1,0)$ and $(0,1)$ parts provided by the transverse complex structure $J:=J_\F$:
\[N_\C\F= N^{1,0}\F \oplus N^{0,1}\F\]
and similarly for the lifted foliation $\wtF$.

Let us pick a bundle-like metric $g$ such that $(\F, g)$ is taut as in Theorem~\ref{T:minimalizableSS}. We begin with some useful observations. Recall that the tension field $\tau (\Psi)$ of $\Psi$ coincides with the basic one $\tau_b (\Psi)$. Consequently the harmonic equation $\tau (\Psi)=\tau_b (\Psi)=0$ reads  
\[ \tilde{\omega}^{n-1}\wedge d_\nabla d^c \Psi=0\] 
where $\tilde{\omega}$ is the lift of the transverse K\" ahler metric, $d_\nabla$ is the differentiation operator
\[A^k(\wtX, f^{*} T\sS_\mathfrak{g})\To A^{k+1}(\wtX, f^{*} T\sS_\mathfrak{g})\]
which extends the Levi-Civita connection on $f^{*} T\sS_\mathfrak{g}$ and which also acts (by restriction) on the complex of twisted basic forms $A_b^\bullet$. The term  $d^c\Psi$ is a basic twisted one form and stands for $J d\Psi$.  Consider the basic scalar-valued $3$-form
\[\eta_\Psi= { \|d^c \Psi\wedge d_\nabla d^c \Psi\|}^2\]
where the norm is taken with respect to the scalar product $f^{*}T\sS_\mathfrak{g}\otimes f^{*}T\sS_\mathfrak{g}\to \R$ induced by the Killing metric on $\sS_\mathfrak{g}$. Note also that $\eta_\Psi= \eta_{\varphi \circ \Psi}$ for any isometry $\varphi$ of $\sS_\mathfrak{g}$; in particular, $\eta=\eta_\Psi$ is actually well defined on $X$.

Up to passing to a double cover, we can assume that $\F$ is oriented.  The volume form defined by $g$ is thus $\nu=\omega^{ n}\wedge\chi$ where $\chi:=\chi_{ {\F}}$ is the characteristic form associated to the bundle-like metric $ g$ (\emph{cf.} \S\ref{SS:characteristicform}). By Stokes' Theorem , we have
\[
\int_X d (\eta\wedge \omega^{ n-2} \wedge \chi) =0.
\]
On the other hand, we know that $d (\eta\wedge \omega^{n-2})$ is basic and that the mean curvature $\kappa$ vanishes identically. From Lemma~\ref{L:Rummler/Stokes}, we obtain that 
\[ d (\eta\wedge \omega^{n-2} \wedge \chi) =d (\eta\wedge \omega^{n-2}) \wedge \chi. \]
We can then apply the punctual Hodge index theorem combined with the harmonic equation to deduce that Siu's vanishing theorem is still valid in our setting (see the proof of Theorem~3.1 in \cite{Toledo} and Generalization~1 in \textit{loc.~cit.}), namely
\begin{equation} \label{E:Siuvanishing}
d_\nabla d^c \Psi=0\quad\mathrm{and}\quad  R(d\Psi(V),d\Psi (W), d\Psi(\bar V), d\Psi (\bar W))=0
\end{equation}
for any (local) basic vector field $V$ and $W$ of type $(1,0)$. Here $R=-d_\nabla^2$ denotes the complexification of the curvature tensor on $T\sS_\mathfrak{g}\otimes \C$. 

Let $\mathfrak g=\mathfrak k\oplus\mathfrak p$ be a Cartan decomposition of the Lie algebra $\mathfrak g$. The following reasoning is again borrowed from \cite[\S4]{Toledo}. Pick a point $x\in \wtX$. From the previous vanishing properties and the fact that $\Psi$ is submersive, we deduce that $d\Psi (N_x^{1,0}\wtF)$ is an abelian subalgebra of $T_{\Psi (x)} \mathcal S_\mathfrak{g} \otimes\C\simeq \mathfrak p \otimes\C$. 
With this at hands and recalling that $\Psi$ is submersive and  $\sS_\mathfrak{g}$ is a symmetric space of non-compact type, we can conclude as in the discussion at the beginning of \S4 in \cite{Toledo} that $\sS_\mathfrak{g}$ is actually \textit{Hermitian symmetric}.  
\end{proof}
 
The fact that $\Psi$ has maximum rank also implies the following statement (\emph{cf.} \cite[Theorem~4.2]{Toledo}) and finally produces a transversely holomorphic map.
\begin{thm}\label{T:harmonic-holomorphic}
Up to replacing the complex structure by its conjugate on each irreducible factor of $\sS_\mathfrak{g}$, the map $\Psi$ is (transversely) holomorphic.
\end{thm}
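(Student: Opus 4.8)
The plan is to deduce transverse holomorphicity of $\Psi$ from the already-established Hermitian symmetric structure on $\mathcal{S}$ together with the Siu-type vanishing~\eqref{E:Siuvanishing}, following Sampson's and Carlson--Toledo's rigidity argument as adapted in \cite[Theorem~4.2]{Toledo} and \cite[Proposition~4.3]{Frankel-annals}. First I would recall that, since $\mathcal{S}$ is now Hermitian symmetric of the non-compact type, its tangent space at a point $\Psi(x)$ decomposes under the complex structure as $\mathfrak{p}\otimes\C = \mathfrak{p}^{+}\oplus\mathfrak{p}^{-}$, the $\pm\sqrt{-1}$-eigenspaces, and these are \emph{abelian} subalgebras with $[\mathfrak{p}^{+},\mathfrak{p}^{-}]\subset\mathfrak{k}\otimes\C$. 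The map $\Psi$ is transversely holomorphic (after possibly conjugating the complex structure on an irreducible factor) precisely when $d\Psi(N^{1,0}\wtF)\subset\mathfrak{p}^{+}$ (or $\subset\mathfrak{p}^{-}$) at every point.

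The key steps, in order: (1) From~\eqref{E:Siuvanishing}, $d\Psi(N^{1,0}_x\wtF)$ is an abelian subalgebra of $\mathfrak{p}\otimes\C$, and because $\Psi$ is a submersion it has complex dimension $n$, hence is a maximal abelian subalgebra among those contained in $\mathfrak{p}\otimes\C$. (2) Decompose $d\Psi(V) = d\Psi(V)^{+}+d\Psi(V)^{-}$ according to $\mathfrak{p}^{\pm}$ for $V$ a basic $(1,0)$ transverse vector field; the curvature-vanishing part of~\eqref{E:Siuvanishing}, evaluated using the explicit curvature of the symmetric space $R(u,v)w = -[[u,v],w]$, forces $[d\Psi(V)^{+},\overline{d\Psi(V)^{+}}]$-type brackets to vanish in a way incompatible with both components being nonzero on an irreducible factor; concretely one runs the Carlson--Toledo argument that a harmonic map into an irreducible Hermitian symmetric space with $d\Psi(N^{1,0})$ abelian of maximal rank must be $\pm$-holomorphic. (3) Invoke that $\Psi$ is basic, so this pointwise statement descends to the local leaf spaces $U/\wtF$ and the splitting $N^{1,0}\wtF$ is integrable (Newlander--Nirenberg for the transverse structure $J_\F$), giving that the locally factorized maps $\overline{\Psi}:U/\wtF\to\mathcal{S}$ are genuinely holomorphic into the (possibly conjugated) complex structure; since $\mathcal{S}=\prod\mathcal{S}_i$ and $\rho$ acts diagonally, the choice of conjugation can be made irreducible-factor by irreducible-factor, and then is propagated globally by continuity and equivariance. (4) Conclude that $\Psi$ is transversely holomorphic in the sense of \S\ref{SS:jFholfext}.

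I expect the main obstacle to be \textbf{step (2)}: making the jump from ``$d\Psi(N^{1,0}\wtF)$ abelian of maximal rank'' to ``$d\Psi(N^{1,0}\wtF)=\mathfrak{p}^{+}$ on each irreducible factor''. In the classical (unfoliated) setting this is precisely where one uses that $\mathfrak{p}^{+}$ is itself a \emph{maximal} abelian subalgebra of $\mathfrak{p}\otimes\C$ for irreducible Hermitian symmetric spaces, and that any maximal abelian subalgebra of $\mathfrak{p}\otimes\C$ contained in a single $\mathrm{Ad}$-orbit must be conjugate to $\mathfrak{p}^{\pm}$ --- but one must check this conjugation can be performed \emph{consistently} across $\wtX$, i.e.\ that the ``choice of sign'' is locally constant. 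Here density of $\rho$ (item~\eqref{I:density} of Theorem~\ref{TH:equivariantsym}) and connectedness of $\wtX$ do the job: the set where $d\Psi(N^{1,0}\wtF)=\mathfrak{p}^{+}$ is open, closed and $\rho$-invariant, hence either empty or everything on each irreducible factor. The remaining verifications --- that the transverse objects behave as in the unfoliated case --- are routine given Theorem~\ref{T:harmonic-kahler} and the fact, already used repeatedly, that basic harmonic maps factor locally through honest harmonic maps on the Kähler leaf spaces.
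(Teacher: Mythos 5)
Your outline works for the irreducible factors of complex dimension at least two, and there it coincides with the paper's own argument (citing Siu--Carlson--Toledo rigidity via \cite[Theorem~4.2]{Toledo}, derived from \eqref{E:Siuvanishing}). The genuine gap is in step~(2) for the rank-one factors: when an irreducible factor $\sS_i$ is the Poincar\'e disk, the statement you invoke is simply false, and indeed \cite[Theorem~4.2]{Toledo} explicitly excludes the hyperbolic plane. For $\sS_i=\mathbb D$ one has $\dim_\C(\mathfrak p\otimes\C)=2$, so \emph{every} line in $\mathfrak p\otimes\C$ is a maximal abelian subalgebra, not just $\mathfrak p^{\pm}$; the abelianity of $d\Psi_i(N^{1,0}\wtF)$ coming from \eqref{E:Siuvanishing} therefore carries no information, and your open--closed argument for the ``choice of sign'' has nothing to propagate, since the pointwise alternative $d\Psi_i(N^{1,0})=\mathfrak p^{+}$ or $\mathfrak p^{-}$ is not established at any point. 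This is not a technicality: equivariant harmonic maps of maximal rank to the disk are in general neither holomorphic nor antiholomorphic (e.g.\ harmonic diffeomorphisms between distinct hyperbolic structures), so some input beyond harmonicity, maximal rank and abelianity is indispensable for these factors.

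The paper closes exactly this case by a different mechanism, following Jost--Yau \cite{JostYau83}: the pluriharmonicity $d_\nabla d^c\Psi=0$ from \eqref{E:Siuvanishing} together with the maximal rank condition shows that the local level sets of $\Psi_i$ are $\{z_1=\mathrm{const}\}$ for a suitable transverse holomorphic coordinate $z_1$ of $\wtF$; since the fibers of $\Psi_i$ are connected, this fibration transports a well-defined complex structure $J$ onto $\sS_i$ for which $\Psi_i$ is transversely holomorphic, and then the \emph{density} of $\rho_i(\pi_1(X))$ in $\mathrm{Iso}^0(\sS_i)$ (item~\eqref{I:density} of Theorem~\ref{TH:equivariantsym}) forces $J$ to be invariant under the full identity component of the isometry group, hence equal to the standard structure or its conjugate. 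So density is used to pin down the complex structure produced by Jost--Yau, not merely to make a sign choice locally constant as in your sketch. A secondary inaccuracy: $d\Psi(N^{1,0}_x\wtF)$ need not have dimension $n$; submersivity only gives dimension at least $\dim_\C\sS$, which is what the rigidity argument actually requires for the higher-dimensional factors.
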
  
\begin{proof}
Up to replacing $X$ by a finite \'etale cover, we can suppose that the image of the representation $\rho_\F$ lies in the identity component $\Isom^0(\sS_\mathfrak{g})$ of the isometry group of $\sS_\mathfrak{g}$. We have an isometric splitting $\sS_\mathfrak{g}= { \sS}_1\times\cdots\times { \sS}_p$ into irreducible Hermitian symmetric spaces ordered in such a way that for some $p'$, ${ \sS}_{p'}\times\cdots\times { \sS}_{p}$ is the polydisk factor (maybe empty). The fundamental group $\pi_1(X)$ (viewed as the group of deck transformations) acts via the representation $\rho$ diagonally and isometrically on $\sS$. In particular, it inherits for every $i=1,\ldots,p$ a representation $\rho_i:\pi_1(X)\to \Isom^0(\sS_i)$ with dense image together with the foliated $\rho_i$-equivariant harmonic map $\Psi_i:\wtX\to \sS_i$ induced by $\Psi$ by projection. We need exactly to prove that for every factor $\sS_i$, $\Psi_* \circ J_{ \wtF}=J_i\circ \Psi_*$, where $J_i$ is one of the two 	$\Isom^0(\sS_i)$-invariant complex structures on $\sS_i$. Because $\Psi_i$ has maximal rank, this automatically holds in the case $i<p'$ according to Siu--Carlson--Toledo's rigidity results  \cite[Theorem~4.2]{Toledo} which can be derived from \eqref{E:Siuvanishing}. 

In the case where $i\geq p'$ and $\sS_i=\mathbb D$ is the Poincar\'e disk, we resort to the analysis developed in \cite{JostYau83}. Since the maximal rank condition and the property \eqref{E:Siuvanishing} are fulfilled, the local levels of $\Phi_i$ are given by $\{z_i= \text{const}\}$ where $z_i$ is a suitable holomorphic transverse coordinate (with respect to $\wtF$). Then, by connectedness of the fibers of $\Phi_i$, there exists on $\sS_i$ a well defined and necessarily unique complex structure $J$ such that $\Phi_i$ become (transversely) holomorphic with respect to $J$. Moreover, $J$ is invariant under the action of the image of $\rho_i$ by equivariance, hence by the whole action of $\Isom^0( \sS_i)$ by density of the representation. The proof of Theorem~\ref{T:harmonic-holomorphic} is thus complete.
\end{proof}

\section{de Rham decomposition for transversely K\" ahler foliation of quasi-negative transverse Ricci curvature}\label{S:DRdecomposition}
 
 Let $(X,\F,g)$ be a compact Riemannian foliated manifold satisfying the assumptions of Section~\ref{S:results}, that is 

	\begin{minipage}{0.8\textwidth}	
	\begin{itemize}
 	\item $\F$ is transversely K\" ahler and from now on the integer $n$ will denote its complex codimension 
 	\item The transverse Ricci form $\gamma=\textrm{Ric}(\overline g)$ is quasi-negative.
 	\item $\F$ is homologically orientable: the basic de Rham cohomology class of the basic volume form induced by $\overline g$ is non-trivial.
 \end{itemize}
	\end{minipage}
 \eqnum\label{P:propertiesofF}
\bigskip

We will denote by $\omega$ the fundamental form of the transverse K\" ahler metric $\overline g$. This is a basic $(1,1)$-form which is positive in the transverse direction. As in the classical setting of complex manifolds, the restriction of the differential $d$ to complex valued basic forms decomposes as the sum of two operators $\partial$ and $\bar{\partial}$ of respective bi-degrees $(1,0)$ and $(0,1)$.
\medskip

Also, as orbifolds enter into the picture at the end this section, we refer to \cite{Moerdijkbook2003} or \cite{caramelloorbifolds} (and references therein) for the related basic definitions/properties.
\subsection{Preparatory material}\label{SS:preparatory}
The following result was proven in \cite{Touzet-toulouse}.

\begin{thm}\label{T:Touzet-semisimple}
The structural Lie algebra $\mathfrak g$ of $\F$ is semi-simple without compact factors.
\end{thm}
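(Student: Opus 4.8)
The plan is to combine Molino's structure theory (\S\ref{SS:commutingsheaf}, \S\ref{SS:asstrfrbundle}) with a transverse Bochner vanishing argument, the latter being available precisely because $\F$ is homologically orientable. Recall that $\mathcal C_\F$ is a locally constant sheaf of Lie algebras with fibre $\mathfrak g$, whose local sections are basic Killing fields for the transverse metric $\overline g$ — and, $\overline g$ being transversely K\"ahler, are in addition $J_\F$-holomorphic, so that their $(1,0)$-parts are basic holomorphic transverse vector fields — and whose monodromy is a representation $\rho\colon\pi_1(X)\to\Aut(\mathfrak g)$. Since $\F$ is homologically orientable, Theorem~\ref{TH:Masatautcrit} gives a bundle-like metric $g$ for which $(\F,g)$ is taut, and Lemma~\ref{L:Rummler/Stokes} then makes the transverse integration by parts available: the integral over $X$ against $\nu=\omega^{n}\wedge\chi_\F$ of the basic Laplacian of any basic function vanishes. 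Writing $\mathfrak g$ as a sum of its radical $\mathfrak r$ and of the compact resp.\ non-compact parts $\mathfrak s_{c},\mathfrak s_{nc}$ of its Levi factor, I would prove separately that $\mathfrak s_{c}=0$ and $\mathfrak r=0$ (then $\mathfrak g=\mathfrak s_{nc}$, as desired).

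The analytic heart is the following vanishing mechanism. Let $E$ be a flat basic vector bundle on $X$ carrying a \emph{parallel} metric, and let $s\colon E\to N\F$ be a basic bundle morphism which, in any local flat frame $(V_\alpha)$ of $E$, is given by commuting basic Killing fields $(V_\alpha)$. Polarising the transverse Bochner identity $\tfrac12\Delta^{T}_{\overline g}|V|^{2}=|\nabla^{T}V|^{2}-\mathrm{Ric}(\overline g)(V,V)$ for transverse Killing fields, and using that the curvature of $E$ contributes nothing ($E$ flat with parallel metric), one obtains over the local leaf spaces
\[
\tfrac12\,\Delta^{T}_{\overline g}\,|s|^{2}=\bigl|\nabla^{T}s\bigr|^{2}-\bigl\langle \mathrm{Ric}(\overline g)\cdot s,\,s\bigr\rangle ,
\]
an equality of basic functions on $X$. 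Integrating against $\nu$ and using tautness (Lemma~\ref{L:Rummler/Stokes}) gives $\int_X|\nabla^{T}s|^{2}\,\nu=\int_X\langle\mathrm{Ric}(\overline g)\cdot s,s\rangle\,\nu\le 0$, whence $\nabla^{T}s\equiv0$ and $\langle\mathrm{Ric}(\overline g)\cdot s,s\rangle\equiv0$; on the nonempty open set where $\mathrm{Ric}(\overline g)$ has full rank the latter forces $s=0$, and transverse parallelism propagates this, so $s\equiv0$ on $X$.

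This already disposes of the compact factors. The ideal $\mathfrak s_{c}$ is characteristic in $\mathfrak g$, so $\rho$ preserves it and $\mathcal C_\F$ contains a flat sub-bundle $\mathcal C_{c}$ with fibre $\mathfrak s_{c}$; since every automorphism of $\mathfrak s_{c}$ preserves its negative-definite Killing form, $\mathcal C_{c}$ carries a parallel Euclidean metric, and a local flat frame of $\mathcal C_{c}$ consists of commuting basic Killing fields. Hence the tautological evaluation morphism $s\colon\mathcal C_{c}\to N\F$ fits the vanishing mechanism and is identically zero; as a basic Killing field vanishing identically represents the zero element of $\mathfrak g$, this forces $\mathfrak s_{c}=0$. (Note this step uses only the transverse Riemannian structure and quasi-negativity; the K\"ahler hypothesis is not needed here.)

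The main obstacle is ruling out a nonzero radical $\mathfrak r$. Taking the last nonzero term of the derived series of $\mathfrak r$ produces a nonzero \emph{abelian} characteristic ideal $\mathfrak a\subseteq\mathfrak g$, hence again a flat sub-bundle $\mathcal C_{\mathfrak a}\hookrightarrow\mathcal C_\F$ of commuting basic Killing fields; but now the monodromy on $\mathfrak a$ need not preserve any positive-definite metric, so the mechanism does not apply verbatim. The plan here is: reduce to a minimal such ideal, on which $\mathfrak g$ acts by a character $\lambda$ (Lie's theorem); when $\lambda=0$ (so $\mathfrak a$ is central) the monodromy on $\mathfrak a$ is bounded, hence unitarisable after a finite \'etale cover of $X$, and the mechanism applies and kills $\mathfrak a$; when $\lambda\neq0$ one is left with a genuinely non-unitary flat line (or plane) sub-bundle $L\subset N\F$, and the crucial point is to show its "dilation" class in $H^{1}(X,\R)$ vanishes — this is where I expect unimodularity of $\mathfrak g$ (a general feature of structural Lie algebras of Riemannian foliations on compact manifolds) together with the taut/Rummler--Stokes identity to enter decisively — so that, after a finite cover, $L$ acquires a parallel metric and the mechanism again applies. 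Making this last paragraph rigorous — controlling the monodromy of $L$, and checking that the relevant integration by parts holds on $X$ — is the delicate part; everything else reduces to Molino's theory and the Bochner computation above.
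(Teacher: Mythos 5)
First, note that the paper does not prove Theorem~\ref{T:Touzet-semisimple} at all: it is imported from \cite{Touzet-toulouse}. Your proposal must therefore stand as a self-contained argument, and as such it has a genuine gap, which you partly acknowledge: the elimination of the radical. Concretely, in your ``$\lambda=0$'' case the boundedness of the monodromy on $\mathfrak a$ does not follow from centrality. The representation $\rho$ is the monodromy of the local system $\mathcal C_\F$, an action of $\pi_1(X)$ by automorphisms of $\mathfrak g$, and triviality of the \emph{adjoint} action of $\mathfrak g$ on $\mathfrak a$ puts no constraint on this outer action: Carri\`ere's Riemannian flow on the hyperbolic torus bundle $T^3_A$ has abelian structural algebra $\R$ on which the monodromy is the unbounded character $n\mapsto\lambda^n$. (That example violates \ref{A:Riccineg} and \ref{A:homor}, which is exactly the point: any boundedness/unitarisability must be \emph{extracted} from the hypotheses, and your sketch does not do this.) In the ``$\lambda\neq 0$'' case you lean on an alleged unimodularity of structural Lie algebras of Riemannian foliations on compact manifolds --- not a fact recorded in the paper, nor one you may invoke without proof --- and on the vanishing of the dilation class in $H^{1}(X,\R)$, which you explicitly leave open. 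So the solvable part of the argument is not merely ``delicate''; it is missing, and it is the heart of the theorem.

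There is also an ordering error in the compact-factor step: the compact part $\mathfrak s_{c}$ of a Levi factor is \emph{not} a characteristic ideal of $\mathfrak g$ in general (Levi subalgebras are only unique up to conjugacy, and a compact simple factor acting nontrivially on the radical is not even an ideal --- e.g.\ $\mathfrak{so}(3)$ inside the Euclidean algebra $\R^{3}\rtimes\mathfrak{so}(3)$). Hence the flat subbundle $\mathcal C_{c}$ you use is not defined, and your argument only disposes of the maximal compact semisimple \emph{ideal} of $\mathfrak g$; in particular this step is not independent of the radical step, contrary to your parenthetical claim, and becomes correct only after $\mathfrak r=0$ is known. On the positive side, the analytic mechanism itself is sound: tautness from Theorem~\ref{TH:Masatautcrit}, the Rummler--Stokes identity of Lemma~\ref{L:Rummler/Stokes}, and the transverse Killing--Bochner identity do give the vanishing statement for flat subbundles of $\mathcal C_\F$ carrying a parallel metric, and this is essentially the Weitzenb\"ock argument the paper itself runs later in \S\ref{SS:vanishingloci} (Theorem~\ref{TH:isotvanishingloci}). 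But with the radical untreated and the compact case mis-ordered, the proposal does not prove the stated theorem.
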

In particular we can (and we will) assume that $(\F,g)$ is \textit{taut} by Nozawa's Theorem \ref{T:minimalizableSS}. Actually, tautness can be directly derived from the homological orientability assumption  according to Masa's criterion recalled in Theorem~\ref{TH:Masatautcrit}.
According to Theorem~\ref{T:harmonic-holomorphic}, $\F$ admits an extension $\G$ (a priori not Riemannian) which is defined on the universal cover $\wtX$ of $X$ by the levels of a $J_\F$-holomorphic map
\[\Psi:\wtX\to\scrH:=\sS_\mathfrak{g}\] to a bounded symmetric domain and whose additional properties are listed in Theorem \ref{T:harmonic-riemannian} (to stick with the notation used in the statements of the introduction, the we drop the subscript $\mathfrak{g}$ and use $\scrH$ to denote the Hermitian symmetric space). Let $h_{\scrH}$ be the Killing metric on~$\scrH$. This is a K\" ahler--Einstein metric whith negative Ricci form $\mathrm{Ric}(h_\scrH)$. After normalization, we can suppose that $\mathrm{Ric}(h_\scrH) =-\omega_\scrH$ where $\omega_\scrH$ is the fundamental form of~$h_\scrH$. The  pull-back of $\omega_\scrH$ by $\Psi$ is $\pi_1(X)$-invariant and then descends to $X$ as a basic (with respect to both $\F$ and $\G$) $(1,1)$-forms that we will denote by $\Omega$.\footnote{Unless otherwise specified,``basic'' means basic with respect to the original foliation $\F$; of course, any covariant tensor which is basic with respect to $\G$ is automatically basic with respect to $\F$}

Set $p=\textrm{rk}_\C\ \G/\F$.  The short exact sequence of complex vector bundles\footnote{After complexification and identification with their $(1,0)$ parts.}
$$0\rightarrow \G/\F \rightarrow N\F\rightarrow N\G\rightarrow 0$$
implies that the corresponding Chern classes are related by 
\[c_1(N\F)=c_1(\G/\F)+c_1(N\G)\in H^2(X,\mathbb R). \]
Consequently, $c_1(\G/\F)$ is represented by the basic closed $(1,1)$-form $\alpha= \frac{1}{2\pi}\left(\gamma+\Omega\right)$. Note that $\alpha$ coincides with $\frac{1}{2\pi}\gamma$ in restriction to the leaves of $\G$ and in particular is quasi-negative on $\G/\F$.

On the other hand, $c_1(\G/\F)$ is also represented by the Chern curvature of  $\omega^p$ seen as a metric on  $\bigwedge^p\G/\F$, that is $c_1(\G/\F)=[\frac{1}{2\pi}\alpha']$ with  
$$\alpha'=-\sqrt{ -1}\partial\bar{\partial}\log \left(\frac{\omega^{ p}\wedge\Omega^{n-p}}{{\left|dz_1\wedge\cdots \wedge dz_p\right|}^2 \wedge \Omega^{n-p}}\right)$$
and where the local tranverse holomorphic coordinates $(z_1,\ldots,z_{n})$ are chosen in such a way that  $\G$ is defined as the kernel of $dz_{p+1}\wedge\cdots \wedge dz_n$. Actually, both $\alpha$ and $\alpha'$ are closed basic $(1,1)$-forms and they are related by 
\begin{equation}\label{E:relationcherncurvature}
\alpha=\alpha' +\sqrt{ -1}\partial\bar{\partial}f_0
\end{equation}
where $f_0$ is the basic function such that
$$\omega^p\wedge \Omega^{n-p}=e^{f_0} \omega^{n}.$$
 
\subsection{Existence of a special $\F$-basic $\G$-leafwise K\"ahler metric}\label{SS:Gleafwisemetric}

The following definition concerns (holomorphic) extensions of foliations (\emph{cf.} \S\ref{SS:transverse Kähler}) and makes precise the notion of ``invariant Kähler metric along the leaves of $\G$''.

\begin{dfn} 
A basic (with respect to $\F$) and closed $(1,1)$-form  $\omega'$ is said to be a \textit{$\G$-leafwise  K\"ahler metric} whenever  it is positive in restriction to $\G/\F$.
\end{dfn}

 Actually,  the restriction  $\restr{\F}{{ \mathcal L}_\mathcal{\G}}$ to any leaf ${ \mathcal L}_\G$ of $\G$ is transversely holomorphic on the (non compact) manifold ${ \mathcal L}_\G$, so that an $\omega'$ satisfying the property above induces an invariant transverse K\"ahler metric with respect to $\restr{\F}{{\mathcal L}_\G}$, whence the termininology used. 
  
The following result guarantees the existence of a special $\F$-basic {$\G$-leafwise  K\"ahler metric} and can be thought as a variant of the famous Yau's existence Theorem of solutions to the complex Monge--Amp\` ere equation. It is motivated by the relationship between $\alpha$ and $\alpha'$ given in \eqref{E:relationcherncurvature}. 
\begin{TH}\label{L:relativemongeampere}
Let $\F$ be a foliation satisfying the assumptions listed in \eqref{P:propertiesofF}. Assume in addition that $\F$ is orientable. Let $\chi$ be the characteristic form associated to the bundle-like metric $g$. Let $f$ be a real basic smooth function such that 
	$$\int_X e^f\omega^p\wedge \Omega^{n-p}\wedge \chi=\int_X \omega^p\wedge \Omega^{n-p}\wedge \chi.$$
Then there exists a basic real smooth function $\varphi$ which solves 
\begin{equation}\label{E:relativemongeampere}
{(  \omega + \sqrt{-1}\partial\bar{\partial}\varphi)}^p\wedge \Omega^{n-p}=e^f\omega^p\wedge \Omega^{n-p}
\end{equation} 
and such that $ \omega + \sqrt{-1}\partial\bar{\partial}\varphi$ is a $\F$-basic $\G$-leafwise  K\"ahler metric. Moreover, such a $\varphi$ is unique up to an additive constant.
\end{TH}
The proof being quite involved (particularly the existence part), we postpone it to Appendix~\ref{S:proofofmaintechnlemma}. Here we simply formulate some useful remarks and conclude the proofs of the main results of this article.

\begin{remark}Observe that when $\F$ is minimal, \emph{i.e.} has dense leaves, $f$ is automatically constant, hence identically zero by the normalization condition, so that existence part of the lemma is obvious. 
\end{remark}

\begin{remark}When  $\F$ is still supposed minimal, $\overline{ \gamma_h}:=-\mathrm{Ric}(\overline g)$ is the fundamental form of a transverse K\" ahler metric $h$. Now, $\textrm{Ric}(h)$ represents the same cohomology class than $-\gamma_h$ (namely $c_1(N\F)$). By El-Kacimi's basic $\partial\overline{\partial}$-Lemma \cite[Proposition~3.5.1]{Elkacimi90}, we can conclude that $\mathrm{Ric}(h)=-\gamma_h$. That is $h$ is (transversely) K\"ahler--Einstein.
\end{remark}

By adding the observations made at the end of \S\ref{SS:preparatory} and more specially the equality~\eqref{E:relationcherncurvature}, we can formulate the following corollary.
\begin{cor}\label{C:ricciqnegative}
Let $\varphi_0$ a solution of \eqref{E:relativemongeampere} with $f=f_0 +c$ ($c$ a suitable normalizing constant) and set $\omega_{\varphi_0}=  \omega + \sqrt{-1}\partial\bar{\partial}\varphi_0$.
Let $\overline{\mathcal L}$ be a leaf of the holomorphic foliation $\overline{\G}$ induced by $\G$ on the local space of leaves $U/\F$. The the restriction of $\omega_{\varphi_0}$ to $\overline{\mathcal L}$ defines on $\overline{\mathcal L}$ a K\"ahler metric whose Ricci form coincide with $\alpha_{|\overline{\mathcal L}}$. 
\end{cor}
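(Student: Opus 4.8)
The plan is to make the statement local and completely explicit: after choosing transverse holomorphic coordinates on $U/\F$ adapted to $\overline\G$, the ``relative'' Monge--Amp\`ere equation \eqref{E:relativemongeampere} becomes, fibrewise over $\overline\G$, an honest complex Monge--Amp\`ere equation on each leaf $\overline{\mathcal L}$; the conclusion then follows from the classical formula for the Ricci form of a K\"ahler metric together with the relation \eqref{E:relationcherncurvature} between $\alpha$ and $\alpha'$.

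First I would fix local holomorphic transverse coordinates $(z_1,\dots,z_n)$ on $U/\F$ adapted to $\overline\G$, i.e.\ such that $\overline\G=\{dz_{p+1}=\dots=dz_n=0\}$. This is legitimate because $\overline\G$ is the holomorphic foliation induced on $U/\F$ by the $J_\F$-holomorphic submersion $\Psi$ (Theorem~\ref{T:harmonic-holomorphic}), so the coordinates $(z_1,\dots,z_p)$ restricting to the leaves of $\G$ (the requirement already made in \S\ref{SS:preparatory}) can be completed by transverse coordinates to $\overline\G$. With this choice $\overline{\mathcal L}$ is a slice $\{z_{p+1}=c_{p+1},\dots,z_n=c_n\}$ carrying holomorphic coordinates $(z_1,\dots,z_p)$; moreover $\Omega=\Psi^{*}\omega_{\sS}$ involves only the differentials $dz_a,d\bar z_b$ with $a,b>p$ (being the pull-back of a form on $\sS$ by the map $U/\F\to\sS$ whose fibres are the leaves of $\overline\G$), so that $\Omega|_{\overline{\mathcal L}}=0$ and $\Omega^{n-p}$ is a nowhere vanishing multiple of $(\sqrt{-1})^{n-p}\,dz_{p+1}\wedge d\bar z_{p+1}\wedge\dots\wedge dz_n\wedge d\bar z_n$. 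Finally, by Lemma~\ref{L:relativemongeampere} the basic form $\omega_{\varphi_0}=\omega+\sqrt{-1}\partial\bar\partial\varphi_0$ is positive on $\G/\F$; being also closed, its restriction $\omega_{\varphi_0}|_{\overline{\mathcal L}}$ is a closed positive $(1,1)$-form, hence a K\"ahler metric on $\overline{\mathcal L}$, as asserted.

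The key observation is the following. Writing $\omega=\sqrt{-1}\sum_{i,j}g_{i\bar j}\,dz_i\wedge d\bar z_j$ and $\omega_{\varphi_0}=\sqrt{-1}\sum_{i,j}g^{\varphi_0}_{i\bar j}\,dz_i\wedge d\bar z_j$ with $g^{\varphi_0}_{i\bar j}=g_{i\bar j}+\partial_i\partial_{\bar j}\varphi_0$, wedging any $(p,p)$-form with $\Omega^{n-p}$ kills every monomial involving some $dz_a$ or $d\bar z_b$ with $a,b>p$, so only the ``$\{1,\dots,p\}$-block'' of $\omega^p$, resp.\ of $\omega_{\varphi_0}^p$, contributes, giving
\[
\omega^p\wedge\Omega^{n-p}=p!\,\det\bigl((g_{i\bar j})_{1\le i,j\le p}\bigr)\,(\sqrt{-1})^p\,dz_1\wedge d\bar z_1\wedge\dots\wedge dz_p\wedge d\bar z_p\wedge\Omega^{n-p},
\]
and similarly with $g^{\varphi_0}$ in place of $g$. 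Since the $(n,n)$-form multiplying the two determinants is nowhere zero, \eqref{E:relativemongeampere} is equivalent to the pointwise scalar identity $\det\bigl((g^{\varphi_0}_{i\bar j})_{1\le i,j\le p}\bigr)=e^{f}\det\bigl((g_{i\bar j})_{1\le i,j\le p}\bigr)$ on $U/\F$. Restricting it to $\overline{\mathcal L}$, and noting that there $(g_{i\bar j})_{1\le i,j\le p}$ and $(g^{\varphi_0}_{i\bar j})_{1\le i,j\le p}$ are exactly the coefficient matrices, in the coordinates $(z_1,\dots,z_p)$, of $\omega|_{\overline{\mathcal L}}$ and $\omega_{\varphi_0}|_{\overline{\mathcal L}}$, we conclude that $\omega_{\varphi_0}|_{\overline{\mathcal L}}$ solves on the $p$-dimensional K\"ahler manifold $(\overline{\mathcal L},\omega|_{\overline{\mathcal L}})$ the genuine complex Monge--Amp\`ere equation $\bigl(\omega_{\varphi_0}|_{\overline{\mathcal L}}\bigr)^p=e^{\,f|_{\overline{\mathcal L}}}\bigl(\omega|_{\overline{\mathcal L}}\bigr)^p$.

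For the last step I would apply $-\sqrt{-1}\,\partial\bar\partial\log(\cdot)$ to this equation, computed in the complex structure of $\overline{\mathcal L}$ (which on functions agrees with the restriction of the ambient operator, $\Psi$ being holomorphic). The left-hand side produces $\mathrm{Ric}\bigl(\omega_{\varphi_0}|_{\overline{\mathcal L}}\bigr)$ by the classical formula, whence $\mathrm{Ric}\bigl(\omega_{\varphi_0}|_{\overline{\mathcal L}}\bigr)=\mathrm{Ric}\bigl(\omega|_{\overline{\mathcal L}}\bigr)-\sqrt{-1}\,\partial\bar\partial\bigl(f|_{\overline{\mathcal L}}\bigr)$. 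On the other hand, in these coordinates the quotient defining $\alpha'$ equals $p!\,\det\bigl((g_{i\bar j})_{1\le i,j\le p}\bigr)$, so $\alpha'=-\sqrt{-1}\,\partial\bar\partial\log\det\bigl((g_{i\bar j})_{1\le i,j\le p}\bigr)$ on $U/\F$ and hence $\alpha'|_{\overline{\mathcal L}}=\mathrm{Ric}\bigl(\omega|_{\overline{\mathcal L}}\bigr)$. Since $f$ equals $f_0$ up to an additive constant, the term $\sqrt{-1}\,\partial\bar\partial f$ is precisely the $\partial\bar\partial$-exact quantity relating $\alpha'$ and $\alpha$ recorded in \eqref{E:relationcherncurvature}; substituting, the right-hand side reorganizes into $\alpha|_{\overline{\mathcal L}}$ (which is just $\gamma|_{\overline{\mathcal L}}$ since $\Omega|_{\overline{\mathcal L}}=0$). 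This gives $\mathrm{Ric}\bigl(\omega_{\varphi_0}|_{\overline{\mathcal L}}\bigr)=\alpha|_{\overline{\mathcal L}}$. I do not expect a real obstacle: the only point that is more than a mechanical unravelling of definitions is the observation that wedging with $\Omega^{n-p}$ turns the foliated equation \eqref{E:relativemongeampere} into a fibrewise Monge--Amp\`ere equation on the leaves of $\overline\G$, and this is immediate in the adapted coordinates; everything else is standard K\"ahler geometry.
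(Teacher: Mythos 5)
Your reduction is exactly the route the paper intends (the paper gives no detailed argument, only a pointer to \S\ref{SS:preparatory} and \eqref{E:relationcherncurvature}), and most of it is correct: in coordinates adapted to $\overline{\G}$, wedging with $\Omega^{n-p}$ does turn \eqref{E:relativemongeampere} into the fibrewise scalar equation $\det\bigl((g^{\varphi_0}_{i\bar j})_{i,j\le p}\bigr)=e^{f}\det\bigl((g_{i\bar j})_{i,j\le p}\bigr)$, the restriction $\omega_{\varphi_0}|_{\overline{\mathcal L}}$ is indeed K\"ahler, $\alpha'|_{\overline{\mathcal L}}=\mathrm{Ric}\bigl(\omega|_{\overline{\mathcal L}}\bigr)$, and $\mathrm{Ric}\bigl(\omega_{\varphi_0}|_{\overline{\mathcal L}}\bigr)=\mathrm{Ric}\bigl(\omega|_{\overline{\mathcal L}}\bigr)-\sqrt{-1}\partial\bar\partial\bigl(f|_{\overline{\mathcal L}}\bigr)$.

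The gap is in the final substitution, and it is a sign that your phrase ``reorganizes into $\alpha|_{\overline{\mathcal L}}$'' hides. With the paper's conventions, $e^{f_0}=\omega^p\wedge\Omega^{n-p}/\omega^n$ gives, using $\mathrm{Ric}(\omega_\sS)=-\omega_\sS$, the identity $\sqrt{-1}\partial\bar\partial f_0=-\alpha'+\Omega+\gamma$, so \eqref{E:relationcherncurvature} holds with a \emph{plus} sign, $\alpha=\alpha'+\sqrt{-1}\partial\bar\partial f_0$ (up to the stray $1/2\pi$). Your Ricci identity carries a \emph{minus} sign, so with $f=f_0+c$ you actually get $\mathrm{Ric}\bigl(\omega_{\varphi_0}|_{\overline{\mathcal L}}\bigr)=\alpha'|_{\overline{\mathcal L}}-\sqrt{-1}\partial\bar\partial f_0|_{\overline{\mathcal L}}=2\alpha'|_{\overline{\mathcal L}}-\gamma|_{\overline{\mathcal L}}=\alpha|_{\overline{\mathcal L}}-2\sqrt{-1}\partial\bar\partial f_0|_{\overline{\mathcal L}}$, and $\sqrt{-1}\partial\bar\partial f_0|_{\overline{\mathcal L}}=(\gamma-\alpha')|_{\overline{\mathcal L}}$ has no reason to vanish: its vanishing would mean that the uncorrected metric $\omega|_{\overline{\mathcal L}}$ already has leafwise Ricci form $\gamma|_{\overline{\mathcal L}}$, which is precisely what the Monge--Amp\`ere correction is there to arrange. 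What does yield the conclusion is the choice of right-hand side making $\omega_{\varphi_0}^p\wedge\Omega^{n-p}$ a \emph{constant multiple of} $\omega^n$, i.e. $f=-f_0+c$ with the paper's $f_0$ (equivalently $f_0$ defined by $\omega^n=e^{f_0}\,\omega^p\wedge\Omega^{n-p}$): then $\det\bigl((g^{\varphi_0}_{i\bar j})_{i,j\le p}\bigr)$ is a constant times $\det\bigl((g_{i\bar j})_{i,j\le n}\bigr)/\det\bigl((\Omega_{a\bar b})_{a,b>p}\bigr)$ and $\mathrm{Ric}\bigl(\omega_{\varphi_0}|_{\overline{\mathcal L}}\bigr)=(\gamma+\Omega)|_{\overline{\mathcal L}}=\alpha|_{\overline{\mathcal L}}$. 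In other words, the statement of Corollary~\ref{C:ricciqnegative} is not compatible, as printed, with the definition of $f_0$ and with \eqref{E:relationcherncurvature}; a complete proof has to detect and resolve this sign (it is a typo in the normalization, not a genuine obstruction), rather than absorb it in an unchecked substitution.
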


\subsection{Vanishing loci of isotropy subalgebras of the commuting sheaf }\label{SS:vanishingloci}
Recall that the differential $d\Psi$ provides a Lie algebra isomorphism between the  lift $r^* { \mathcal C}_\F$ of the commuting sheaf $r^* { \mathcal C}_\F$ to the universal cover $r:\wtX\to X$ and the Lie algebra $\Lieisom(\scrH)$ of infinitesimal isometries of $\scrH$. For any $s\in\scrH$, we will denote by $\Lieisom_s(\scrH)\subset \Lieisom(\scrH)$ the isotropy Lie subalgebra: 
$$\Lieisom_s(\scrH):=\{w\in \Lieisom(\scrH)|w(s)=0 \}.$$
\begin{thm}\label{TH:isotvanishingloci}
Let $s\in \scrH$ and $w\in \Lieisom_s(\scrH)$. Let $v$ be the unique element of $r^* { \mathcal C}_\F$ such that $d\Psi (v)=w$. Then $v$ vanishes identically on the fiber $F_s=\Psi^{-1}(s)$.
\end{thm}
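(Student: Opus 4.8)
The plan is as follows. First I would observe that, since $d\Psi$ identifies $r^*\mathcal C_\F$ with $\mathfrak{iso}(\sS)$ and (by item~(5) of Theorem~\ref{TH:equivariantsym}, equivalently the Corollary recorded after the proof of Theorem~\ref{T:harmonic-riemannian}) $d\Psi(v)$ is constant along the fibres of $\Psi$, one has $d\Psi_x(v(x))=w(s)=0$ for every $x\in F_s$ because $\Psi\equiv s$ there. Hence over $F_s$ the section $v$ of $N\wtF$ takes values in $\ker d\Psi=\wtG/\wtF$, so $v|_{F_s}$ descends to a vector field $\hat v$ tangent to the sub-orbifold $L:=F_s/\wtF$ of the complete Kähler orbifold $\wtX/\wtF$ (Lemma~\ref{L:BSpaceleaves}). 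The goal is then to prove $\hat v\equiv 0$.

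Next I would equip $L$ with the Monge--Amp\`ere metric of Corollary~\ref{C:ricciqnegative}. Being a section of the commuting sheaf, $v$ is Killing for $\overline g$, so $\hat v$ is Killing for the induced metric on $L$; and since $\Psi$ is $J_\F$-holomorphic (Theorem~\ref{T:harmonic-holomorphic}) while $\mathrm{Iso}^0(\sS)$ acts by biholomorphisms on the Hermitian symmetric space $\sS$, a short computation with $\mathcal L_v J_\F$ using that $\Psi$ intertwines $v$ and $w$ shows that $v$ is transversely holomorphic, so $\hat v$ is a holomorphic vector field on $L$. Moreover $\omega$ and $\gamma$ are $v$-invariant ($v$ being Killing and holomorphic) and $\Omega=\Psi^*\omega_\sS$ is $v$-invariant (as $w$ is a holomorphic Killing field of $(\sS,h_\sS)$), hence so is the function $f_0$ with $\omega^p\wedge\Omega^{n-p}=e^{f_0}\omega^{n}$; therefore the flow of $v$ permutes the solutions of \eqref{E:relativemongeampere}, and by the uniqueness part of Lemma~\ref{L:relativemongeampere} the solution $\varphi_0$ is $v$-invariant up to an additive constant, which must vanish since $\varphi_0$ is bounded (it is a function on the compact manifold $X$). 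Thus $v(\varphi_0)=0$, so $\omega_{\varphi_0}|_L$ is $\hat v$-invariant and $\hat v$ is a holomorphic Killing field for $\omega_{\varphi_0}|_L$, whose Ricci form equals $\alpha|_L=\tfrac1{2\pi}\gamma|_L\le 0$.

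Finally I would run the Bochner technique on the complete Kähler orbifold $(L,\omega_{\varphi_0}|_L)$: applying Yau's generalized maximum principle to the bounded subharmonic function $|\hat v|^2$ together with the Bochner identity $\tfrac12\Delta|\hat v|^2=|\nabla\hat v|^2-\mathrm{Ric}(\hat v,\overline{\hat v})$ for holomorphic fields forces $\nabla\hat v\equiv 0$ and $\mathrm{Ric}(\hat v,\overline{\hat v})\equiv 0$ on $L$. If $\hat v\not\equiv 0$, then being parallel it is nowhere zero on $L$; but $r(F_s)$ is a (dense) leaf of the minimal foliation $\G$ of Remark~\ref{R:defofG}, so it meets the non-empty open locus where $\gamma$ is negative definite on $N\F$, and at a point of $F_s$ above such a point $\mathrm{Ric}(\omega_{\varphi_0}|_L)=\tfrac1{2\pi}\gamma|_{\wtG/\wtF}$ is negative definite, contradicting $\mathrm{Ric}(\hat v,\overline{\hat v})=0$ with $\hat v\ne 0$. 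Hence $\hat v\equiv 0$, that is, $v$ vanishes on $F_s$.

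The main obstacle is this last step: carrying out the Bochner argument on the non-compact complete orbifold $L$ requires knowing that $\omega_{\varphi_0}|_L$ is complete with Ricci curvature bounded below --- properties that should be inferred from the compactness of $X$ --- and a minimum of care at the orbifold locus. It is precisely here that the (hard) existence part of the foliated Monge--Amp\`ere Lemma~\ref{L:relativemongeampere} is indispensable, since it replaces $\overline g|_L$, whose curvature we do not control, by a cohomologous metric of quasi-negative Ricci curvature adapted to the extension $\G$.
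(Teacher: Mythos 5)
Your reduction to a statement about the holomorphic field $\hat v$ on the fiber $L=F_s/\wtF$, the use of the Monge--Amp\`ere metric of Lemma~\ref{L:relativemongeampere}/Corollary~\ref{C:ricciqnegative}, the Bochner--Weitzenb\"ock identity for holomorphic fields, and the final appeal to minimality of $\G$ plus the quasi-negativity locus are all exactly the ingredients of the paper's proof. The gap is in the step you try to run on the single non-compact fiber. First, the boundedness of $|\hat v|^2$ on $L$ is asserted but not proved, and it does not follow from compactness of $X$: the individual section $v\in r^*\mathcal C_\F$ does \emph{not} descend to $X$ --- deck transformations act on $r^*\mathcal C_\F$ through $\mathrm{Ad}\circ\rho$, and since $\rho$ has dense image in a non-compact semisimple group this action is unbounded, so $\|v\|^2$ along the non-compact fiber is not controlled by a fundamental domain. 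This is precisely the difficulty the paper's proof is built to circumvent: instead of one $v$ on one fiber, it considers the function ${\mathscr D}(x)=\sum_i\|v_i(x)\|^2$ where $(v_i)$ is an orthonormal basis (for the negative of the Killing form) of the \emph{whole} isotropy algebra at the varying point $\Psi(x)$; this combination is $\mathrm{Ad}$-invariant, hence $\pi_1$-invariant and descends to the compact $X$, where its maximum is attained, and the strong (Hopf) maximum principle on the corresponding fiber plus density of the leaf then force $\mathscr D$ to be constant, and finally zero.

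Second, even granting boundedness (together with completeness and a Ricci lower bound, which, as you note, do follow from compactness of $X$ since $\omega_{\varphi_0}$ and $\alpha$ descend to $X$), Yau's generalized maximum principle applied to the subharmonic function $|\hat v|^2$ only yields a sequence $x_k\in L$ along which $\Delta|\hat v|^2(x_k)\to 0$, hence $|\nabla\hat v|^2(x_k)\to 0$ and $\mathrm{Ric}(\hat v,\hat v)(x_k)\to 0$; it does not give $\nabla\hat v\equiv 0$ and $\mathrm{Ric}(\hat v,\hat v)\equiv 0$ on all of $L$, because the supremum need not be attained and the strong maximum principle cannot be invoked. Your concluding step needs exactly this pointwise information (parallelism, hence nowhere vanishing, and vanishing of $\mathrm{Ric}(\hat v,\hat v)$ at a point of the quasi-negativity locus), and you have no control on where the sequence $x_k$ goes. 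So as written the last paragraph does not close; the missing idea is the equivariant globalization to the compact quotient (the function $\mathscr D$), after which the compact-type maximum principle argument, not a maximum principle at infinity, finishes the proof. (A side remark: your argument that $\hat v$ is Killing for $\omega_{\varphi_0}$ via uniqueness in Lemma~\ref{L:relativemongeampere} is not needed --- transverse holomorphicity of $v$ suffices for the Weitzenb\"ock identity used here.)
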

\begin{proof}
Let $r^\sharp:\wtsX\to X^\sharp$ be the universal cover and $q:\wtsX\to \wtX$ be the natural projection. Set $\Psi^\sharp=\Psi\circ q$.  Let us also consider:
\[ r^*{ \mathcal C}_\F\supset r^*({ \mathcal C}_{\F,\,s}):= d\Psi^{-1}\left( \Lieisom_s(\scrH)\right)\] and similarly
\[ { (r^\sharp)}^*{ \mathcal C}_{ \F^\sharp}\supset  (r^\sharp)^*( \mathcal{C}_{ \F^\sharp,\, s})= d{ \Psi^\sharp}^{-1}\left( \Lieisom_s(\scrH)\right).\]
Let $\pi_{ \F^\sharp}:\wtsX\to P:= X^\sharp/\widetilde{ \F^\sharp} $ be the projection map onto the space of leaves. Recall that $P$ has the structure of a $G$-principal bundle (where $\Lie(G)=\mathfrak g$) over the basic manifold $W$ and such that the Lie algebra of fundamental vector fields coincides with  $(r^\sharp)^*{ \cC}_{ \F^\sharp}$ (identified with its projection via $\pi_{ \F^\sharp}$). Recall also that $G$ acts on the Hermitian symmetric space $\scrH$ via $\pi:G\to \Isom(\scrH)$.  Moreover, $\Psi^\sharp$ and its differential $d \Psi^\sharp$ are ``equivariant'' with respect to the representation $\rho_{\F^\sharp}$ and the adjoint action respectively; it means that the following identities hold for every $x\in \wtsX$, $\gamma\in\pi_1(X)$,  $h\in G$, and $v\in ( r^\sharp)^*( \mathcal{C}_{ \F^\sharp})$:
\begin{align*}
\Psi^\sharp (\gamma(x))&= \rho_{\F^\sharp} (\gamma)\left(\Psi^\sharp (x)\right),\\
d\Psi^\sharp\left(\textrm{Ad}(h)(v)\right)&=\textrm{Ad}\left(\pi( h)\right)\left(d\Psi^\sharp (v)\right),\quad\text{and}\\
d\gamma(v)&= \textrm{Ad}\left(\rho_{\F^\sharp}(\gamma)\right) (v).
\end{align*}
Let $\xi$ be a transverse symmetric $(0,2)$ tensor on $X$ (\emph{e.g.} $\xi=\overline g$, the transverse invariant metric of $\F$), and $\widetilde {\xi}$, ${{ {\xi}}^\sharp}$, $\widetilde {{ {\xi}}^\sharp}$ be their respective pull-backs on $\wtX$, $X^\sharp$ and $\widetilde{X^\sharp}$. Those are  basic transverse symmetric $(0,2)$ tensors (respectively with respect to $\wtF$, $\F^\sharp$ and $\widetilde{ \F^\sharp}$), ${{ {\xi}}^\sharp}$ and $\widetilde {{ {\xi}}^\sharp}$ being in addition $\SO(2n)$ and  $ { \mathscr S}_{ 2n}$-invariant respectively. Denote by $B$ the Killing form on $\mathfrak g\simeq ( r^\sharp)^* \mathcal{C}_{ \F^\sharp}\simeq  { r}^*{ \mathcal C}_{ \F}$ and by $B_s$ its restriction to the maximal compact subalgebra  ${ \mathfrak g}_s\simeq (r^\sharp)^*( \mathcal{C}_{\F^\sharp,\, s})\simeq  r^*(\mathcal{C}_{\F,\, s})$. In particular $B_s$ is negative definite on ${ \mathfrak g}_s$ and $\textrm{Ad}(h^{-1})$ provides an isometry between $({ \mathfrak g}_s,B_s)$ and $({ \mathfrak g}_{ h( s)},B_{ h( s)})$ (here, we identify $\mathfrak g$ with ${ (r^\sharp)^*{ \mathcal C}_{ \F^\sharp}} $). In particular, if we fix $s$ and an orthonormal basis $(v_i)$ of ${ \mathfrak g}_s$ (with respect to the scalar product $-B_s$), the function $\|{ \mathfrak g}_s\|:\widetilde{X^\sharp}\to \R$ defined by $\|{ \mathfrak g}_s\|(x):=\sum_i \widetilde {{ {\xi}}^\sharp}(v_i(x),v_i(x))$ does not depend on the choice of $(v_i)$ and is a basic function which satisfies
\begin{equation}\label{E:invg(s,x)}
\|{ \mathfrak g}_s\|= \|{ \mathfrak g}_{\rho^\sharp ( \gamma)( s)}\|\circ\gamma 
\end{equation}
 for any $\gamma\in\pi_1(X^\sharp)$.
 In addition $\|{ \mathfrak g}_s\|$ is $ { \mathscr S}_{ 2n}$-invariant (because $( r^\sharp)^*\mathcal{C}_{\F^\sharp}$ is so).
 
 Define ${ \mathscr D}^\sharp:\widetilde{ X^\sharp}\to\R$ by ${ \mathscr D}^\sharp (x)=\|{ \mathfrak g}_{ \Psi^\sharp(x)}\|(x)$. From the equation~\eqref{E:invg(s,x)} we can deduce that ${ \mathscr D}^\sharp$ is a basic function, that is moreover invariant under the actions of $\pi_1(X^\sharp)$ and ${\mathcal S}_{ 2n}$. Actually, the ${\mathcal S}_{ 2n}$-invariance is inherited from that of $\|{ \mathfrak g}_s\|$, taking into account that the ${\mathcal S}_{ 2n}$-action on $\widetilde{X^\sharp}$ makes the map $\Psi^\sharp$ equivariant. Now, as the projection $q: \widetilde{ X^\sharp}\to  \widetilde{ X}$ induces a surjective morphism from $\pi_1(X^\sharp)$ onto $\pi_1(X)$, we can conclude that there exists a basic $\pi_1(X)$-invariant function $ \mathscr D:\wtX\to \R$ such that ${ \mathscr D}^\sharp={ \mathscr D}\circ q$.
 
Note also that ${ \mathscr D}^\sharp$ (and consequently  ${ \mathscr D}$) is smooth.  Indeed, pick a  fiber $F_s= ( \Psi^\sharp)^{-1} (s)$ and remark that the restriction of $\|{ \mathfrak g}_s\|$ to $F_s$ is obviously smooth. Let $\Upsilon_s\subset G$ be a small transversal to the stabilizer of $G$ at $s\in\scrH$ passing through $e_G$. By the implicit function Theorem, we can fill out a small neighborhood $U$ of $s$ by $\{\pi ( h)(s)\mid h\in \Upsilon_s \}$ such that the correspondence $h\to \pi ( h)(s) $ induces a diffeomorphism between $\Upsilon_s$ and $U$. To any element $v\in  { \mathfrak g}_s$, we can thus associate on the neighborhood $( \Psi^\sharp)^{-1}(U)$ of $F_s$  a unique smooth basic vector field $\tau (v)$ characterized by 
\begin{itemize}
	\item for every $h\in \Upsilon_s$, $\tau (v)_{ |F_{ \pi( h)(s)}}$ is tangent to $F_{ \pi( h)(s)}=( \Psi^\sharp)^{-1}\left(\pi( h)(s)\right)$
	\item $\tau (v)_{ |F_{ \pi( h)(s)}}=\textrm{Ad}(h^{-1})(v)_{ |F_{ \pi( h)(s)}}.$ 
\end{itemize}

Roughly speaking, $\tau(v)$ is the ``transport'' of $v_{ |{F_s}}$ along $\Upsilon_s$. With this in mind, we thus observe that ${ \mathscr D}^\sharp$ coincides on $( \Psi^\sharp)^{-1} (U)$ with $\sum_i \widetilde { {\xi}^\sharp}(\tau( v_i),\tau( v_i))$ 
 where $v_i$ is any orthonormal basis of $( { \mathfrak g}_s,-B_s) $, thus proving the smoothness of ${ \mathscr D}^\sharp$ and $\mathscr D$. 

By construction, $\mathscr D$ only depends on the choice of the original tensor $\xi$. In the sequel, we will focus on basic symmetric tensors of the forms $\xi=\overline{g}_\varphi= \omega_\varphi (\cdot, J_\F(\cdot))$ where $\varphi:X\to \R$ is basic and $\omega_\varphi=\omega+\sqrt{-1}\partial\bar{\partial}\varphi$. We will restrict our attention for those  $\varphi$ for which the restriction of $\xi$ to $\G/\F $ is positive definite. Let $s\in\sS$ and denote by ${ \F}_s$ the restriction of $\wtF$ to $F_s$. We obtain in this way a foliated Riemannian manifold $(F_s,{ \F}_s)$ where the transverse Riemmanian structure is defined by the transverse K\" ahler metric $\xi_s$, setting $\xi_s={\tilde{\xi}}_{|F_s}$. Let $w$ a basic vector field on $F_s$, such that $v$ is the real part of a \textit{holomorphic} basic vector field (equivalently the Lie derivative ${ \mathcal L}_v J_\F$ vanishes). The transverse formulation of the classical Bochner--Weitzenb\"ock's formula (see for instance \cite[Proposition~3.1.8]{Kobayashi87})  reads 
\begin{equation}\label{E:trbochner}
-\dfrac{1}{2}\Delta_s\left(  { \| w\|}_{s}^2 \right)= { \|\nabla_s w\|}_{s}^2 -\textrm{Ric}_s(w,w)  
\end{equation}
where $\Delta_s$, $\nabla_s$, $\textrm{Ric}_s$, $\| \ \|$ are respectively the (transverse) Laplacian,  the $(1,0)$ part of the Levi-Civita connection,  the Ricci form and the norm with respect to $\xi_s$. In particular, this formula holds whenever $w$ is the restriction $v_s$ of an element of $ r^* ( \mathcal{C}_{\F,\, s})$ to $F_s$.

Now, Theorem~\ref{L:relativemongeampere} and Corollary~\ref{C:ricciqnegative} provide us with the existence of $\varphi_0$ such that for every $s\in\scrH$ and every $x\in F_s$, $\textrm{Ric}_s$ is negative semi-definite and is moreover negative somewhere. Fix $\varphi_0$ from now on. By semi-negativity of the Ricci form, \emph{i.e.} the semi-negativity of $\alpha$ on $\G/\F$, the right-hand side of \eqref{E:trbochner} is non-negative. Moreover, as $\mathscr D$ descends to $X$, it reaches its maximum at $x_0\in \wtX$. Let $s_0=\Psi(x_0)$. As the restriction of $\mathscr D$ to $F_{s_0}$ is given by ${ \mathscr D}_{ s_0}=\sum_i { \| { v_i}_{s_0}\|}_{s_0}^2$, $(v_i)$ an orthonormal basis of  $r^*( \mathcal{C}_{\F,\, s_0})$, we can derive from \eqref{E:trbochner} and the Hopf's maximum principle that ${ \mathscr D}_{ s_0}$ is constant. As ${\mathscr D}$ descends to $X$ and is continuous, this implies that ${ \mathscr D}$ is indeed constant on the whole of $\wtX$ by minimality of the foliation $\G$ and is finally identically zero by quasi-negativity of $\alpha$ on $\G/\F$. This concludes the proof of Theorem~\ref{TH:isotvanishingloci}.
\end{proof}
\begin{remark}
The use of the Weitzenb\"ock formula above and the resulting vanishing property given above is the substitute in our setting to \cite[Corollary~4.5(3)]{Frankel-annals}.
\end{remark}

\subsection{Two complementary parallel holomorphic foliations and proof of Theorem~\ref{T:A}}
By Theorem~\ref{TH:isotvanishingloci}, $X$ is now endowed with a foliation $\overline{\F}$ satisfying the following properties:
\begin{itemize}
	\item $\overline{\F}$ is an extension of $\F$.
	\item $\overline{\F}$ is locally generated by $\F$ and ${ \mathcal C}_\F$. In particular, the leaves of $\overline{\F}$ are the topological closures of the leaves of $\F$.
	\item Let $x\in X$. Locally, the vanishing locus of the set $ { \mathcal C}_{\F,\,x}$ of local sections of ${ \mathcal C}_\F$ vanishing at $x$ coincides with the leaf of $\G$ through $x$. In particular,  the rank of $\overline{\F}$ is equal to $\rg(\F)$ + $\dim(\sS)$ and $\overline{\F}$ intersects $\G$  transversely along $\F$: $$ \overline{\F}\cap \G=\F.$$
\end{itemize}

Using in addition that the local sections $v$ of  ${ \mathcal C}_\F$ are real part of \textit{ holomorphic} basic vector fields $v^{1,0}$, we can apply the statement of \cite[Lemma~12.1]{Frankel-acta} to conclude that we have a $\overline g$-orthogonal decomposition
\begin{equation}\label{E:parallsplitting}
 N\F= \G/\F \oplus\overline \F/\F
 \end{equation}
whence the following result.
\begin{lemma}
The foliation $\overline{\F}$ is $J_\F$-holomorphic and the aforementioned orthogonal splitting is parallel with respect to the transverse Levi-Civita connection of the transverse K\"ahler metric $\overline g$.
 \end{lemma}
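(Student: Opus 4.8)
The plan is to handle the two claims separately: $J_\F$-holomorphicity of $\overline{\F}$ is essentially formal, coming from the fact that $\overline{\F}$ is locally generated by real parts of holomorphic vector fields, while the parallelism of \eqref{E:parallsplitting} is read off from the full conclusion of Frankel's Lemma~12.1 that was already invoked to produce the orthogonal splitting, once its hypotheses have been checked by means of Theorem~\ref{TH:isotvanishingloci}.

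For the first claim I would argue as follows. Over any distinguished open set the subbundle $\overline{\F}/\F\subset N\F$ is, by its very construction, spanned by the local sections of the commuting sheaf $\mathcal C_\F$, and each such section $v$ is the real part of a holomorphic basic vector field $v^{1,0}$, so that $v=\frac{1}{2}(v^{1,0}+v^{0,1})$ with $v^{0,1}=\overline{v^{1,0}}$. Hence $(\overline{\F}/\F)\otimes\C$ is locally spanned by the $v^{1,0}$'s together with the $v^{0,1}$'s; in particular $\overline{\F}/\F$ is $J_\F$-stable and $(\overline{\F}/\F)^{1,0}\subset {N\F}^{1,0}$ is locally spanned by the basic holomorphic vector fields $v^{1,0}$. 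By the characterization of $J_\F$-holomorphic extensions recalled in \S\ref{SS:jFholfext}, this is exactly the statement that $\overline{\F}$ is $J_\F$-holomorphic; involutivity of $\overline{\F}$ is already part of its construction, $\mathcal C_\F$ being a sheaf of Lie algebras of foliated vector fields.

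For the parallelism I would work on a distinguished open set $U$ and descend to the local K\"ahler leaf space $(U/\F,\overline g)$ of complex dimension $n$. There the sections of $\mathcal C_\F$ become a Lie algebra of \emph{holomorphic Killing} vector fields — holomorphic by the previous step, Killing because $\mathcal C_\F$ consists of basic Killing fields — spanning the subbundle corresponding to $\overline{\F}/\F$, whose $\overline g$-orthogonal complement is the subbundle corresponding to $\G/\F$ by \eqref{E:parallsplitting}. The hypothesis that makes Frankel's Lemma~\cite[Lemma~12.1]{Frankel-acta} applicable is a vanishing-locus condition, and this is precisely Theorem~\ref{TH:isotvanishingloci}: any element of $\mathcal C_\F$ vanishing at a point vanishes along the whole leaf of $\G$ through that point, a submanifold complementary to $\overline{\F}/\F$ since $\overline{\F}\cap\G=\F$. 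Granting this, Frankel's lemma yields not only the orthogonality of the two subbundles but that each of them is parallel for the Levi-Civita connection of $\overline g$ on $U/\F$. Since the transverse Levi-Civita connection of $\overline g$ restricts over $U$ to that Levi-Civita connection and all the data in sight are basic, I conclude that the splitting \eqref{E:parallsplitting} is parallel for the transverse Levi-Civita connection, as asserted. (As that connection is metric, parallelism of one summand forces it for the other, so Frankel's lemma really only needs to deliver it for $\overline{\F}/\F$.)

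The only genuinely non-formal ingredient is the appeal to Frankel's Lemma~12.1, and within it the verification of its hypotheses — which is the role of Theorem~\ref{TH:isotvanishingloci}, the technical core of this part of the paper (itself resting on the foliated Monge--Amp\`ere Lemma~\ref{L:relativemongeampere} and the transverse Weitzenb\"ock identity). The remaining points — that $\mathcal C_\F$ consists of holomorphic Killing fields, that $\overline{\F}/\F$ has constant rank and meets $\G/\F$ only along $\F$, and that basic tensors on the local leaf spaces assemble into transverse tensors compatible with the transverse Levi-Civita connection — have all been recorded earlier.
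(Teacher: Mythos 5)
Your treatment of holomorphicity is essentially the paper's, but the way you get $J_\F$-stability of $\overline{\F}/\F$ is circular as written: from $v=\frac12(v^{1,0}+v^{0,1})$ you can only conclude that $(\overline{\F}/\F)\otimes\C$ is \emph{contained} in the span of the $v^{1,0}$'s and $v^{0,1}$'s; asserting equality (equivalently, that $J_\F v$ again lies in the span of the sections of $\mathcal C_\F$) is precisely the $J_\F$-stability you are trying to prove, and the sheaf $\mathcal C_\F$ is not $J_\F$-stable in any obvious way. The paper gets this for free from the splitting \eqref{E:parallsplitting}: $\G/\F$ is $J_\F$-stable (fibres of the transversely holomorphic map $\Psi$), $\overline g$ is $J_\F$-compatible, hence the orthogonal complement $\overline{\F}/\F$ is $J_\F$-stable; only then does the fact that the $v^{1,0}$'s are holomorphic and span $(\overline{\F}/\F)^{1,0}$ give holomorphicity of $\overline{\F}$ via the criterion of \S\ref{SS:jFholfext}. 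This is a small fix, but the order of the argument matters.

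The more serious issue is the parallelism. You delegate it entirely to \cite[Lemma~12.1]{Frankel-acta}, claiming that lemma ``yields not only the orthogonality of the two subbundles but that each of them is parallel.'' That is not how the lemma is used in the paper: Frankel's Lemma~12.1 (whose hypotheses are indeed supplied by Theorem~\ref{TH:isotvanishingloci}) is invoked only to produce the $\overline g$-orthogonal decomposition \eqref{E:parallsplitting}, and the parallelism is obtained afterwards by a different, general argument: on the local leaf space $U/\F$ the two induced distributions are holomorphic and mutually orthogonal for a K\"ahler metric, and a holomorphic distribution with holomorphic orthogonal complement on a K\"ahler manifold is automatically parallel (the paper cites \cite[Theorem~2.1]{Johnson80} for this). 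As written, your parallelism step rests on an unverified attribution to Frankel; either justify that his Lemma~12.1 really asserts a parallel splitting, or (simpler, and what the paper does) use that both summands are holomorphic and orthogonal and conclude by the K\"ahler-geometry fact above. Your side remark that parallelism of one summand forces that of the other because the transverse Levi-Civita connection is metric is correct, but it does not remove the need to establish parallelism of at least one of them.
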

 
 \begin{proof}
 Observe first that $\overline{\F}/\F$ is $J_\F$-invariant as $\G/\F$ is so. Now, $\overline{\F}/\F$ is locally spanned by sections $v$ of ${ \mathcal C}_\F$. Hence ${ ( \overline{\F}/\F)}^{1,0}$ is locally spanned by the \textit{holomorphic} vector fields $v^{1,0}=v-\sqrt{-1}J_\F (v)$. This proves the first claim. Concerning the second point, this amounts to showing that on the local space of leaves $U/\F$, the orthogonal and holomorphic distributions induced by $\overline{\F}$ and $\G$ are parallel. This last property is a general fact in K\" ahler geometry (see for instance \cite[Theorem~2.1]{Johnson80}).
 \end{proof}
 
\begin{proof}[Proof of Theorem~\ref{T:A}]
As $\overline{\F}$ is a Riemannian foliation with closed leaves and the transverse metric is K\" ahler with quasi-negative Ricci curvature, we conclude that the leaf space $X/\overline \F$ is a K\"ahler orbifold with quasi-negative  canonical bundle, hence of the general type according to \cite{Puchol}.  The statement of Theorem \ref{T:A} follows directly.
\end{proof}

\vskip\baselineskip
The decomposition into parallell subbundles given in \eqref{E:parallsplitting} provides us with the existence of two transverse invariant K\"ahler metrics with quasi-negative Ricci curvature  ${ \overline g}_1$, ${ \overline g}_2$ for the foliations $\G$ and $\bar\F$ and such that $\overline g={ \overline g}_1\oplus { \overline g}_2$.

\subsection{Proof of Theorem~\ref{L:BSpaceleaves}}
Consider the universal cover map $r^\sharp:\wtsX\to X^\sharp$ and the $\SO(n)$-principal bundle $Y:=r^{-1} (X^\sharp)$ over $\wtX$.  The latter coincides with the transverse orthonormal frame bundle associated to the complete Riemannian foliated manifold $(\wtX, \wtF,\widetilde{g})$. By virtue of Lemma~\ref{L:devcentercriterion}, the lift ${ \wtF}^\sharp$ of $\wtF$ to $Y$ is \textit{simple}. According to Molino's theory, this implies that the space of leaves $(\wtX/\wtF, \tilde{ \overline g}$) has a (unique) orbifold structure such that the projection map $\wtX\to \wtX/\wtF$ is a smooth orbifold map. Indeed, $\wtX/\wtF$ is canonically identified with the space of orbits of the locally free action of the compact Lie group $\SO(n)$ on the manifold $Y/{ \wtF}^\sharp$ and the points of  $\wtX/\wtF$ with non-trivial isotropy correspond to leaves with non-trivial holonomy. In our setting, the leaf space $ \wtX/\wtF$ inherits a K\"ahler metric from the transverse structure of $\wtF$. \hfill\qed
	
\subsection{Proof of Theorem~\ref{T:CdeRhamdec}}
Let $\widetilde{\G}$ and $\widetilde{\overline{\F}}$ be the respective pull-backs of ${\G}$ and ${\overline{\F}}$ to $\wtX$. Denote by $( \G_1,{ \overline g}_1)$, $( \G_2,{ \overline g}_2)$ the respective foliations induced on $\wtX/\wtF$ together with their transverse K\"ahler metric ${ \overline g}_i$ coming from the orthogonal and parallel splitting 
$$( N\wtF,\widetilde{ \overline g})= ( \tilde \G/\wtF,\widetilde{ \overline g}_2) \oplus( \widetilde{ \overline \F}/\wtF, \widetilde{ \overline g}_1)$$
where $\widetilde{-} $ stands for the pull-back of the transverse metrics under consideration on the universal cover. Equivalently
$$( T(\wtX/\wtF),\widetilde{ \overline g})= (  \G_1,{ \overline g}_2) \oplus(\G_2, { \overline g}_1)$$
in the orbifold setting.

Note that the orbifold fundamental group $\pi_1^\textrm{orb}(\wtX/\wtF)$ is nothing but the fundamental group of the holonomy pseudogroup of $\wtF$ (\emph{cf.} \cite[Appendix~D]{Molino-livre}). Because we have a natural \textit{surjective} morphism $\pi_1(\wtX)=\{1\}\to \pi_1^\textrm{orb}(\wtX/\wtF)$ (\textit{loc.cit}) , we conlude that $\wtX/\wtF$ is a \textit{simply connected} orbifold. The orbifold version of de Rham decomposition Theorem \cite[Lemma~2.19]{Kleinergeometrizationorb} then yields:
\begin{equation}\label{E:dRdec}
\left(\wtX/\wtF,\widetilde{ \overline g}\right)\simeq \left(  \mathcal{L}_1/\wtF,{ \overline g}_2\right) \times \left(\mathcal{L}_2/\wtF, { \overline g}_1\right)
\end{equation}
where ${ \mathcal L}_1$, ${ \mathcal L}_2$ are (arbitrary) leaves of $\widetilde \G$ and $\widetilde{ \overline \F}$. Of course, the (local) isotropy groups acts diagonally with respect to this splitting.   Note also that the map $\Psi$ descends on $\wtX/\wtF$ as an orbifold holomorphic map  $\overline\Psi:\wtX/\wtF\to\scrH$. Set $\mathscr K= { \mathcal L}_1/\wtF$ and $\scrH'= { \mathcal L}_2/\wtF$. The levels of $\overline\Psi$ are precisely the leaves of the horizontal foliation in the K\"ahlerian product~\eqref{E:dRdec}. As $\Psi$ is submersive, the restriction 
\[\restr{\overline\Psi}{\scrH'}\colon \scrH'\To \scrH\]
is a local diffeomorphism. In particular $\scrH'$ is smooth (the isotropy groups acts trivially on the second factor). Moreover $\restr{\overline\Psi}{\scrH'}$ is a local isometry between complete Riemannian manifolds (for a suitable invariant K\" ahler metric on $\scrH$). The target $\scrH$ being simply connected, we can conclude that  $\restr{\overline\Psi}{\scrH'}$ induces an isometric biholomorphism between $\scrH'$ and $\scrH$ . This proves the three first items of Theorem~\ref{T:CdeRhamdec}. The item (4) follows from the fact that $\overline \F$ has compact leaves and that the action of $\pi_1(X)$ on $\scrH$ is dense.\hfill\qed

\subsection{The case where $\F$ is minimal}

We can give a simple proof (without resorting to the technical material developed in this article) of Theorem~\ref{T:CdeRhamdec} in the situation where $\F$ is minimal, \emph{i.e.} in the situation where every leaf is dense.\footnote{In the case of Riemannian foliation, this is equivalent to saying that at least one leaf is dense.} In this case $\wtX/\wtF$ should be reduced to $\mathscr H$. Actually, according to \cite{Touzet-toulouse}, the structural Lie algebra $\mathfrak g$ (\emph{cf.} \S\ref{SS:commutingsheaf}) of $\F$ is semi-simple without compact factors. We can then derive from the works of Haefliger \cite[Theorem~6.4.1]{Haefligerleafclosures88} that $\F$ is transversely homogeneous. It precisely means that $\wtF$ is given by the fibers of a submersion $\Phi:\wtX\twoheadrightarrow H$ onto a homogeneous K\" ahler manifold $(H,h)$ such that the Ricci curvature of $h$ is negative and such that $\Phi$ is $\rho$-equivariant with respect to a representation 
$$\rho: \pi_1(X) \To \Isom(H) $$
whose image consists in a subgroup of holomorphic isometries acting densely on $H$. By a result of Borel \cite[Theorem~4]{Borelcoset54}, $H$ admits a structure of a homogeneous holomorphic fibre bundle whose base is a homogeneous bounded domain (\emph{i.e.} a Hermitian symmetric space) $\scrH$ and whose fiber is a flag manifold $F$. As $F$ is rational algebraic, it is reduced to a point, due to the quasi-negativity of the Ricci curvature. Finally we get that $H=\scrH$ as desired.   

\section{Automorphism group of the foliation in the K\" ahler case}\label{S:transversefinite}

In this section, $(X,\F)$ is a foliated compact K\"ahler manifold ($\F$ holomorphic) which satisfies the hypothesis of Theorem~\ref{T:transversefinite}, \emph{i.e.} $\F$ admits an invariant transverse Kähler metric having quasi-negative Ricci curvature. We will denote by $\knr$ the class of such foliations in the remaining part of this article.

Recall that $\Aut (X,\F)$ denote the group of analytic diffeomorphisms of $X$ preserving the foliation $\F$. Let $\overline g$ be the invariant transverse K\"ahler metric.
\medskip

The following preliminary and simple observations will be proved to be useful.
\begin{lemma}\label{L:orbitinclusion}
Let $V$ be an irreducible analytic complex space. Let $G$ a countable group of biholomorphisms of $V$. Let $f$ be a biholomorphism of $V$ such that for every $x\in V$, $f(x)\in G\cdot x$, the $G$-orbit of $x$. Then $f$ is an element of $G$.
\end{lemma}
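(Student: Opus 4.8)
The plan is to use a Baire category argument. Write $V=\bigcup_{g\in G}\{x\in V\mid f(x)=g(x)\}$, which expresses $V$ as a countable union of the sets $Z_g:=\{x\in V\mid f(x)=g(x)\}$. Each $Z_g$ is an analytic subset of $V$, being the locus where the two holomorphic maps $f$ and $g$ agree (equivalently, the preimage of the diagonal under $(f,g):V\to V\times V$). Since $V$ is irreducible, an analytic subset is either all of $V$ or nowhere dense (it is contained in a proper analytic subset, hence has empty interior, and being analytic it is also closed). By the Baire category theorem applied to the locally compact space $V$ (or using that an irreducible analytic space cannot be written as a countable union of proper analytic subsets), at least one $Z_{g_0}$ must have nonempty interior, and therefore $Z_{g_0}=V$. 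This means $f=g_0$ on all of $V$, so $f=g_0\in G$.

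The only point requiring a small justification is that an irreducible complex analytic space is a Baire space and that a proper analytic subset is nowhere dense; both are standard. The cleanest route is: if every $Z_g$ were proper, then each would be contained in the singular-or-lower-dimensional locus after restricting to the smooth part $V_{\mathrm{reg}}$, which is a connected complex manifold, hence a Baire space; a proper analytic subset of a connected manifold is closed with empty interior, so a countable union of them cannot cover $V_{\mathrm{reg}}$, contradicting $V_{\mathrm{reg}}\subset\bigcup_g Z_g$. Hence some $Z_{g_0}\supset V_{\mathrm{reg}}$, and by density of $V_{\mathrm{reg}}$ in $V$ together with continuity of $f$ and $g_0$ we get $f=g_0$ on $V$.

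I do not anticipate any genuine obstacle here; the statement is elementary once one recognizes the coincidence-set decomposition. The only thing to be careful about is the countability hypothesis on $G$ (essential for the Baire argument) and the irreducibility of $V$ (essential so that proper analytic subsets are nowhere dense). Both are in the hypotheses, so the proof goes through directly.
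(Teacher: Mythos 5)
Your proof is correct and follows essentially the same route as the paper: decompose $V$ as the countable union of the coincidence sets $Z_g=\{x\in V\mid f(x)=g(x)\}$, apply Baire to find some $Z_{g_0}$ with nonempty interior, and conclude $f=g_0$ by the identity principle on the irreducible space $V$. Your extra care about passing to $V_{\mathrm{reg}}$ and the nowhere-density of proper analytic subsets just makes explicit what the paper compresses into the phrase ``analytic continuation.''
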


\begin{proof} 
For every $g\in G$, set $Z_g:=\{x\in V\mid g(x)=f(x)\}$. We have $\bigcup_{g\in G} Z_g=V$ by assumptions. According, to Baire's lemma, there exists $h\in G$ such that $Z_h$ has non-empty interior. We then conclude by analytic continuation that $f=h$, as wanted.
\end{proof}

\begin{lemma}\label{L:CFpreserved}
If $f\in \Aut (X,\F)$, then $f_*({ \mathcal C}_\F)={ \mathcal C}_\F$.
\end{lemma}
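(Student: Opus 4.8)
The plan is to show that the commuting sheaf $\mathcal C_\F$, which is intrinsically attached to the foliation $(X,\F)$ through Molino's theory, must be preserved by any biholomorphism preserving $\F$. The key point is that $\mathcal C_\F$ admits a purely canonical description: it is the sheaf whose local sections are the basic Killing vector fields of the transverse metric $\overline g$ whose local flows sweep out the leaf closures; equivalently, after passing to the transverse orthonormal frame bundle $X^\sharp$, it is the push-forward $p_*\mathcal C_{\F^\sharp}$ of the commuting sheaf of the transversely parallelizable foliation $\F^\sharp$, and $\mathcal C_{\F^\sharp}$ is characterized (Molino, \cite[Proposition~4.4]{Molino-livre}) as the sheaf of basic vector fields of $\F^\sharp$ commuting with every element of $\ell(X^\sharp,\F^\sharp)$.

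First I would observe that $f\in\Aut(X,\F)$, being a biholomorphism preserving $\F$, acts on all the objects canonically built from $(X,\F)$. In particular $f$ induces an automorphism of the (real codimension $2n$) Riemannian foliation structure: since $\overline g$ is the unique transverse K\"ahler metric involved and $f$ preserves both $\F$ and the transverse complex structure $J_\F$ (being holomorphic), $f_*\overline g$ is again a transverse K\"ahler metric for $\F$; one must be slightly careful here, but the relevant point is only that $f$ carries the closures of the leaves of $\F$ to closures of leaves of $\F$ (it permutes the leaves, and homeomorphisms preserve topological closure), hence $f$ preserves the basic fibration $\pi_\F\colon X\to W$ up to a diffeomorphism of $W$. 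Next I would lift $f$ to the frame bundle: because $f$ is an isometry of the transverse structure up to the choice of metric, or more robustly because $f_*$ acts naturally on transverse frames, $f$ induces a bundle automorphism $f^\sharp\colon X^\sharp\to X^\sharp$ covering $f$ and commuting with the $\SO(2n)$-action, and $f^\sharp$ preserves $\F^\sharp$. Then $f^\sharp_*$ preserves the Lie algebra $\ell(X^\sharp,\F^\sharp)$ of basic vector fields (it is an automorphism of the foliated manifold), hence preserves the subsheaf of those commuting with all of $\ell(X^\sharp,\F^\sharp)$, which is exactly $\mathcal C_{\F^\sharp}$. Pushing down by $p$ and using $f^\sharp$ covers $f$, one gets $f_*(\mathcal C_\F)=f_*(p_*\mathcal C_{\F^\sharp})=p_*(f^\sharp_*\mathcal C_{\F^\sharp})=p_*\mathcal C_{\F^\sharp}=\mathcal C_\F$.

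The main obstacle is the step of producing the lift $f^\sharp$ to $X^\sharp$ and checking it genuinely preserves $\F^\sharp$ and the commuting sheaf: $X^\sharp$ is the \emph{transverse orthonormal} frame bundle for the fixed metric $\overline g$, and an $f$ that does not preserve $\overline g$ on the nose a priori sends an $\overline g$-orthonormal transverse frame to an $f_*\overline g$-orthonormal one, not an $\overline g$-orthonormal one. The clean way around this is to work with the full transverse linear frame bundle (on which any foliation automorphism acts tautologically) and to note that $\mathcal C_\F$ can be recovered already at that level as the image of the corresponding commuting sheaf, Molino's construction being functorial in reductions of the structure group; alternatively one invokes that the leaf-closure fibration $\pi_\F$ and the infinitesimal data of the holonomy pseudo-group are metric-independent, so that $\mathcal C_\F$ is intrinsic to $(X,\F)$ as an object of the category of Riemannian foliations (any two transverse metrics for $\F$ give the same leaf closures and the same Lie pseudo-group closure, by \cite[\S5.3 and \'E.~Salem's Appendix~D]{Molino-livre}). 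With $\mathcal C_\F$ identified as such an intrinsic object, $f_*(\mathcal C_\F)=\mathcal C_\F$ is immediate by functoriality, and this is the argument I would write up.
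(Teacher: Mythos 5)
Your final argument --- that $f_*\overline g$ is again a transverse (K\"ahler) metric for $\F$ and that the commuting sheaf $\mathcal C_\F$ is independent of the choice of transverse Riemannian structure --- is exactly the paper's proof, which simply makes these two observations and cites \cite[Proposition~5.1]{Molino-livre} for the metric-independence. The detour through a lift to the orthonormal frame bundle is unnecessary (and, as you note yourself, delicate since $f$ need not preserve $\overline g$), but since you fall back on the intrinsic, metric-independent characterization of $\mathcal C_\F$, your proposal is correct and essentially the same as the paper's argument.
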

\begin{proof}
Firstly, note that $\overline h=f_* \overline g$ is also a transverse K\"ahler metric. On the other hand, ${ \mathcal C}_\F$ is independant of the Riemannian transverse structure, as noticed in \cite[Proposition~5.1]{Molino-livre}, whence the result.
\end{proof}

\begin{cor}\label{C:bothpreserved}
The following inclusion holds true:
$$ \Aut (X,\F)\subset  \Aut (X,\overline\F)\cap  \Aut (X,\G).$$
\end{cor}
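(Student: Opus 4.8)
The plan is to show that $\Aut(X,\F)\subset\Aut(X,\overline\F)$ and $\Aut(X,\F)\subset\Aut(X,\G)$ separately, each time exploiting the intrinsic characterizations of the two foliations obtained in Section~\ref{S:DRdecomposition}. The key point is that both $\overline\F$ and $\G$ were built canonically out of $\F$, its transverse K\"ahler structure, and the commuting sheaf $\mathcal C_\F$; since an element $f\in\Aut(X,\F)$ preserves $\F$ and (by Lemma~\ref{L:CFpreserved}) preserves $\mathcal C_\F$, it must preserve whatever these data determine.

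\textbf{Inclusion in $\Aut(X,\overline\F)$.} Recall from \S\ref{SS:vanishingloci}--\S\ref{S:DRdecomposition} that $\overline\F$ is the foliation whose leaves are the topological closures of the leaves of $\F$; equivalently, $\overline\F$ is locally generated by $\F$ together with the local sections of $\mathcal C_\F$. Let $f\in\Aut(X,\F)$. Since $f$ is a homeomorphism preserving $\F$ leafwise-up-to-permutation, it maps the closure of a leaf of $\F$ to the closure of a leaf of $\F$; hence $f$ permutes the leaves of $\overline\F$, i.e. $f_*\overline\F=\overline\F$. (Alternatively, one can argue infinitesimally: $\overline\F/\F$ is locally spanned by $\mathcal C_\F$, and $f_*\mathcal C_\F=\mathcal C_\F$ by Lemma~\ref{L:CFpreserved}, while $f_*\F=\F$, so $f_*(\overline\F)=\overline\F$.) This gives $\Aut(X,\F)\subset\Aut(X,\overline\F)$.

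\textbf{Inclusion in $\Aut(X,\G)$.} Here I would use the characterization of $\G$ from \S\ref{SS:vanishingloci}: locally, the leaf of $\G$ through a point $x$ is the common vanishing locus of the germs at $x$ of sections of $\mathcal C_\F$ that vanish at $x$ (the isotropy subalgebra $\mathcal C_{\F,x}$). Concretely, $\G/\F$ is the orthogonal complement of $\overline\F/\F$ in $N\F$ with respect to the transverse K\"ahler metric $\overline g$, by the parallel splitting $N\F=\G/\F\oplus\overline\F/\F$ of \eqref{E:parallsplitting}. Now let $f\in\Aut(X,\F)$. Although $f$ need not preserve $\overline g$, it does preserve $\overline\F$ by the previous step, and the splitting \eqref{E:parallsplitting} is the \emph{unique} $J_\F$-holomorphic, $\overline g$-parallel, $\overline g$-orthogonal decomposition of $N\F$ into a sub-bundle generated by $\mathcal C_\F$ and its complement; but what we really need is only that $\G$ is intrinsically attached to $\F$ and $\mathcal C_\F$. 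Since $f_*\overline g$ is again a transverse K\"ahler metric (Lemma~\ref{L:CFpreserved}'s proof) and the construction of $\G$ through the vanishing loci of $\mathcal C_{\F,x}$ makes no reference to which transverse K\"ahler metric is chosen (only $\mathcal C_\F$ and $\F$ enter), $f$ carries the leaf of $\G$ through $x$ to the leaf of $\G$ through $f(x)$: indeed $v\in\mathcal C_{\F,x}$ iff $v$ vanishes at $x$, and $f_*v\in\mathcal C_{\F,f(x)}$ vanishes exactly at $f(x)$, so $f$ maps $\{v=0 \text{ for all } v\in\mathcal C_{\F,x}\}$ onto $\{w=0 \text{ for all } w\in\mathcal C_{\F,f(x)}\}$. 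Hence $f_*\G=\G$, giving $\Aut(X,\F)\subset\Aut(X,\G)$.

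\textbf{Main obstacle.} The one subtle point is the $\G$-inclusion, because $f$ does not preserve $\overline g$ in general, so one cannot simply invoke the orthogonality in \eqref{E:parallsplitting}. The resolution is to phrase $\G$ purely in terms of $\F$ and $\mathcal C_\F$ — precisely the ``vanishing locus of the isotropy subalgebra'' description from Theorem~\ref{TH:isotvanishingloci} and the three bullet points following it — which is metric-independent; combined with $f_*\mathcal C_\F=\mathcal C_\F$ (Lemma~\ref{L:CFpreserved}) and $f_*\F=\F$, functoriality then forces $f_*\G=\G$. Once both inclusions are in hand the corollary follows by intersecting.
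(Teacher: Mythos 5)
Your proof is correct and follows essentially the same route as the paper: the first inclusion from the topological characterization of $\overline\F$ as leaf closures, and the second from the description of $\G$ via the vanishing loci of the isotropy subalgebras of $\mathcal C_\F$ together with Lemma~\ref{L:CFpreserved}. Your remark that one must avoid relying on $\overline g$-orthogonality (since $f$ need not preserve $\overline g$) correctly identifies why the metric-independent characterization of $\G$ is the right tool, which is exactly the paper's argument.
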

\begin{proof}
The first inclusion $ \Aut (X,\F)\subset  \Aut (X,\overline\F)$ is a consequence of the topological characterization of $\overline \F$. The second one $ \Aut (X,\F)\subset  \Aut (X,\G)$ is due to the fact that $\G$ is defined by the vanishing locus of the isotropy Lie subalgebra  of ${ \mathcal C}_\F$ (\emph{cf.} \S\ref{SS:vanishingloci}) combined with Lemma~\ref{L:CFpreserved}.
\end{proof}

During the proof of Theorem~\ref{T:transversefinite}, we will be led to replace the initial manifold $X$ with finite \'etale covers.  They can be chosen in such a way that the action of the group $G$ lifts, as shown in the next result.
\begin{lemma}\label{L:liftexistence}
Let $X_1$ be a compact complex manifold and $X_2$ be a finite \'etale cover of $X_1$. Let $G=\Aut(X_1)$ be the group of biholomorphisms of $X_1$. Then, there exists a finite \'etale cover $X_3$ of $X_2$ such that every $g\in G$ lifts to a biholomorphism of $X_3$. In particular, Theorem~\ref{T:transversefinite} holds on $X$ iff it holds on a finite \'etale cover.
\end{lemma}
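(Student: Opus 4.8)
The statement to prove is Lemma~\ref{L:liftexistence}: given a finite étale cover $X_2 \to X_1$ of compact complex manifolds and $G = \Aut(X_1)$, there is a further finite étale cover $X_3 \to X_2$ to which every $g \in G$ lifts as a biholomorphism; as a consequence, Theorem~\ref{T:transversefinite} may be checked on any finite étale cover. The approach is entirely topological/group-theoretic: work with fundamental groups, produce a characteristic finite-index subgroup of $\pi_1(X_1)$ contained in $\pi_1(X_2)$, and let $X_3$ be the corresponding cover. The key external fact is that $\pi_1(X_1)$ is finitely generated (since $X_1$ is a compact manifold), so it has only finitely many subgroups of any given index; their intersection $N$ is therefore a finite-index subgroup that is invariant under \emph{every} automorphism of $\pi_1(X_1)$, in particular under the ones induced by elements of $G$.

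\smallskip

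\textbf{Steps.} First, fix a basepoint $x_1 \in X_1$ and let $H = \pi_1(X_2) \leq \Gamma := \pi_1(X_1)$, a subgroup of finite index $d$. Since $\Gamma$ is finitely generated, the set of subgroups of index $d$ in $\Gamma$ is finite; let $N$ be their intersection. Then $N \trianglelefteq \Gamma$, $N \subseteq H$, $[\Gamma : N] < \infty$, and $N$ is preserved by every automorphism of $\Gamma$ (an automorphism permutes the index-$d$ subgroups). Let $X_3 \to X_1$ be the connected covering associated to $N$; since $N \subseteq H$ it factors through $X_2$, and since $[\Gamma:N]<\infty$ it is a finite étale cover of $X_2$. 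Second, for $g \in G$, pick a path from $x_1$ to $g(x_1)$ to define $g_* \in \Aut(\Gamma)$ (well-defined up to inner automorphism). The covering $X_3$ is, by the Galois correspondence, the one attached to the characteristic subgroup $N$; one then invokes the standard lifting criterion for maps between covering spaces: $g \circ (\text{covering map})\colon X_3 \to X_1$ lifts through $X_3 \to X_1$ precisely because $g_*$ maps $N = \pi_1(X_3)$ into $N$, and this holds regardless of the inner-automorphism ambiguity (inner automorphisms preserve any normal subgroup). This produces a lift $\tilde g \colon X_3 \to X_3$, a biholomorphism since $g$ is and the covering map is a local biholomorphism. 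Third, deduce the last sentence: $G = \Aut(X_1)$ contains $\Aut(X,\F)$ as a subgroup (automorphisms preserving $\F$ are in particular biholomorphisms of $X = X_1$), so applying the construction to $X_1 = X$ and $X_2$ the given cover, every element of $\Aut(X,\F)$ lifts to $X_3$; moreover the lifted maps preserve the lifted foliation, and the transverse-finiteness of $\Aut(X_3, \F_3)$ forces that of $\Aut(X,\F)$ — the leaf space $X/\F$ is a finite quotient of $X_3/\F_3$ and the transverse action on one is finite iff it is on the other, the quotient group $\Aut/\Autfix$ injecting (up to finite kernel and cokernel) into the corresponding group upstairs.

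\smallskip

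\textbf{Main obstacle.} The only genuinely delicate point is the well-definedness of the lift in the presence of the basepoint/inner-automorphism ambiguity: $g_*$ is only canonical up to conjugation, so one must make sure it is the \emph{normality} of $N$ (equivalently: $N$ being characteristic, hence in particular invariant under inner automorphisms) that makes ``$g_*(N) \subseteq N$'' a statement independent of the chosen path. This is routine but should be stated carefully. A secondary, purely cosmetic point is ensuring $X_3$ is \emph{connected}: one picks a single connected component of the cover associated to $N$ (equivalently, takes $N$ itself rather than a conjugate), which is legitimate since $N$ is normal so all components are isomorphic as covers. Everything else — that finitely generated groups have finitely many subgroups of bounded index, the Galois correspondence for covers, the lifting criterion — is standard and can be cited or stated without proof.
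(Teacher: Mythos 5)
Your proof is correct and follows essentially the same route as the paper: since $\pi_1(X_1)$ is finitely generated, it has only finitely many subgroups of index $d=[\pi_1(X_1):\pi_1(X_2)]$, and their intersection is a finite-index characteristic subgroup contained in $\pi_1(X_2)$, whose associated cover is the desired $X_3$. The extra details you supply (the lifting criterion, the inner-automorphism ambiguity handled by normality, and the deduction of the last sentence) are exactly the routine points the paper leaves implicit.
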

\begin{proof}
It is clearly sufficient to find a finite index characteristic subgroup $H$ of $\pi_1(X_1)$ contained in $\pi_1(X_2)$. Because $\pi_1(X_1)$ is finitely generated, we can take $H$ to be the (finite) intersection of all subgroups of $\pi_1(X_1)$ of the given index $[\pi_1(X_1):\pi_1(X_2)]$.
\end{proof}
 
\begin{proof}[Proof of Theorem \ref{T:transversefinite}]
Let $G=\Aut(X,\F)$. Denote by $\widetilde{G}$ the group of biholomorphisms of $\wtX$ which descend to $X$ as an element of $\Aut(X,\F)$. Remark that $\widetilde{g}$ acts by biholomorphism on the complex orbifold $\wtX /\wtF=\mathscr K \times\mathscr H$. According to Corollary~\ref{C:bothpreserved}, $G$ both preserves $\overline\F$ and $\G$, so that $\widetilde{G}$ acts diagonally on $\wtX /\wtF$ with respect to the above decomposition. We can also suppose, up to taking a finite index subgroup that $G$ acts trivially on the general type orbifold $X/\overline\F$. Let us first show that the transverse action of $G$ on $X/\G$ is finite. For this, recall that up to replacing $X$ by a finite \'etale cover and according to Lemma~\ref{L:liftexistence}, we can assume that the projection of the diagonal action of $\pi_1(X)$ on $\wtX /\wtF=\mathscr K \times\mathscr H$ given by the morphism $(\rho_\mathscr K,\rho_\mathscr H):\pi_1(X)\to \Aut(\mathscr K)\times \Aut(\mathscr H)$ is a dense subgroup $H$ (in the Euclidean topology) of the connected semi-simple real algebraic group $\Aut^0 (\mathscr H)$.	 We can even assume (Selberg's Lemma) that $H$ does not contain any torsion elements. We can now exploit the results of \cite{zuo96} and \cite[Theorem~1]{campanaetal-rep} according to which there exists a dominant quasi-holomorphic map with connected fibers ${ \textrm{Sh}}_{ \rho_{ \mathscr H}}:X\to V$ factorizing the representation $\rho_{ \mathscr H}$ and such that $V$ is a manifold of general type. Here, ${ \textrm{Sh}}_{ \rho_{ \mathscr H}}$ is nothing but the Shafarevich map associated to the representation $ \rho_{ \mathscr H}$. Now from Lemma~\ref{L:CFpreserved} and uniqueness of the fibration defined by ${ \textrm{Sh}}_{ \rho_{ \mathscr H}}$, we can infer that every $f\in G$ descend to $V$ as a birational transformation of $V$. This immediately implies that $G$ acts trivially on $V$ (again up to extracting a finite index subgroup). It remains to observe that in a suitable neighborhood $U$ of a general fiber $F$ of ${ \textrm{Sh}}_{ \rho_{ \mathscr H}}$ (where the restriction of  $\rho_{ \mathscr H}$ is thus trivial),  $\G$ is defined by the levels of a holomorphic submersion $\varphi_U:U\to \mathscr H$ which takes its values in the bounded domain $\mathscr H$. This map is thus constant on $F$ and we can then conclude that $F$ is contained in a leaf of $\G$ and eventually that $G$ acts trivially on  $X/\G$. 

At this stage, we have thus showed that up to finite index, $G$ acts both trivially on the space of leaves $X/\overline\F$ and $X/\G$. However, as the leaves intersection of these two foliations may consist of infinitely many leaves of $\F$, one cannot conclude directly.

Let us consider the lift of this  action $\lambda_G:\widetilde{G}\to \Aut(\mathscr K)\times \Aut(\mathscr H)$. According to Lemma~\ref{L:orbitinclusion}, the image of $\lambda_G$ lies in $\rho_\mathscr K (\pi_1(X))\times\rho_\mathscr H( \pi_1(X))$. In particular, we can restrict $\lambda_G$ to ${ \widetilde{G}}_0$ where
\[ \widetilde{G}_0={ \lambda_G}^{-1}\left( \{\textrm{Id}_{\mathscr K}\}\times\rho_\mathscr H( \pi_1(X))\right).\]
Indeed, ${ \widetilde{G}}_0$ still projects to $G$. Let $o$ in ${\mathscr K}$ and let ${ \mathfrak L}_o$ be the leaf of $\widetilde{ \overline \F}$ through $o$. Let $q$ be the image of $o$ by the covering map $\mathscr K \to X/\overline\F$ and let ${ \mathcal L}_q$ be the leaf of ${ \overline \F}$ through~$q$. The covering map ${ \mathfrak L}_o\to { \mathcal L}_q$ is Galois with deck tranformations group the stabilizer ${ \pi_1(X)}_o$ of ${ \mathfrak L}_o$ with respect to the $\pi_1 (X)$-action on the leaves of $\widetilde{ \overline \F}$.  The pull-back on ${ \mathfrak L}_o$ of the foliation $\F_{|{ \mathcal L}_q}$  is thus given by the fibers of the restriction of the surjective submersion $\Phi:\wtX\to \mathscr H$ defining $\widetilde{\G}$. Let us denote it by $\Phi_o:{ \mathfrak L}_o\twoheadrightarrow \mathscr{H}$. Indeed, $\Phi_o$ is a topologically trivial fibration over the simply-connected space $\mathscr{H}$, so that the fibers are connected.    By construction  $\Phi_o$ is ${ \pi_1(X)}_o$-equivariant, that is 
$$\forall \gamma\in { \pi_1(X)}_o,\ \forall x\in { \mathfrak L}_o, \Phi_o (\gamma(x))= \rho_\mathscr H (\gamma)(\Phi_o (x)).$$

Denote by  $\rho_{ \mathscr H}^q$ the restriction of $\rho_{ \mathscr H}$ to $\pi_1({\mathcal L}_q)$. By definition of ${\mathcal L}_q$, this implies that the image of  $\rho_{ \mathscr H}^q$ (=$\rho_{ \mathscr H}({ \pi_1(X)}_o)$) intersects $\Aut^0 (\mathscr{H})$ as a dense subgroup. On the other hand, the space of leaves $X/\overline\F$ is an orbifold, so that we can choose $o$ such the corresponding point $q\in  X/\overline\F$ has trivial isotropy or, in other words, such that the leaf ${ \mathcal L}_q$ has trivial holonomy. This also amounts to saying that the ${ \pi_1(X)}_o$-action on $\wtX/\wtF=\mathscr{K}\times\mathscr{H}$ is trivial on the first factor.

It is thus sufficient to show that the action of $G$ on ${ \mathcal L}_q/\F$ is finite. Actually, thanks to the expression of ${ \widetilde{G}}_0$ and Lemma~\ref{L:orbitinclusion}, any subgroup of $G$ acting trivially on ${ \mathfrak L}_q/\F$ necessarily acts trivially on $X/\F$.
 
As before the latter fact can be established by considering the Shafarevich morphism  ${ \textrm{Sh}}_{ \rho_{ \mathscr H}^q}:{ \mathcal L}_q\to V_q$, following exactly the same line of ideas.
\end{proof}
\medskip

It is very likely that we can enlarge the setting of Theorem~\ref{T:transversefinite} by considering the group $\mathrm{Bim}(X,\F)$ of bimeromorphic transformations preserving the foliation $\F$ with essentially the same proof. 

\section{Some final remarks/questions/partial results in the K\"ahler realm}\label{S:questions}

In this last section, the ambient manifold $X$ is K\"ahler and $\F$ is holomorphic and in the class \knr (see \S\ref{S:transversefinite}). Let us recal that the inclusion $N\F^*\subset\Omega^1_X$ induces $\bigwedge^pN\F^*\subset \Omega^p_X$. We would like to relate the fact that $\F\in\knr$ with the positivity properties of $L$. Let us first recall the following terminology introduced in~\cite{Campana2004} and in~\cite{Wu2020}.

\begin{dfn}\label{def:Bog sheaf}
Let $L\subset \Omega^p_X$ be a saturated rank-one subsheaf of $\Omega^p_X$. Then $L$ is said to be:
\begin{itemize}
\item[(BNS)] a \emph{numerical Bogomolov sheaf} if $\nd(L)=p$,
\item[(BS)] a \emph{Bogomolov sheaf} if $\kappa(L)=p$.
\end{itemize}
\end{dfn}

The Iitaka--Kodaira and the numerical dimension of a line bundle $L$ satisfying $\kappa(L)\le\nd(L)$, we see that $L$ is NBS if it is BS.

\begin{remark}\label{rem:max numerical dim}
According to \cite[Th\'eor\`eme~3.2.12]{Seb-Bouc}), $p$ is the maximum possible value for the numerical dimension (and thus for the Iitaka--Kodaira) of rank-one subsheaves of~$\Omega^p_X$. 
\end{remark}

\subsection{Geometry of foliations induces by NBS}

Let us recall that Bogomolov sheaves have a geometric interpretation, as proved in~\cite{Campana2004}.

\begin{prop}[\emph{cf.} \protect{\cite[Theorem~2.26]{Campana2004}}]\label{prop:BS general type}
Let $L\subset\Omega^p_X$ be BS. It induces a general type fibration\footnote{We refer the reader to \cite{Campana2004} for the notion of general type fibration and of special manifold.} (its Iitaka--Kodaira map)
\[\varphi_L\colon X\xdashrightarrow{\ \quad} Y\quad \text{with} \quad\dim(Y)=p.\]
In particular, the foliation $\F_L:=\Ker(L)$ is algebraically integrable.
\end{prop}

\begin{remark}\label{rem:BS+transverse Kähler}
Let $\F$ be a holomorphic foliation in the class \knr. Then if
\[L:=\left(\bigwedge^p N\F^*\right)^\mathrm{sat}\subset \Omega^p_X \quad ( \text{where}\ p=\codim(\F))\]
is BS, it implies that $\overline{\F}=\F$ and $\F$ is given by a fibration.
\end{remark}

We now explore the NBS case. If $L$ is NBS, a result of Demailly \cite{Demailly} asserts that $\F:=\Ker(L)$ is an integrable distribution and that moreover 
\begin{equation}\label{E:demaillyinteg}
\Theta\wedge \omega=0
\end{equation} where $\omega$ is a local generator of $L$ and  $\Theta$ is any closed positive current representing $c_1(L)$. The foliations we deal with include those for which we can choose $\Theta$ to be a smooth closed  positive $(1,1)$ form of rank $p$ everywhere, \emph{i.e.} those with negative transverse Ricci curvature. 

\begin{prop} \label{P:HNRclass}
Let $L\subset \Omega^p_X$ be NBS and assume that $\Theta$ can be chosen smooth and of rank $p$ everywhere in \eqref{E:demaillyinteg}. Then $\F=\Ker(L)$ is in the class \knr.
\end{prop}

\begin{proof}
The vanishing of $L\wedge\Theta=0$ Indeed implies that the kernel of $\Theta$ is exactly the tangent bundle to $\F$ and in particular that $\Theta$ is (the fundamental form of) a holonomy transverse invariant K\" ahler metric for $\F$.  We get thus another real basic $(1,1)$-form, namely the transverse Ricci form $\alpha=-\mathrm{Ricci}(\Theta)$. 

Note that $-\alpha$ also represents $c_1(N_\F^*)$, so that there exists, by the $\partial\overline{\partial}$-lemma, a smooth function $f:X\to \R$ such that
\[-\alpha=\Theta +\sqrt{-1}\partial\overline{\partial}f.\]
Moreover, $\partial\overline{\partial}f$ is basic as it is a sum of two basic forms.  This implies that $f$ is pluriharmonic along the leaves of $\F$. It turns out that $f$ is basic.

Let us justify the last claim by considering $\mathcal L$ a leaf of $\F$ and $\overline{\mathcal L}$ its topological closure. Let $x\in\overline{\mathcal L}$ such that $\restr{f}{\overline{\mathcal L}}$ reaches its maximum at $x$ and let ${\mathcal L}_x$ be the leaf passing through $x$. By the maximum principle for pluriharmonic functions, $f$ is constant on ${\mathcal L}_x$ hence on $ \overline{ {\mathcal L}_x}$. On the other hand, the leaves closure form a partition of $X$ (\emph{cf.}~\cite[Theorem~5.1]{ Molino-livre}) and we get $\overline{\mathcal L}=
\overline{ {\mathcal L}_x}$. As the original leaf $\mathcal L$ has been chosen arbitrarily,  we can conclude that $f$ is leafwise constant, as wanted. 

Then, $\alpha$ and $\eta$ are not only cohomologous in the ordinary $\partial\overline{\partial}$ cohomology, but also in the \textit{basic} $\partial\overline{\partial}$ cohomology. By the foliated version of Yau's solution to Calabi's conjecture \cite[Section~3.5]{Elkacimi90}, for $\F$ admits an invariant transverse K\"ahler metric whose Ricci form is equal to $-\Theta$.
\end{proof}

\medskip
In fact, these simple models could provide an insight of what is likely to happen for general NBS for which we expect a singular version of the decomposition theorem to hold. For codimension one foliations, this vague expectation can be turned into reality, and the picture is even more precise according to the
\begin{thm}[\emph{cf.} \protect{\cite{Touzetconormal} and \cite[Theorem~D]{Rousseau-nonspecial}}]\label{TH:codim1}
Let $(X,\F)$ be a foliated K\"ahler manifold such that $\F$ is a holomorphic codimension one  foliation given as the annihilator of a NBS $L\subset \Omega_X^1$. Assume  that $\F$ is not  algebraically integrable. Then, up to replacing $X$ by a non-singular K\"ahler modification, there exists a morphism  $\Psi: X\to {\mathbb D}^N/\Gamma$ whose image has dimension $p\geq 2$ such that $\F=\Psi^*\G$ where $\G$ is one of the tautological  foliation on $ {\mathbb D}^N/\Gamma$.
\end{thm}

It is worth mentioning that in this situation, the analogue of the  representation provided by the item~\eqref{I:denserepresentation} of Theorem~\ref{T:CdeRhamdec} is given by a morphism
\[\rho: \pi_1(X\setminus H)\longrightarrow \mathrm{pr}(\Gamma)\subset \Aut(\mathbb D)\]
with dense image and where $H$ is an $\F$-invariant hypersurface.  It is in particular of arithmetic nature (see the comments in \S\ref{SS:examples}).

In the general case of a singular foliation $\F$ defined by a BNS, it is then natural to expect that $\F$ should be obtained as  (the saturation of) the intersection of two foliations, says $\overline{\F}$ and $\G$ of lower codimension, where the leaves of $\overline{\F}$ are topological closure of  general leaves of $\F$ and $\G$ is transversely modelled on a bounded symmetric domain. 

\medskip
On a more realistic side one can also ask if the properties listed below hold for general NBS (note that the answer to the questions below if positive for foliations in \knr).

\begin{question}
Let $L\subset \Omega_X^p$ a NBS. Does the foliation $\F=\Ker(L)$ have codimension $p$?
\end{question}
Note that the answer is affirmative if we assume that $c_1 (L)$ can be represented by a positive current whose absolute continuous part has rank $p$ on a Lebesgue measurable set with non zero measure. Indeed, it can easily be deduced from \eqref{E:demaillyinteg} that $\Theta\wedge T_\textrm{ac}=0$ and by a pointwise calculation, we can conclude that the kernel of $\Theta$ has generically codimension $p$. We expect that such a current always exists.

\begin{question}\label{Q:koddim}
Let $L\subset \Omega_X^p$ be a NBS. Is this true that $\kappa(L)\in \{-\infty, p\}$.
\end{question}

Let us give some evidences supporting a positive answer to the last question. If $L$ is a NBS, any positive closed $(1,1)$ current $T$ representing $c_1(L)$ is invariant by the foliation $\F$ defined by $L$ by virtue of Demailly's identity~\eqref{E:demaillyinteg}. In particular, it holds when $T=[D]$ is the integration current along a $\mathbb Q$-effective divisor $D$. When $\F$ is in the class \knr, we can observe, according to the dynamical behavior of $\F$ as stated in \ref{T:A} that  the support $|D|$ of $D$ is saturated by  $\overline{\F}$. In particular, the general leaf of $\overline{\F}$ does not intersect $|D|$. On the other hand, $c_1(L)$ is a semi-positive class whose numerical dimension coincides with the codimension $p$ of $\F$. This prevents the existence of such a divisor $D$ (incompatibility of intersection properties), unless $\F= \overline{\F}$, in which case $\kappa (L)=p$.

\medskip
We can also ask for the analogue of  Theorem~\ref{T:transversefinite}.

\begin{question}\label{qt:non special}
Let $L\subset \Omega_X^p$ be a NBS and $\F=\mathrm{Ker}(L)$ the associated foliation. Is the action of the group of automorphisms (\emph{resp.} bimeromorphisms) of $X$ preserving $\F$ transversely finite?  
\end{question}
\noindent The case $p=1$ of Question~\ref{qt:non special} is settled in \cite{lobiancoetal2022}, at least when $X$ is projective.

\medskip
 The following question is already asked in \cite{Rousseau-nonspecial}. 
\begin{question} \label{Q:nonspecial}
Let $X$ be a compact K\" ahler manifold supporting a NBS $L\subset \Omega_X^p$. Is $X$ non-special? 
\end{question}
A positive answer to this question is provided in \cite{Rousseau-nonspecial} for $p=1$ and for $\F\in\knr$.

\subsection{Metric aspects and hyperbolicity properties}\label{SS:hyperbolicity prop}

We turn our attention to the existence of special metrics on NBS (and their tentative applications to Kobayashi hyperbolicity). Let $L$ be a NBS and $h$ be a singular metric on $L$ with psh local weight $\varphi$. 
On trivializing open sets, it reads $h(x,v)={|v|}^2e^{-\varphi(x)}$. As noticed before, the current $T=dd^c \varphi$ is $\F$-invariant where $\F$ is the foliation defined by $\Ker(\ L)$. When $p=\dim( X)$, it amounts to saying that the canonical bundle $K_X$ is big. According to \cite{BEGZ10}, this implies that $X$ admits a (singular K\"ahler--Einstein metric): one can choose the local potential $\varphi$ such that 
\begin{equation}\label{E:kahlereinstein}
\langle{ ( dd^c\varphi) }^p\rangle= {\sqrt{-1}}^{p^2}e^\varphi \sigma\wedge \overline{\sigma}
\end{equation}
where $\sigma$ is a local generator of $K_X$ suitably chosen and the brackets stand for the \textit{non-pluriplar} product (\emph{cf.} \textit{loc.cit}).

This motivates the following question.

\begin{question}
Let $\F$ be as above. Does $\F$ admit a transverse singular K\" ahler--Einstein metric? In other words, is it possible to choose  local potentials of the curvature current $T$ and some local generators $\sigma$ of $L$ such that the equation \ref{E:kahlereinstein} holds on $X$?
\end{question}

Once again the answer is affirmative
\begin{itemize}
\item[---] when $p=1$ (see \cite{Touzetconormal}),
\item[---] if $\F$ is in the class \knr (and any $p$).
\end{itemize}
This can be shown by using the techniques developped by El Kacimi in \cite{Elkacimi90} (although it is not explicit in\textit{loc.cit}) or the transverse version of the K\"ahler--Ricci flow, see \cite[Theorem~6.3]{Bedulli_2017}.

Finally let us consider the following question related to hyperbolicity properties of the ambient manifold. According to Campana\footnote{A non-special manifold shouldn't support a Zariski dense entire curve.}, a positive answer to Question~\ref{Q:nonspecial} should implies the same for the question below.
\begin{question} \label{Q:weak hyperbolicity conjecture}
Let $X$ be a compact K\" ahler manifold carrying $L\subset \Omega_X^p$ a NBS. Is it true that for any 
 entire curve $f: X \to\C$, the image of $f$ is never Zariski dense.
\end{question}

The statement of Theorem~\ref{TH:hyperbolicity} partially confirm this expectation in the case $p=1$.

\begin{proof}[Proof of Theorem~\ref{TH:hyperbolicity}]
As already noticed in the proof of Theorem \ref{T:harmonic-holomorphic} (from which we borrow notation),  up to replacing $X$ by a finite étale cover, we can assume that the representation $\rho$ considered in \eqref{E:repautg} takes value in the identity component $\Aut^0 (\mathfrak g)$ and then splits as a sum  of irreducible subrepresentations $\rho=\bigoplus_{i=1}^k \rho_i$ corresponding to the splitting $\mathfrak g= \bigoplus_{i=1}^k { \mathfrak g}_i$ as a sum of simple Lie algebras. By projecting onto the first factor, we can thus assume that $\rho$ takes its values in the real points of an absolutely simple linear algebraic group $G$ defined over $\Q$:
\[G(\R)=  \Aut^0 (\mathfrak g)=\Isom^0(\scrH)\]
where $\scrH$ is an irreducible Hermitian symmetric domain (non reduced to a point as a consequence of the assumption $\F\neq\overline{\F}$). We also have a $\rho$-equivariant holomorphic map (\emph{cf.} Section~\ref{S:equivariant-foliated-harmoni-maps})
\[\Psi:\wtX\To \scrH\]
and let us finally recall that $\rho$ has dense image in $G(\R)$ and consequently is Zariski dense as a representation in the complex algebraic group $G(\C)$. 
		 	
Up to replacing $X$ with a birational modification of an étale cover (it does not affect the answer to Question~\ref{Q:weak hyperbolicity conjecture}), the $\rho$-Shafarevich map is a surjective morphism with connected fibers $f\colon X\to V$ where $V$ is a projective manifold of general type and such that the representation $\rho$ factors through $f$, \emph{i.e.} $\rho=\rho_V\circ f_*$ for a representation $\rho_V\colon \pi_1(V)\to G$. This factorization property and the maximum principle implies that $\Psi$ descends to a $\rho_V$-equivariant holomorphic map $\Psi_V$  on the universal cover $\widetilde V$ of $V$:
\[\begin{tikzcd}
\wtX\ar[rr,"\tilde{f}"] \ar[dd, "r"']\ar[rd,"\Psi"'] & &\widetilde{V} \ar[dl,"\Psi_V"]\ar[dd,"r_V"]\\
& \scrH & \\
X\ar[rr,"f"] & & V.
\end{tikzcd}
\]
Note that the kernel of the differential $d\Psi_V$ defines a (possibly singular) foliation $\F_V$ which descends on $V$ as a foliation whose generic leaf is dense by construction.
		 	
\subsubsection*{Claim 1} The representation $\alpha_V$ is rigid as an element of $\textrm{Hom}(\pi_1(V), G(\C))$.

This is a consequence of the factorization's properties of non rigid representations due to Katzarkov and Zuo \cite{Katzarkov1994}, \cite[Theorem 3]{Zuo99}. Indeed, according to these properties,  a rigidity defect should imply the existence of a projective manifold $Y$ of positive dimension and a surjective morphism $\pi\colon V\to Y$ through which the representation $\rho_V$ factors. In particular the leaves of $\F_V$ should be contained  in the fibers of $\pi$ contradicting the fact that they are dense in $V$.
		 		 
\subsubsection*{Claim 2} Up to replacing $V$ by  a finite étale cover, $\rho_V$ takes its values in $G({ \mathcal O}_K)$ ($K$ a number field) and it underlies a direct factor of a $\mathbb Z$-variation of Hodge structures $\mathbb V$.  

This can deduced from factorization's results of $p$-adically unbouded representations by the same authors  \cite{Katzarkov1994},  \cite[Theorem 4]{Zuo99} together with the characterization of rigid representations in terms of VHS by Simpson  \cite[\S4]{Simpson92}.

\medskip
We can now easily conclude. Let $\Gamma$ be the monodromy group of the local system $\mathbb V$, $\cD$ be the period domain attached to the VHS, and $\varphi\colon V\to \cD/\Gamma$ be the period map. If $g:\mathbb C\to V$ is an entire curve, it is well known since the works of Griffiths and Schmid \cite{GS69} that $g^*\mathbb V$ is trivial. Consequently, the image of $g$ lies in the fibers of $\varphi$. The entire curves on $V$ are thus degenerate and the same is thus true for $X$: the proof of the proposition is now complete.
\end{proof}

\appendix

\section{A foliated Monge--Ampère equation: proof of Theorem~\ref{L:relativemongeampere}} \label{S:proofofmaintechnlemma}

Let us recall that we aim at proving that the equation~\eqref{E:relativemongeampere} has a unique solution $\varphi$ (up to adding a constant). The proof of existence is a non-trivial adaptation of the continuity method in the foliated setting and relies on subtle estimates. The uniqueness of the solution is easier to establish and we first explain this part of the proof.
\begin{proof}[Proof of the uniqueness in Theorem~\ref{L:relativemongeampere}]
Let $\varphi_1$, $\varphi_2$ two solutions of \eqref{E:relativemongeampere} such that  $ \omega + i\partial\bar{\partial}\varphi_i$ is positive in restriction to $\G/\F$ (for $i=1,\,2$).  Set $\omega_i= \omega +\sqrt{-1}\partial\bar{\partial}\varphi_i$. Let $\varphi=\varphi_1-\varphi_2$. Because forms of even degree commute, we obtain
\begin{equation} \label{E:vanishingwedge}
	\partial\bar{\partial}\varphi\wedge\sum_{k=0}^{p-1}\omega_1^k\wedge \omega_2^{ p-k-1}\wedge\Omega^{n-p}=0.
\end{equation}
On the other hand, and thanks to Rummler's formula~\eqref{Rummlerformula}, we have $\beta\wedge d\chi=0$ for every \textit{basic} $(n-1)$-form $\beta$. Combined with the closedness of $\omega_i$ and $\Omega$, Stokes' Theorem yields
$$\int_X d(\varphi d^c\varphi)\wedge\left(\sum_{k=0}^{p-1}\omega_1^k\wedge \omega_2^{ p-k-1}\right)\wedge\Omega^{n-p}\wedge\chi=0$$
When expanding the integrand and according to \eqref{E:vanishingwedge}, we get
	\begin{equation}\label{E:vanishingintegral}
		\int_X \sum_{k=0}^{p-1} d\varphi\wedge d^c\varphi\wedge\omega_1^k\wedge \omega_2^{ p-k-1}\wedge\Omega^{n-p}\wedge\chi=0
\end{equation}
	Let $x\in X$. On some distinguished neighborhood of $x$, we can find holomorphic transverse coordinates $(z_1,\ldots,z_{n})$ such the foliation $\G$ is defined by $\{dz_\alpha=0,\ \alpha>p\}$. Because $\omega_i$ ($i=1,\,2$) are positive in restriction to $\G/\F $, we can moreover choose the $p$ first coordinates $z_1,\ldots,z_p$ such that 
\begin{align*}
\omega_1(x)&= \sqrt{-1}\left(\sum_{\alpha=1}^p dz_\alpha\wedge d\bar{ z_\alpha} +\xi_1\right)(x)\quad \text{and}\\
\omega_2(x)&= \sqrt{-1}\left(\sum_{\alpha=1}^p \mu_\alpha dz_\alpha\wedge d\bar{ z_\alpha} +\xi_2\right)(x)
\end{align*}
where the $\xi_i (x)$'s vanish identically in restriction to ${  \G/\F}_x $ and the $\mu_\alpha$'s are positive real numbers. Thanks to this writing, we infer that there exist $p$ positive numbers $\nu_1,\ldots,\nu_p$ such that 
$$\left(\sum_{k=0}^{p-1} d\varphi\wedge d^c\varphi\wedge\omega_1^k\wedge \omega_2^{ p-k-1}\wedge\Omega^{n-p}\right) (x)= \sum_{\alpha=1}^p\nu_\alpha \left| \frac{\partial \varphi}{\partial z_\alpha}\right|^2 \left|dz_1\wedge\cdots \wedge dz_{n}\right|^2 (x).$$
Together with \eqref{E:vanishingintegral}, this implies that the real function $\varphi=\varphi_1-\varphi_2$ is constant on the leaves of $\G$, hence constant on the whole of $X$ as $\G$ is minimal. 
\end{proof}
\begin{remark}
Actually, by the same proof, the uniqueness holds as soon as $\varphi\in \mathcal C^2 (X)$. 
\end{remark}

\subsection{Preliminaries}\label{SS:prelproofoflemmaE}
We now turn to the proof of the existence part in Theorem~\ref{L:relativemongeampere}. It strongly relies on El-Kacimi's work \cite{Elkacimi90}, even if the context and presentation differ at multiple places.

The original foliation is equipped with the transverse volume form $\xi=\omega^p\wedge\Omega^{n-p}$ and $X$ is itself equipped with the volume form $dV=\xi\wedge\chi$ where $\chi$ is the characteristic form of $\F$. We can use them to endow the space ${ \mathcal C}^0 (X/\F)$ of \textit{continuous} basic functions with a scalar product $\langle\cdot,\cdot\rangle_X$, namely
$$\langle f_1,f_2\rangle_X=\int_X f_1f_2 dV$$
We can of course assume that $\mathrm{Vol}(X):=\int_X  dV=1$ (up to multiplying $\omega$ by a suitable constant).

The transverse invariant metric $\overline g$ induces by restriction a metric on $\G/\F\subset N\F$  that we denote by the same symbol. One can then consider the corresponding \textit{$\F$-basic $\G$-leafwise Laplacian $\Delta_\G$} as defined in \S\ref{SS:extension}.  Thanks to the choice of $dV$, we can derive, as in the classical setting, the following statement.
 \begin{lemma}
 	For $f,f_1,f_2\in { \mathcal C}^0 (X/\F)\cap { \mathcal C}^2(X)$, we have:
 \begin{enumerate}
 	\item $\int_X \Delta_\G (f)dV=0$. In particular, $f$ is constant if and only if $\Delta_\G (f)\geq 0$.
 	\item $\langle f_1,\Delta_\G (f_2)\rangle=\langle \Delta_\G (f_1),f_2\rangle$
 \end{enumerate}
 \end{lemma}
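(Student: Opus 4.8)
The plan is to deduce both assertions from the single integration-by-parts identity
\begin{equation}\label{E:plandiv}
\int_X \mathrm{div}_\G(w)\, dV = 0,
\end{equation}
valid for every basic section $w$ of $\G/\F$ of class $\mathcal C^1$ ($\mathrm{div}_\G$ being the $\F$-basic $\G$-leafwise divergence of \S\ref{SS:jFholfext}). Once \eqref{E:plandiv} is available, assertion~(1) is the case $w=\nabla_\G(f)$, since $\Delta_\G(f)=-\mathrm{div}_\G(\nabla_\G(f))$. For~(2), I would use the leafwise Leibniz rule $\mathrm{div}_\G\big(f_1\nabla_\G(f_2)\big)=\langle\nabla_\G(f_1),\nabla_\G(f_2)\rangle_{\overline g}-f_1\,\Delta_\G(f_2)$ together with \eqref{E:plandiv} applied to $w=f_1\nabla_\G(f_2)$ to obtain $\langle f_1,\Delta_\G(f_2)\rangle_X=\int_X\langle\nabla_\G(f_1),\nabla_\G(f_2)\rangle_{\overline g}\,dV$, which is visibly symmetric in $f_1$ and $f_2$ and hence gives the self-adjointness $\langle f_1,\Delta_\G(f_2)\rangle_X=\langle\Delta_\G(f_1),f_2\rangle_X$. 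For the ``in particular'' clause: if $\Delta_\G(f)\ge 0$ then, being continuous with vanishing integral against the positive volume form $dV$, it vanishes identically; taking $f_1=f_2=f$ in the formula above yields $\int_X\|\nabla_\G(f)\|_{\overline g}^2\,dV=0$, so $\nabla_\G(f)\equiv 0$ and $f$ is locally constant along the leaves of $\G$, hence globally constant because $\G$ is minimal (Remark~\ref{R:defofG}); the converse is trivial.

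To prove \eqref{E:plandiv} I would first establish the form identity
\begin{equation}\label{E:planform}
\mathrm{div}_\G(w)\,\xi = d\big(\iota_w\xi\big),\qquad \xi=\omega^p\wedge\Omega^{n-p},
\end{equation}
where $\iota_w\xi$ is the interior product; note that $\iota_w\xi$, and therefore $d(\iota_w\xi)$, is a basic $(2n-1)$-form, since $w$ and $\xi$ are basic. The ingredients are: $\xi$ is closed, as $\omega$ and $\Omega$ are closed basic $(1,1)$-forms; $\iota_w\Omega=0$, because $w$ is a section of $\G/\F$, which on a distinguished chart is the vertical bundle of the submersion $\Psi$ cutting out $\G$ (and $\Omega=\Psi^*\omega_\sS$); and $\omega$ is non-degenerate on $\G/\F$ while $\Omega$ is non-degenerate on $\overline\F/\F$, so that $\omega^p$ restricts to a leafwise volume form on the leaves of the foliation induced by $\G$. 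Cartan's formula and closedness of $\xi$ give $d(\iota_w\xi)=\mathcal L_w\xi=(\mathcal L_w\omega^p)\wedge\Omega^{n-p}$, the term in $\mathcal L_w\Omega^{n-p}$ dropping out because $\mathcal L_w\Omega=d\iota_w\Omega=0$. In transverse holomorphic coordinates adapted to $\G$ (i.e. $\G=\{dz_\alpha=0,\ \alpha>p\}$), wedging a $2p$-form with $\Omega^{n-p}$ retains only its ``pure $\G$-leafwise'' part; for $\mathcal L_w\omega^p$ this part equals, by the very definition of the leafwise divergence relative to $\overline g|_{\G/\F}$, the product of $\mathrm{div}_\G(w)$ with the leafwise volume form $\omega^p|_{\G/\F}$, which yields \eqref{E:planform}. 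Wedging \eqref{E:planform} with $\chi$ gives $\mathrm{div}_\G(w)\,dV=d(\iota_w\xi)\wedge\chi$ with $\iota_w\xi\in\Omega^{2n-1}(X/\F)$, and since $(\F,g)$ is taut ($\kappa_g\equiv 0$) and $\F$ is orientable, Lemma~\ref{L:Rummler/Stokes} (applied with $\codim_\R(\F)=2n$) gives $\int_X\mathrm{div}_\G(w)\,dV=\int_X d(\iota_w\xi)\wedge\chi=0$, i.e. \eqref{E:plandiv}.

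The hard part will be the verification of \eqref{E:planform}: identifying the algebraic operation ``wedge with $\Omega^{n-p}$'' on $2p$-forms with the extraction of the $\G$-leafwise divergence relative to $\overline g|_{\G/\F}$, and checking carefully that the forms involved remain basic so that Lemma~\ref{L:Rummler/Stokes} is indeed applicable. Everything else amounts to the bookkeeping above, together with the standard facts that $dV=\xi\wedge\chi$ is a positive volume form and that $\G$ is minimal.
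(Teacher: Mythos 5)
Your proposal is correct and follows essentially the same route as the paper: the key identity $\mathrm{div}_\G(w)\,\xi=d(\iota_w\xi)$ combined with Lemma~\ref{L:Rummler/Stokes} to get $\int_X\mathrm{div}_\G(w)\,dV=0$, then $w=\nabla_\G(f)$ for item (1) and the leafwise Leibniz rule for item (2). You merely spell out the ``usual calculation'' behind the form identity and the minimality argument for the ``in particular'' clause, both of which the paper leaves implicit.
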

 \begin{proof}
Let $v$ be a basic section  of class ${ \mathcal C}^1$ of $\G/\F$ . The usual calculation yields $\textrm{div}_\G (v)\xi= d( i_{ v}\omega^p) \wedge \Omega^{n-p}=d( i_{v}\xi)$. According to Lemma~\ref{L:Rummler/Stokes}, this implies that 
\begin{equation}\label{E:intbasicdivergencenull}
\int_X \textrm{div}_\G (v)dV=0.
\end{equation}
The item (1) is then obtained by choosing $v=\Delta_\G(f)$.  The item (2) is a consequence of the standard identities $$\mathrm{div}_\G (f_i v)=\langle\nabla_\G (f_i),v \rangle_{\overline g} + f_i\mathrm{div}_\G (v),\ i=1,2$$ and the vanishing property \eqref{E:intbasicdivergencenull} where we take alternatively $v=\nabla_\G (f_2)$ and $v= \nabla_\G (f_1)$.
 \end{proof}
 
We now perform an analogous construction on the foliated transverse frame bundle $p:(X^\sharp,\F^\sharp)\to (X,\F)$, $\pi_{\F^\sharp}:X^\sharp\to W$ together with its projection map $p$, its basic fibration $\pi_{\F^\sharp}$ and its bundle-like metric $g^\sharp=p^*g\oplus_{ \mathcal H}\vartheta$ as defined in \S\ref{SS:asstrfrbundle} from which we maintain notation.  Let $\chi^\sharp:=p^*\chi$ be the induced characteristic form attached to $\F^\sharp$. Let $\nu$ be the characteristic form of the vertical fibration $\vartheta$. Note that, thanks to the transverse $\F^\sharp$-parallelism (see \S\ref{SS:asstrfrbundle})  $i_v d\nu=0$, $\nu$ is \textit{basic} with respect to $\F^\sharp$. Let us consider
 $\omega^\sharp=p^*\omega$ and $\Omega^\sharp=p^*\Omega$. They can be used to endow $\F^\sharp$ with a transverse volume form 
$\xi^\sharp= {\omega^\sharp}^p\wedge\nu\wedge { \Omega^\sharp}^{n-p }$ that can be completed to a volume form $dV^\sharp$ on $X^\sharp$ by setting $dV^\sharp=\xi^\sharp\wedge \chi^\sharp$. As before we can consider the $\F^\sharp$-basic $\G^\sharp$-leafwise  metric induced by the restriction to $\G^\sharp/\F^\sharp$ of the orthogonal sum ${\overline g}^\sharp:=p^*\overline{g}\oplus_{\overline{\mathcal H}}\vartheta$ (see \S\ref{SS:asstrfrbundle}). Let $\Delta_{ \G^\sharp}$ be the corresponding $\F^\sharp$-basic $\G^\sharp$-leafwise Laplacian.  
Every basic function of class ${ \mathcal C}^r$ ($r\in\llbracket0,\infty\rrbracket$) defined on $X^\sharp$ is of the form $f\circ\pi_{\F^\sharp}$, $f\in\mathcal{C}^r(W)$ and  the $\SO(2n)$-action on the principal bundle $X^\sharp$ projects to an $\SO(2n)$ action on $W$. We can identify the space of orbits $W/\SO(2n)$ with the space of leaves closure of $\F$ and the subspace $\mathcal{C}_{ \SO(2n)}^r(W)\subset \mathcal{C}^r(W)$ of $\SO(2n)$-invariant functions to the space ${ \mathcal C}^0 (X/\F)\cap \mathcal{C}^r(X)$. 
We will denote by $ D_W^2$ the linear differential operator induced on $W$ by $\Delta_{ \G^\sharp}$ (as in \cite[Proposition~2.7.7]{Elkacimi90}). It is given, for every $f\in\mathcal{C}^2(W)$, by the formula:
$$D_W^2 (f)=\Delta_{ \G^\sharp}( f\circ\pi).$$ 
 
Arguing as above, we can prove that the equality
 \begin{equation} \label{E:selfadjointnesssharp}
  \int_{X^\sharp}\Delta_{ \G^\sharp} (f_1\circ\pi_{\F^\sharp})( f_2 \circ\pi_{\F^\sharp})dV^\sharp=\int_{X^\sharp}\Delta_{ \G^\sharp} (f_2\circ\pi_{\F^\sharp})( f_1 \circ\pi_{\F^\sharp})dV^\sharp
 \end{equation}
 holds true for any $\ f_1,f_2\in \mathcal{C}^2(W)$.
Let $dw$ be the volume form\footnote{We assume here that $W$ is oriented; this can be achieved by replacing if necessary $X$ by a double cover. Note also that for any finite \'etale cover of $r:Y\to X$, the commuting sheaves are simply related by $\mathcal C_{r^*\F}=r^*{\mathcal C}_\F$. } on $W$ such that we have 
\begin{equation}\label{E:scalarsharp} \int_{X^\sharp}( f_1\circ\pi_{\F^\sharp})( f_2\circ\pi_{\F^\sharp})dV^\sharp=\int_{W}f_1f_2dw=:\langle f_1,f_2\rangle_W.
 \end{equation}
 for every $f_1,f_2\in \mathcal{C}^0(W)$. In particular, since $\nu$ assigns volume $1$ to the fibers of $X^\sharp\to X$, we have 
$$\langle f_1,f_2\rangle_X=\langle f_1,f_2\rangle_W$$ whenever $f_1$ and $f_2$ are $\SO(2n)$-invariant.

The following lemma establishes some ``basic'' properties of $D_W^2$.
\begin{lemma}\label{L:operatorproperties}
\leavevmode
\begin{enumerate}
	\item $D_W^2 (1)=0$.
	\item $D_W^2$ is a strongly elliptic operator of order $2$, in particular $D_W^2$ satisfies the Hopf's maximum principle and then $\textrm{Ker}( D_W^2)=\R$.
	\item $D_W^2$ is $\SO(2n)$-invariant and for every $f\in \mathcal{C}_{\SO (2n)}^2(W)( ={ \mathcal C}^0 (X/\F)\cap{ \mathcal C}^2 (X))$, $D_W^2(f)=\Delta_\G (f).$
	\item $D_W^2$ is self-adjoint with respect to the scalar product $\langle \cdot,\cdot\rangle_W$, i.e, for every $f_1,f_2\in \mathcal{C}^2(W)$, $\langle f_1,D_W^2( f_2)\rangle_W=\langle D_W^2( f_1),f_2\rangle_W.$
	\end{enumerate}
\end{lemma}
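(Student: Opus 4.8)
The plan is to verify the four properties of $D_W^2$ essentially by transporting the corresponding (already established) statements about $\Delta_{\G^\sharp}$ on $X^\sharp$ down to $W$ via the basic fibration $\pi_{\F^\sharp}\colon X^\sharp\to W$, using that basic functions on $X^\sharp$ are exactly pull-backs of functions on $W$.

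\textbf{Item (1).} Since the constant function $1$ on $W$ pulls back to the constant function on $X^\sharp$, and $\Delta_{\G^\sharp}$ annihilates constants (it is built from the $\G^\sharp$-leafwise gradient, which kills constants, followed by $-\operatorname{div}_{\G^\sharp}$), we get $D_W^2(1)=\Delta_{\G^\sharp}(1)=0$.

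\textbf{Item (2).} This is the one place where a genuine verification is needed rather than a formal transport. Following El-Kacimi \cite[Proposition~2.7.7]{Elkacimi90}, the operator $\Delta_{\G^\sharp}$ is a basic second-order differential operator on $X^\sharp$ whose leading symbol is, in a distinguished foliated chart with transverse coordinates, the (positive semi-definite) quadratic form associated to $\restr{\overline g^\sharp}{\G^\sharp/\F^\sharp}$. Because the $\F^\sharp$-transverse parallelism makes the basic fibration $\pi_{\F^\sharp}$ genuinely submersive onto $W$ with $\G^\sharp$ projecting onto all of $TW$ (this is the surjectivity observed in Remark~\ref{R:defofG}, valid here since the structural Lie algebra is semi-simple without compact factors and hence a section $\sigma$ of $E_H\to W$ exists), the induced operator $D_W^2$ has leading symbol the honest Riemannian metric on $W$ obtained by pushing forward $\restr{\overline g^\sharp}{\G^\sharp/\F^\sharp}$ — in particular it is strongly elliptic of order $2$ with no zeroth-order term (by (1)). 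Strong ellipticity on the compact manifold $W$ then yields the Hopf maximum principle, whence $\operatorname{Ker}(D_W^2)=\R$ by the usual argument (a function attaining its maximum with $D_W^2(f)\le 0$ everywhere — here $=0$ — must be constant).

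\textbf{Items (3) and (4).} For (3): $\SO(2n)$-invariance of $D_W^2$ follows from the $\SO(2n)$-invariance of $\overline g^\sharp$ and of $\G^\sharp$, which makes $\Delta_{\G^\sharp}$ commute with the $\SO(2n)$-action on $X^\sharp$; identifying $\mathcal{C}^2_{\SO(2n)}(W)$ with $\mathcal C^0(X/\F)\cap\mathcal C^2(X)$ as in the paragraph preceding the lemma, one checks on a distinguished chart that under this identification $\restr{\overline g^\sharp}{\G^\sharp/\F^\sharp}$ restricts to $\restr{\overline g}{\G/\F}$ (the vertical $\vartheta$-summand is $\SO(2n)$-direction and contributes nothing to $\G^\sharp/\F^\sharp$ evaluated on basic invariant functions), so the corresponding Laplacians agree: $D_W^2(f)=\Delta_\G(f)$. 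For (4): self-adjointness of $D_W^2$ with respect to $\langle\cdot,\cdot\rangle_W$ is just the restatement on $W$ of the identity \eqref{E:selfadjointnesssharp}, combined with the defining relation \eqref{E:scalarsharp} between $\langle\cdot,\cdot\rangle_W$ and the integral over $X^\sharp$ against $dV^\sharp$; indeed \eqref{E:selfadjointnesssharp} reads $\langle D_W^2(f_1),f_2\rangle_W=\langle f_1,D_W^2(f_2)\rangle_W$ after substituting $f_i\circ\pi_{\F^\sharp}$ and using $D_W^2(f)\circ\pi_{\F^\sharp}=\Delta_{\G^\sharp}(f\circ\pi_{\F^\sharp})$.

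The main obstacle is item (2): proving strong ellipticity requires unwinding the local expression of $\Delta_{\G^\sharp}$ and confirming that the push-forward of $\restr{\overline g^\sharp}{\G^\sharp/\F^\sharp}$ along $\pi_{\F^\sharp}$ is a bona fide (positive definite) metric on $W$ — equivalently, that $\G^\sharp$ surjects onto $TW$ — which is exactly where the hypothesis that the structural Lie algebra is semi-simple without compact factors (hence the existence of the section $\sigma$, cf. \S\ref{SS:SScase} and Remark~\ref{R:defofG}) enters. The remaining three items are formal transports of statements already available upstairs on $X^\sharp$.
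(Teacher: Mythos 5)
Your overall route is the paper's: items (1), (2) and (4) are handled exactly as in the text. In particular, for (2) the key point — surjectivity of $\G^\sharp/\F^\sharp\twoheadrightarrow TW$ supplied by Remark~\ref{R:defofG} — is the same one the paper uses to obtain positivity of the principal symbol (the paper phrases it by evaluating $\Delta_{\G^\sharp}$ on $(f\circ\pi_{\F^\sharp})^2$ at a point of the fiber rather than through a ``push-forward metric'', but the content is identical), and (4) is, as you say, just \eqref{E:selfadjointnesssharp} combined with \eqref{E:scalarsharp}.

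The one step that does not hold up as written is the second half of item (3). From the facts that $f\circ p$ is constant along the $\SO(2n)$-orbits and that $\overline{g}^\sharp$ splits orthogonally as $p^*\overline{g}\oplus_{\overline{\mathcal H}}\vartheta$ you cannot yet conclude that the vertical summand ``contributes nothing'': for a Riemannian submersion, the (here $\F^\sharp$-basic $\G^\sharp$-leafwise) Laplacian of a pulled-back function differs from the pull-back of the Laplacian downstairs by a first-order term pairing the horizontal gradient with the mean curvature vector of the fibers, and this term has no reason to vanish for a general invariant metric. The missing ingredient is that the fibers of $p$ are totally geodesic, hence minimal, because the $\SO(2n)$-action preserves the horizontal distribution and acts by isometries (\emph{cf.} \cite[\S5]{Oneill66} and \S\ref{SS:asstrfrbundle}); one then concludes by applying Proposition~\ref{P:relationtensionfieldbasictf} (see Remark~\ref{R:basiclaplacian}) to the vertical foliation, which is exactly how the paper argues. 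Once this point is inserted, your proof coincides with the paper's.
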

\begin{proof}
\leavevmode

	(1): Obvious.
	
	(2): The differential operator $D_W^2$ has obviously order $\leq 2$ ($=$ order of the basic operator $\Delta_\G$).
	
Let $w\in W$ and $u\in \pi_{\F^\sharp}^{-1}(w)$. Let $\xi\in T_x^* W -\{0\}$ and $f\in { \mathcal C}^2(W)$ such that $f(w)=0$ and $df_w=\xi$. Set $f^\sharp= f\circ \pi_{\F^\sharp}$. 	The differential $d\pi_{\F^\sharp}$ vanishes on $\F^\sharp$ and induces a surjective morphism of vector bundles: $\G^\sharp/ \F^\sharp\twoheadrightarrow T W$ (see Remark~\ref{R:defofG}); then the differential $df^\sharp_u$ induces a non-trivial linear form on $ { ( \G^\sharp/ \F^\sharp)}_u$. Consequently $D_W^2 (f^2)(w)=\Delta_{ \G^\sharp}( (f^\sharp)^2)(u)<0$ by strong ellipticity of the Laplacian. The terms involved in this equality are thus nothing but the principal symbol $\sigma( D_W^2)(w,\xi)$ (up to a factor $\frac{1}{2}$), showing that $D_W^2$ is strongly elliptic of order $2$. 

(3): Recall firstly that the structural group $\SO(2n)$ acts on $X^\sharp$ by basic transformation with respect to both $\F^\sharp$ and $\G^\sharp$ (more precisely the action is tangent to the latter). Moreover this action preserves the transverse metric $\overline{g}^\sharp$. This implies the $\SO(2n)$-invariance of $\Delta_{\G^\sharp}$ and consequently that of $D_W^2$. 

Concerning the last point, we maintain notation/observations from \S\ref{SS:asstrfrbundle}. Let us denote by $p_{U/\F}: (p^{-1}(U)/\F^\sharp,{ \overline{g}}^\sharp)\to ( U/\F,\overline g)$ the Riemannian submersion induced by $p$ and which allows us to identify the source as the orthonormal frame bundle of the target. Denote by $\overline{\G}$ (resp. by $\overline{\G^\sharp}$) the foliation induced by $\G$ (resp. by ${\G^\sharp}$) on $U/\F$ (resp. on $ p^{-1}(U)/\F^\sharp$). Both are related by $\overline{\G^\sharp}= p_{U/\F}^* \overline{\G}$. Now the $\SO(2n)$-action preserves the horizontal distribution $\overline { \mathcal H}$, so that the vertical fibers of $p_{U/\F}$ are totally geodesic (with respect to ${ \overline g}^\sharp$) and in particular minimal.  From Remark \ref{R:basiclaplacian} applied to the vertical foliation, we can deduce that $D_W^2(f)=\Delta_{\G^\sharp}(f\circ\pi_{\F^\sharp})=\Delta_{\G}(f)$, as desired.

(4): Straighforward consequence of the self-adjointness of \eqref{E:selfadjointnesssharp} and \eqref{E:scalarsharp}.
\end{proof}

In the sequel, we will also have to replace the transverse reference metric $\overline g$ by the ``metric'' $\overline{ g_\varphi}$ with fundamental form $\omega_\varphi=\omega+\sqrt{-1}\partial\bar{\partial}\varphi$ where $\varphi$ is a basic function of class $\mathcal C^k$, $k\gg 0$ such that $\omega_\varphi$ is positive in restriction to $\G/\F$, so that $\overline{ g_\varphi}$ is \textit{a priori} only a genuine metric of class ${ \mathcal C}^{k-2}$ in restriction  to $\G/\F$.\footnote{In fact, $\varphi$ is necessarily constant along the leaves of $\G$ (see the proof of Lemma~\ref{L:Ckalpharestr}), so that $\overline{ g_\varphi}$ really is a transverse invariant metric on $N\F$.} This is however enough to construct a $\SO(2n)$-invariant $\F^\sharp$-basic $\G^\sharp$-leafwise metric on $\G^\sharp/\F^\sharp$ by taking the restriction of the orthogonal sum $p^*{\overline g}_\varphi\oplus_{\overline{  \mathcal H}}\vartheta$.\footnote{Recall that the $\SO(2n)$-action on $X^\sharp$ preserve both $\F^\sharp$ and $\G^\sharp$ (and is actually tangent to the latter) and consequently induces an action on  $\G^\sharp/\F^\sharp$.} As before, we can attach to it the  $\F^\sharp$-basic $\G^\sharp$-leafwise (resp. $\F$-basic $\G$-leafwise ) Laplacian $\Delta_{ \G^\sharp}^\varphi$  (resp. $\Delta_{ \G}^\varphi$). In this setting, the relevant volume form on $X$ and $X^\sharp$ are respectively
$dV_\varphi:=\omega_\varphi^p\wedge\Omega^{n-p}\wedge \chi$ and $dV_\varphi^\sharp:=\xi_\varphi^\sharp\wedge \chi^\sharp$ where $\xi_\varphi^\sharp= {\omega_\varphi^\sharp}^p\wedge\nu\wedge { \Omega^\sharp}^{n-p }$ and $\omega_\varphi^\sharp=p^*\omega_\varphi$. Let us denote by  $\langle\cdot,\cdot\rangle_{\varphi,W}$ the corresponding scalar product on ${ \mathcal C}^0(W)$.
As before, $\Delta_{ \G^\sharp}^\varphi$ descends to $W$ as a second order differential $D_{ W}^{ 2,\varphi}$ fulfilling all the items of Lemma~\ref{L:operatorproperties}, replacing $\Delta_{ \G}$ by $\Delta_{ \G^\sharp}^\varphi$ and $\langle\cdot,\cdot\rangle_{W}$ by $\langle\cdot,\cdot\rangle_{\varphi,W}$ \footnote{In view of \S\ref{SS:openness}, one could also argue by considering the space of transverse orthonormal frames attached to the new metric $\bar{g}_\varphi$.}.
\medskip
  
As in the classical context of complex Monge--Amp\`ere equations, we want to apply the continuity method in order to prove Theorem~\ref{L:relativemongeampere}, that is to solve for every $t\in[0,1]$ the family of equations 
\begin{equation*}
(\mathrm{MA})_t:\left.\begin{array}{cl}{(  \omega + \sqrt{-1}\partial\bar{\partial}\varphi_t)}^p\wedge \Omega^{n-p}&=	e^{ f_t}\omega^p\wedge \Omega^{n-p}\\
&=e^{ tf}\dfrac{ \int_X\omega^p\wedge \Omega^{n-p} \wedge\chi}{\int_X e^{ tf} \omega^p\wedge \Omega^{n-p} \wedge\chi}\omega^p\wedge \Omega^{n-p}\end{array}\right.
\end{equation*}
which consists in replacing $f$ by $$f_t:=tf +\log\left({\int_X\omega^p\wedge \Omega^{n-p} \wedge\chi}\right)-\log\left({\int_X e^{ tf} \omega^p\wedge \Omega^{n-p} \wedge\chi}\right)$$ in the right-hand side of \eqref{E:relativemongeampere}. 

Let $k$ be a non-negative integer and $\alpha\in (0,1)$. Consider the Banach space ${ \mathcal C}^{k,\alpha}(W)$ of functions of class ${ \mathcal C}^{k}$ and H\"older exponent $\alpha$  and the closed subspace  ${ \mathcal C}_{\SO(2n)}^{k,\alpha}(W)$ of $\SO(2n)$-invariant functions. The latter can be identified with the space of basic functions on $X$ of class ${ \mathcal C}^{k,\alpha}$. On the local leaf space $U/\F$, observe  that $\varphi_t$ is a solution to $(\mathrm{MA})_t$ iff for every leaf $\overline{ \mathcal L}$ of the foliation ${ \overline\G}$ induced by $\G$ on $U/\F$, the following equation holds:
\begin{equation}\label{E:mongeampontheleafspace}
\restr{ {(  \omega + \sqrt{-1}\partial\bar{\partial}\varphi_t)}^p}{\overline{ \mathcal L}}=	\restr{ e^{ f_t}\omega^p}{\overline{ \mathcal L}}.
\end{equation}

Let $\mathcal{W}$ be a connected open subset of $\overline{ \mathcal L}$ relatively compact in $U/\F$. The following observation will turn out to be useful.
\begin{lemma}\label{L:Ckalpharestr}
Let $\varphi_t$ be a solution of $(\mathrm{MA})_t$. Assume that $\varphi_t$ belongs to the class ${ \mathcal C}^{k,\alpha}$ in restriction to $\mathcal{W}$. Then $\varphi_t\in { \mathcal C}^{k,\alpha} (\mathcal{V})$ where $\mathcal{V}$ is an open neighborhood of $\mathcal{W}$ in $U/\F$.
\end{lemma}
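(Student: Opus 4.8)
\emph{Strategy and normal form of the equation.} The key observation is that $(\mathrm{MA})_t$, read on the local space of leaves $U/\F$, is a \emph{leafwise} complex Monge--Amp\`ere equation along $\overline\G$, uniformly elliptic in the directions of $\G/\F$ but with the complementary (transverse) directions entering only as passive parameters. The plan is: first put the equation into this normal form using the parallel splitting \eqref{E:parallsplitting}; then deduce leafwise interior regularity, uniform in the transverse parameter; and finally propagate that regularity to the transverse variables by differentiating the equation in the parameter and bootstrapping. Concretely, shrinking $U$, the local de Rham splitting theorem applied to the orthogonal, parallel decomposition \eqref{E:parallsplitting} of $(N\F,\overline g)$ gives a biholomorphic isometry $U/\F\simeq U'\times U''$ under which the leaves of $\overline\G$ are the slices $U'\times\{\cdot\}$, the metric is a product, and, $\Omega$ being basic with respect to $\G$, the form $\Omega^{n-p}$ is, up to a positive factor depending only on the $U''$-variable, the K\"ahler volume form on $U''$. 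Writing holomorphic coordinates $(z',z'')=(z_1,\dots,z_p,z_{p+1},\dots,z_n)$, so that $z'$ parametrises $\overline{\mathcal L}$ and $\mathcal W\subset U'\times\{q_0\}$, expanding $(\omega+\sqrt{-1}\partial\bar\partial\varphi_t)^p\wedge\Omega^{n-p}$ shows all mixed and transverse terms of the Hessian are killed by the wedge with $\Omega^{n-p}$, so that \eqref{E:mongeampontheleafspace}, i.e. $(\mathrm{MA})_t$, becomes
\[
\det\!\Big(g_{i\bar j}(z')+\frac{\partial^2\varphi_t}{\partial z_i\partial\bar z_j}(z',z'')\Big)_{1\le i,j\le p}=e^{\,f_t(z',z'')}\,\det\!\big(g_{i\bar j}(z')\big).
\]
This depends on $\varphi_t$ \textbf{only} through its leafwise complex Hessian $\big(\partial_i\partial_{\bar j}\varphi_t\big)_{i,j\le p}$; it is uniformly elliptic in $z'$ since, for the relevant solutions, $\omega_{\varphi_t}$ is positive on $\G/\F$ (this is exactly what makes it a $\G$-leafwise K\"ahler metric, cf. Corollary~\ref{C:ricciqnegative}); and $z''$ appears merely as a parameter.

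\emph{Leafwise regularity.} For each fixed $q=z''$ near $q_0$, the slice function $z'\mapsto\varphi_t(z',q)$ solves a genuine complex Monge--Amp\`ere equation on $U'$. Combining the hypothesis $\varphi_t\in\mathcal C^{k,\alpha}$ on $\mathcal W$ with the interior estimates for such equations (the second-order Evans--Krylov estimate followed by the linear Schauder bootstrap), and using the a priori uniform control of $\varphi_t$ on compact subsets of $U/\F$ together with the smoothness of $f_t$, one obtains $\mathcal C^{k,\alpha}$ bounds on $z'\mapsto\varphi_t(z',q)$ on a slightly smaller slice, \emph{uniformly} for $q$ in a neighbourhood of $q_0$. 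In particular the coefficients $g^{\bar j i}_{\varphi_t}$ of the linearised operator are bounded in $\mathcal C^{k-2,\alpha}$ in $z'$, uniformly in $q$.

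\emph{Transverse regularity.} To gain regularity in $z''$ one differentiates the equation in the parameter. Since no $z''$-derivative of $\varphi_t$ occurs, forming difference quotients $w_h(z',q)=h^{-1}\big(\varphi_t(z',q+he_\gamma)-\varphi_t(z',q)\big)$ for $\gamma>p$ and using the mean-value form of $\log\det$, one finds that $w_h$ solves a linear, uniformly elliptic equation on the slices, $\big(\int_0^1 g^{\bar j i}_{\varphi_t(\cdot,q+she_\gamma)}\,ds\big)\partial_i\partial_{\bar j}w_h=h^{-1}\big(f_t(\cdot,q+he_\gamma)-f_t(\cdot,q)\big)$, whose coefficients are controlled by the previous step and whose right-hand side converges in $\mathcal C^{k-1,\alpha}$. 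Interior Schauder then gives $\mathcal C^{k,\alpha}$ bounds on $w_h$ uniform in $h$ (the needed $L^\infty$ bound on $w_h$ coming from the a priori control on $\varphi_t$), and letting $h\to0$ yields $\partial\varphi_t/\partial z_\gamma\in\mathcal C^{k,\alpha}$ along the slices, and likewise for $\partial/\partial\bar z_\gamma$. Iterating on the order of $z''$-differentiation — the higher transverse derivatives satisfying analogous linear elliptic equations with already-controlled data — and combining all the resulting estimates shows that every derivative of $\varphi_t$ of order $\le k$ in \emph{all} variables is bounded and $\alpha$-H\"older on a neighbourhood $\mathcal V$ of $\mathcal W$ in $U/\F$, which is the assertion.

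\emph{Main obstacle.} The delicate point is the transverse step: because $(\mathrm{MA})_t$ is \emph{totally degenerate} in the complementary directions, one cannot simply differentiate it, and the existence and uniform boundedness of the transverse difference quotients $w_h$ must be established by hand, from the uniform-in-$q$ leafwise Schauder estimates and the a priori $\mathcal C^0$ (or $\mathcal C^{1,1}$) control on $\varphi_t$ over $U/\F$. Everything else — the de Rham coordinate normalisation, the uniformity in the parameter of the Monge--Amp\`ere interior estimates (which follows from the corresponding uniformity of the a priori bounds, in the spirit of El-Kacimi's analysis in \cite{Elkacimi90}), and tracking the H\"older exponents through the iteration — is routine.
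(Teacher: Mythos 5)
Your proposal has a genuine gap, and it sits precisely at the point you label the ``main obstacle'': the transverse step cannot be extracted from the equation, no matter how the difference-quotient argument is organised. Equation \eqref{E:mongeampontheleafspace} involves only the leafwise complex Hessian $\bigl(\partial^2\varphi_t/\partial z_i\partial\bar z_j\bigr)_{i,j\le p}$ and contains no derivative of $\varphi_t$ in the directions transverse to $\overline\G$. It is therefore invariant under replacing $\varphi_t$ by $\varphi_t+P$, where $P$ is pluriharmonic along each slice and depends arbitrarily --- even discontinuously --- on the transverse variable $z''$ (positivity of $\omega_{\varphi_t}$ on $\G/\F$ is unaffected). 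Such a perturbation satisfies all the slice equations and enjoys the same leafwise estimates, uniformly in the parameter, yet has no transverse regularity at all; so no combination of leafwise Evans--Krylov and Schauder estimates, however uniform in $q$, can yield a modulus of continuity in $q$. In particular the uniform $L^\infty$ bound on $w_h$ that your interior Schauder step needs is not implied by a $\mathcal C^0$ bound on $\varphi_t$ (that only gives $|w_h|\le 2\|\varphi_t\|_\infty/|h|$) nor by the uniform-in-$q$ leafwise bounds, and the transverse bootstrap cannot even start. What forces transverse regularity is an input your proposal never uses: $\varphi_t$ is a continuous \emph{basic} function, hence constant on the closures of the leaves of $\F$, hence constant along the orbits of the commuting sheaf ${\mathcal C}_\F$, viewed on $U/\F$ as a Lie algebra $\mathfrak g$ of smooth transverse Killing fields. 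This is exactly the paper's (soft) proof: choosing a complement $\mathfrak p$, spanned by $v_1,\ldots,v_d$ with $d=2(n-p)$, of the subalgebra of elements of $\mathfrak g$ vanishing along $\overline{\mathcal L}$, the map $(x,t_1,\ldots,t_d)\mapsto e^{t_1v_1}\circ\cdots\circ e^{t_dv_d}(x)$ is a smooth diffeomorphism of $\mathcal W\times(-\varepsilon,\varepsilon)^d$ onto a neighbourhood $\mathcal V$ of $\mathcal W$ in $U/\F$, and in these coordinates $\varphi_t$ does not depend on $(t_1,\ldots,t_d)$; the $\mathcal C^{k,\alpha}$ regularity on $\mathcal W$ thus propagates to $\mathcal V$ with no PDE analysis.

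Two secondary points. First, your normal form invokes the parallel splitting \eqref{E:parallsplitting} and the product metric it yields, but \eqref{E:parallsplitting} is established only after Theorem~\ref{TH:isotvanishingloci}, which itself relies on Lemma~\ref{L:relativemongeampere} --- the statement Section~\ref{S:proofofmaintechnlemma}, including the present lemma, is in the course of proving; using it here is circular. This is repairable (holomorphic coordinates adapted to $\G$ and the $\G$-basicness of $\Omega$ already reduce \eqref{E:mongeampontheleafspace} to a leafwise Monge--Amp\`ere equation, with coefficients depending on both $z'$ and $z''$), but the repair does not touch the main gap. Second, even the claimed uniform $\mathcal C^{k,\alpha}$ control on the \emph{nearby} slices does not follow from the hypothesis, which concerns only the slice containing $\mathcal W$; in the paper's argument this, too, comes for free from the constancy of $\varphi_t$ along the flows of $\mathfrak g$.
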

\begin{proof}
Consider the constant sheaf ${ { \mathcal C}_\F}_{|U}$, viewed as a Lie algebra $\mathfrak g$ of Killing vector fields on $U/\F$. These vector fields are also foliated with respect to $\overline\G$. Among those vector fields, those that vanish along $\overline{ \mathcal L}$ form a maximal compact subalgebra $\mathfrak k$ of $\mathfrak g$. Take a decomposition of $\mathfrak g$ as a direct sum of linear subspaces $\mathfrak g=\mathfrak k\oplus\mathfrak p$ (for instance a Cartan decomposition). Note that the dimension of $\mathfrak p$ is the real codimension $d=2(n-p)$ of $\overline\G$. Let $\mathscr B=\{v_1,\ldots,v_d\}$ be a basis of $\mathfrak p$. In particular, $\mathscr B$ provides along $\overline{ \mathcal L}$ a trivialization of the normal bundle $N\mathcal L$. For $(t_1,\ldots,t_d)\in{ ]-\varepsilon, \varepsilon[}^d$, the map $$\Psi:\left\{\begin{array}{rcl}
\mathcal{W}\times { ]-\varepsilon, \varepsilon[}^d	&\longrightarrow  & U/\F \\
\left(x,(t_1,\ldots,t_d)\right)	&\longmapsto  &e^{t_1 v_1}\circ e^{t_2 v_2}\circ\cdots\circ e^{t_d v_d}(x)
\end{array}\right.$$ is a smooth diffeomorphism onto an open subset $\mathcal{V}$ of $U/\F$ and $\mathcal{W}=\Psi (\mathcal{W}\times\{0\})$. The lemma is then a straightforward consequence of this trivialization and the fact $\varphi_t$ is constant along the orbit of any $v\in \mathfrak g$.
\end{proof}

We will also make use of the spaces:
\begin{align*}
E_{k,\alpha}&=\left\{\varphi\in { \mathcal C}_{\SO(2n)}^{k,\alpha}(W)\mid\langle\ \varphi,1\rangle_W=0\right\}\quad\text{and}\\
H_{k,\alpha}&=\left\{h\in { \mathcal C}_{\SO(2n)}^{k-2,\alpha}(W)\mid\langle h,1\rangle_W=\langle 1,1\rangle_W\right\}
\end{align*}
with $k\gg 0$. They are respectively a closed linear subspace of $ { \mathcal C}_{\SO(2n)}^{k,\alpha}(W)$ and a closed affine subspace of  ${ \mathcal C}_{\SO(2n)}^{k-2,\alpha}(W)$. Let us denote by $A_{k,\alpha}$ the subset of $t\in [0,1]$ such that $(\mathrm{MA})_t$ admits a solution $\varphi_t\in { \mathcal C}_{\SO(2n)}^{k,\alpha}(W)$ with the property that $\restr{ \omega_{\varphi_t}}{ \G/\F} >0$. We must prove that $A_{k,\alpha}$ contains $1$ for every $(k,\alpha)$. To this end, let us consider the open set subset of $E_{k,\alpha}$, $${ \mathscr U}_{k,\alpha} :=\left\{\varphi\in E_{k,\alpha} \}\mid\omega_\varphi=\omega+\sqrt{-1}\partial\bar{\partial}\varphi >0\ \textrm{in restriction to } \G/\F\right\}$$ and the map $\mathscr C :{ \mathscr U}_{k,\alpha}\to H_{k,\alpha}$ defined by  $$\mathscr C (\varphi)=\dfrac{ { ( \omega + \sqrt{-1}\partial\bar{\partial}\varphi)}^p\wedge \Omega^{n-p}}{ {\omega^p}\wedge \Omega^{n-p}}.$$
\subsection{Openness of $A_{k,\alpha}$.}\label{SS:openness} 
 A straightforward computation shows that $\mathscr C$ is differentiable and that the differential at $\varphi$ is given by:
 $$d \mathscr{C}_\varphi=-\mathscr C (\varphi)\Delta_\G^\varphi .$$
Now, recall that
$$\Delta_\G^\varphi:{ \mathcal C}_{\SO(2n)}^{k,\alpha}(W)\longrightarrow { \mathcal C}_{\SO(2n)}^{k-2,\alpha}(W)$$ is the restriction of $$D_W^{2,\varphi}:{ \mathcal C}^{k,\alpha}(W)\longrightarrow { \mathcal C}^{k-2,\alpha}(W).$$
By standard elliptic theory and Schauder estimates, the image of the latter coincides with 
$${ \mathscr E}_0 :=\left\{h\in {\mathcal C}^{k-2,\alpha}(W)\mid\langle h,1\rangle\}_{ \varphi,W}=0\right\}$$
(see \cite[Expos\'e~VI]{ConjectureCalabi78}).  We can infer that the image of the former is exactly ${ \mathscr E}_0\cap { \mathcal C}_{\SO(2n)}^{k-2,\alpha}(W)$. Indeed, the image of ${ \mathcal C}_{\SO(2n)}^{k,\alpha}(W)$ is contained in ${ \mathscr E}_0\cap { \mathcal C}_{\SO(2 n)}^{k-2,\alpha}(W)$ thanks to the $\SO(2n)$-invariance of $D_W^{2,\varphi}$. Conversely and still by  $\SO(2n)$-invariance, we can express every element of ${ \mathscr E}_0\cap { \mathcal C}_{\SO(2n)}^{k-2,\alpha}(W)$ as  $D_W^{2,\varphi} (f)$ where $f\in { \mathcal C}^{k,\alpha}(W)$ verifies 
 $$ D_W^{2,\varphi}(v(f))=0$$
 for every fundamental vector field $v$ of the $\SO(2n)$-action. This shows that $v(f)$ is constant, hence identically zero by compactness of $W$,  proving our assertion. 
We have thus established that $d \mathscr{C}_\varphi$ is an isomorphism between the tangent spaces $T_\varphi { \mathscr U}_{k,\alpha}$ and $T_{ \mathscr C (\varphi)}H_{k,\alpha}$. By the inverse function Theorem, we can conclude that $A_{k,\alpha}$ is an \textit{open} (non-empty) subset of $[0,1]$. 
\medskip

It remains to show that $A_{k,\alpha}$ is \textit{closed}. We can mimick without fundamental changes the classical proof given by Yau as explained below.

\subsection{Closedness of $A_{k,\alpha}$}

 Let $t\in A_{k,\alpha}$ and $\varphi_t\in { \mathcal C}_{\SO(2n)}^{k,\alpha}(W)$ such that $\restr{ \omega_{\varphi_t}}{ \G/\F} >0$   the solution of $(\mathrm{MA})_t$ together with the normalization   $\langle\varphi_t,1\rangle_W=0=\langle\varphi_t,1\rangle_X$, so that $\varphi_t$ is necessarily \textit{unique}. It is worth noticing that the properties of the operator $D_W^2$ are mainly used in the item (2) of Step~1 below and that the remaining part only needs to work directly on the original manifold $X$ without any reference to the normal frame bundle and the basic fibration on it.
 
\subsubsection{Step 1: $L^1$ estimate}
The solution $\varphi_t$ satisfies the following estimates.
\begin{enumerate}
 	\item $\Delta_\G (\varphi_t)<p$ (= $\dim_\C( \G/\F)$): this is a straightforward adaptation of the proof of \cite[Exposé VII, Lemme~2.1(i)]{ConjectureCalabi78}.
 	\item the upper-bound $\mathrm{sup}_{X\times A_{k,\alpha}}\varphi_t\leq C_1$ follows from the estimates for the Green function of $D_W^2$ (see \cite[Expos\'e VII]{ConjectureCalabi78} and for instance \cite[Appendix~A]{Alesker2013}).
 	\item We get the $L^1$ uniform bound $\int_X |\varphi_t|dV\leq 2C_1$ as a trivial consequence.
 \end{enumerate}
\subsubsection{Step 2: $L^q$ estimate} This is \cite[Expos\'e VII, Lemme 3.3]{ConjectureCalabi78} where the $L^q$ norm under consideration is computed with respect to the volume form $dV=\omega^p\wedge \Omega^{n-p}\wedge \chi$.
The proof is a direct adaptation of the strategy depicted in \textit{loc.~cit.}  (repeated use of $L^1$ estimates and Sobolev embeddings).

\subsubsection{Step 3: uniform ${ \mathcal C}^0$ estimate.}
This is again obtained as a direct adaptation of the classical situation \cite[Expos\'e VII]{ConjectureCalabi78}. Indeed it follows from $L^q$ estimates that we have:
$$\mathrm{Sup}_{A_{k,\alpha}}{ \|\varphi_t  \|  }_\infty\leq C_2  .  $$

\subsubsection{Step 4: Laplacian ${ \mathcal C}^0$ estimate.}
We follow \textit{verbatim} the strategy and computation made in \cite[Expos\'e VIII]{ConjectureCalabi78} and obtain
$$\mathrm{Sup}_{A_{k,\alpha}}{\|\Delta_{\G} (\varphi_t)\|}_\infty\leq C_3.$$ 

\subsubsection{Last step: higher order estimates}
This part follows closely the strategy\footnote{We could alternatively use \cite[Expos\'e XI]{ConjectureCalabi78}.} developed in \cite[\S14.3]{guedjzeriahi17}. Unlike the preceding steps, this is a purely local result to which we can reduce by considering the equation~\eqref{E:mongeampontheleafspace} and Lemma~\ref{L:Ckalpharestr} (see the comments at the end of \cite[\S14.1.2]{guedjzeriahi17}).

In the first place, we can derive from Evans--Krilov theory the upper bound:
\[\exists\, 0<\beta<\alpha,\ \mathrm{Sup}_{A_{k,\alpha}} { \|\varphi_t\|}_{ {\mathcal C }^{2,\beta}}\leq C_4.\]

As the injection $ {\mathcal C }^{2,\alpha}(X)\hookrightarrow  {\mathcal C }^{2,\beta}(X)$ is compact, we can find for any cluster point $t_0\in [0,1]$ a sequence $(t_n)\in { A_{k,\alpha}}$, $t_n\rightarrow t_0$ and $\varphi\in {\mathcal C }^{2,\beta}(X)$ such that $\varphi_{t_n}$ tends to $\varphi$ with respect to the ${ \mathcal C}^{2,\beta}$ norm. In particular we get that $t_0\in A_{2,\beta}$. 
We can then  deduce by applying inductively Schauder's estimates that    $\varphi\in { \mathcal C }^{k',\beta}(X)$ for every positive integer $k'$. In particular  $t_0$ belongs to $A_{k,\alpha}$. Eventually $A_{k,\alpha}=[0,1]$ and, as $k$ is arbitrarily large, this provides the sough solution. This concludes the proof of Theorem~\ref{L:relativemongeampere}.\hfill\qed


\providecommand{\bysame}{\leavevmode\hbox to3em{\hrulefill}\thinspace}
\providecommand{\MR}{\relax\ifhmode\unskip\space\fi MR }
\providecommand{\MRhref}[2]{%
  \href{http://www.ams.org/mathscinet-getitem?mr=#1}{#2}
}
\providecommand{\href}[2]{#2}

\end{document}